\newtheorem{thm}{Theorem}[section]
\newtheorem{lem}[thm]{Lemma}
\newtheorem{prop}[thm]{Proposition}
\newtheorem{qs}[thm]{Question}
\newtheorem{cor}[thm]{Corollary}
\newtheorem{notation}[thm]{Notation}
\theoremstyle{remark} 
 \newtheorem{rem}[thm]{Remark}
 \crefname{rem}{Remark}{Remarks}
 \Crefname{rem}{Remark}{Remarks}
 \newtheorem{ex}[thm]{Example}
\theoremstyle{definition} 
\newtheorem{df}[thm]{Definition} 
\titleformat*{\section}{\normalsize \bfseries \filcenter}
\titleformat*{\subsection}{\normalsize \bfseries }
\newtheorem{mainthm}{Theorem}
\Crefname{mainthm}{Theorem}{Theorems}
\newtheorem{maincor}[mainthm]{Corollary}
\Crefname{maincor}{Corollary}{Corollaries}
\def\namedlabel#1#2{\begingroup
   \def\@currentlabel{#2}\label{#1}\endgroup
}
\newcommand{\eps}{\varepsilon}
\newcommand{\wh}{\widehat}
\newcommand{\wb}{\overline}
\newcommand{\bb}{\mathbb}
\newcommand{\RR}{\mathbb R}
\newcommand{\ZZ}{\mathbb Z}
\newcommand{\CC}{\mathbb C}
\newcommand{\PP}{\mathbb P}
\newcommand{\LL}{\mathbb L}
\newcommand{\II}{\mathbb I}
\newcommand{\CP}{\mathbb{CP}}
\newcommand{\del}{\nabla}
\newcommand{\into}{\hookrightarrow}
\newcommand{\tensor}{\otimes}
\newcommand{\Lc}{L}
\newcommand{\wrpalpha}{\phi_{\sF_\alpha}^{1-\eps}}
\renewcommand{\Re}{\text{Re}}
\newcommand{\CF}{CF^\bullet}
\newcommand{\HF}{HF^\bullet}
\newcommand{\wrp}[1]{\phi_{#1,\delta}^{t}} 
\newcommand{\tb}[1]{\check X_{\str(#1)/#1}}
\newcommand{\Halpha}{\widehat H^{\delta}_{\alpha}}
\newcommand{\afan}{\str(\alpha)/\alpha}
\newcommand{\core}{\mathfrak{c}}
\newcommand{\cocore}{\mathfrak{u}}
\newcommand{\stp}{\mathfrak{f}}
\newcommand{\hamK}[1]{(-\eps K_{#1})}
\newcommand{\hamH}[1]{\widehat H_{#1}}
\newcommand{\wrpK}[1]{\psi_{K_{#1}}^{t}} 
\newcommand{\thetaa}{p}
\newcommand{\sF}{{F}} \newcommand{\sP}{{P}} \newcommand{\funnyi}{\underaccent{\breve}{i}}
\newcommand{\funnyj}{\underaccent{\breve}{j}}
\DeclareMathOperator{\id}{id}
\DeclareMathOperator{\dg}{dg}
\DeclareMathOperator{\Hom}{Hom}
\DeclareMathOperator{\Log}{Log}
\DeclareMathOperator{\Conn}{Conn}
\DeclareMathOperator{\Coh}{Coh}
\DeclareMathOperator{\Pic}{Pic}
\DeclareMathOperator{\st}{star}
\DeclareMathOperator{\Fuk}{Fuk}
\newcommand{\val}{u}
\DeclareMathOperator{\Tw}{Tw}
\DeclareMathOperator{\bval}{\check{\Log}}
\DeclareMathOperator{\str}{star}
\DeclareMathOperator{\Ob}{Ob}
\DeclareMathOperator{\Supp}{Supp}
\DeclareMathOperator{\Vol}{Vol}
\DeclareMathOperator{\Bl}{Bl} \addbibresource{preambles/references.bib}
\newcommand{\Addresses}{{\bigskip
  \footnotesize

  \noindent A.~Hanlon, \textsc{Department of Mathematics, Stony Brook University and Simons Center for Geometry and Physics}\par\nopagebreak
  \noindent \textit{E-mail address}: \texttt{andrew.hanlon@stonybrook.edu}

  \medskip

  \noindent J.~Hicks, \textsc{School of Mathematics,  University of Edinburgh}\par\nopagebreak
  \noindent \textit{E-mail address}: \texttt{jeff.hicks@ed.ac.uk}

  \medskip

}}
\begin{document}

\title{\normalsize \textbf{Aspects of functoriality in homological mirror symmetry for toric varieties}}
\author{\normalsize Andrew Hanlon and Jeff Hicks}
\date{}
\maketitle

\begin{abstract}    We study homological mirror symmetry for toric varieties, exploring the relationship between various Fukaya-Seidel categories which have been employed for constructing the mirror to a toric variety. 
In particular, we realize tropical Lagrangian sections as objects of a partially wrapped category and construct a Lagrangian correspondence mirror to the inclusion of a toric divisor. 
As a corollary, we prove that tropical sections generate the Fukaya-Seidel category, completing a Floer-theoretic proof of homological mirror symmetry for projective toric varieties.
In the course of the proof, we develop techniques for constructing Lagrangian cobordisms and Lagrangian correspondences in Liouville domains, which may be of independent interest.
 \end{abstract}

\maketitle

\section{Introduction}
      Mirror symmetry, first observed in the work of \cite{candelas1991pair}, is now a broad-reaching set of proposed equivalences between symplectic geometry on a Calabi-Yau manifold $X$ and complex geometry on a mirror space $\check X$.
In this article, we'll explore the interplay between three aspects of this theory.
The work of \cite{strominger1996mirror} predicts that mirror spaces $X$ and $\check X$ can be constructed as dual Lagrangian torus fibrations (SYZ-fibrations) over a common affine base $Q$.
The homological mirror symmetry (HMS) conjecture from \cite{kontsevich1995homological} proposes that the Fukaya category of $X$ is derived equivalent to the category of coherent sheaves on $\check X$.
Finally, \cite{hori2000mirror} states that when we replace a space $\check X$ with a compactification, we should modify the geometric model considered on the mirror space $X$ via incorporation of a ``superpotential'' $W: X\to \CC$ whose behavior at the boundary of $X$ encodes the compactifying divisors added to $\check X$.

Toric varieties have become a central object of study in mirror symmetry as they provide a set of examples to test all three of these features. 
The presence of a moment map gives them a Lagrangian torus fibration, and the additional combinatorial structure makes the derived category of coherent sheaves particularly amenable to computation.
Additionally, toric varieties naturally present themselves as a compactification of the algebraic torus, which we denote by $X= \check X= (\CC^*)^n$.
Consider the compactification of $\check X$ to a toric variety $\check X_\Sigma$ given by fan $\Sigma \subset \bb{R}^n$ with $A \subset \bb{Z}^n$ indexing the primitive generators of $\Sigma$. In \cite{hori2000mirror} it was deduced from physical considerations that the compactification is mirror to the incorporation of the Hori-Vafa superpotential (a Laurent polynomial) on $X$,
\[ W_\Sigma = \sum_{\alpha \in A} c_\alpha z^\alpha \]
where $z^\alpha = z_1^{\alpha_1}\hdots z_n^{\alpha_n}$ and $c_\alpha \in \bb{C}^*$ are coefficients encoding the K\"{a}hler structure of $X$.
The superpotential $W_\Sigma$ is a holomorphic function on $X=(\CC^*)^n$, and the pair $(X, W_\Sigma)$ is a Landau-Ginzburg model mirror to $\check X_\Sigma$. 
Since $(X, W_\Sigma)$ has the same underlying symplectic manifold as $X$, to understand the HMS conjecture for toric varieties we need to state how the superpotential modifies the geometry as computed by the Fukaya category. Additionally, it would be satisfying to understand this superpotential from an SYZ perspective.

There are now a variety of perspectives on the equivalence between the category of coherent sheaves on $\check X_\Sigma$ and a Fukaya category associated to the pair $((\bb{C}^*)^n, W_\Sigma)$.
In particular, there are several different flavors of the Fukaya category associated to the pair $((\bb{C}^*)^n, W_\Sigma)$, each useful for studying a different aspect of the mirror relation between $\check X_\Sigma$ and $((\CC^*)^n, W_\Sigma)$.
We now briefly summarize some of these approaches from the existing literature.

Following ideas of Kontsevich, \cite{seidel2001more} defined the Fukaya-Seidel category, $\mathcal{FS}(Y,W)$, of a holomorphic function $W$ on an exact K\"{a}hler manifold $Y$.  Objects of $\mathcal{FS}(Y, W)$ are supported on Lagrangians submanifolds which, outside of a compact subset, project to the real positive axis of $\CC$ under  $W$. 
Using this framework, Seidel deduced HMS for $\check X_\Sigma = \bb{P}^2$; this argument was later generalized by  \cite{auroux2006mirror} to del Pezzo surfaces (including the non-toric ones) and weighted projective planes. 

Further input on the construction of these mirror pairs came from the SYZ conjecture.
By deforming the superpotential $W_\Sigma$ to its ``tropical localization," \cite{abouzaid2009morse} produced a subcategory of ``tropical Lagrangian sections'' of $\mathcal{FS}(Y, W)$ equivalent to line bundles on $\check X_\Sigma$ for any smooth and projective toric variety. 
The first author in \cite{hanlon2019monodromy} developed a variant of Seidel's construction called the monomially admissible Fukaya-Seidel category and proved an analog of Abouzaid's result.
Short of a generation gap, this approach provides an entirely geometric (i.e., sheaf-free) proof of homological mirror symmetry for toric varieties. 
Using tropical Lagrangian sections has the advantage of following an  ``SYZ-first'' viewpoint on HMS, where relations between affine geometry of toric varieties, tropical geometry, and symplectic geometry of $(\CC^*)^n$ inform the HMS identification. 

A separate approach to HMS for toric varieties comes from \cite{fang2011categorification}, which associated to the data of the fan $\Sigma$ a Lagrangian skeleton for $X$,
\[ \LL_\Sigma = \bigcup_{\sigma \in \Sigma} \sigma^\perp \times \sigma \subset T^*T^n = (\bb{C}^*)^n,\]
and proposed that the category of constructible sheaves on $T^n$ with microsupport in $\LL_\Sigma$ is a suitable stand-in for the Fukaya category of $((\bb{C}^*)^n, W_\Sigma)$. 
In this setting, they were able to prove an equivariant version of HMS for projective toric varieties.
This result was extended to the non-equivariant setting and to arbitrary toric varieties by \cite{kuwagaki2020nonequivariant}.

Recent developments in Fukaya categories in the non-compact setting open the prospect of unifying these different categories associated with the pair $(X, W_\Sigma)$.
\cite{sylvan2019partially} introduced the partially wrapped Fukaya category, a general construction of a category associated to the data of a Landau-Ginzburg pair.
This category has been additionally studied by \cite{ganatra2017covariantly}.
In the greatest generality, the partially wrapped Fukaya category, $\mathcal{W}(Y,\mathfrak{f})$, is defined for a symplectic manifold $Y$ which is conical at infinity and a closed subset $\mathfrak{f}$ in the contact boundary of $Y$.
Objects are supported on Lagrangians that are conical at infinity and do not intersect $\mathfrak{f}$. 

There are known relations between the partially wrapped Fukaya category, the Fukaya-Seidel category, and sheaves with microlocal support on the FLTZ skeleton.
To recover the Fukaya-Seidel category, we look at the partially wrapped Fukaya category $\mathcal{W}((\bb{C}^*)^n, W_\Sigma^{-1}(-\infty))$. 
To relate the partially wrapped Fukaya category to the FLTZ skeleton, we can also consider $\mathcal{W}((\bb{C}^*)^n, \stp_\Sigma)$ where $\stp_\Sigma$ is the intersection of $\bb{L}_\Sigma$ with the boundary at infinity.
A general result of \cite{ganatra2017covariantly}, along with a result particular to the toric setting due to \cite{gammage2017mirror,zhou2018lagrangian} implies that they are equivalent when $\check X_\Sigma$ is Fano.
Inspired by the work of \cite{nadler2009constructible}, \cite{ganatra2018microlocal} shows that $\mathcal{W}((\bb{C}^*)^n, \stp_\Sigma)$ is equivalent to the sheaf theoretic category considered by \cite{fang2011categorification}, giving a very general proof of HMS when combined with the aforementioned work of Kuwagaki.
\cite{katzarkov2017partially} have also defined a variant of $\mathcal{W}((\bb{C}^*)^n, \stp_\Sigma)$ to give a proof of HMS for quasi-affine toric varieties.
An overview of the current state of the field is included in \cite[Section 1.3]{hanlon2019monodromy}.    \subsection{Statement of results}
      The results of this paper are best summarized working backwards from applications, where we investigate mirror symmetry for the functors described in \cref{tab:desiredHMS}.
\begin{table}
\centering
\begin{tabular}{c|c}
    $A$-side &  $B$-side \\\hline
    Monodromy of FLTZ skeleton & Tensor product with line bundles\\
    Lagrangian correspondences & Toric morphisms and inclusions\\
    Stop removal & Blow-down and localization
\end{tabular}
\caption{Aspects of functoriality in toric HMS.}
\label{tab:desiredHMS}
\end{table}
In order to understand these functors through homological mirror symmetry, we are required to reexamine homological mirror symmetry for toric varieties with the newly developed machinery of partially wrapped Fukaya categories.

Our first result encodes previous HMS results for toric varieties in this new language. 
\begin{mainthm}[Restatement of \cref{thm:embedding}]\label{thm:mainthm} Let $\Delta$ be a monomial division adapted to $\Sigma$ and let $\mathcal{F}^s_\Delta$ be the monomially admissible Fukaya-Seidel category of sections associated to $\Delta$. There is a fully faithful embedding
$$ \mathcal{F}^s_\Delta \hookrightarrow \mathcal{W}((\bb{C}^*)^n, \stp_\Sigma) .$$
\end{mainthm}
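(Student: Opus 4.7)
The plan is to construct the embedding as the identity on objects, viewing each monomially admissible Lagrangian section as a conical Lagrangian in $(\bb{C}^*)^n$ whose Legendrian boundary avoids $\stp_\Sigma$, and then to identify the morphism complexes by comparing the two Hamiltonian perturbation schemes used to define them.

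First, I would verify that the monomial admissibility condition with respect to a monomial division $\Delta$ adapted to $\Sigma$ forces every section $L$ to be cylindrical outside a compact set with Legendrian boundary disjoint from $\stp_\Sigma$. Concretely, monomial admissibility arranges $L$ so that in the region of the base dual to a cone $\sigma\in\Sigma$, the section has asymptotic slope lying in $\sigma^\perp$, so its boundary at infinity sits on the stratum $\sigma^\perp\times\sigma$ of $\LL_\Sigma$ but only in the cotangent directions transverse to $\stp_\Sigma$. This presents $\mathcal{F}^s_\Delta$ as a full subcategory of objects of $\mathcal{W}((\bb{C}^*)^n, \stp_\Sigma)$ at the level of supports.

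Second, I would define the functor on morphisms and prove full faithfulness by comparing the wrapping prescriptions. The morphism complex in $\mathcal{F}^s_\Delta$ is obtained by perturbing one input with a monomially admissible Hamiltonian $H_\Delta$ and then iterating by $tH_\Delta$ as $t\to\infty$, while the partially wrapped complex is a direct limit over all positive wrappings avoiding $\stp_\Sigma$. The core of the argument is to show that the sequence $\{tH_\Delta\}_{t\to\infty}$ is cofinal in the wrapping category defining $\mathcal{W}((\bb{C}^*)^n, \stp_\Sigma)$. This in turn reduces to a statement about Reeb dynamics on the contact boundary: the Hamiltonian vector field of $H_\Delta$ at infinity must be positively transverse to the contact distribution, avoid $\stp_\Sigma$ for all time, and generate displacement in every Reeb direction in the complement of $\stp_\Sigma$.

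The main obstacle is the cofinality assertion. A priori, $H_\Delta$ could wrap only in a restricted subset of Reeb directions, yielding a Floer complex strictly smaller than the partially wrapped one, and even the non-displacement of $\stp_\Sigma$ requires careful control because the monomial Hamiltonian becomes singular at the boundary strata attached to $\stp_\Sigma$. I would address this by constructing an explicit homotopy of positively admissible Hamiltonians interpolating between $H_\Delta$ and a standard cofinal wrapping for $\stp_\Sigma$, in the style of the acceleration/cofinality arguments developed for partially wrapped categories; a Sard-type check on chord creation would ensure that no Reeb chords cross $\stp_\Sigma$ along the homotopy. Once cofinality is established, identifying intersection points and holomorphic strips on the two sides gives an isomorphism on Floer complexes, and the $A_\infty$-functor structure extends via continuation maps in the standard way.
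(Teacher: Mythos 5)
Your Step 1 has a genuine gap, and it is the central obstacle that the paper's argument is designed to overcome. Monomially admissible Lagrangian sections are in general \emph{not} conical at infinity, and in fact, as observed in Remark 3.4 of \cite{hanlon2019monodromy}, they \emph{cannot} be made conical while preserving monomial admissibility: a conical smoothing of a support function must distort $\theta\cdot\alpha$ even close to the ray $\langle\alpha\rangle$, whereas monomial admissibility demands $\theta\cdot\alpha$ vanish exactly over $C_\alpha$ outside a compact set. Moreover, the monomially admissible condition $\theta\cdot\alpha = 0$ over $C_\alpha$ places the unperturbed section \emph{on} the FLTZ skeleton at infinity, not in the complement of $\stp_\Sigma$; it is only after applying the wrapping Hamiltonian $K$ that one lands off the stop. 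So there is no functor that is ``the identity on objects'': the objects of $\mathcal{F}^s_\Delta$ simply are not objects of $\mathcal{W}((\CC^*)^n,\stp_\Sigma)$, and one must first construct new conical representatives. This is precisely what \cref{lemma:conesctions} does, using a variable-radius mollifier to produce conical tropical Lagrangian sections $L^\delta(D)$ for which $\theta\cdot\alpha$ is bounded by $\delta$ rather than forced to vanish.

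The cofinality of the conical twisting Hamiltonians is indeed a key step (cf.\ \cref{lemma:cofinal}), and you are right to flag it, so that portion of your plan is on target. But you are missing the other two pieces of the argument: (a) constructing the conical replacements $L^\delta(D)$, and (b) proving that the Floer cohomology of those conical sections agrees with the monomially admissible Floer cohomology of the sections $L(D)$. The latter is nontrivial because $\mathcal{F}^s_\Delta$ uses a monomially admissible (holomorphic) almost complex structure while $\mathcal{W}((\CC^*)^n,\stp_\Sigma)$ uses a cylindrical one; the paper handles this in \cref{lemma:samehoms} by a monotonicity argument confining disks to a compact region and an explicit interpolation family $L_a^\delta(D)$ that agrees with $L^\delta(D)$ on a ball of radius $a$ and becomes monomially admissible at infinity. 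Your proposal implicitly assumes the chain complexes and moduli spaces are the same on both sides, which is exactly what must be shown. Finally, the upgrade to a full $A_\infty$ embedding in \cref{thm:embedding} requires tracking these identifications through the localization at continuation elements; a generic ``extend by continuation maps'' remark does not address the compatibility of the two localizations.
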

As a consequence, the subcategory of $\mathcal{W}((\bb{C}^*)^n, \stp_\Sigma)$ generated by tropical Lagrangian sections  is derived equivalent to the subcategory of line bundles on the mirror toric variety.
We then show that these tropical Lagrangian sections generate  $\mathcal W((\CC^*)^n, \stp_\Sigma)$.
The proof of generation relies on constructing the mirror to the inclusion of a toric divisor.
\begin{mainthm}[Restatement of \cref{thm:inclusioncorrespondence}] 
    Let $\str(\alpha)/\alpha$ be the fan of a toric divisor of $\check X_\Sigma$.
    There is a Lagrangian correspondence  $L_{\alpha 0}: ((\CC^*)^{n-1}, \stp_{\str(\alpha)/\alpha})\Rightarrow ((\CC^*)^n, \stp_\Sigma)$ which is mirror to the inclusion of a toric divisor $i_{\alpha 0}: \check X_{\str(\alpha)/\alpha}\into \check X_\Sigma$ on the level of the Grothendieck groups $K_0(\mathcal W((\CC^*)^n, \stp_\Sigma))$ and $K_0(D^b\Coh(\check X_\Sigma))$.
\end{mainthm}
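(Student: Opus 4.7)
The plan is to construct $L_{\alpha 0}$ from the combinatorics of the divisor, promote it to a functor using the paper's general Lagrangian correspondence machinery, and then verify the mirror identification on $K_0$ by testing against tropical Lagrangian sections. Since the fan $\str(\alpha)/\alpha$ lives in $\bb R^n/\bb R\alpha$, there is a natural linear surjection $\pi_\alpha : \bb R^n \to \bb R^n/\bb R\alpha \cong \bb R^{n-1}$ compatible with the two fans. I would take $L_{\alpha 0} \subset ((\CC^*)^{n-1})^- \times (\CC^*)^n$ to be a cylindrical perturbation of the exponential of the conormal to the graph of $\pi_\alpha$. One checks that after the perturbation $L_{\alpha 0}$ is cylindrical at infinity and avoids a product of stops refining $\stp_{\str(\alpha)/\alpha}$ and $\stp_\Sigma$; the Liouville-domain bookkeeping required is exactly what the paper's correspondence technology for Liouville domains is set up to handle, and feeds directly into a convolution functor $\Phi := \Phi_{L_{\alpha 0}} : \mathcal W((\CC^*)^{n-1}, \stp_{\str(\alpha)/\alpha}) \to \mathcal W((\CC^*)^n, \stp_\Sigma)$.

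To verify K-theoretic compatibility, note that $K_0(D^b\Coh(\check X_{\str(\alpha)/\alpha}))$ is spanned by line bundle classes since $\check X_{\str(\alpha)/\alpha}$ is smooth and toric, and by Theorem A each class $[\mathcal{L}]$ corresponds to the class of a tropical Lagrangian section $L_{\mathcal{L}}$. It therefore suffices to verify, for every line bundle $\mathcal{L}$ on $\check X_{\str(\alpha)/\alpha}$, that $[\Phi(L_{\mathcal{L}})]$ and $[i_{\alpha 0 *}\mathcal{L}]$ match under the HMS identification. Fixing an extension $\wt{\mathcal{L}}$ of $\mathcal{L}$ to $\check X_\Sigma$, the Koszul sequence $0 \to \wt{\mathcal{L}}(-D_\alpha) \to \wt{\mathcal{L}} \to i_{\alpha 0 *}\mathcal{L} \to 0$ rewrites the target class as $[\wt{\mathcal{L}}] - [\wt{\mathcal{L}}(-D_\alpha)]$, so the problem reduces to exhibiting the identity $[\Phi(L_{\mathcal{L}})] = [L_{\wt{\mathcal{L}}}] - [L_{\wt{\mathcal{L}}(-D_\alpha)}]$ in $K_0(\mathcal W((\CC^*)^n, \stp_\Sigma))$.

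I would establish this identity by an explicit computation of the geometric composition $L_{\alpha 0} \circ L_{\mathcal{L}}$. After a controlled Hamiltonian isotopy, the composition should fit into a Lagrangian cobordism (of the kind produced by the paper's techniques) with ends $L_{\wt{\mathcal{L}}}$ and $L_{\wt{\mathcal{L}}(-D_\alpha)}$, corresponding geometrically to the two ways of pushing the composition off the stop $\stp_\Sigma$ on opposite sides of the divisor ray $\bb R_{\geq 0}\alpha$. The resulting exact triangle in the partially wrapped category yields the desired K-theoretic relation, and a parallel calculation on any line bundle completes the check.

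The main obstacle is this last cobordism construction. The composition $L_{\alpha 0} \circ L_{\mathcal{L}}$ is generically non-transverse along a copy of the section over $\pi_\alpha^{-1}(0)$, and converting this configuration into a bona fide Lagrangian cobordism between the two tropical sections requires perturbations that simultaneously respect the tropical structure of $L_{\mathcal{L}}$ and the compound stop. Although the calculation is local near the ray $\bb R_{\geq 0}\alpha$, tracking shifts and signs and verifying that the resulting cobordism is an admissible object in the partially wrapped category depends heavily on the Liouville-domain correspondence technology advertised in the abstract.
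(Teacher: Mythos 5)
Your K-theoretic verification strategy is sound and essentially matches the paper: test the correspondence on tropical Lagrangian sections, use the short exact sequence $0 \to \widetilde{\mathcal L}(-D_\alpha) \to \widetilde{\mathcal L} \to (i_{\alpha 0})_*\mathcal L \to 0$, and exhibit a Lagrangian cobordism realizing the mirror triangle. But the construction of $L_{\alpha 0}$ itself has a gap that is not a perturbation issue, and it is worth being precise about why.

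The conormal to the graph of $\underline\pi_\alpha: \RR^n \to \RR^n/\RR\alpha$ (suitably exponentiated, wrapped off the stop, and made cylindrical) is exactly the Lagrangian the paper calls $L_{\pi_\alpha^*}$, and it is the mirror to the toric \emph{projection} $\check\pi_\alpha : \check X_{\str(\alpha)} \to \check X_{\str(\alpha)/\alpha}$ from the normal bundle to the divisor, not to the inclusion $\check i_{\alpha 0}$. If you geometrically compose this conormal with a tropical section $L_{\mathcal L}$, you get the pullback section $\underline\pi_\alpha^* L_{\mathcal L}$, which is again a Lagrangian section of $(\CC^*)^n \to Q$, fibering over all of $Q$. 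It is not supported (in valuation) over $\str(\alpha)$ and certainly does not have the topology $L_{\mathcal L}\times\RR$ of something mirror to a sheaf pushed forward from a divisor. This is not a non-transversality you can perturb away; the underlying set is genuinely the wrong thing.

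The missing ingredient is a Lagrangian surgery between $L_{\pi_\alpha^*}$ and $\phi_{\sF_\alpha}^{1-\eps}(L_{\pi_\alpha^*})$, i.e.\ its image under the conical twisting Hamiltonian associated to the support function $\sF_\alpha$ (which wraps by nearly $2\pi$ in the $\alpha$-direction). This surgery ``caps off'' the conormal along the locus where $\widehat H_\alpha^\delta$ vanishes, and it is exactly what forces the composition $L_{\alpha 0}\circ L$ to have valuation inside $\str(\alpha)$ and to fit into the cobordism $(L_0(\sF+\sF_\alpha), L_0(\sF))\rightsquigarrow L_{\alpha 0}\circ L_\alpha(\underline i_{\alpha 0}^*\sF)$ you need for the $K_0$ relation. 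This surgery requires the surgery-data and conicalized-profile machinery of the appendices to stay admissible, so the appeal to ``Liouville-domain correspondence technology'' is right, but the specific step you are missing is the surgery on the correspondence before you ever compose it with a section, not a cobordism constructed after the fact from a non-transverse composition. One further technical point: the correspondence naturally lands in $((\CC^*)^n, \stp_{\str(\alpha)})$ (the normal bundle mirror), and you must then compose with a stop-removal correspondence to land in $((\CC^*)^n, \stp_\Sigma)$; this second step is harmless but needs to be made explicit.
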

Provided that the expected functoriality of Lagrangian correspondences extends to the partially wrapped setting, \cref{thm:inclusioncorrespondence} gives a functor between Fukaya categories mirror to the derived pushforward of the inclusion; see \cref{rem:functorstatement}.
This Lagrangian correspondence allows us to inductively construct the linking disks to the FLTZ skeleton via Lagrangian cobordisms and tropical Lagrangian sections, proving generation. 
\begin{maincor}[Restatement of \cref{thm:ageneration}]
    \label{cor:generation} 
    The subcategory of tropical Lagrangian sections generates $\mathcal W ((\CC^*)^n, \stp_\Sigma)$.
\end{maincor}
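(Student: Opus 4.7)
The plan is to combine the Lagrangian correspondence constructed in \cref{thm:inclusioncorrespondence} with induction on dimension and Lagrangian cobordism techniques. My starting point is the general principle (in the spirit of Ganatra--Pardon--Shende) that $\mathcal{W}((\CC^*)^n, \stp_\Sigma)$ is generated by the small linking disks to the strata of the FLTZ skeleton. These strata are indexed by cones $\sigma$ of $\Sigma$, and under homological mirror symmetry the linking disk at the stratum labelled by $\sigma$ is expected to be mirror to the structure sheaf of the toric subvariety $V(\sigma)\subset \check X_\Sigma$. It therefore suffices to realize every such linking disk as an iterated cone of tropical Lagrangian sections in $\mathcal W((\CC^*)^n, \stp_\Sigma)$.

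The induction is on the dimension $n$, with the one-dimensional case handled directly since the FLTZ skeleton then has only finitely many point strata whose linking disks are cotangent fibres, and these agree with the tropical sections mirror to line bundles on $\check X_\Sigma$ (which are known to generate by the toric surface case). For the inductive step, fix a cone $\sigma \in \Sigma$ with $\dim \sigma \geq 1$ and choose a ray $\alpha \subset \sigma$. The image $\bar\sigma$ of $\sigma$ in the quotient fan $\str(\alpha)/\alpha$ has dimension one less, so by induction the linking disk to the stratum of $\stp_{\str(\alpha)/\alpha}$ labelled by $\bar\sigma$ is an iterated cone of tropical Lagrangian sections of the divisor mirror. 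Applying the correspondence $L_{\alpha 0}$ from \cref{thm:inclusioncorrespondence} lifts this object to $\mathcal W((\CC^*)^n, \stp_\Sigma)$, and because the correspondence is mirror to pushforward along the inclusion of the toric divisor $\check X_{\str(\alpha)/\alpha} \hookrightarrow \check X_\Sigma$, the resulting object agrees with the linking disk to the stratum labelled by $\sigma$. The remaining case $\sigma = \{0\}$ is mirror to $\mathcal O_{\check X_\Sigma}$, which is itself a tropical Lagrangian section by \cref{thm:mainthm}.

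The main obstacle is translating the image of the correspondence back into an iterated cone of tropical Lagrangian sections of $\mathcal W((\CC^*)^n, \stp_\Sigma)$, rather than leaving it as an abstract object generated by the correspondence. Concretely, this requires building a Lagrangian cobordism between the geometric composition of a tropical section with $L_{\alpha 0}$ and a collection of tropical sections in the total space, and then invoking the exact triangle attached to the cobordism. This is where the Lagrangian cobordism and correspondence technology in Liouville domains advertised in the abstract enters: one must verify that this machinery functions in the partially wrapped setting and respects admissibility with respect to $\stp_\Sigma$. Once this compatibility is established, the inductive step closes and generation of $\mathcal W((\CC^*)^n, \stp_\Sigma)$ by tropical Lagrangian sections follows.
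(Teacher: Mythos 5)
Your sketch reproduces the paper's strategy: generation of $\mathcal W((\CC^*)^n, \stp_\Sigma)$ by linking disks, induction on the dimension of the fan, the inclusion correspondence $L_{\alpha 0}$ from \cref{thm:inclusioncorrespondence} to push forward linking disks from $\stp_{\str(\alpha)/\alpha}$, and the cobordism of \cref{property:aexactsequence} combined with \cref{thm:cobordismgeneration} to rewrite the pushed-forward sections as iterated cones of tropical sections in the ambient space.

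One factual slip worth flagging: your handling of $\sigma = \{0\}$ is not quite right. The zero section $L(0)$ (mirror to $\mathcal O_{\check X_\Sigma}$) is explicitly \emph{not} a cocore or linking disk of $\LL_0$ — it sits at the singular locus where several strata meet (see the remark following \cref{lemma:cocoresaretropical}). The genuine cocores of $\LL_0$, indexed by connected components of $\LL_0$ minus the singular locus, are mirror to various line bundles $\mathcal O(D)$ rather than all to $\mathcal O$; \cref{lemma:cocoresaretropical} establishes that each of them is admissibly Hamiltonian isotopic to a tropical section, which is what closes the $\sigma = 0$ case. The base case of the induction in the paper is also the zero-dimensional fan (empty stop), rather than $n=1$, though this is immaterial.
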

The generation result is mirror to generation of $D^b\Coh(\check X_\Sigma)$ by line bundles \cite[Proposition 1.3]{abouzaid2009morse}. In combination with \cref{thm:mainthm}, generation immediately gives a proof of HMS for toric varieties that is completely Floer-theoretic.
\begin{maincor} \label{cor:HMScor} Let $\check X_\Sigma$ be a smooth projective toric variety. There is a quasi-equivalence
\[ \Tw\mathcal{W}((\bb{C}^*)^n, \stp_\Sigma) \simeq D^b_{\dg}\Coh(\check X_\Sigma). \]
Under this equivalence, Lagrangian sections $L(\sF)$ associated to support functions $\sF$ are identified with line bundles $\mathcal O(\sF)$. 
\end{maincor}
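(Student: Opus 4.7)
The plan is to combine the three preceding main results with the classical fact that line bundles generate $D^b\Coh(\check X_\Sigma)$. The HMS identification for tropical Lagrangian sections inside the monomially admissible Fukaya-Seidel category (due to \cite{abouzaid2009morse} and extended in \cite{hanlon2019monodromy}) provides an $A_\infty$ quasi-equivalence between the full subcategory of line bundles $\mathcal P \subset D^b_{\dg}\Coh(\check X_\Sigma)$ and the subcategory of tropical Lagrangian sections inside $\mathcal{F}^s_\Delta$. Composing this with the fully faithful embedding of \cref{thm:mainthm}, I obtain a fully faithful $A_\infty$ functor
\[
\Phi\colon \mathcal P \hookrightarrow \mathcal W((\bb{C}^*)^n, \stp_\Sigma)
\]
sending $\mathcal O(\sF) \mapsto L(\sF)$ by construction, which already covers the matching claim in the corollary.

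The next step is to pass to twisted complexes. The induced dg functor $\Tw \Phi \colon \Tw \mathcal P \hookrightarrow \Tw \mathcal W((\bb{C}^*)^n, \stp_\Sigma)$ is still fully faithful. On the $B$-side, \cite[Proposition 1.3]{abouzaid2009morse} shows that line bundles split-generate $D^b \Coh(\check X_\Sigma)$; since $\check X_\Sigma$ is smooth and projective, $D^b \Coh(\check X_\Sigma)$ is already idempotent-complete, and so $\Tw \mathcal P \simeq D^b_{\dg} \Coh(\check X_\Sigma)$. On the $A$-side, \cref{cor:generation} asserts that tropical Lagrangian sections generate $\mathcal W((\CC^*)^n,\stp_\Sigma)$, so the essential image of $\Tw \Phi$ exhausts $\Tw \mathcal W((\CC^*)^n,\stp_\Sigma)$. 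Full faithfulness combined with essential surjectivity yields the desired quasi-equivalence, and the indexing $L(\sF) \leftrightarrow \mathcal{O}(\sF)$ is built into $\Phi$ from the outset.

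The main technical point to verify is that \cref{thm:mainthm} is compatible at the $A_\infty$ (not merely cohomological) level with the earlier identification of tropical sections with line bundles from \cite{abouzaid2009morse, hanlon2019monodromy}, so that $\Phi$ is genuinely an $A_\infty$ functor rather than only a bijection on isomorphism classes of objects with matching morphism spaces. Once this coherence check is in place, the conclusion is formal: all of the substantive symplectic input has been packaged into \cref{thm:mainthm,thm:inclusioncorrespondence,cor:generation}, and \cref{cor:HMScor} follows as their immediate consequence.
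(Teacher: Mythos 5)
Your proposal is correct and follows the same route the paper takes: it is exactly \cref{cor:hmsline} (the embedding $\Pic_{\dg}(\check X_\Sigma)\hookrightarrow \mathcal{W}((\CC^*)^n,\stp_\Sigma)$, already assembled from \cref{thm:OrigHMS} and \cref{thm:embedding}) combined with \cref{thm:ageneration} on the $A$-side and \cite[Proposition 1.3]{abouzaid2009morse} on the $B$-side. The $A_\infty$-coherence concern you flag at the end is already dispatched, since \cref{thm:embedding} is proved at the $A_\infty$ level by constructing a strictly unital quasi-equivalence of directed categories $\mathcal{O}_\Delta\to\mathcal{O}_{\mathcal{A}}$ before localization, so \cref{cor:hmsline} is genuinely an $A_\infty$ embedding and not merely a cohomological one.
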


\begin{rem} The embedding from \cref{thm:mainthm} can also be constructed using Abouzaid's category of tropical Lagrangian sections (see \Cref{rem:abouzaidembedding}). Combined with \cref{cor:generation} and the equivalence $\mathcal{W}((\bb{C}^*)^n, W_\Sigma^{-1}(-\infty)) \simeq \mathcal{W}((\bb{C}^*)^n, \stp_\Sigma)$ mentioned above, this implies that Abouzaid's tropical sections generate the Fukaya-Seidel category\footnote{At least, the partially wrapped version of the Fukaya-Seidel category, which differs from that considered in \cite{abouzaid2009morse} in its technical setup, but is expected to be quasi-equivalent.} when $\check X_\Sigma$ is Fano.
\end{rem}

\begin{rem} An obvious question is how the equivalence of \cref{cor:HMScor} relates to the one obtained in \cite[Corollary 6.16]{ganatra2018microlocal} by passing to the category of microlocal sheaves. It is reasonable to expect that they coincide. In order to verify this, one needs to check only that the images of the tropical Lagrangian sections corresponding to ample line bundles in the category of microlocal sheaves is the costandard constructible sheaf on the moment polytope of this line bundle \cite{fang2011categorification}. In general, tropical Lagrangian sections should then correspond to twisted polytope sheaves \cite{zhou2019twisted}.
\end{rem}

\begin{rem} Although the FLTZ skeleton has been defined and studied for toric stacks \cite{fang2014coherent}, we work only with toric varieties in this paper. This is done largely for clarity and to simplify the exposition. We expect that all of our results can be extended to toric Deligne-Mumford stacks in the sense of \cite{borisov2005orbifold}. 
\end{rem}

\Cref{cor:HMScor} provides a version of homological mirror symmetry that can leverage the additional structure and flexibility of the partially wrapped Fukaya category. We use these structures to obtain the desired mirror relationships listed in \cref{tab:desiredHMS}.
As specific applications, we show for projective toric varieties:
\begin{description}
    \item[Restatement of \cref{thm:wrappingaction}:]  The monodromy action on the monomially admissible Fukaya category considered in \cite{hanlon2019monodromy} extends to the partially wrapped Fukaya category. These autoequivalences of $\mathcal W((\CC^*)^n, \stp_\Sigma)$ are mirror to tensor-product with line bundles on $D^b\Coh(\check X_\Sigma)$.
    \item[Restatement of \cref{property:linkingdisks}:] 
    Under the mirror identification,  linking disks $L_\sigma$ are mirror to line bundles supported on the toric orbit indexed by $\sigma$.
    \item[Restatement of \cref{prop:stopremovallocalization} and \cref{cor:blowup}:] The stop removal functors associated to the appropriate components of the FLTZ skeleton on  $\mathcal W((\CC^*)^n, \stp_\Sigma)$ are mirror to toric blow-down, and localization away from toric strata. 
    \item[Restatement of \cref{cor:quasiprojective}:] We can extend \cref{cor:HMScor} to the more general setting of quasi-projective smooth toric varieties.
\end{description}

The construction of the mirror correspondence to inclusion and the inductive argument for generation use new methods for building Lagrangian surgeries, cobordisms, and correspondences in Liouville domains. We lay some groundwork for working with Lagrangian cobordisms and correspondences in Liouville domains in the appendices.
These tools are kept separate from the main results and may be of interest to those constructing Lagrangian submanifolds inside Liouville domains.
    \subsection{Outline}
      The paper is roughly organized as follows: background and notation in \cref{sec:background}; embedding of tropical sections into the partially wrapped Fukaya category in \cref{sec:embedding}; the construction of the inclusion correspondence and subsequent generation of the partially wrapped Fukaya category in \cref{sec:mirrordivisor,sec:inclusion,sec:generation}; and applications and extension in \cref{sec:applications}. Finally, \cref{app:cobordism,app:generation,app:correspondenceandcobordism,app:correspondenceadmissibility} cover constructions of Lagrangian correspondences and cobordisms in Liouville domains.

A review of notation and existing literature is included in \cref{sec:background}.
We set notation for the partially wrapped Fukaya category in \cref{subsec:partiallywrapped}, mostly following \cite{ganatra2018sectorial}.
\Cref{subsec:backgroundSYZ,subsec:toricnotation,subsec:HMSboundary,subsec:troplag} overview mirror symmetry for toric varieties. 
Our emphasis on SYZ mirror symmetry will color our notation used for toric varieties.
\Cref{subsec:toricnotation} fixes our choices for notation, which views line bundles and toric divisors as arising from the data of a support function on the SYZ base.
This description becomes useful in \cref{subsec:HMSboundary}, which recaps the construction of the monomially admissible Fukaya-Seidel category from \cite{hanlon2019monodromy}, and the FLTZ skeleton from \cite{fang2011categorification}.
The objects in the monomially admissible Fukaya-Seidel category that are of primary interest to us are tropical Lagrangian sections, which are similarly described in \cref{subsec:troplag}.

In \cref{sec:embedding}, we produce conical tropical Lagrangian sections from support functions by modifying the construction of tropical Lagrangian sections in the monomially admissible setting. Then, we show that this modification does not change their Floer theory, which results in \cref{thm:mainthm}. We also see that the conical twisting Hamiltonians used in defining the conical tropical Lagrangian sections act on $\mathcal{W}((\CC^*)^n, \stp_\Sigma)$ and that this action is mirror to tensoring by the corresponding line bundle on the mirror as in the monomially admissible setting. In \cref{subsec:cocores}, we prove that the cocores of the FLTZ skeleton are all tropical Lagrangian sections.

In \cref{sec:mirrordivisor,sec:inclusion}, we construct various Lagrangian submanifolds for $\mathcal W((\CC^*)^n, \stp_\Sigma)$ which realize mirror symmetry predictions. 
\Cref{sec:mirrordivisor} constructs a Lagrangian submanifold $L_{\alpha>0}\subset ((\CC^*)^n, \stp_\Sigma)$ which is mirror to the structure sheaf of a toric divisor $\mathcal O_{\alpha>0}\in D^b\Coh(\check X_\Sigma)$.
The construction is based on the surgery methods used to build tropical Lagrangian submanifolds from the second author's thesis \cite{hicks2020tropical}.
These Lagrangians provide an example of an advantage of working in the partially wrapped Fukaya category, as they cannot be constructed in the monomially admissible Fukaya category. 
In \cref{subsec:p1}, an explicit example is given for homological mirror symmetry for $\CP^1$ for the purpose of exposition. 

These methods are extended in \cref{sec:inclusion}, which looks at constructions of Lagrangian correspondences.
We explore two kinds of Lagrangian correspondences: correspondences mirror to toric morphisms and correspondences mirror to the inclusion of a toric divisor. 
In \cref{subsec:toricmorphisms}, we construct the Lagrangian correspondence mirror to a toric morphism. 
We additionally prove that this correspondence intertwines the twisting Hamiltonians of the domain and codomain and that it sends Lagrangian sections to Lagrangian sections.
\Cref{subsec:inclusionmirror} uses this construction and the surgery method from \cref{sec:mirrordivisor} to build a Lagrangian correspondence $L_{\alpha 0}: ((\CC^*)^{n-1}, \stp_{\str(\alpha)/\alpha})\Rightarrow ((\CC^*)^n, \stp_\Sigma)$, the mirror to inclusion of a toric divisor. 
The main result of this section, \cref{thm:inclusioncorrespondence}, is the construction and description of $L_{\alpha 0}$.
We prove that $L_{\alpha 0}$ intertwines the twisting Hamiltonians, sends linking disks to linking disks, pulls Lagrangian sections back to Lagrangian sections, and can be constructed by a Lagrangian cobordism.

We explore some immediate applications in \cref{sec:applications}. 
In \cref{sec:generation}, we use the inclusion correspondence to show that tropical Lagrangian sections generate the Fukaya category.
We give a sketch of the argument for the mirror to $\CP^2$ using piecewise linear pictures in \cref{subsec:warmupcp2}. 
These PL Lagrangians are replaced with smooth Lagrangians in \cref{subsec:generationproof}, which uses the inclusion correspondence to inductively show that tropical Lagrangian sections generate linking disks of the FLTZ skeleton.
This completes the proof of \cref{cor:generation}. 
Using the same techniques, we show that the mirror to the Beilinson exceptional collection for $\CP^n$ in generates in \cref{subsubsec:cpngeneration}.
We then leverage \cref{cor:generation} to examine the stop removal functor under homological mirror symmetry for toric varieties.
\cref{subsec:quotient} shows that homological mirror symmetry holds for quasi-projective smooth toric varieties. This is achieved by checking that localization at a stop is mirror to working in the complement of the corresponding toric stratum.
In \cref{subsec:blowup}, we show that the blow-down functor associated to toric blowup at a smooth orbit can also be realized by stop removal.

Finally, we include several appendices containing constructions for Lagrangian cobordisms and correspondences in Liouville domains.
These provide tools that may be of independent interest beyond the homological mirror symmetry results of this paper. 
\Cref{app:cobordism} extends the surgery cobordism construction of \cite{hicks2020tropical} to the setting of Liouville domains. 
\Cref{app:generation} describes a Lagrangian cobordism between cobordisms of Legendrians; we recover as an application the generation result of \cite{ganatra2018sectorial}.
\Cref{app:correspondenceandcobordism} proves an interchange relation of Lagrangian cobordisms and Lagrangian correspondences; this result is known to experts but (to our knowledge) has not yet appeared in the literature. As an application, we observe that the Lagrangian cobordism class under geometric composition of Lagrangian correspondences is independent of perturbations taken to achieve transversality.
Finally, \cref{app:correspondenceadmissibility} lays some groundwork for Lagrangian correspondences between Liouville domains, highlighting why ``totally wrapped to the stop'' is an important condition to ensure admissibility of geometric composition.    \subsection{Acknowledgements}
      We would like to thank Denis Auroux and Sheel Ganatra for useful discussions. We are also grateful to the anonymous referee whose many helpful comments improved this paper.

AH is partially supported by NSF RTG grant DMS-1547145.
JH is supported by EPSRC Grant (EP/N03189X/1, Classification, Computation, and Construction:
New Methods in Geometry).

\section{Background}
\label{sec:background}
   
In this section, we set our notation and review some of the necessary background on toric varieties and the Fukaya categories relevant to homological mirror symmetry for toric varieties.
As a general rule: 
\begin{itemize}
   \item Symplectic objects in this paper are written with unadorned characters (so $\phi: X\to X$ is a symplectomorphism).
   \item Complex geometric objects in this paper are decorated with checks (so $\check f: \check X_1\to \check X_2$ is a map of varieties).
   \item Piecewise linear / tropical  objects are marked with an underline (so $\underline i:Q_1\to Q_2$ is a linear function). A notable exception is for support functions $\sF: Q\to \RR$. 
\end{itemize} 

\subsection{Partially wrapped Fukaya categories}
\label{subsec:partiallywrapped}
The notation that we use for partially wrapped Fukaya categories follows \cite{ganatra2018sectorial}.
The symplectic objects which are the center of our study are \emph{Liouville domains.} A Liouville domain is a pair $(X^{int},\lambda)$, where
   \begin{itemize}
      \item $X^{int}$ is a $2n$-manifold with boundary $\partial X^{int}$ and,
      \item $\lambda\in \Omega^1(X^{int}, \RR)$ is a one form on $\Omega^1(X^{int}, \RR)$ so that $\omega=d\lambda$ is a symplectic form for $X^{int}$.
   \end{itemize}
To this data we can associate a \emph{Liouville vector field }$Z$ defined by the property $\iota_Z\omega= \lambda$.
We require that this vector field transversely points outward along $\partial X$.
The boundary of a Liouville domain is a contact manifold $(\partial X^{int}, \lambda)$.
Liouville domains admit completions to a non-compact symplectic manifold $X$ by attaching the symplectization $\partial X^{int}\times [0, \infty)_t$ to the contact boundary.
The symplectic form that we take on the cylindrical end is $d(e^t\lambda)$; we note that this is an exact symplectic form. 
Since we will mostly work with the completions of Liouville domains, we\footnote{This notation places a preference for the completion of a Liouville domain rather than the Liouville domain itself.} will use $X^{int}$ to refer to any Liouville domain which completes to $X$.

Outside of the appendix, the only Liouville domain which we will consider is $X=(\CC^*)^n$, where $X^{int}$ is the unit cotangent bundle of the product torus $\{(e^{i\theta_1}, \ldots, e^{i\theta_n}), \theta_i\in S^1\}$.
We summarize some general properties of Liouville domains. 
The Liouville vector field gives a flow $\phi_Z^t: X\to X$ which expands the symplectic form.
The \emph{core} of a Liouville manifold, which we denote by $\core_X$, is the set of points which do not escape under the flow of $Z$.
Given $x_0$ belonging to an $n$-dimensional strata of $\core_X$, we can look at the cocore,
\[\cocore(x_0) := \left\{x\in X\; : \; \lim_{t\to-\infty} \phi_Z^t(x)=x_0\right\}.\] 

The Liouville vector field provides a taming condition on Lagrangian submanifolds of $X$.
We say that a Lagrangian submanifold $L\subset (X, \lambda)$ is \emph{conical at infinity} if it is parallel to $Z$ outside of some compact set $V\subset X$.
We say a Lagrangian submanifold is \emph{admissible} if it is exact and conical at infinity. 

Associated to $(X, \lambda)$ is a \emph{wrapped Fukaya category}  $\mathcal W(X)$ whose objects are admissible Lagrangians with extra data.
We sketch the construction of the morphisms here, and refer the reader to \cite[Section 3]{ganatra2017covariantly} for a full definition of the category.
$\mathcal W(X)$ is constructed as the localization of a directed $A_\infty$-category.
For each admissible Lagrangian $L\subset X$, this directed $A_\infty$ category contains a sequence of Lagrangians $L^i$. 
The $L^i$ are obtained by applying successively larger positive wrapping Hamiltonians to the underlying Lagrangian $L$.
Morphisms are then defined between Lagrangians of decreasing wrapping. 
The wrapped category is obtained by localizing the directed category at the continuation elements, which are canonical morphisms $L \to L^k$.
In summary, to compute (the cohomology of) morphism complexes $\Hom_{\mathcal W(X)}(L_1, L_2)$, one must compute the limit of $\text{CF}^\bullet(L^i_1, L_2)$, where $L^i_1$ is a cofinal sequence of Lagrangian submanifolds starting with $L_1$, and related by positive Hamiltonian isotopies (see \cite[Lemma 3.37]{ganatra2017covariantly}).

There is also a version of the Fukaya category that does not completely wrap the ends of admissible Lagrangian submanifolds. 
A \emph{stop} for $X$ is a mostly Legendrian (\cite[Definition 1.6]{ganatra2018sectorial}) closed subset $\stp\subset \partial X^{int}$.
An admissible Lagrangian submanifold in $(X^{int}, \stp)$ is an admissible Lagrangian submanifold of $X^{int}$ whose boundary is disjoint from $\stp$.

The \emph{skeleton} of a stopped Liouville manifold is the union of the conicalization of the stop and the core, 
\begin{equation}\LL_\stp:= \{x\in X\; : \; \phi_Z^t(x)\in \stp \text{ for some $t\in \RR$ }\}\cup \core. \label{eq:conicalizationofstop}
\end{equation}
We can recover the stop from the skeleton by taking the intersection of $\LL_{\stp}$ with $\partial X^{int}$.
We can equivalently characterize an admissible Lagrangian submanifold of $(X, \stp)$ as an admissible Lagrangian submanifold of $X$ which avoids $\LL_{\stp}$ outside of a compact set $X^{int}$.

From the data of a stopped Liouville domain, \cite{sylvan2019partially,ganatra2018sectorial} constructs a \emph{partially wrapped Fukaya category} $\mathcal W(X, \stp)$.
We follow the conventions from \cite{ganatra2018sectorial} where $\mathcal W (X, \stp)$ is defined in the generality that we need.
The category $\mathcal W(X, \stp)$ has admissible Lagrangian submanifolds as objects.
Again morphisms $\Hom_{\mathcal W(X, \stp)}(L_1, L_2)$ are computed by taking an appropriate limit of Lagrangian intersection Floer theory over a set of positively Hamiltonian isotopic and admissible Lagrangians $L^i_1$. 

At a point $x$ in the interior of $n$-dimensional strata of $\LL_\stp$, we can take a Lagrangian disk $D^n_x$ that intersects the skeleton transversely at $x$. 
By applying the Liouville flow to the boundary of $D^n_x$, we obtain an admissible Lagrangian disk which only intersects $\LL$ at $x$.
This disk is called the \emph{linking disk of $\LL$ at $x$} and we will also denote it by $\cocore(x)$.\footnote{This is slightly different than the standard notation, which usually reserves the word linking disk to mean a disk which intersects the skeleton at a single point in the symplectization. However, we will use linking disk to mean both these disks, and cocores. We will use cocore to specifically mean a linking disk which intersects the core of $X$}
The admissible Hamiltonian isotopy class of the linking disk is only dependent on the stratum of the skeleton $\LL$ which contains $x$.

\begin{thm}[Theorem 1.1 of \cite{ganatra2018sectorial},\cite{chantraine2017geometric}]
   $\mathcal W(X, \stp)$ is generated by the linking disks $\cocore(x)$ of the skeleton, $\LL_{\stp}$. 
\end{thm}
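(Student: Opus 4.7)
The plan is to show that any admissible Lagrangian $L$ can be written as an iterated cone on linking disks by analyzing positive Hamiltonian wrapping of $L$ past the skeleton $\LL_\stp$. Morphisms in $\mathcal W(X, \stp)$ are computed as a colimit over a cofinal sequence $L = L^0 \to L^1 \to L^2 \to \cdots$ of positive isotopes of $L$ together with their continuation elements, and invertibility of the continuation morphism $L \to L^\infty$ in the localization means it suffices to express each $L^k$ in terms of $L$ and linking disks.

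First, I would set up a cofinal wrapping by choosing a Liouville-adapted Hamiltonian $H$ whose flow near $\partial X^{int}\setminus \stp$ is positive with respect to the Reeb direction, so that $L^k = \phi_H^k(L)$ eventually wraps past every compact piece of $\LL_{\stp}\setminus \stp$. At each ``crossing event'' where the wrapped front passes transversely through the interior of an $n$-dimensional stratum of $\LL_\stp$ at a point $x$, the standard local model for positive wrapping around a Lagrangian skeleton (a cotangent neighborhood in which $L$ is a section and the stratum is the zero section) identifies the change in $L^k$ with a Lagrangian surgery on the linking disk $\cocore(x)$. Formalizing this, I would build, for consecutive $L^{k}$ and $L^{k+1}$ separated by a single crossing event at $x$, a Lagrangian cobordism whose ends are $L^{k+1}$, $L^{k}$, and $\cocore(x)$; the cobordism formalism (as developed in the appendices of this paper, e.g.\ \cref{app:cobordism,app:generation}) then yields an exact triangle
\[
\cocore(x) \longrightarrow L^k \longrightarrow L^{k+1} \longrightarrow \cocore(x)[1]
\]
in $\mathcal W(X, \stp)$.

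Iterating this over the finitely many crossing events that occur up to wrapping time $k$ presents $L^k$ as an iterated extension of $L$ by linking disks. Passing to the colimit and using the fact that the continuation element $L \to L^\infty$ is inverted in the wrapped category expresses $L$ itself as a twisted complex on linking disks, proving generation. Lagrangians whose ends run into $\stp$ require a small preliminary perturbation to make them transverse to the interiors of the top strata; that this can be arranged uses the mostly-Legendrian hypothesis on $\stp$ and a stratum-by-stratum genericity argument, choosing wrapping that only ever crosses top-dimensional pieces of $\LL_\stp$ and never the lower-dimensional singular locus.

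The main obstacle is the last step: controlling what happens when the wrapping front encounters the singular (non-smooth, lower-dimensional) part of the skeleton, where the cotangent neighborhood model used to identify wrapping with surgery on a linking disk breaks down. In the sectorial setting this is handled by stratifying $\stp$ and inducting over strata, ensuring by a generic perturbation of the wrapping Hamiltonian that only the smooth top strata are crossed, and then absorbing the lower-dimensional contributions into a cofinal choice of wrapping; I would follow the same outline, invoking the Liouville-domain cobordism techniques of \cref{app:cobordism,app:generation} for the local surgery-to-linking-disk identification. Together with the fact that continuation maps to fully wrapped Lagrangians are colimits, this produces the desired generation of $\mathcal W(X, \stp)$ by $\{\cocore(x)\}$.
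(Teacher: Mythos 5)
Your proposal follows the wrapping-exact-triangle strategy of \cite{ganatra2018sectorial}: wrap $L$ in cofinal stages $L^0 \to L^1 \to \cdots$, identify each crossing of a smooth top stratum of $\LL_\stp$ with a cone on a linking disk, and accumulate the resulting exact triangles. This is a valid high-level plan, but it is genuinely different from what the paper does in \cref{app:generation}. There, the argument is a one-shot cobordism: after a preliminary perturbation so that $L \cap \LL_\stp$ is a finite transverse set $\{x_1,\hdots,x_k\}$ and $L$ agrees with $\bigcup_i \cocore(x_i)$ near the skeleton, one applies the (cut-off) Liouville flow $\Phi_{\rho(t)}$, whose suspension is an exact Lagrangian in $X \times \CC$. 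Slicing this suspension along the contact hypersurface $\partial X^{int}$ produces a single exact cobordism $(\cocore(x_1),\ldots,\cocore(x_k)) \rightsquigarrow L$, with the imaginary coordinate of the positive ends controlled by the primitive $H$, and generation then follows directly from the Biran--Cornea/Bosshard iterated cone theorem (\cref{thm:cobordismgeneration}). The paper's route never needs an incremental colimit argument; your route, if executed carefully, has to control a potentially infinite family of crossings.

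That points to the real gap in your write-up: the final colimit step does not close. From each exact triangle $\cocore(x_i) \to L^{i-1} \to L^{i}$ you deduce $L^{i-1} \in \langle \cocore(x_i), L^i\rangle$, hence $L = L^0 \in \langle \cocore(x_1),\ldots,\cocore(x_k), L^k\rangle$ for every $k$. To conclude that $L$ lies in the subcategory generated by linking disks alone, you must show that \emph{some} finite-time wrap $L^N$ already lies in $\langle \cocore(x) \rangle$ (for example is a zero object of $\mathcal W(X, \stp)$ because the wrapping has pushed $L^N$ entirely off the skeleton into the stop region). Merely observing that the continuation $L \to L^\infty$ is inverted gives $L \cong L^\infty$ and contributes nothing: it is the statement $L \in \langle L, \cocore(x)\rangle$, which is vacuous. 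Until you supply that final ``escape to zero'' (or a direct argument that the cofinal wrap lands in the linking-disk subcategory), the iteration you set up never bottoms out, and the proposal as written does not prove generation. The paper's Liouville-flow cobordism bypasses this issue precisely because the cobordism has $L$ on one end and only cocores/linking disks on the other, with no residual term left over.
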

We provide a Lagrangian cobordism argument for this generation result in \cref{app:generation}, which is based on the work of \cite{bosshard} extending Biran and Cornea's cobordism iterated exact sequence to the setting of Liouville manifolds.

\subsection{SYZ mirror symmetry for toric varieties}
\label{subsec:backgroundSYZ}
For accessibility, we provide a dictionary between notation used in algebraic geometry for toric varieties, and notation from symplectic geometry.
We denote by $\check X_\Sigma$ the toric variety of dimension $n$ over $\CC$ corresponding to a fan $\Sigma \subset N \otimes \RR$ where $N$ is an $n$-dimensional lattice.
By definition, $\check X_\Sigma$ is a (partial) compactification of the algebraic torus $\check X_0:= N \otimes \CC^* = (\CC^*)^n$.
$\Sigma$ encodes the strata of compactification in terms of convergence of 1-parameter subgroups of the algebraic torus \cite[Proposition 3.2.2]{cox2011toric}.\footnote{Here we are making a change in sign from \cite{cox2011toric} to be more compatible with our later conventions for Lagrangian sections and the FLTZ skeleton.} 
Unless explicitly stated otherwise, we will assume that $\check X_\Sigma$ is smooth, that is, every $n$-dimensional cone of $\Sigma$ is generated by a basis of $N$. We will denote primitive generators of $\Sigma$ by $\alpha$ and set $A$ to be the set of all primitive generators.
Additionally, $\langle \alpha_1, \hdots, \alpha_k \rangle$ denotes the cone generated by the primitive generators $\alpha_1, \hdots, \alpha_k$.

We first describe mirror symmetry for the algebraic torus $\check X_0$.
The algebraic torus $\check X_0$ has a natural fibration over the affine space $Q := N \otimes \bb{R}\simeq \RR^n$ with fibers that are real $n$-dimensional tori. From $Q$, we can also construct a symplectic manifold $X=T^*Q/M$ where $M$ is the dual lattice to $N$ acting fiberwise via the natural splitting $T^*Q \simeq Q \times (M \otimes \bb{R})$. The algebraic variety $\check X_0$ and the symplectic manifold $X$ are mirror dual. These dual torus fibrations serve as the foundational example of Strominger-Yau-Zaslow (SYZ) mirror symmetry \cite{strominger1996mirror}, and this example is often referred to as semi-flat mirror symmetry (see, for example, \cite{gross2013mirror}).

We now give the symplectic notation which we will use for the remainder of the paper.
It is natural to consider $Q$ with the additional structure of an integral affine manifold; this trivializes the tangent and cotangent bundles by identifying $\ZZ^n$ lattice subbundles $T_{2\pi \ZZ} Q\subset TQ$ and $T^*_{2\pi \ZZ} Q\subset T^*Q$.
The lattice structure on toric varieties can be recovered from the affine structure on $Q$, giving us the following dictionary:
\begin{align*}
   Q := N \otimes \bb{R} && N \simeq (T_{2\pi \ZZ})_q Q &&  \check X_0 = (\CC^*)^n \simeq TQ/T_{2\pi \ZZ} Q\\
   \; && M \simeq (T^*_{2\pi \ZZ})_q Q && X =T^*Q/M\simeq T^*Q/T^*_{2\pi \ZZ} Q.
\end{align*} 
In particular,
the torus fibration $\check X_0\to Q$ becomes the complex SYZ fibration 
\begin{align*}
   \bval: \check X_0= (\CC^*)^n\to& Q\\
    (z_1, \hdots, z_n)\mapsto& (\log|z_1|, \hdots, \log |z_n|)
\end{align*}
induced by the standard holomorphic volume form.
The torus fibration on $T^*Q/T^*_{2\pi \ZZ}Q$ is the symplectic SYZ fibration, i.e., moment map $\val:X\to Q$, of the symplectic form
$$ \omega = \sum_{i=1}^n du_i \wedge d\thetaa_i $$
where $u_i$ are our chosen coordinates on $Q \simeq \bb{R}^n$ and $\thetaa_i$ are the dual coordinates on the fibers.
In \cref{sec:mirrordivisor,sec:inclusion}, we will use the coordinates $(q_i, p_i)$ inherited from $T^*Q$.

We now discuss how equipping $\check X_0$ with a toric K\"{a}hler form yields a complex structure on $X$.
Let $h: Q\to \RR$ be a strictly convex function on $Q$ with nondegenerate Hessian.
This gives a potential $h\circ  \bval$ on $\check X_0$, whose associated K\"ahler form is toric.
As first observed by Hitchin \cite{hitchin1997moduli}, the Legendre transform $\nabla h$:
\begin{itemize}
    \item takes the complex SYZ fibration on $\check X_0$ to the symplectic SYZ fibration coming from the K\"{a}hler form and
    \item takes the symplectic SYZ fibration on $X$ to the complex SYZ fibration coming from a complex structure naturally arising from $h$.
\end{itemize} 
In other words, we obtain an isomorphism of $X$ with $\check X_0$ where 
$$ \Log = \nabla h \circ u.$$
From the perspective of tropical geometry, $\Log$ should be viewed as the valuation map. 

In this particular example, the cotangent bundle and affine structure are trivializable. As a result, there is another map which we will frequently use: the argument projection $\arg: T^*Q/M \to (M \otimes \RR)/M$.
In coordinates, we have $\arg(u, \thetaa) = \thetaa$ so that under the identification with $(\CC^*)^n$, $\arg$ is the usual argument map, or identifying with $T^*T^n$, projection to the base. The diagram below summarizes various maps involved in semi-flat mirror symmetry.

\setlength\mathsurround{0pt}
$$ \begin{tikzcd}
(\CC^*)^n \simeq N \otimes \CC^* \arrow{dd}[left]{\bval} \arrow[leftrightsquigarrow]{rr}{\text{SYZ mirror}} & & T^*Q/M \simeq T^*\RR^n/ T^*_{2\pi \ZZ} \RR^n \arrow{ddll}[above]{u} \arrow{r}{\arg} \arrow{d}[rotate=90, below]{\sim} & (M \otimes \RR)/M \simeq T^n\\
& & (\CC^*)^n \arrow{d}{\Log}  & \\
Q = N \otimes \RR \simeq \RR^n  \arrow{rr}[above]{\nabla h}[below]{\text{Legendre transform}} & & M \otimes \RR \simeq \RR^n &
\end{tikzcd} $$
\setlength\mathsurround{.8pt}
\begin{rem} We will often implicitly identify $T^*Q/T^*_{2\pi \ZZ} Q$ with $(\CC^*)^n$ as outlined above for brevity of notation. In some constructions of Fukaya categories mirror to toric varieties, the holomorphic structure plays an important role while in others, namely the partially wrapped category, it does not. We will emphasize this distinction when it is important. See for instance \cref{rem:monadmholomorphic,rem:notechnicalprob}.
\end{rem}

It remains to understand mirror symmetry for the toric variety $\check X_\Sigma$ from semi-flat mirror symmetry for the pair $\check X_0, X$.  
This amounts to understanding how compactifying by toric divisors modifies the mirror space. 
An expectation for symplectic Calabi-Yau manifolds $\check X$ is that compactification is mirror to the addition of a ``superpotential'' function $W: X\to \CC$ on the SYZ mirror. 
We first work in the setting where $\check X_\Sigma$ is equipped with a K\"{a}hler potential.
One route to constructing this superpotential function (studied in detail by \cite{auroux2007mirror}) comes from identifying the points of $X$ with SYZ fibers of $\check X_0$. The open Gromov-Witten invariants count the holomorphic disks in the compactification  $\check X_\Sigma$ with boundary on SYZ fibers and record the deformation from $\text{Fuk}(\check X_0)$ to the weakly unobstructed curved $A_\infty$ category $\text{Fuk}(\check X_\Sigma)$.
This amounts to equipping $X$ with a holomorphic function.
It is expected that each compactifying divisor on the space $\check X_\Sigma$ presents the possibility for additional holomorphic disks to appear on the boundary of SYZ fibers of $\check X_\Sigma$, giving a correspondence between the toric divisors  of $\check X_\Sigma$ and the terms to this superpotential. 
This expectation has been verified from the viewpoint that a projective toric variety $\check X$ is a compactification of $(\CC^*)^n$ with the induced K\"{a}hler form (see, for instance, \cite{chan2020syz}). In particular, by choosing coordinates and using the mirror K\"{a}hler form to obtain a complex structure, we can construct the mirror superpotential which to first order is the Hori-Vafa potential
\begin{align*}
   W_\Sigma: (\CC^*)^n \to \CC\\
   (z_1, \ldots, z_n)\mapsto  \sum_{\alpha \in A} c_\alpha z^\alpha. \stepcounter{equation}\tag{\theequation}\label{eq:HVpotential}  
\end{align*}
Here $z^\alpha = z_1^{\alpha_1}\hdots z_n^{\alpha_n}$ and $c_\alpha \in \bb{C}^*$ are coefficients that depend on the K\"{a}hler structure of $\check X_\Sigma$.
When $\check X_\Sigma$ is Fano, this is the correct mirror holomorphic function in the sense of SYZ mirror symmetry, but in general, the potential requires higher-order corrections. Although this construction of $W_\Sigma$ employs the symplectic structure on $\check X_\Sigma$ coming from the K\"{a}hler form, it is commonplace to also treat the pair $((\CC^*)^n, W_\Sigma)$ as a mirror when $\check X_\Sigma$ is considered as an algebraic/complex object due to the expected involutive nature of mirror symmetry. One palatable feature of this treatment is that $W_\Sigma$ is unchanged as a singular symplectic fibration when varying the $c_\alpha \in \CC^*$. 

However, it is more natural to only use the complex algebraic structure on $\check X_\Sigma$.
As in the case of semi-flat mirror symmetry, it should be possible to understand an SYZ mirror symmetry construction without the choice of a K\"{a}hler structure for $\check X_\Sigma$.
Recall that after taking a choice of coordinates, an element $u = (u_1, \hdots, u_n) \in N$ lies in a cone $\sigma \in \Sigma$ if and only if the limit
$$ \lim_{z \to \infty} (z^{u_1}, \hdots, z^{u_n}) $$
exists in $\check X_\Sigma$ and converges to a point in the toric strata corresponding to $\sigma$.
As $\check \Log(z^{u_1}, \hdots, z^{u_n}) = (\log|z|) u$, we can view this limiting point as lying at infinity along the ray spanned by $u$, determining a  circle $\arg(z) u$ in the fiber which is collapsed to a point in the boundary stratum. Thus, we obtain a boundary condition on the mirror $T^*Q/M$ to $N \otimes \bb{C}^*$: above a cone $\sigma \in \Sigma$, $u \cdot \thetaa$ should be asymptotically constant for all $u \in \sigma$ where $\thetaa =( \arg(z_1), \ldots, \arg( z_n))$. Writing $\sigma = \langle \alpha_1, \hdots, \alpha_k \rangle$ for primitive generators $\alpha_1, \hdots, \alpha_k$, this condition is equivalent to $\alpha_1 \cdot \thetaa, \hdots, \alpha_k \cdot \thetaa$ being asymptotically constant. Moreover, thinking of $p$ as parameterizing a dual torus, we see that the limiting constant should be $0$, that is, the unit in $S^1 = \RR/2\pi \ZZ$. These conditions begin to resemble the data of the Hori-Vafa potential, and can be interpreted in several ways to construct Fukaya categories and study homological mirror symmetry. Two such approaches are explored in \cref{subsec:HMSboundary}.

\subsection{Notation for toric varieties: Support functions, morphisms, and sheaves}
\label{subsec:toricnotation}
Let $\Sigma_1\subset Q_1, \Sigma_2\subset Q_2$ be two fans. A \emph{map of fans} is a linear map $\underline f: Q_1\to Q_2$ with the property that for every cone $\sigma_1\in \Sigma_1$, there exists a cone $\sigma_2\in \Sigma_2$ with $\underline f(\sigma_1)\subset \sigma_2$. The data of a map of fans is equivalent to the data of a toric morphism $\check f: \check X_{\Sigma_1}\to \check X_{\Sigma_2}$.

For a cone $\sigma \in \Sigma$, we define the star of $\sigma$ as the subfan of all cones containing $\sigma$, that is,
$$ \str(\sigma):=\{\tau\in\Sigma\; : \; \tau \geq \sigma \}. $$
We will often abuse this notation by conflating $\str(\sigma)$ with the subset of $N \otimes \bb{R}$ covered by $\str(\sigma)$ and by writing $\str(\alpha)$ in place of $\str(\langle \alpha \rangle)$.
Note that when $\sigma = 0$, the star of $\sigma$ is the whole fan; when $\sigma$ is top dimensional, then $\str(\sigma)=\sigma$. 
The standard inclusion $\str(\sigma)\into \Sigma$ is a map of fans and the corresponding map of toric varieties  $\check X_{\str(\sigma)}\into \check X_\Sigma$ identifies $\check X_{\str(\sigma)}$ as the
normal bundle to the closure of the toric orbit indexed by $\sigma$.

The toric orbit closure associated with the cone $\sigma$ is itself a toric variety, which has a description in terms of $\str(\sigma)$. 
Let $Q_\sigma:=Q/(\RR\cdot \langle\sigma\rangle) $ be the quotient of the cocharacter lattice by the real span of $\sigma$. 
The \emph{orbit closure fan} of $\sigma$ is the fan\footnote{This is slightly different than the notation in \cite{cox2011toric}, where $\str(\sigma)$ is used to denote what we call $\str(\sigma)/\sigma$.}
\[\str(\sigma)/\sigma:= \{\tau/(\RR\cdot \langle \sigma \rangle) \; : \; \sigma <\tau \}\subset  Q_\sigma.\]
With this notation, the toric varieties $\tb{\alpha}$ correspond to toric divisors of $\check X_\Sigma$.
$\tb{\sigma}$ is the closure of the toric orbit associated with $\sigma$.
For the inclusion of a toric orbit, we will write $\check i_{\sigma\tau}:\tb{\sigma}\into \tb{\tau}$.
Note that this is not a toric map.
Accordingly, $\check X_{\str(\sigma)}=N_{\check X_\Sigma}\check X_{\str(\sigma)/\sigma}$, and the map of fans $\str(\sigma)\to \str(\sigma)/\sigma$ yields the projection of toric varieties
    \[
       \check X_{\str(\sigma)}\to \check X_{\str(\sigma)/\sigma}.
    \] 
We now provide a mirror-symmetry biased set of notation for discussing line bundles and divisors on toric varieties.
\begin{df}
   A support function $\sF: Q\to \RR$ is a piecewise linear map whose domains of linearity are the cones of the fan $\Sigma$, and whose differentials (over the domains of linearity where it is defined) are contained in $T^*_{\ZZ} Q$. Equivalently in the smooth setting, we take the piecewise linear maps whose domains of linearity are the cones of $\Sigma$, and $\sF(\alpha)\in \ZZ$ for all $\alpha\in A$.
\end{df}
Let $\alpha$ be a 1-dimensional cone. We will write $\sF_\alpha$ for the support function which is $1$ on $\alpha$, and $0$ on all other 1-dimensional cones in $A$.
We will denote the set of support functions $\Supp(\Sigma)$. 
We favor support functions rather than divisors for encoding the geometry of $\check X_\Sigma$ as they rely only on the affine geometry of $Q$, and are therefore more natural objects to consider from a mirror-symmetry perspective. 
From this viewpoint, divisors, line bundles, and sheaves on $\check X_\Sigma$ are the $B$-model extensions of the underlying tropical geometry given by $Q$ and support functions $\sF: Q\to \RR$. 
To recover a toric divisor $D$ on $\check X_\Sigma$ from a support function, we take $D_{\sF}=\sum_{\alpha\in \Sigma} -(\sF(\alpha))\cdot D_\alpha$. 
To obtain a line bundle on $\check X_{\Sigma}$ from a support function, we take $\mathcal O_{\Sigma}(\sF):= \mathcal O_{\Sigma}(D_{\sF})$.\footnote{This leads to the somewhat unfortunate convention that  $D_{\sF_\alpha}=-D_\alpha$.}
Two support functions $\sF_1, \sF_2$ represent linearly equivalent divisors if and only if the difference $\sF_1-\sF_2$ is a linear function. 
A map of fans $\underline f: \Sigma_1\to \Sigma_2$ induces a pullback map on support functions $\underline f^*:\Supp(\Sigma_2)\to \Supp(\Sigma_1)$. This means that we have a well-defined pullback map for toric divisors.
This pullback intertwines with the toric morphism in the sense that 
\begin{equation*}
   \check f^*\mathcal O_{\Sigma_2}(\sF)= \mathcal O_{\Sigma_1} (\underline f^*\sF).
\end{equation*}

When $\tb{\sigma}$ is an orbit closure, the (non-toric) map of varieties $\check i_{\sigma 0}:\tb{\sigma}\into \check X_\Sigma$ does not have a well-defined pullback map on divisors.
 However, there is a well-defined pullback map on line bundles, and therefore a pullback map on linear equivalence classes of support functions. 
\begin{prop}
   Let $\sigma \in \Sigma$ be a cone.
   There is a surjective linear map
   \[\underline i_{\sigma 0}^*:(\Supp(\Sigma)/\sim) \to( \Supp(\str(\sigma)/\sigma)/\sim)\]
   where $\sim$ is the relation of linear equivalence.
\end{prop}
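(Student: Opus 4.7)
The plan is to construct $\underline i_{\sigma 0}^*$ at the level of representatives by first adjusting $\sF$ by a linear function so that its restriction to $\sigma$ vanishes, and then descending the result along the quotient $\pi \colon Q \to Q_\sigma$. Given $\sF \in \Supp(\Sigma)$, the restriction $\sF|_\sigma$ is integral linear since $\sigma$ is a single domain of linearity of $\sF$. Smoothness of $\check X_\Sigma$ implies that the primitive generators of $\sigma$ extend to a basis of $N$, so the values of $\sF$ on those generators extend to an integral linear functional $\ell \colon Q \to \RR$ with $\ell|_\sigma = \sF|_\sigma$. The difference $\sF - \ell$ is a support function vanishing on $\sigma$, and its restriction to any cone $\tau \in \str(\sigma)$ descends along $\pi$ to a linear function on $\tau/\RR\langle \sigma\rangle$; these assemble into a support function on $\str(\sigma)/\sigma$ which we declare to be $\underline i_{\sigma 0}^*[\sF]$.

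The next step is to verify well-definedness modulo linear equivalence and linearity. Any two choices $\ell, \ell'$ of extension differ by an integral linear function vanishing on $\sigma$, which descends to a linear function on $Q_\sigma$, so the resulting support functions on $\str(\sigma)/\sigma$ are linearly equivalent. Replacing $\sF$ by $\sF + m$ for integral linear $m$ can be absorbed by replacing $\ell$ with $\ell + m$, producing the same descent. Linearity of the induced map on equivalence classes is then immediate from the construction.

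For surjectivity, the strategy is to build an explicit lift of any $G \in \Supp(\str(\sigma)/\sigma)$. In the smooth case, a support function on a fan is determined by its values on primitive generators of rays, and any integer assignment to those generators yields a well-defined support function, because each cone is spanned by part of a basis of $N$, so the linear extension on each cone is canonical and automatically agrees on shared faces. Given $G$, define $\sF$ on primitive ray generators of $\Sigma$ by: $\sF(\alpha) = 0$ for $\alpha$ a generator of $\sigma$; $\sF(\beta) = G(\pi(\beta))$ whenever $\sigma + \RR_{\geq 0}\beta$ is itself a cone of $\Sigma$ (so that $\pi(\beta)$ is a primitive generator of a ray of $\str(\sigma)/\sigma$); and $\sF(\gamma) = 0$ on all other rays. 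Then $\sF|_\sigma = 0$, so $\ell = 0$ is a valid choice above, and the descent of $\sF|_{\str(\sigma)}$ agrees with $G$ on every primitive generator of a ray of $\str(\sigma)/\sigma$ and therefore equals $G$.

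The only substantive obstacle is the appeal to smoothness, which enters both in extending $\sF|_\sigma$ to a global integral linear functional $\ell$ and in freely prescribing ray values in the surjectivity step. One can further check that $\underline i_{\sigma 0}^*$ agrees with the pullback of line bundles under the standard identifications $\Supp(\Sigma)/{\sim} \simeq \Pic(\check X_\Sigma)$ and $\Supp(\str(\sigma)/\sigma)/{\sim} \simeq \Pic(\tb{\sigma})$, which matches both the notation and the assertion preceding the proposition.
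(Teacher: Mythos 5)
The paper states this proposition without proof, leaving it as a standard consequence of the pullback on Picard groups under $\check i_{\sigma 0}$; your direct combinatorial argument is a correct way to spell it out. Your construction also matches the paper's implicit approach: subtracting an integral linear functional $\ell$ so that the result vanishes on $\sigma$ is precisely normalization into the transverse support functions $\Supp_\sigma(\Sigma)$ introduced in the paragraph immediately following the proposition, and the descent to $\Supp(\str(\sigma)/\sigma)$ is the map the paper refers to there. The two appeals to smoothness (to extend $\sF|_\sigma$ to a global integral linear functional, and to prescribe ray values freely in the surjectivity step) are both valid under the paper's standing assumption that $\check X_\Sigma$ is smooth.
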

At times we will not want to work on the level of support functions modulo linear equivalence, but still make sense of pullback.
In that case, we will restrict ourselves to the support functions which give divisors transverse to a toric orbit closure. 
Let $\sigma\in \Sigma$ be a fan. The \emph{transverse support functions} are those which vanish on $\sigma$,
\[\Supp_\sigma(\Sigma):=\{\sF\; : \; \sF(q)=0 \text{ for all $q\in  \sigma$}\}\]
and there is a well-defined map from $\Supp_\sigma(\Sigma)\to \Supp(\str(\sigma)/\sigma)$.
Finally, we fix some notation for sheaves on toric varieties and toric orbit closures. We note that a toric orbit closure of $\check X_\Sigma$ is a toric variety and a subvariety of $\check X_\Sigma$, but not a toric subvariety of $\check X_\Sigma$.
Having fixed a fan $\Sigma$, the sheaf $\mathcal O_\sigma(\sF)$ will be a line bundle on the space $\tb{\sigma}$, where $\sF$ is either a support function on $\str(\sigma)/\sigma$, or is pulled back from a support function $\sF \in\Supp_\sigma(\Sigma)$. 
The line bundles on  $\check X_\Sigma$ are denoted  $\mathcal O_0(\sF)$ --- however, when it is unambiguous that we are referring to a line bundle on $\check X_\Sigma$, we will simply write $\mathcal O(\sF)$. 
The pullback map on transverse support functions $\underline i^*_{\sigma 0}: \Supp_\sigma(\Sigma)\to \Supp(\str(\sigma)/\sigma)$ agrees with the pullback of line bundles in the sense that 
\[i^*_{\sigma 0}(\mathcal O_0 (\sF))=\mathcal O_{\sigma}(\underline i^*_{\sigma 0}(\sF)).\] 
We denote the pushforward of a line bundle along $i_{\sigma 0}: \tb{\sigma}\into \check X_\Sigma$ by:
\[\mathcal O_{\sigma>0}(\sF):=(i_{\sigma 0})_* \mathcal O_\sigma(i^*_{\sigma 0}\sF)\in \Coh(\check X_\Sigma).\] 
We will denote by $\pi_\sigma: \check X_{\str(\sigma)}\to \tb{\sigma}$ the standard projection of the normal bundle to the base.
Similarly, when $\sigma>\tau$ we write $\mathcal O_{\sigma>\tau}$ for the pushforward of $\mathcal O_{\sigma}$ to $\check X_{\str(\tau)/\tau}$.

\subsection{Toric HMS: FLTZ skeleton and monomial admissibility} \label{subsec:HMSboundary}
We now apply the conclusions of \cref{subsec:backgroundSYZ} to construct Fukaya categories homologically mirror to the derived category of coherent sheaves on $\check X_\Sigma$. All definitions involve imposing conditions on Lagrangian objects \emph{near infinity}, that is, outside of a compact subset. There are two dual ways to constrain Lagrangians: require them to be contained in or to avoid a closed subset at infinity. The first approach results in what is usually called an infinitesimally wrapped category; the second gives a partially wrapped category. Under mirror symmetry, these are expected to correspond to the categories of compactly supported perfect complexes and coherent sheaves, respectively. If $\check X_\Sigma$ is smooth and complete, then these two categories are the same.

The traditional approach for toric varieties is to take the closed subset to be a fiber over a very large positive number, $W_\Sigma^{-1}(\infty)$. Then, the infinitesimally wrapped category is the Fukaya-Seidel category as envisioned by Seidel \cite{seidel2008fukaya}, and the partially wrapped category is the Fukaya-Seidel category as defined in \cite{sylvan2019partially, ganatra2017covariantly}. As both categories are generated by Lefschetz thimbles, it is expected that these categories are equivalent in the case that $\check X_\Sigma$ is smooth and projective. A proof of this equivalence requires an argument similar to that of \cref{thm:embedding} with some extra considerations on the choice of symplectic form used to define each category. However, these categories are not homological mirrors to $\check X_\Sigma$ when the anticanonical bundle on $\check X_\Sigma$ is not nef \cite{auroux2008mirror, ballard2015mori}.

As noted in \cref{subsec:backgroundSYZ}, a more natural boundary condition instead involves a collection of subtori determined by $\Sigma$ when considering $\check X_\Sigma$ on the B-side of mirror symmetry. Namely, near infinity\footnote{Here, we have traded the asymptotic condition justified in \cref{subsec:backgroundSYZ} for a condition achieved near infinity in order to establish compactness in the Floer theory. Alternatively, an asymptotic condition becomes a condition near infinity when working with conical Lagrangians. However, restricting to conical Lagrangians is also largely done for technical reasons.} in a cone $\sigma = \langle \alpha_1, \hdots, \alpha_k \rangle \in \Sigma$, we should have a boundary condition of the form
$$ \thetaa \cdot \alpha_1 = \hdots = \thetaa \cdot \alpha_k = 0. $$
Thinking in terms of these generators, we are immediately led to a stop of the form
$$\mathfrak{f}_\Sigma = \{ (u,\thetaa) \in \partial((\CC^*)^n)^{int} \; : \;\ \thetaa \cdot \alpha = 0 \text{ for all } \alpha \in A \text{ such that } u \in \text{int}(\st(\alpha)) \}$$
whose skeleton is the \emph{FLTZ skeleton} 
\begin{align} \begin{split} \label{eq:fltzskel}
   \mathbb L_\Sigma & = \{ (u, \thetaa) \in (\CC^*)^n\; : \;\thetaa \cdot \alpha = 0 \text{ for all } \alpha \in A \text{ such that } u \in \text{int}(\st(\alpha)) \} \cup \{ (0, \thetaa) \in (\CC^*)^n \} \\
   &= \bigcup_{\sigma \in \Sigma}  \{ (u, \thetaa) \in (\CC^*)^n : u \in \sigma \text{ and } \thetaa \cdot v = 0  \text{ for all } v \in \sigma \} \\
   &= \bigcup_{\sigma \in \Sigma} \sigma \times \sigma^\perp \subset N \times (M \otimes \RR)/M = T^*Q/T^*_{2\pi \ZZ} Q \simeq (\CC^*)^n
\end{split} \end{align}
introduced in \cite{fang2011categorification} using similar SYZ considerations. 
We will denote a stratum of the FLTZ skeleton by $\LL_{\sigma}=\sigma\times \sigma^\bot$, and similarly denote corresponding stratum of the stop by $\stp_\sigma\subset \stp_\Sigma$.
Thus, it is clear from this perspective that the appropriate partially wrapped category to consider is $\mathcal{W}( (\CC^*)^n, \mathfrak{f}_\Sigma)$.

It is then tempting to also construct a Fukaya-Seidel type category consisting of Lagrangian branes that lie in $\mathfrak{f}_\Sigma$ near infinity. However, such a category would not contain many interesting objects due to the singular nature of $\mathfrak{f}_\Sigma$. Instead, we must allow some room for Lagrangians to twist. One way to implement this is to use a monomial admissibility condition (introduced in \cite{hanlon2019monodromy}). We will need only a particular instance of monomial admissibility and work towards defining it now. 

In order to restrict Lagrangians near infinity, we introduce sets in the base $Q$ over which certain arguments must be controlled. We begin with the following definition which will also be useful for later exposition. 
\begin{df} The $\eps$-star open cover for $\Sigma$ and $\eps > 0$ is the collection of sets $\{U_{\alpha} \}_{\alpha \in A}$ where each $U_\alpha \subset \bb{R}^n$ is the open cone over the set
$$ \{ u \in \bb{R}\; : \;\|u\| = 1, u \in \st(\alpha), \text{ and } d(u, \sigma) > \eps \text{ for all } \sigma \in \Sigma \text{ such that } \alpha \not \in \sigma \} $$
where distance and length are taken with respect to the Euclidean metric.
\label{def:staropencover}
\end{df}

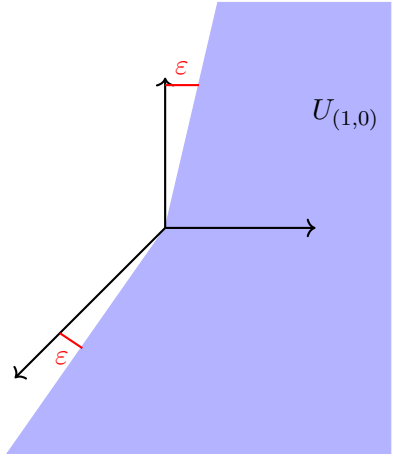
\begin{figure} 
\centering
\begin{tikzpicture}
\draw[blue, fill=blue, opacity=0.3] (0,0) -- (0.7, 3) -- (3,3) -- (3, -3) -- (-2.1, -3) -- (0,0);
\draw (2.4, 1.5) node {$U_{(1,0)}$};
\draw[->, thick] (0,0) --(2,0);
\draw[->, thick] (0,0) -- (0,2);
\draw[->, thick] (0,0) -- (-2,-2);
\draw[red, thick] (0,1.9) -- node[above]{$\eps$} (0.45, 1.9);
\draw[red, thick] (-1.4, -1.4) -- node[below, xshift=-3.5]{$\eps$} (-1.1, -1.6);
\end{tikzpicture} \caption{The open set $U_{1,0}$ in the $\eps$-star open cover for a fan corresponding to $\bb{P}^2$. Taking the closure of this set gives the cone $C_{1,0}$ in the $\eps$ combinatorial division, $\Delta_\eps$.} \label{fig:staropencover}
\end{figure}

Note that the $\eps$-star open cover for $\Sigma$ is a cover of $\bb{R}^n \setminus \{0\}$ for $\eps$ sufficiently small when $\Sigma$ is a complete fan, and we will always assume that we choose $\eps$ small enough for this to be true. 

\begin{df} The $\eps$ combinatorial division $\Delta_\eps$ for $\Sigma$
is the collection of closed sets $\{C_\alpha\}_{\alpha \in A}$ where $C_\alpha = \wb{U}_\alpha$ is the closure of the corresponding subset of the $\eps$-star open cover.
\label{def:combinatorialdiv}
\end{df}

For any fan corresponding to a smooth projective toric variety, it was shown in \cite[Corollary 2.40]{hanlon2019monodromy}  that there exists a choice of toric K\"{a}hler form on $(\CC^*)^n$ such that the $\eps$ combinatorial division is a \emph{monomial division adapted to $\Sigma$} in the sense that it satisfies the following properties:
\begin{enumerate}
    \item The closed subsets $\{ C_{\alpha} \}_{\alpha \in A}$ cover $\RR^n$;
    \item there are constants $k_\alpha \in \RR_{>0}$ such that 
    $$ \max_{\alpha \in A} | z^\alpha|^{k_\alpha} = | z^\beta|^{k_\beta} $$
    implies that $u(z) \in C_\beta$;
    \item $C_\alpha \setminus \{ 0 \} \subset \text{int}(\st(\alpha))$ for all $\alpha \in A$.
\end{enumerate}

\begin{rem} \label{rem:monadmholomorphic} The difficulty in the above statement lies only in verifying the second property, which involves the complex structure on $(\CC^*)^n$ and is used to obtain a maximum principle on holomorphic discs. Therefore, the monomially admissible Fukaya-Seidel category depends on the K\"{a}hler structure on $(\CC^*)^n$ rather than just the symplectic structure. In particular, the choice of toric K\"{a}hler form only manifests through the expression of $u$ in terms of holomorphic coordinates.
\end{rem}

After choosing such a toric K\"{a}hler form, we can then associate to $\Delta_\eps$ a monomially admissible Fukaya-Seidel category $\mathcal{F}_{\Delta_\eps}$ as detailed in \cite{hanlon2019monodromy}. Whenever discussing this category, we will always assume we have chosen such a K\"{a}hler form. Objects of $\mathcal{F}_{\Delta_\eps}$ are supported on \emph{monomially admissible Lagrangians}, that is, exact Lagrangians that are required to satisfy $ z^\alpha \in \RR_{>0} $ for $z \in L$ such that $u(z) \in C_\alpha$ near infinity. In other words, if $(u,p) \in L$ and $u \in C_\alpha$ lies outside a compact set, then $p \cdot \alpha = 0$. Roughly, morphisms are computed by applying a Hamiltonian $K$ that increases $p \cdot \alpha$ near infinity in $C_\alpha$ for all $\alpha \in A$. In practice, the category is obtained by localization at the continuation elements coming from the Hamiltonian $K$. The category of Lagrangian sections (with respect to $u$) form an important and computable subcategory as explained in the next section.

\begin{rem} More generally, \cite{hanlon2019monodromy} associates a Fukaya-Seidel category to any \emph{monomial division}, i.e., collection $\Delta$ of closed subsets satisfying the conditions above. For the purposes of this paper, we will only need to work with $\Delta_\eps$, which in the language of \cite{hanlon2019monodromy} corresponds to working with a monomial division \emph{adapted to }$\Sigma$.
\end{rem}

\begin{figure} 
\centering
\begin{tikzpicture}
\draw[blue, thick] (0,0) circle (2);
 \draw[blue, thick, fill = blue, fill opacity=0.2] (3.98,0.4) arc (0:180:1 and 0.3) --   (1.98,-0.4) arc (180:360:1 and 0.3) -- (3.98, 0.4);
\draw[blue, thick] (1.95,-0.4) arc (180:0:1 and 0.3);
\draw[xshift = -2cm, blue, thick, dashed] (3.95,0.4) arc (180:360:1 and 0.3);

\begin{scope}[rotate =90]
 \draw[blue, thick, fill = blue, fill opacity=0.2] (3.98,0.4) arc (0:180:1 and 0.3) --   (1.98,-0.4) arc (180:360:1 and 0.3) -- (3.98, 0.4);
\draw[blue, thick] (1.95,-0.4) arc (180:0:1 and 0.3);
\draw[xshift = -2cm, blue, thick, dashed] (3.95,0.4) arc (180:360:1 and 0.3);
\end{scope}

\begin{scope}[shift = {(-4.2, -4.2)}, rotate =45]
 \draw[blue, thick, fill = blue, fill opacity=0.2] (3.95,0.4) arc (0:180:1 and 0.3) --   (1.95,-0.4) arc (180:360:1 and 0.3) -- (3.95, 0.4);
\draw[blue, thick] (1.95,-0.4) arc (180:0:1 and 0.3);
\draw[xshift = -2cm, blue, thick, dashed] (3.95,0.4) arc (180:360:1 and 0.3);
\end{scope}

\draw[->, thick] (0,0) --(1,0);
\draw[->, thick] (0,0) -- (0,1);
\draw[->, thick] (0,0) -- (-1,-1);

 \begin{scope}[shift = {(8,0)}]
 \draw[blue, thick] (0,0) circle (2);
 \draw[blue, thick] (4,0) arc (0:180:1 and 0.5);
 \draw[blue, thick, dashed] (2,0) arc (180:360:1 and 0.5);
 \draw[blue, thick] (0,4) arc (90:270:0.5 and 1);
 \draw[blue, thick, dashed] (0,2) arc (-90:90:0.5 and 1);
\begin{scope}[shift = {(-4.25,-4.25)}, rotate = 45]
 \draw[blue, thick] (4,0) arc (0:180:1 and 0.5);
 \draw[blue, thick, dashed] (2,0) arc (180:360:1 and 0.5);
\end{scope}

 \draw[->, thick] (0,0) --(1,0);
\draw[->, thick] (0,0) -- (0,1);
\draw[->, thick] (0,0) -- (-1,-1);
 \end{scope}
\end{tikzpicture} \caption{The region of $\partial_\infty (\CC^*)^2$ that monomially admissible Lagrangian sections are permitted to intersect (left) and the stop $\stp_\Sigma$ of the FLTZ skeleton for $\check X_\Sigma = \bb{P}^2$.} \label{fig:monvsfltz}
\end{figure}
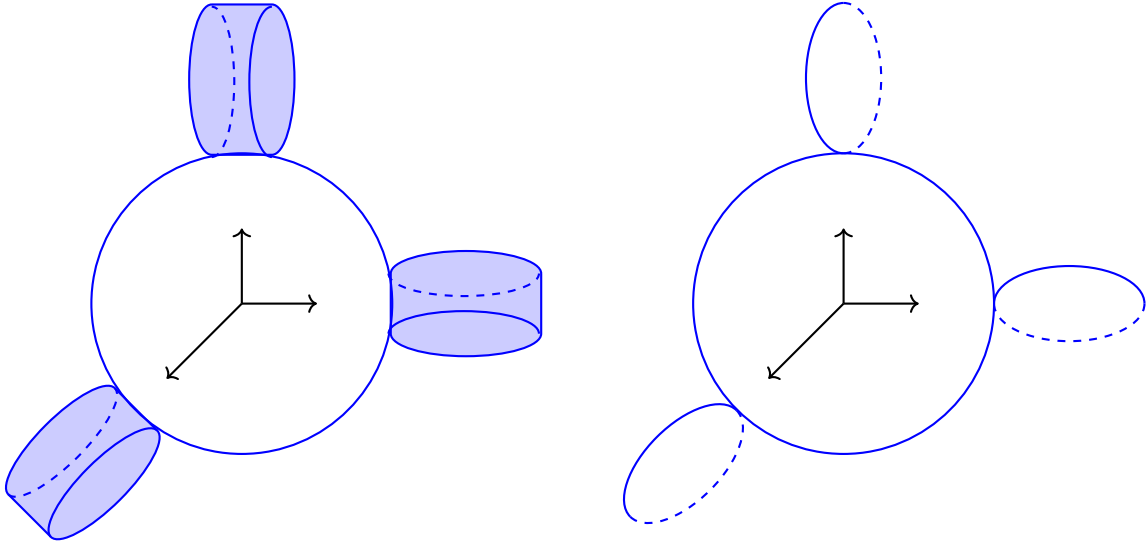

\subsection{Tropical Lagrangian sections}
\label{subsec:troplag}

It is well-understood that Lagrangian sections of an SYZ fibration should be mirror to line bundles, and this philosophy carries through to the toric setting despite the compactification. In the context of HMS for toric varieties, Lagrangian sections of $\Log$ were first studied in \cite{abouzaid2006homogeneous} where Abouzaid computes the Floer cohomology of the mirror to powers of an ample line bundle. Applying the Legendre transform and working with sections of the SYZ fibration $u$ is more amenable to constructing and computing a category of all sections as done in \cite{abouzaid2009morse}. Abouzaid studied sections lying over a region in $Q$ modeled on the moment polytope of $\check X_\Sigma$ and with boundary on a fiber of the tropical localization of $W_\Sigma$. To pass to the partially wrapped setting, it will be more convenient for us to work with the variant of Abouzaid's approach introduced in \cite{hanlon2019monodromy} and motivated above.

Namely, we consider the subcategory $\mathcal{F}^s_{\Delta_\eps} \subset \mathcal{F}_{\Delta_\eps}$ of Lagrangian sections of $\val$. Note that $\mathcal{F}^s_{\Delta_\eps}$ is independent of small $\eps$ by \cite[Proposition 3.19]{hanlon2019monodromy}. The Hamiltonian isotopy classes of monomially admissible Lagrangian sections have particularly nice representatives, which we will refer to as \emph{tropical Lagrangian sections}. Indeed, given a support function $\sF_D$ corresponding to a toric divisor $D = \sum n_\alpha D_\alpha$, we can smooth $\sF_D$ to a function $H_D$. 
By abuse of notation, we will also use $H_D$ to denote the Hamiltonian for $H\circ u : (\CC^*)^n\to \RR$,  which we call a \emph{twisting Hamiltonian} for $D$.
This Hamiltonian also satisfies 
$$ dH_D(\alpha) = -2\pi n_\alpha $$
near infinity in $C_\alpha$ for all $\alpha \in A$ by setting 
$$H_D(u) = -2 \pi \int_{\RR^n} \sF_D(u-x) \eta_\eps(x) \, dx $$
where $\eta_\eps$ is a smooth symmetric mollifier supported on the ball of radius $\eps$ as in \cite[Proposition 2.13]{hanlon2019monodromy}. From a twisting Hamiltonian $H_D$, we obtain a tropical Lagrangian section $L(D)$ as the projection of the graph of $dH_D$ to $T^*Q/T^*_{2\pi \ZZ} Q$ or, equivalently, the time-$1$ flow of the real positive locus $ \RR_{>0}^n \subset (\CC^*)^n$ under the Hamiltonian isotopy. The Floer theory of these Lagrangians can be shown to behave exactly as expected.

\begin{thm}[Corollary 3.41 of \cite{hanlon2019monodromy} following \cite{abouzaid2009morse}] There is a quasi-equivalence $$\mathcal{F}^s_{\Delta_\eps} \simeq \Pic_{\dg}(\check X_\Sigma)$$ for small $\eps$ where $\Pic_{\dg}(\check X_\Sigma)$ is a $\dg$-enhancement of the category of line bundles on $\check X_\Sigma$.
   \label{thm:OrigHMS}
\end{thm}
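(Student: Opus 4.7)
The plan is to build a quasi-equivalence by sending each tropical Lagrangian section $L(\sF)$ to the line bundle $\mathcal{O}_\Sigma(\sF)$ and checking fully faithfulness and essential surjectivity. For essential surjectivity, every monomially admissible Lagrangian section of $\val$ is determined at infinity in each cone $C_\alpha$ by an integer slope $n_\alpha$, hence records a support function $\sum_\alpha n_\alpha \sF_\alpha \in \Supp(\Sigma)$; after a compactly supported Hamiltonian isotopy, any such section can be put into tropical form $L(\sF)$. Two tropical sections are Hamiltonian isotopic in $\mathcal{F}^s_{\Delta_\eps}$ iff their support functions differ by a global linear function, which matches the passage from $\Supp(\Sigma)$ to $\Pic(\check X_\Sigma)$, so the assignment is a bijection on isomorphism classes.

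The heart of the argument is the morphism computation. To evaluate $\Hom_{\mathcal{F}^s_{\Delta_\eps}}(L(\sF_1), L(\sF_2))$, one applies the monomial wrapping Hamiltonian $K$ to $L(\sF_1)$, so that up to compactly supported isotopy the wrapped section is $L(\sF_1 + t\sF_K)$ for $t \gg 0$ with $\sF_K$ an ample support function. Transverse intersections with $L(\sF_2)$ are indexed by lattice points $m \in M$ at which $m$ equals the slope of $\sF_2 - \sF_1 - t\sF_K$; a direct combinatorial count identifies this set with the standard monomial basis computing $H^\bullet(\check X_\Sigma, \mathcal{O}(\sF_2 - \sF_1))$ via toric \v{C}ech cohomology, and the Maslov grading at each intersection equals the cohomological degree of the corresponding monomial. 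Having matched cochain complexes, one compares the $A_\infty$-products with the \v{C}ech cup product using a Morse model in which holomorphic polygons with boundary on wrapped tropical sections reduce, under the Legendre transform, to rigid gradient trees on $Q$ that carry out the concatenation of monomials.

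The main obstacle is controlling holomorphic curves so that this Morse/combinatorial model accurately captures the Floer theory. One needs a maximum principle ensuring that pseudoholomorphic strips with boundary on monomially admissible Lagrangians remain in a compact subset, and it is precisely here that the choice of toric K\"ahler form and the specific form of the monomial wrapping Hamiltonian enter (cf.~\cref{rem:monadmholomorphic}). With the maximum principle in place, the identification of cochain complexes and the matching of $A_\infty$-operations follow the template of \cite{abouzaid2009morse} adapted to the monomially admissible setting of \cite{hanlon2019monodromy}.
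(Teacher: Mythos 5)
This theorem is quoted, not proved, in the paper: it is cited as Corollary~3.41 of \cite{hanlon2019monodromy}, which in turn follows \cite{abouzaid2009morse}, so there is no in-paper proof to compare against. That said, your sketch of the strategy of the cited proof — intersection points indexed by lattice points in $M$, identification with a \v{C}ech-type basis for $H^\bullet(\check X_\Sigma, \mathcal{O}(\sF_2 - \sF_1))$, and matching the $A_\infty$-products via tropical/Morse gradient trees after a maximum-principle compactness argument — is aligned with what Abouzaid and Hanlon actually do, and you correctly identify the role of the toric K\"ahler form in \cref{rem:monadmholomorphic}.

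However, there is a concrete error in the key morphism computation. You propose to wrap $L(\sF_1)$ to $L(\sF_1 + t\sF_K)$ for $t \gg 0$ with $\sF_K$ ample and then count lattice points for $\sF_2 - \sF_1 - t\sF_K$. This is internally inconsistent: if $\sF_K$ were ample and $t \gg 0$, the associated polytope (and hence the lattice point count) grows without bound, so it could not identify with the finite-dimensional $H^\bullet(\check X_\Sigma, \mathcal{O}(\sF_2 - \sF_1))$. The monomially admissible Fukaya--Seidel category is of the \emph{infinitesimally wrapped} type — as the paper itself stresses in \cref{subsec:HMSboundary} — and morphisms are computed by a \emph{small} positive perturbation $\psi^t_{K}$ with $t > 0$ small, after which the continuation elements coming from $K$ are already isomorphisms for tropical sections. (Compare the proof of \cref{lemma:samehoms}, where a small time $d \in (\delta/2, \delta)$ is used, rather than a limit over $t \to \infty$.) One should also be careful with the claim that $\sF_K$ is ample: the canonical class is ample only in the Fano case, and no ampleness is needed; the wrapping Hamiltonian merely needs to increase $p \cdot \alpha$ near infinity in each $C_\alpha$. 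Finally, you do not address how the matching is made independent of $\eps$, which is why the statement is restricted to small $\eps$; but this is a minor omission relative to the $t \gg 0$ issue.
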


In fact, this construction gives slightly more as the time-$1$ flow of $H_D$ preserves monomial admissibility with respect to $\Delta_\eps$. Thus, these Hamiltonian diffeomorphisms act on $\mathcal{F}_{\Delta_\eps}$, and it can be checked that on $\mathcal{F}^s_{\Delta_\eps}$ these functors are mirror to the functors $(\cdot)\otimes \mathcal{O}(D)$ as done in \cite[Theorem 4.5]{hanlon2019monodromy}. 

\begin{rem} \label{rem:relatetoAb} In \cite{abouzaid2009morse}, tropical Lagrangian sections are defined as sections over a subset $P$ of $(\CC^*)^n$, which is roughly the polytope of $\check X_\Sigma$ for some ample line bundle, with boundary on a deformation of the Hori-Vafa superpotential. This makes Abouzaid's category of tropical Lagrangian sections\footnote{This is defined in \cite{abouzaid2009morse} as an $A_\infty$-pre-category, but using localization, one can define a quasi-equivalent $A_\infty$-category.}, $\mathscr{T}\text{Fuk}$, a subcategory of a more traditionally defined Fukaya-Seidel category. Abouzaid's tropical Lagrangian sections are closely related to those considered above. Indeed, assuming that we can work with the standard K\"{a}hler form to define $\mathcal{F}^s_{\Delta_\eps}$ (see \cref{rem:monadmholomorphic}), then there is an equivalence between $\mathscr{T}\Fuk$ and $\mathcal{F}^s_{\Delta_\eps}$ given by restricting $L(D)$ to $\Log^{-1}(P)$ as shown in \cite[Proposition 3.23]{hanlon2019monodromy}. Even without that assumption, it can be deduced from \cite[Proposition 2.36]{hanlon2019monodromy} that the restriction of $L(D)$ to $\Log^{-1}(P)$ are objects in $\mathscr{T}\Fuk$ even though they are not the smoothing of a support function in the $\Log$ coordinates. Of course, combining \cite[Theorem 1.2]{abouzaid2009morse} and \cite[Corollary 3.41]{hanlon2019monodromy} also gives a less direct equivalence between $\mathscr{T}\Fuk$ and $\mathcal{F}^s_{\Delta_\eps}$ regardless of the K\"{a}hler form used to define the latter.
\end{rem}
 
\section{Conical Lagrangian sections and twisting Hamiltonians}
   \label{sec:embedding}
   This section consists of producing the embedding in \cref{thm:mainthm}, translating the action of twisting Hamiltonians from \cite{hanlon2019monodromy} to the partially wrapped setting, and realizing cocores of $\LL_\Sigma$ as tropical Lagrangian sections. The main ingredient is the observation that tropical Lagrangian sections  (sections very close to their piecewise-linear/tropical limit) exist in both categories. Furthermore, these sections have matching Floer theory.

\subsection{Construction and embedding} 

We will now construct a full and faithful embedding of $\mathcal{F}_{\Delta_\eps}^s$ into $\mathcal{W}((\bb{C}^*)^n, \stp_\Sigma)$ for small $\eps$. Although objects of $\mathcal{F}_{\Delta_\eps}$ naturally lie in the complement of the FLTZ skeleton when perturbed by the Hamiltonian $K$, there are three difficulties in passing from monomially admissible Floer theory to partially wrapped Floer theory. Monomially admissible Lagrangians, which in general are not conical at infinity, must be modified to obtain objects of the partially wrapped category. Then, we must check that this modification does not change the Floer theory of the Lagrangians when now computed with a cylindrical almost complex structure. Finally, we need to prove that the (conical version of) the Hamiltonian $K$ gives a cofinal wrapping and thus can be used to compute morphisms in $\mathcal{W}( (\CC^*)^n, \stp_\Sigma)$.

Although it is not clear how to produce a conical Lagrangian submanifold from a monomially admissible Lagrangian submanifold in general, we can modify the construction of tropical Lagrangian sections summarized in \cref{subsec:troplag}  from support functions of toric divisors to obtain tropical Lagrangian sections in $\mathcal{W}((\bb{C}^*)^n, \stp_\Sigma)$. As the Liouville vector field is the radial vector field on the SYZ base, this amounts to smoothing the piecewise linear support function $F_D$ of a toric divisor so that the gradient is invariant under rescaling in the complement of a compact set. As observed in Remark 3.4 of \cite{hanlon2019monodromy}, this is not possible while preserving monomial admissibility. We therefore will need to allow some distortion of $\thetaa \cdot \alpha$ even close to the ray $\langle\alpha\rangle \in \Sigma$.

\begin{lem} \label{lemma:conesctions} Suppose that $F_D$ is the piecewise-linear support function of a toric divisor $D = \sum_{\alpha \in A} n_\alpha D_\alpha$, $\{U_{\alpha} \}$ is the $\eps$-star open cover for $\Sigma$, $\eps>0$ is sufficiently small, and $V \subset \bb{R}^n$ is any compact set for which the origin is an interior point. Then, there exists a smooth function $\wh{H}_D$ satisfying the following.
\begin{enumerate}
    \item $\wh{H}_D$ is positively homogeneous of degree $1$ outside of $V$. 
    \item In the complement of $V$, $\wh{H}_D = 2\pi F_D$ on any non-empty intersection of the form $U_{\alpha_1} \cap \hdots \cap U_{\alpha_n}$.\footnote{We never use this condition, but it is somewhat reassuring to make the Lagrangian sections as close to their tropical counterparts as possible.}
    \item There is a constant $C_D$, independent of $\eps$ and $V$, such that
    $$ |\nabla \wh{H}_D \cdot \alpha + 2\pi n_\alpha| \leq \eps C_D $$
    in $U_\alpha \setminus V$ for all $\alpha \in A$.
\end{enumerate}
\end{lem}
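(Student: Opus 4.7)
The plan is to replace the constant-scale mollification of \cite[Proposition 2.13]{hanlon2019monodromy} with one whose scale grows linearly with $\|u\|$. As already noted in \cite[Remark 3.4]{hanlon2019monodromy}, any fixed-scale convolution destroys positive homogeneity of degree $1$, so rescaling the support of the mollifier proportionally to $\|u\|$ is the natural fix. Explicitly, I would choose a smooth radial function $\rho:\RR^n\to \RR_{>0}$ that equals $\|u\|$ outside a small ball $B\subset \mathrm{int}(V)$ and is bounded below by a positive constant on $B$, and then define
\[ \wh H_D(u) := 2\pi \int_{\RR^n} F_D(u-\rho(u)x)\,\eta_\eps(x)\,dx, \]
where $\eta_\eps$ is a smooth symmetric mollifier supported in the Euclidean $\eps$-ball. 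Smoothness of $\wh H_D$ on all of $\RR^n$ follows from smoothness of $\rho$ and the standard fact that convolving a Lipschitz function against a smooth kernel produces a smooth output.

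Property (1) is then immediate: outside $V$ one has $\rho(u)=\|u\|$, so a change of variables combined with positive homogeneity of $F_D$ yields $\wh H_D(\lambda u)=\lambda\wh H_D(u)$ for $\lambda>0$. For property (2), a nonempty intersection $U_{\alpha_1}\cap\cdots\cap U_{\alpha_n}$ forces $\alpha_1,\ldots,\alpha_n$ to generate a common top-dimensional cone $\sigma\in\Sigma$ (since each $C_{\alpha_i}\subset\wb{\mathrm{int}(\mathrm{st}(\alpha_i))}$), on which $F_D$ is linear with gradient some $m_\sigma\in M$. The $\eps$-star open cover condition puts $u/\|u\|$ at Euclidean distance greater than $\eps$ from every cone missing some $\alpha_i$, and hence from $\partial\sigma$; so $u-\|u\|x\in\sigma$ for all $\|x\|\leq\eps$. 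Symmetry of $\eta_\eps$ kills the linear-in-$x$ term after integration, leaving $\wh H_D(u)=2\pi m_\sigma\cdot u=2\pi F_D(u)$.

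Property (3) is the main estimate. For $u\in U_\alpha\setminus V$, the same reasoning gives $u-\|u\|x\in\mathrm{int}(\mathrm{st}(\alpha))$ for $\|x\|\leq\eps$, so $F_D$ is sampled only on cones $\sigma\ni\alpha$. Differentiating under the integral in the direction $\alpha$ gives
\[ \nabla \wh H_D(u)\cdot\alpha = 2\pi\int \nabla F_D(u-\|u\|x)\cdot\Bigl(\alpha - \tfrac{u\cdot\alpha}{\|u\|}x\Bigr)\eta_\eps(x)\,dx. \]
For a.e.\ $x$, $\nabla F_D(u-\|u\|x)=m_{\sigma(x)}$ for some top-dimensional $\sigma(x)\ni\alpha$, and the crucial point is that $m_\sigma\cdot\alpha = F_D(\alpha) = -n_\alpha$ is the same for every $\sigma\ni\alpha$. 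The $\alpha$-piece of the integrand thus contributes exactly $-2\pi n_\alpha\int\eta_\eps = -2\pi n_\alpha$, while the $x$-piece is bounded in absolute value by
\[ 2\pi\|\alpha\|\cdot\Bigl(\max_{\sigma\in\Sigma}\|m_\sigma\|\Bigr)\cdot\eps, \]
yielding the required estimate with $C_D := 2\pi(\max_{\alpha\in A}\|\alpha\|)(\max_{\sigma\in\Sigma}\|m_\sigma\|)$ depending only on $D$.

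I expect the main obstacle to be justifying the differentiation under the integral sign, since $F_D$ is only Lipschitz and $\nabla F_D$ is discontinuous across cone walls. This can be handled either by first regularizing $F_D$ with a secondary fixed-scale mollifier, carrying out the computation classically with $\eps$-uniform estimates, and passing to the limit; or by integration by parts, transferring the $u$-derivative onto the smooth kernel $\eta_\eps$. The algebraic reason the estimate works in the end is that the multi-valuedness of $\nabla F_D$ across cone walls is invisible when paired with $\alpha$, precisely because $m_\sigma\cdot\alpha=-n_\alpha$ for every $\sigma\ni\alpha$ by definition of the support function $F_D$.
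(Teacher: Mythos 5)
Your proposal is correct and follows essentially the same route as the paper: a variable-scale mollification $\int F_D(u-\rho(u)x)\,\eta_\eps(x)\,dx$ with $\rho(u)=\|u\|$ away from the origin, differentiation under the integral, and the observation that $\nabla F_D\cdot\alpha=-n_\alpha$ on every cone containing $\alpha$, yielding the same constant $C_D=2\pi(\max_{\alpha}\|\alpha\|)(\max_{\sigma}\|\nabla F_D|_\sigma\|)$. The only cosmetic difference is that you build the interior smoothing directly into $\rho$, whereas the paper defines $G$ on $\RR^n\setminus\{0\}$ and then patches it to a smooth function over $V$ with a bump function; your worry about differentiating under the integral is resolved exactly as in the paper (dominated convergence suffices since $F_D$ is Lipschitz with bounded a.e.\ gradient).
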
 
\begin{proof} Let $\eta: \bb{R}^n \to \bb{R}$ be a smooth symmetric $(\eta(u) = \eta(-u))$ mollifier function supported on the ball of radius $1$ centered at the origin.
For any $t >0$ define a $t$-scaled mollifier $\eta_t: \bb{R}^n \to \bb{R}$ by $\eta_t (u) = t^{-n}\eta(u/t)$. 

Define $G: \bb{R}^n \setminus \{0\} \to \bb{R}$  by
\[ G(u) = 2\pi \int_{\bb{R}^n} \sF_D(u-x) \cdot \eta_{\eps r} (x) \, dx = 2\pi \int_{\bb{R}^n} \sF_D(u - rx) \cdot \eta_\eps(x) \, dx \]
where $r = \| u \|$ is the radial coordinate with respect to the Euclidean metric. It is straightforward to see that $G$ is smooth and homogeneous of degree $1$. It also follows from a direct computation that $G = F_D$ on any set of the form $U_{\alpha_1} \cap \hdots \cap U_{\alpha_n}$ using the fact that $F_D$ is a linear function on the open ball of radius $\eps r$ around any point in the intersection. 

We can then compute that
\begin{equation} \label{eq:conicalgradient} \nabla G (u) = 2 \pi \int_{ \bb{R}^n} \left[ \nabla F_D(u- rx) - \frac{\nabla F_D(u-rx) \cdot x}{r} u \right] \eta_{\eps }(x) \, dx . \end{equation}
The gradient of the support function is given by  $\nabla F_D \cdot \alpha = -n_\alpha$ on the open ball of radius $\eps r$ around any point in $U_\alpha$. 
Thus, if we set $E^\alpha(u) = (\nabla G(u) \cdot \alpha + 2\pi n_\alpha)/2\pi$, we have
$$ E^\alpha (u) = \frac{u \cdot \alpha}{r} \int_{\bb{R}^n} \left( \nabla F_D(u-rx)\cdot x \right) \eta_\eps(x) \, dx $$
for all $u\in U_\alpha$, and
\begin{align*} 
| E^\alpha (u) | &= \frac{| u \cdot \alpha| }{r} \left| \int_{\bb{R}^n} (\nabla F_D(u- rx) \cdot x )\eta_{\eps}(x) \, dx \right| \\ 
& \leq \| \alpha \|  \int_{\bb{R}^n} \| \nabla F_D(u- rx)\| \| x \| \eta_{\eps}(x) \, dx \ \\
& \leq \eps \| \alpha \| \int_{\bb{R}^n} \| \nabla F_D(u- rx)\| \eta_{\eps}(x) \, dx \\
&\leq \eps \| \alpha \| \max_{\sigma \in \Sigma^{n}} \| \nabla F_D|_{\sigma} \| 
\end{align*}
using the Cauchy-Schwarz inequality and that $\eta_\eps(x)$ is supported on the ball of radius $\eps$. This gives us the bound
$$ | E^\alpha (u)| \leq \eps C_D/2\pi$$
for all $\alpha \in A$ where the constants $C_D$ are given by
$$ C_D = 2\pi \max_{\alpha \in A} \| \alpha \| \max_{\sigma \in \Sigma^{n}} \| \nabla F_D|_{\sigma} \| .$$ 
To obtain the smooth function $\wh{H}_D:\bb{R}^n\to \RR$, we use a bump function with support contained within $V$ to smoothly interpolate between $G$ and a smooth function.
\end{proof}

The previous lemma allows us to construct Lagrangian sections in $\mathcal{W}((\bb{C}^*)^n, \stp_\Sigma)$ corresponding to divisors on $\check X_\Sigma$ by modifying the procedure outlined in \cref{subsec:troplag}. We begin by fixing a compact set $V \subset \bb{R}^n$ containing the origin as an interior point. For any $\delta > 0$ and toric divisor $D$ on $\check X_\Sigma$, we let $\wh{H}_D^\delta$ be a smooth function as guaranteed by \cref{lemma:conesctions} for our fixed compact set $V$ and $\eps = \delta/C_D$. 
We will refer to such functions as \emph{conical twisting Hamiltonians with defect} $\delta$. We will denote the time $t$ flow of $\wh{H}_D^\delta\circ u$ by $\phi_{D, \delta}^t: (\CC^*)^n\to (\CC^*)^n$. For the canonical divisor $K = \sum_{\alpha \in A} - D_\alpha$, we will simplify the notation to $\psi_\delta^t$ for its flow. 

To obtain Lagrangian sections, we set $\mathcal{L}$ to be the zero section (equivalently, $\mathcal{L}= (\bb{R}_{>0})^n \subset (\CC^*)^n$) and
\[ L^\delta(D) := \psi^{-\frac{\delta}{2\pi}}_{\delta/4} \phi_{D,\delta/4}^1 (\mathcal{L}).  \] 
In other words, $L^\delta(D)$ is the image of the graph of 
$$d\left(\wh{H}_D^{\delta/4} + \left( - \frac{\delta}{2\pi} \right)H^{\delta/4}_K \right)$$
under the projection from $T^*\bb{R}^n$ to $T^* \RR^n/ T^*_{2\pi \ZZ} \RR^n \simeq (\bb{C}^*)^n$. 
We will, at times, call $L^\delta(D)$ a \emph{conical tropical Lagrangian section with defect} $\delta$. 
We will also sometimes use the notation $L^\delta(\sF):= L^\delta(D_{\sF})$, where $\sF$ is a support function for the divisor $D$. 
We base our choice of notation on the notation used for line bundles on the toric variety $\check X_\Sigma$.
Taking $\text{Arg}$ to denote the branch of $\arg$ taking values in $[-\pi, \pi)$, note that \cref{lemma:conesctions} implies that
\begin{equation} \label{eq:anglebound} \left| \text{Arg}\left(z^\alpha|_{L^\delta(D)}\right) +\delta) \right| \leq \delta/2 < \delta \end{equation}
in $U_\alpha \setminus V$ for all $\alpha \in A$. 
Now, we observe that for every cone $\sigma \in \Sigma$ and point $u \in \sigma \setminus V$, there is an $\alpha \in \sigma$ such that $u \in U_\alpha$ for sufficiently small choice of $\delta$.
It follows that $$ L^\delta(D) \setminus V \subset (\bb{C}^*)^n \setminus \bb{L}_\Sigma $$
for all $D$ and sufficiently small $\delta$ (see \cref{eq:fltzskel}).
Thus, $L^\delta(D)$ supports an object of $\mathcal{W}((\bb{C}^*)^n, \stp_\Sigma)$, which has a canonical grading as a Lagrangian section (see \cite[Example 2.10]{seidel2000graded}) and a unique relative spin structure as a contractible manifold. We will abuse notation and also denote this object by $L^\delta(D)$. Note that as an object of $\mathcal{W}((\bb{C}^*)^n, \stp_\Sigma)$, $L^\delta(D)$ is independent of the choices ($\eta, \delta, \hdots$) made in its construction as different choices result in a Lagrangian that differs by a Hamiltonian isotopy that does not meet $\stp_\Sigma$

\begin{rem} \label{rem:conicalsecineq} Conical tropical Lagrangian sections corresponding to a divisor $D = \sum n_\alpha D_\alpha$ are characterized by the fact that they have lifts to $T^*\RR^n$ that satisfy 
$$ -2\pi (n_\alpha - 1) < \alpha \cdot p < -2\pi n_\alpha $$
in the coordinates $(u, p = p(u))$ whenever $u \in U_\alpha$ near infinity. Thus, unlike in the monomially admissible setting, there are many admissible Lagrangian sections that are not tropical or even admissibly Hamiltonian isotopic to a tropical section. See \cref{ex:blowupexample}.
\end{rem} 

\begin{figure}
   \centering
   \begin{tikzpicture}
   \draw (1,1) node {$-u_1$};
   \draw (-1.5, 0.5) node {$u_1$};
   \draw (0.5, -1) node {$-u_1 + 2u_2$};
   \draw[blue, fill=blue] (0,0) circle (0.4);
   \draw[blue, fill=blue, opacity=0.3] (0,0.3) --(2,0.3) -- (2, -0.3) -- (0, -0.3) -- (0, 0.3);
   \draw[blue, fill=blue, opacity=0.3] (0.3,0) --(0.3,2) -- (-0.3, 2) -- (-0.3, 0) -- (0.3, 0);
   \draw[blue, fill=blue, opacity=0.3] (0.21,-0.21) --(-1.79,-2.21) -- (-2.21, -1.79) -- (-0.21, 0.21) -- (0.21, -0.21);
   \draw[->, thick] (0,0) -- (1,0);
   \draw[->, thick] (0,0) -- (0,1); 
   \draw[->, thick] (0,0) -- (-1,-1);
   
   \begin{scope}[shift = {(7,0)}]
   \draw (1,1) node {$-u_1$};
   \draw (-1.5, 0.5) node {$u_1$};
   \draw (0.5, -1) node {$-u_1 + 2u_2$};
   \draw[blue, fill=blue] (0,0) circle (0.4);
   \draw[blue, fill=blue, opacity=0.3] (0,0) --(2,0.5) -- (2, -0.5) -- (0,0);
   \draw[blue, fill=blue, opacity=0.3] (0,0) --(0.5,2) -- (-0.5, 2) -- (0, 0);
   \draw[blue, fill=blue, opacity=0.3] (0,0) --(-1.65,-2.35) -- (-2.35, -1.65) -- (0, 0);
   \draw[->, thick] (0,0) -- (1,0);
   \draw[->, thick] (0,0) -- (0,1); 
   \draw[->, thick] (0,0) -- (-1,-1);
   \end{scope}
\end{tikzpicture}    \caption{The monomial admissible smoothing, $\frac{1}{2\pi}H_D$, and the conical smoothing, $\frac{1}{2\pi}\wh{H}_D^{\delta}$, of the support function $F_D$ for $D = D_{(1,0)} + D_{(-1,-1)}$ on $\bb{P}^2$ are shown on the left and right, respectively. The dark blue circle represents $u(V)$ while the shaded regions represent where the piecewise linear function is smoothed with strict and approximate control of the corresponding angles, respectively.}
   \label{fig:comparefunctions}
\end{figure}
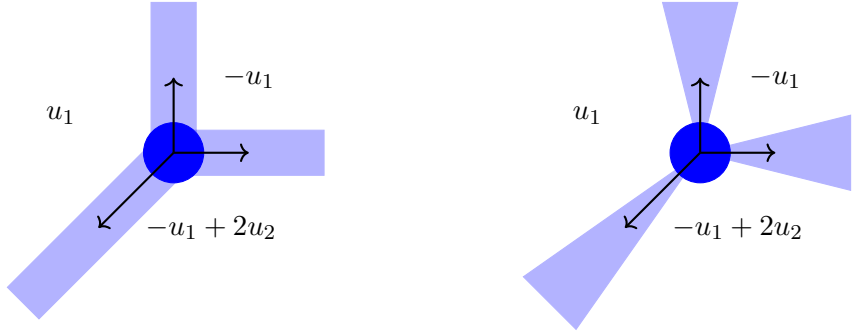

It remains to understand the Floer theory of these objects. Note first that when $\delta$ is chosen in a smooth family, we can take the functions guaranteed by \cref{lemma:conesctions} to vary smoothly by smoothly choosing the interpolation bump functions on $V$. Below, we will always assume that we have done so.

\begin{figure}
   \centering
   \begin{subfigure}{.3\linewidth}
      \centering
      \includegraphics[scale=.3]{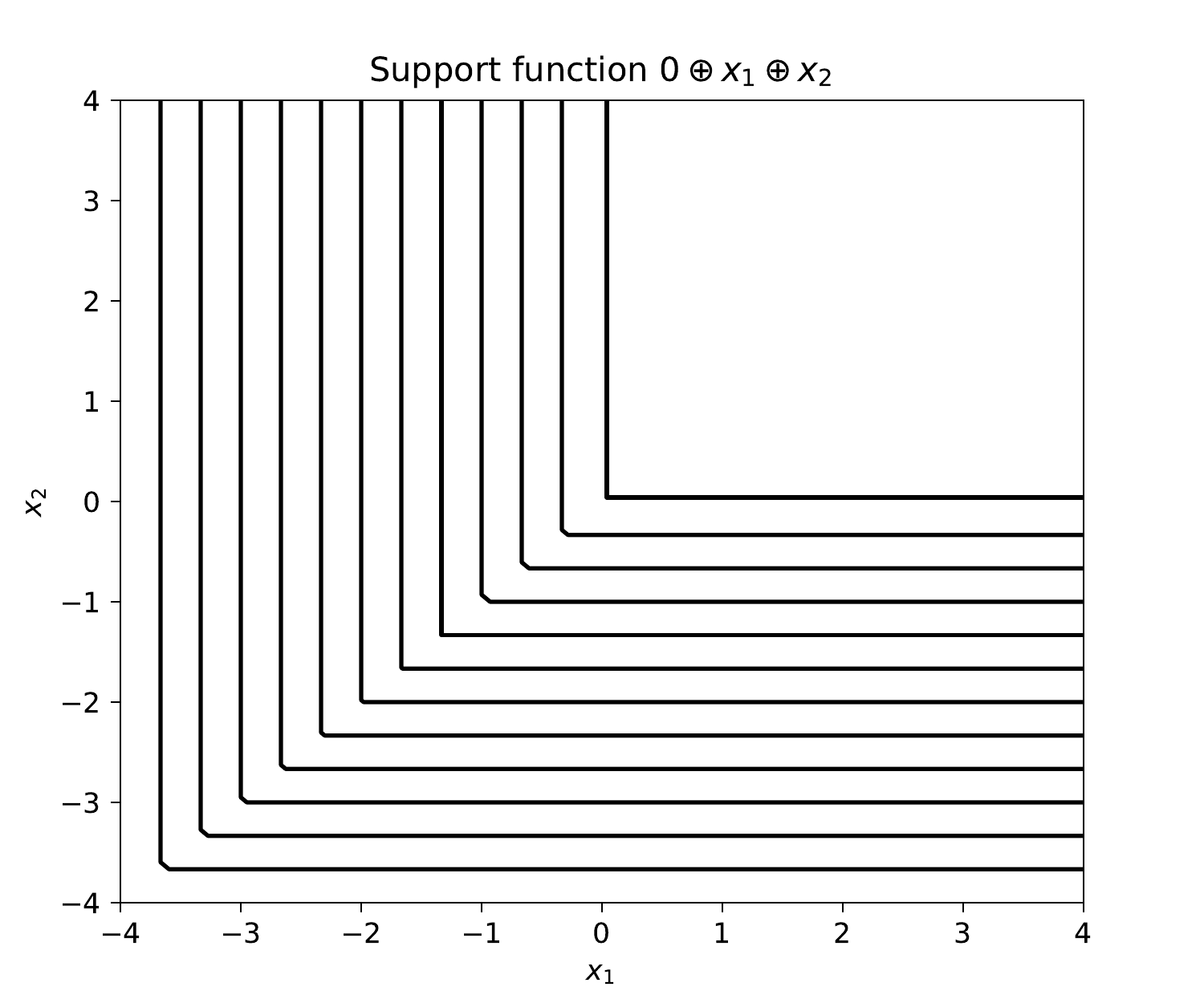}
      \caption{Contour plot of the support function $\sF_D=\max(0, x_1, x_2)$}
   \end{subfigure}
   \begin{subfigure}{.3\linewidth}
      \centering
      \includegraphics[scale=.3]{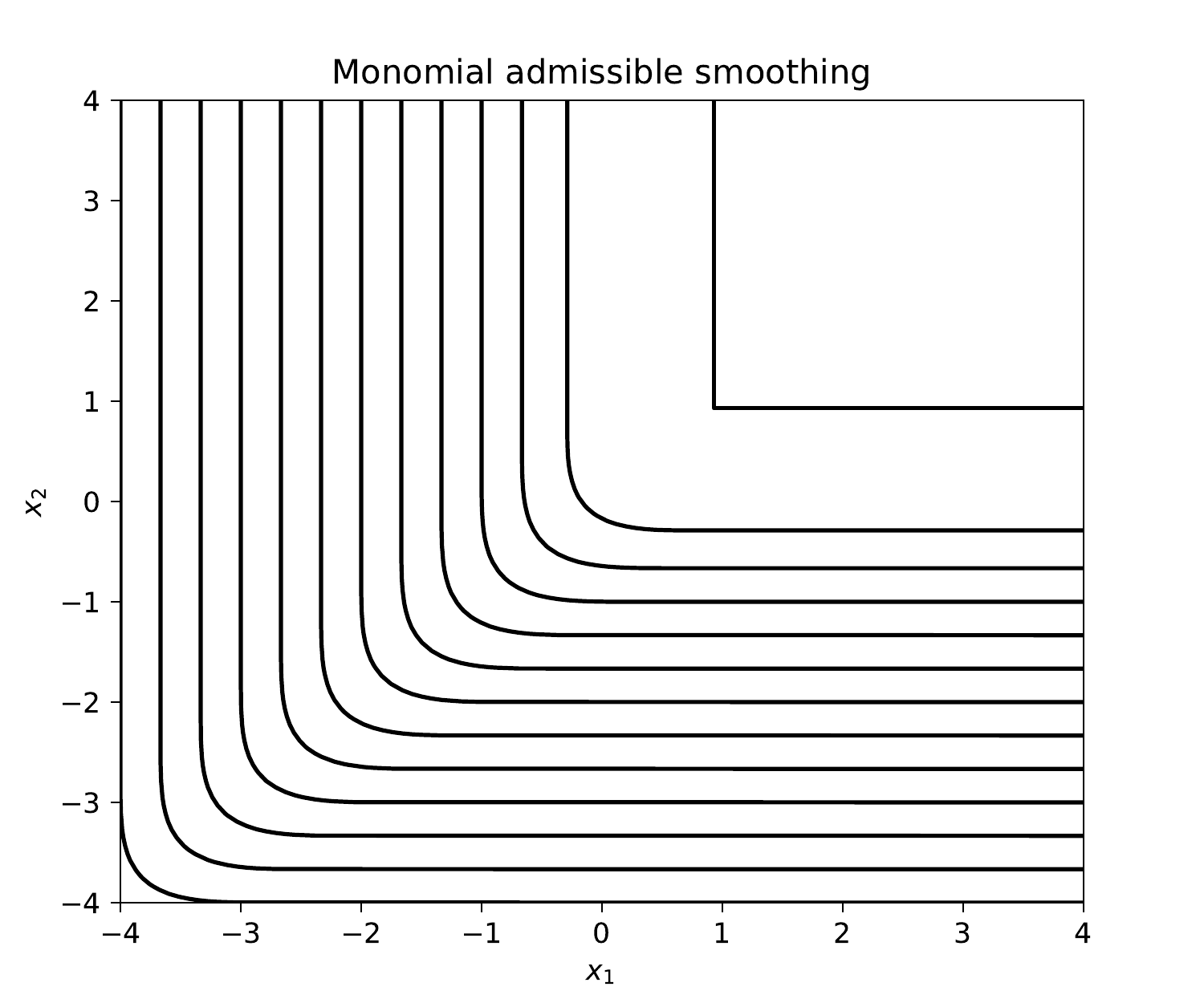}
      \caption{Contour plot of the monomial admissible smoothing $H_D$}
   \end{subfigure}
   \begin{subfigure}{.3\linewidth}
      \centering
      \includegraphics[scale=.3]{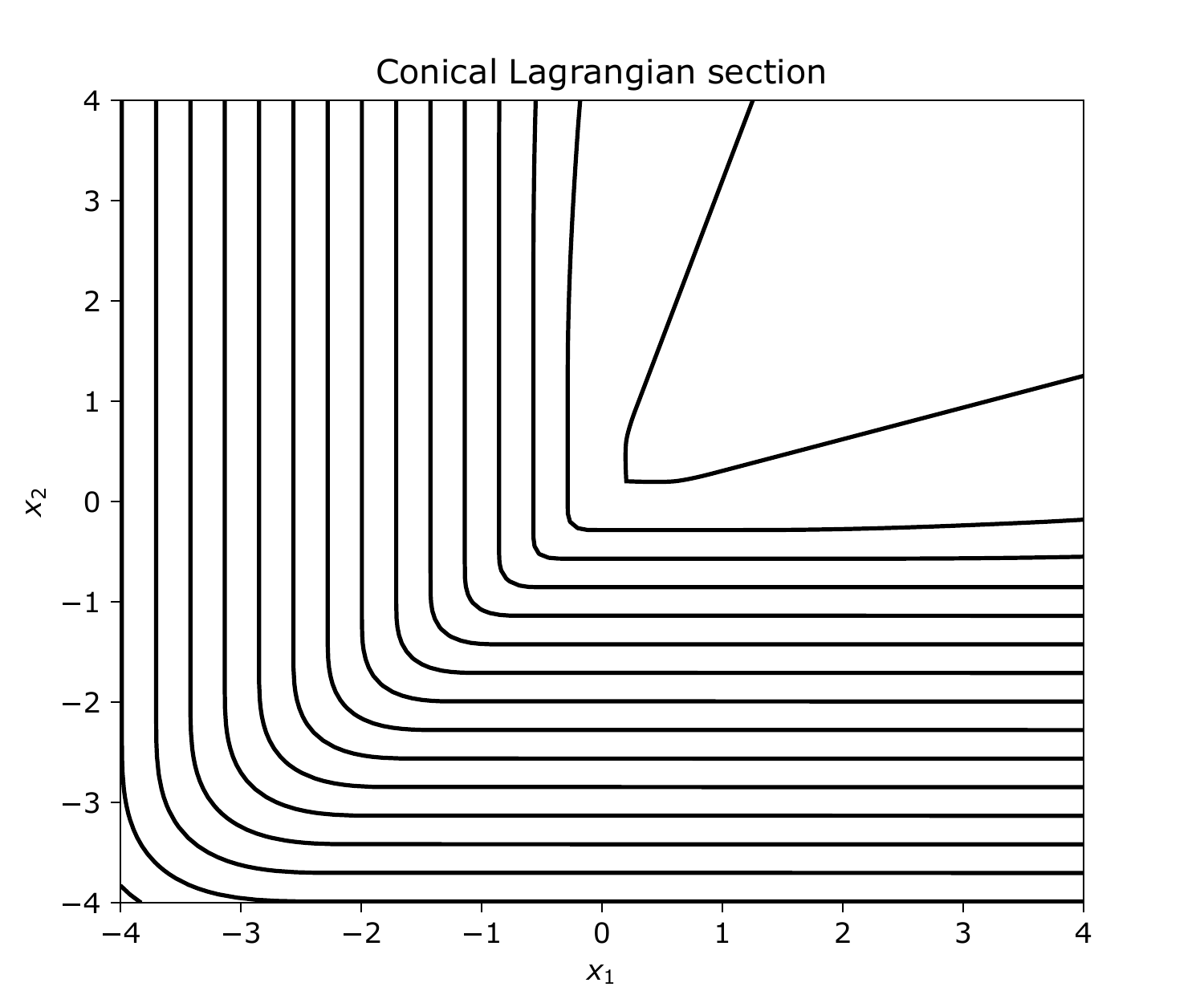}
      \caption{Contour plot of the conically admissible smoothing $\hat H_D$}
   \end{subfigure}
   \caption{Contour plots of a monomial admissible smoothing and a conical smoothing.}
\end{figure}
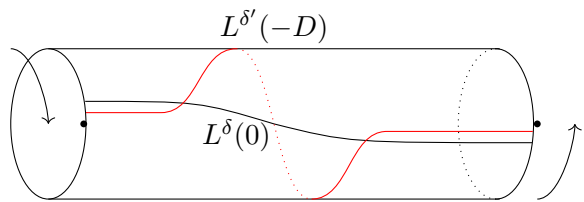
\begin{figure}
   \centering
   \begin{tikzpicture}

               \draw  (-0.5,-1.5) ellipse (0.5 and 1);
   
               \begin{scope}[shift={(3.95,-3.5)}]
                  \begin{scope}[]
   
                     \clip  (1.5,3) rectangle (2.5,1);
                     \draw  (1.5,2) ellipse (0.5 and 1);
   
                  \end{scope}
                  \begin{scope}[]
   
                     \clip  (1.5,1) rectangle (0.5,3);
                     \draw[dotted]  (1.5,2) ellipse (0.5 and 1);
                  \end{scope}
               \end{scope}
   
               \draw (-0.5,-2.5) -- (5.5,-2.5);
               \draw (-0.5,-0.5) -- (5.5,-0.5);

         \node[fill=black, circle, scale=.25] at (-0.0296,-1.5) {};
         \node[fill=black, circle, scale=.25] at (6,-1.5) {};

\draw (-0.01,-1.2) .. controls (1.5,-1.2) and (1.5,-1.2) .. (2.5,-1.5) .. controls (3.5,-1.75) and (3.5,-1.75) .. (5.93,-1.75);
\draw[red] (0,-1.35) .. controls (0.65,-1.35) and (0.55,-1.35) .. (1,-1.35) .. controls (1.5,-1.35) and (1.5,-0.5) .. (2,-0.5);
\draw[dotted, red] (2,-0.5) .. controls (2.5,-0.5) and (2.5,-2.5) .. (3,-2.5);
\draw[red] (3,-2.5) .. controls (3.5,-2.5) and (3.5,-1.6) .. (4,-1.6) .. controls (4.55,-1.6) and (5.55,-1.6) .. (5.94,-1.6);
\draw[->] (6,-2.5) .. controls (6.3,-2.5) and (6.5,-1.75) .. (6.5,-1.5);
\draw[->,yscale=-1] (-1,0.5) .. controls (-0.7,0.5) and (-0.5,1.25) .. (-0.5,1.5);
\node at (2.5,-0.2) {$L^{\delta'}(-D)$};
\node at (2,-1.65) {$L^{\delta}(0)$};
\end{tikzpicture}    \caption{Here, $\check X_\Sigma=\CP^1$, the divisor $D$ is a point, and the Lagrangians are in position to compute $\Hom_{\mathcal W(\CC^*,  \stp_\Sigma)}(L(-D), L(0))$. Note that $\delta'<\delta$. }
\end{figure}
\begin{lem} \label{lemma:cofinal} If $\delta_0$ is sufficiently small and $\delta(t)$ is a smooth monotonically decreasing function on $[0, \infty)$ satisfying $\delta(0) = \delta_0$ and 
$$ \lim_{t \to \infty} \delta(t) = 0, $$
then $\{L_t\}_{t \geq 0}$ defined by
$$ L_t = L^{\delta(t)}(D) $$
is a cofinal wrapping of $L^{\delta_0}(D)$ in $((\bb{C}^*)^n, \stp_\Sigma)$.
\end{lem}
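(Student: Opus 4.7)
The plan is to verify three properties of the family $\{L_t\}$: it is a smooth isotopy through admissible Lagrangians in $((\CC^*)^n, \stp_\Sigma)$; the isotopy is positive in the Reeb direction at infinity; and its Legendrian boundary at infinity converges to $\stp_\Sigma$. Together these properties characterize a cofinal wrapping in the sense used by \cite{ganatra2017covariantly,ganatra2018sectorial}.

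Admissibility is inherited from the pointwise construction: each $L^{\delta(t)}(D)$ is an object of $\mathcal{W}((\CC^*)^n, \stp_\Sigma)$ for $\delta(t)$ sufficiently small by the observation following \eqref{eq:anglebound}. Since $\delta(t)$ monotonically decreases from $\delta_0$, taking $\delta_0$ small enough guarantees admissibility for all $t \geq 0$. By selecting the symmetric mollifiers and interpolation bump functions used to build $\wh{H}_D^\delta$ and $\wh{H}_K^\delta$ to depend smoothly on $\delta$, the family $\{L_t\}$ is smooth.

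For positivity and the boundary behavior, we analyze $L_t$ at infinity. Each $L_t$ is the projection to $T^*Q/T^*_{2\pi \ZZ}Q$ of the graph of $dG_t$ with $G_t = \wh{H}_D^{\delta(t)/4} - \frac{\delta(t)}{2\pi}\wh{H}_K^{\delta(t)/4}$, and $G_t$ is positively $1$-homogeneous outside $V$, so $L_t$ is conical there. By \eqref{eq:anglebound}, for $u \in U_\alpha \setminus V$,
$$\mathrm{Arg}\bigl(z^\alpha|_{L_t}\bigr) \in \bigl[-\tfrac{3\delta(t)}{2}, -\tfrac{\delta(t)}{2}\bigr].$$
Since $\delta(t)$ strictly decreases, this angle coordinate strictly increases toward $0$ in each $U_\alpha$, which is positive Reeb motion (the Reeb direction at infinity is along $\partial_\thetaa$). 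Moreover, as $\delta(t)\to 0$, the angle $\thetaa \cdot \alpha$ converges to $0$ for every primitive generator $\alpha$ with $u \in \str(\alpha)$; comparing with \eqref{eq:fltzskel} this means the Legendrian at infinity of $L_t$ converges to $\stp_\Sigma$.

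Cofinality then follows: given any admissible Lagrangian $L'$, its Legendrian at infinity is a closed subset of $\partial_\infty((\CC^*)^n) \setminus \stp_\Sigma$; since the Legendrian of $L_t$ moves monotonically in the positive Reeb direction and converges to $\stp_\Sigma$, it eventually lies past that of $L'$, so any positive wrapping of $L_0$ is dominated by some $L_t$. The main obstacle I anticipate is ensuring that the coupled variation of the smoothing parameter in $\wh{H}_D^{\delta/4}$ and of the coefficient $-\delta/(2\pi)$ in front of $\wh{H}_K^{\delta/4}$ combines to a genuinely positive (rather than merely eventually positive) Reeb motion at every point of the Legendrian at infinity; but this is precisely what the uniform bound \eqref{eq:anglebound} delivers, since $\wh{H}_K$ contributes the dominant linear-in-$\delta$ term while the error from $\wh{H}_D^{\delta/4}$ is quadratically smaller.
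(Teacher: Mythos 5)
Your outline of what needs to be shown (smooth admissible family, positivity at infinity, boundary converging to $\stp_\Sigma$, then cofinality via a lemma of Ganatra--Pardon--Shende) matches the paper's structure, and the admissibility and smoothness points are handled correctly. The gap is in the positivity step, which is the mathematical content of the lemma.

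You deduce positivity from \eqref{eq:anglebound}: since $\mathrm{Arg}(z^\alpha|_{L_t}) \in [-\tfrac{3\delta(t)}{2}, -\tfrac{\delta(t)}{2}]$ and $\delta(t)$ decreases, you conclude that ``this angle coordinate strictly increases toward $0$.'' That inference is not valid. The estimate \eqref{eq:anglebound} constrains the angle to lie in a shrinking interval, but says nothing about where within the interval it sits at any given $t$; as $\delta(t)$ decreases, the intervals overlap, and the actual value $\alpha \cdot p(u,t)$ could perfectly well decrease while staying inside the allowed band (e.g.\ move from near the right endpoint of $[-\tfrac{3\delta(t_1)}{2}, -\tfrac{\delta(t_1)}{2}]$ to near the left endpoint of $[-\tfrac{3\delta(t_2)}{2}, -\tfrac{\delta(t_2)}{2}]$). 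A uniform containment cannot produce a pointwise derivative sign. Moreover, even granting monotonicity of each $\alpha \cdot p$, positivity of the isotopy is the condition $\lambda(\partial_t L_t) = u \cdot \partial_t p > 0$ with respect to the contact form $\lambda = \sum u_i \, dp_i$ on $\|u\|=R$, and $u$ is generically not parallel to $\alpha$ in $U_\alpha$, so you would still need an argument.

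What the paper actually does is compute $\lambda(\partial_t L_t)(u)$ explicitly from the mollifier formula \eqref{eq:conicalgradient}, obtaining a factor $-\delta'(t)R$ times a sum of three integrals: a term coming from differentiating the explicit coefficient $-\delta(t)/(2\pi)$ in front of $\wh{H}_K^{\delta/4}$, and two correction terms coming from the $\delta$-dependence of the smoothing radius inside $\wh{H}_D^{\delta/4}$ and $\wh{H}_K^{\delta/4}$. The first is bounded below by $\tfrac{1-\eps_K(0)}{\max_\alpha \|\alpha\|}$ using positivity of $F_K$; the other two are each bounded in absolute value by $\tfrac{1}{8\pi \max_\alpha \|\alpha\|}$. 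Note that the $\wh{H}_D$ correction is of the \emph{same} order as the main term, not ``quadratically smaller'' as your final paragraph asserts (it is the third term, with the extra $\delta(t)$ factor, that is smaller). Positivity therefore only holds after imposing the quantitative threshold $1 - \eps_K(0) \geq 1/4\pi$, which is where the hypothesis that $\delta_0$ be sufficiently small is actually used. Your proof does not engage with this calculation and so misses both the role of the hypothesis and the reason the cancellation works out.
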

\begin{proof} By the comments preceding the statement of the lemma, $\{ L_t \}$ is a smooth family of exact Lagrangians in $(\bb{C}^*)^n \setminus \stp_\Sigma$. We now need to check this is a positive isotopy at infinity. Recall that we take the contact boundary to be $\| u \| = R$ for large $R$ (with $V$ in the interior) and the contact form is given by 
$$ \lambda = \sum_{i=1}^n u_i dp_i .$$ 
In those coordinates, 
$$ L_t(u) = \left(u,  \frac{\partial \wh{H}_D^{\delta(t)/4}}{\partial u} - \frac{\delta(t)}{2\pi} \frac{\partial H^{\delta(t)/4}_K}{\partial u} \right)$$
and thus
$$ \lambda(\partial_t L_t)(u) = u \cdot \frac{d}{dt} \left( \nabla \wh{H}_D^{\delta(t)/4}(u) - \frac{\delta(t)}{2\pi}\nabla H^{\delta(t)/4}_K (u) \right). $$
Combining the above with \cref{eq:conicalgradient} and the fact that $F_K$ and $F_D$ are piecewise-linear, we have 
\begin{align*} \lambda(\partial_t L_t)(u) =& - \delta'(t) \int_{\bb{R}^n} F_K(u - Rx) \eta_{\eps_K(t)}(x) \, dx \\
&+ 2\pi \frac{d}{dt} \int_{ \bb{R}^n}  F_D(u- Rx)  \eta_{\eps_D(t)}(x) \, dx \\ 
&- \delta(t)  \frac{d}{dt} \int_{ \bb{R}^n} F_K(u- Rx)  \eta_{\eps_K(t)}(x) \, dx\end{align*} 
where $\eps_K(t) = \delta(t)/4 C_K$ and $\eps_D(t) = \delta(t)/4C_D$.
For the second term, we have 
\begin{align*} \frac{d}{dt} \int_{ \bb{R}^n}  F_D(u- Rx)  \eta_{\eps_D(t)}(x) \, dx &= \frac{d}{dt} \int_{ \bb{R}^n} F_D(u- R\eps_D(t)x) \eta(x) \, dx \\
& = - R\eps_D'(t) \int_{\bb{R}^n} \nabla F_D(u - R\eps_D(t)x) \cdot x \eta(x) \, dx.
\end{align*}
Applying the same calculation to the third term gives
\begin{align*} \lambda(\partial_t L_t)(u) = - \delta'(t) R \Bigg(&\int_{\bb{R}^n} F_K(u/R - x) \eta_{\eps_K(t)}(x) \, dx \\
&+ \frac{2\pi}{4 C_D}\int_{\bb{R}^n} \nabla F_D(u - R\eps_D(t)x) \cdot x \eta(x) \, dx  \\
&- \frac{ \delta(t)}{4 C_K}\int_{\bb{R}^n} \nabla F_K(u - R\eps_K(t)x) \cdot x \eta(x) \, dx \Bigg). \end{align*}
As $\delta(t)$ is monotonically decreasing, $-\delta'(t)$ is positive. The first term inside the parentheses is positive as $F_K$ is positive on $\bb{R}^n\setminus \{0 \}$. Moreover, since $\Sigma$ is complete, 
for $x \in B_{\eps_K(t)} (0)$ we may write
$$ u/R - x = \sum_{\alpha \in \sigma} c_\alpha \frac{\alpha}{\| \alpha \|} $$
where $\sigma$ is a top-dimensional cone and $c_\alpha \in \RR_{\geq 0}$ satisfy $\sum c_\alpha \geq 1- \eps_K(t) \geq  1-\eps_K(0)$.
It follows that
$$ \int_{\bb{R}^n} F_K(u/R - x) \eta_{\eps_K(t)}(x) \, dx  \geq \frac{1 - \eps_K(0)}{\max_{\alpha \in A} \| \alpha \|}. $$
For the second term, we have
$$ \left| \frac{1}{4 C_D}\int_{\bb{R}^n} \nabla F_D(u - R\eps_D(t)x) \cdot x \eta(x) \, dx \right| \leq \frac{1}{4 C_D} \max_{\sigma \in \Sigma^{n}} \| \nabla F_D|_{\sigma} \| = \frac{1}{8\pi \max_{\alpha \in A} \| \alpha \|}. $$
By the same argument, the norm of the third term has the same upper bound. Thus, $\lambda(\partial_t L_t)(u)$ is positive as long as $\delta_0$ is small enough so that $1- \eps_K(0) \geq 1/4\pi$.

It then follows from \cite[Lemma 2.1]{ganatra2018sectorial}  that the wrapping is cofinal as every point in $L_t\cap B_R(0)$ converges to a point in $\stp_\Sigma$ as shown by \cref{eq:anglebound}. 
\end{proof}

By modifying these constructions slightly and using monotonicity estimates, we can deduce that Floer cohomology of tropical Lagrangian sections is independent of the method of implementing them in a Fukaya category.

\begin{lem} \label{lemma:samehoms} Suppose that $D_1$ and $D_2$ are toric divisors on $\check X_\Sigma$, $\Delta_\eps$ is the $\eps$ combinatorial division for $\Sigma$, and $\delta > 0$ is small. If $L(D_1), L(D_2)$ are the corresponding monomially admissible tropical Lagrangian sections and $L^\delta(D_1), L^\delta(D_2)$ are the corresponding conical tropical Lagrangian sections with defect $\delta$, then
$$ H^\bullet \Hom_{\mathcal{F}_\Delta^s} (L(D_1), L(D_2))) \cong H^\bullet \Hom_{\mathcal{W}( (\bb{C}^*)^n, \stp_\Sigma)} (L^\delta(D_1), L^\delta(D_2)) $$
\end{lem}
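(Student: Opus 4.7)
The plan is to compare the two Floer complexes by realizing both as computing intersections in a common compact region with a common choice of almost complex structure. Both sides of the claimed isomorphism compute morphisms by applying a cofinal positive wrapping: in $\mathcal{F}_\Delta^s$ this is the time-$t$ flow of the Hamiltonian $K$ from \cite{hanlon2019monodromy}, while in $\mathcal{W}((\CC^*)^n, \stp_\Sigma)$ this is the family $\{L^{\delta(t)}(D_1)\}$ supplied by \cref{lemma:cofinal}. In both cases, after wrapping for time $t$, the relevant Lagrangian is (up to small perturbation) the graph of the differential of a smoothing of the piecewise-linear function $\sF_{D_2-D_1-tK}$.

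For the chain-level generators, I would first enlarge the compact set $V$ appearing in the conical construction so that it contains all critical points of the smoothed difference function for the range of wrapping parameters $t$ under consideration; these critical points lie in a bounded region roughly of the size of the moment polytope of $D_2-D_1-tK$. On such a large $V$, the monomially admissible smoothings used in \cref{subsec:troplag} and the conical smoothings produced by \cref{lemma:conesctions} can be chosen to be constructed from the same symmetric mollifier of the same piecewise-linear functions, and hence to literally agree on $V$. Thus the intersections of the wrapped $L(D_1)$ (resp.\ $L^\delta(D_1)$) with $L(D_2)$ (resp.\ $L^\delta(D_2)$) are the same set of points, and are canonically in bijection.

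The main obstacle is matching the differentials, since the two categories use a priori different classes of almost complex structures (cylindrical at infinity in the partially wrapped case, and the specific class controlled by the monomial admissibility condition in $\mathcal{F}_\Delta^s$). The plan is a confinement argument: pick an almost complex structure that is cylindrical at infinity, agrees with a monomially admissible $J$ on $V$, and take $\delta$ sufficiently small. For $L(D_i)$ the maximum principle on $|z^\alpha|^{k_\alpha}$ used to build $\mathcal{F}_\Delta^s$ in \cite{hanlon2019monodromy} already confines holomorphic strips with monomially admissible boundary to a compact set. For $L^\delta(D_i)$ the bound \cref{eq:anglebound} says the Lagrangians are monomially admissible up to a defect of $\delta$; by taking $\delta$ small enough (in particular smaller than the margin appearing in that maximum principle) the same argument confines holomorphic strips to the same compact region. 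Once both differentials count discs supported in a single compact subset on which the two Lagrangians and the almost complex structure coincide, the resulting Floer chain complexes are literally equal, giving the desired isomorphism on cohomology. The technical heart of the proof is therefore verifying that the maximum principle from \cite{hanlon2019monodromy} is robust under the $O(\delta)$ defect in monomial admissibility, which is what allows the conical perturbation to be absorbed without changing disc counts.
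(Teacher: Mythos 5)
Your high-level strategy---match the two flavors of tropical section on a large compact region, confine holomorphic discs to that region, and then observe that the Floer complexes coincide there---is essentially the strategy of the paper's proof. However, there are two genuine gaps in the way you propose to implement it.

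First, the claim that the monomially admissible and conical smoothings ``can be chosen to literally agree on $V$'' is not correct as stated. The conical smoothing is built from the \emph{variable-radius} kernel $\eta_{\eps r}$ and is homogeneous of degree one near $\partial V$, whereas the monomially admissible smoothing uses the \emph{fixed-radius} kernel $\eta_\eps$; these differ in any annulus around $\partial V$, no matter how large $V$ is taken. One can choose the arbitrary interior interpolation of $\wh{H}_D$ to match the monomially admissible smoothing on a smaller set $V' \subset V$, but then a transition region between the two is forced, and one must verify that this transition produces neither extra intersection points nor discs escaping through the transition annulus. The paper handles this with the interpolating sections $L_a^\delta(D)$ built from the concave profile $f_a$, which agree with the conical section on $\val^{-1}(B_a(0))$ and simultaneously equal $\psi^{-\delta/2\pi}(L_a(D))$ for a monomially admissible $L_a(D)$; the bound \eqref{eq:interpolateangle} together with concavity of $f_a$ is what rules out spurious intersections in the transition region.

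Second, your confinement argument rests on the assertion that the maximum principle for $|z^\alpha|^{k_\alpha}$ from \cite{hanlon2019monodromy} remains valid when the boundary condition $\arg(z^\alpha) = 0$ on $C_\alpha$ is weakened to the $O(\delta)$ defect $|\arg(z^\alpha) + \delta| \leq \delta/2$ of \eqref{eq:anglebound}. That maximum principle uses the Lagrangian boundary condition in an essential way (the boundary is sent by $z^\alpha$ to a single ray), and a boundary in a wedge rather than a ray does not obviously inherit the conclusion; you would need to supply an actual argument for this, which is nontrivial. The paper avoids this issue entirely by using exactness and the monotonicity estimate of \cite[Proposition 3.19]{ganatra2017covariantly}, which requires only a cylindrical almost complex structure on $\val^{-1}(B_R(0) \setminus V)$ and gives confinement in $\val^{-1}(B_R(0))$ independently of the precise boundary condition. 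This is what allows one to then choose $J$ that is simultaneously monomially admissible and conical on $\val^{-1}(B_R(0) \setminus V)$ and compare the two differentials. You should also note that the proof is not finished at the chain-level identification: the interpolating sections are only Hamiltonian isotopic (via compactly supported isotopies) to the monomially admissible $L(D_i)$, and the final appeal to Propositions 2.17 and 3.3 of \cite{hanlon2019monodromy} is needed to land back in $\mathcal{F}_\Delta^s$.
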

\begin{proof} By \cite[Lemma 3.21]{ganatra2017covariantly}, \cref{lemma:cofinal}, and \cref{eq:anglebound}, we have
$$ H^\bullet \Hom_{\mathcal{W}( (\bb{C}^*)^n, \stp_\Sigma)} (L^\delta(D_1), L^\delta(D_2)) \cong HF(L^{\delta - d}(D_1), L^\delta(D_2))$$
for generically chosen $d \in (\delta/2, \delta)$ and small $\delta$.\footnote{Choosing $d>\delta/2$ guarantees that these Lagrangians do not intersect near infinity by \cref{eq:anglebound}.} Fix a compact set $V \subset \bb{R}^n$ as in the definition of the conical twisting Hamiltonians. Using a priori energy bounds from exactness and monotonicity (as in \cite[Proposition 3.19]{ganatra2017covariantly}), there is an $R>0$ such that for any almost complex structure $J$ that is conical on $\val^{-1}(B_R(0) \setminus V)$, all $J$-holomorphic discs contributing to the differential in $CF(L^{\delta - \eps}(D_1), L^\delta(D_2))$ are contained in $\val^{-1}(B_R(0))$. Moreover, this property is preserved when modifying $J$ and the Lagrangians outside of $B_R(0)$ without creating additional intersection points. We can then interpolate between the two flavors of tropical sections as follows.

Let $f_a: [0,\infty) \to \bb{R}$ be a smooth, non-decreasing concave function that satisfies $f_a(t) = t$ for $t \leq a$ and $f_a(t) = a + \eps$ for $t \geq a+\eps$ for some small $\eps >0$ as drawn in \cref{fig:interpolatingfunction}. We define an interpolating tropical Lagrangian section of radius $a$ associated to a toric divisor $D$ to be a Lagrangian section of the form $L_{a} (D) = dH_D^a$ where
$$ H_D^a(u) = 2 \pi \int_{\bb{R}^n} F_D(u-x) \eta_{\eps f_a(r)} (x) \, dx $$
outside of a compact set $V$ with small $\eps >0$. Calculating as in the proof of \cref{lemma:conesctions}, we obtain that
$$ \left| \nabla H_D^a + 2\pi n_\alpha \right| \leq \eps f_a'(r) C_D. $$
in $U_\alpha \setminus V$ for all $\alpha \in A$. Proceeding as in the case of the conical tropical Lagrangian sections we can then construct an interpolating tropical Lagrangian section of radius $a$ and defect $\delta$, $L_{a}^\delta(D)$, satisfying
\begin{equation} \label{eq:interpolateangle} \left| \text{Arg}\left( z^{\alpha}|_{L^\delta(D)(a)} \right) + \delta \right| < f_a'(r) \delta . \end{equation}
Moreover, $L_a^\delta(D) = L^\delta(D)$ in $\val^{-1}(B_a(0))$ and is equal to $\psi^{- \frac{\delta}{2\pi}} (L_a(D))$ where $L_a(D)$ is a monomially admissible Lagrangian section corresponding to $D$ with respect to $\Delta_\eps$. Finally, observe that \cref{eq:interpolateangle} and concavity of $f_a$ imply that all intersection points between $L_{a}^\delta(D_1)$ and $L_a^{\delta-d}(D_2)$ lie in  $V$ for all $a$ such that $V \subset B_a (0)$. After choosing $a > R$ and a monomially admissible almost complex structure $J$ that is conical on $\val^{-1}(B_R(0) \setminus V)$, we have a canonical identification
$$ \CF(L^{\delta-d}({D_1}), L^\delta(D_2)) = \CF(L_a^{\delta - d}(D_1), L^\delta_a(D_2)  ) = \CF(\psi^{d/2\pi}(L_{a}(D_1)), L_{a}(D_2) ) $$
as chain complexes. Finally, from Propositions 2.17 and 3.3 of \cite{hanlon2019monodromy}, we get 
$$ \HF(\psi^{d/2\pi}(L_{a}(D_1)), L_{a}(D_2) ) \cong \HF(\psi^{d/2\pi}(L(D_1)), L(D_2)) \cong H^\bullet \Hom_{\mathcal{F}_\Delta^s} (L(D_1), L(D_2)) $$
as desired.
\end{proof}

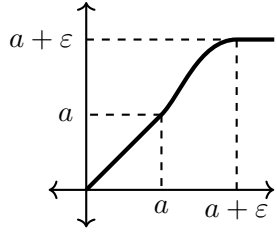
\begin{figure}
   \centering
   \begin{tikzpicture}
   \draw[<->, thick] (0, -0.5) -- (0,2.5); 
   \draw[<->, thick] (-0.5, 0) -- (2.5,0); 
   \draw[thick, dashed] (1,1) -- (1,0) node[below] {$a$};
   \draw[thick, dashed] (2,2) -- (2,0) node[below] {$a+ \eps$};
   \draw[thick, dashed] (1,1) -- (0,1) node[left] {$a$};
   \draw[thick, dashed] (2,2) -- (0,2) node[left] {$a+\eps$};
\draw[ultra thick] (0,0) .. controls (0.25,0.25) and (0.75,0.75) .. (1,1) .. controls (1.25,1.25) and (1.5,2) .. (2,2) .. controls (2.25,2) and (2.25,2) .. (2.5,2);
\end{tikzpicture}    \caption{The graph of the function $f_a$ used in the proof of \cref{lemma:samehoms}.}
   \label{fig:interpolatingfunction}
\end{figure}

We now upgrade \Cref{lemma:samehoms} to verify that we have an $A_\infty$ embedding using an argument similar to the proof of Corollary 4.36 in \cite{abouzaid2009morse}.

\begin{thm} \label{thm:embedding} The subcategory of $\mathcal{W}( (\bb{C}^*)^n, \stp_\Sigma)$ consisting of conical tropical Lagrangian sections is quasi-equivalent to $\mathcal{F}^s_{\Delta_\eps}$.
\end{thm}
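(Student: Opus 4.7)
The plan is to upgrade the morphism-level isomorphism of \cref{lemma:samehoms} to an $A_\infty$ quasi-equivalence, following the template of Corollary 4.36 in \cite{abouzaid2009morse}. Essential surjectivity is immediate: the subcategory on the partially wrapped side is defined so that every object is of the form $L^\delta(D)$ for some toric divisor $D$, and $D$ also determines a monomially admissible tropical Lagrangian section $L(D) \in \mathcal{F}^s_{\Delta_\eps}$. Thus it suffices to build an $A_\infty$-functor extending the identification $L(D) \mapsto L^\delta(D)$ on objects and to check it is cohomologically full and faithful. The cohomological statement is already \cref{lemma:samehoms}, so the content of the proof is producing the $A_\infty$-structure map.

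First, I would extend the interpolating tropical Lagrangian section construction from the proof of \cref{lemma:samehoms} from pairs to arbitrary finite tuples of objects. Given $L(D_0), \ldots, L(D_k) \in \mathcal{F}^s_{\Delta_\eps}$, a careful choice of cofinal wrapping (via \cref{lemma:cofinal}) produces interpolating Lagrangians $L_a^{\delta_i}(D_i)$ such that, for $a$ sufficiently large depending on the tuple and perturbation data:
\begin{enumerate}
    \item inside $\val^{-1}(B_a(0))$ the interpolating family coincides with the conical family $L^{\delta_i}(D_i)$, and
    \item outside $\val^{-1}(B_a(0))$ the interpolating family agrees with the monomially admissible family $L(D_i)$ up to the admissible Hamiltonian flow $\psi^{-d/2\pi}$ for a fixed small $d$.
\end{enumerate}

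Next, the a priori energy estimates coming from exactness and monotonicity, as in the proof of \cref{lemma:samehoms} and Proposition 3.19 of \cite{ganatra2017covariantly}, are upgraded from holomorphic strips to holomorphic polygons. For a monomially admissible almost complex structure that is also conical on $\val^{-1}(B_R(0) \setminus V)$, all $J$-holomorphic polygons contributing to $A_\infty$-operations on the interpolating family are trapped in $\val^{-1}(B_R(0))$ for some $R < a$. This is the key compactness input and combines the monotonicity estimate near infinity in the conical region with the monomial admissibility estimate in the intermediate region. Inside $\val^{-1}(B_R(0))$, the three families (conical, interpolating, monomially admissible after applying $\psi^{d/2\pi}$) coincide, giving a canonical chain-level identification
\[ \mu^k_{\mathcal{W}}(L^\delta(D_0),\ldots,L^\delta(D_k)) \;=\; \mu^k_{\text{interp}}(L_a^\delta(D_0),\ldots,L_a^\delta(D_k)) \;=\; \mu^k_{\mathcal{F}^s_{\Delta_\eps}}(L(D_0),\ldots,L(D_k)) \]
for compatible choices of perturbation data. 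Assembling these identifications coherently across tuples yields the desired $A_\infty$-functor.

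The main obstacle is reconciling the two wrapping schemes used to define the $A_\infty$ operations: the cofinal family of \cref{lemma:cofinal} on the partially wrapped side and the quadratic-type Hamiltonian $K$ on the monomially admissible side. The interpolating construction is precisely the bridge, but verifying that the continuation maps, cofinality checks, and transversality perturbations all localize inside $\val^{-1}(B_R(0))$ simultaneously for every tuple in a coherent way requires careful (though essentially standard) bookkeeping. Once the localization is established, the $A_\infty$-relations hold automatically because the moduli spaces used to define both sides are literally identified.
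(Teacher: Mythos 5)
Your proposal follows essentially the same strategy as the paper's proof: use the interpolating tropical Lagrangian sections from \cref{lemma:samehoms}, trap holomorphic polygons in $\val^{-1}(B_R(0))$ via monotonicity, and identify the moduli spaces of the two categories on the nose. However, the final step --- assembling the chain-level identifications into an actual $A_\infty$-functor --- is where the paper has to do real work that your proposal dismisses as ``essentially standard bookkeeping,'' and I don't think that dismissal is justified.

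The concrete difficulty is this: the radius $R$ (and hence the interpolation parameter $a$) needed to trap all polygons depends on the tuple of objects. If you try to define a naive $A_\infty$-functor by choosing a sufficiently large $a$ tuple by tuple, the $A_\infty$-relations mix moduli spaces for tuples of different sizes, and the chain complexes underlying the various choices of $a$ are not literally identical even though they are quasi-isomorphic. The paper resolves this by working with the directed categories $\mathcal{O}_{\mathcal{A}}$ and $\mathcal{O}_\Delta$ prior to localization, letting the parameter $a$ appear as part of the object data, filtering $\mathcal{O}_\Delta$ by subcategories $\mathcal{O}^k_\Delta$ (objects with $a\geq k$), and using the Floer-theoretic quasi-inverse functors $\mathscr{G}_{\ell\to k}$ to absorb the mismatch in $a$-values: the functor $\mathscr{F}^k$ is defined as the composition of the on-the-nose moduli identification with $\mathscr{G}^k_{\min a_j \to R}$. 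That device is the essential content of the proof and is not something one can wave away as routine perturbation management. Your proposal also omits the final step of checking that $\mathscr{F}$ takes continuation elements to continuation elements, which is required to descend the quasi-equivalence $\mathcal{O}_\Delta \simeq \mathcal{O}_{\mathcal{A}}$ through localization to $\mathcal{F}^s_{\Delta_\eps} \simeq \mathcal{A}$; the paper proves this using that the relevant chain complexes are identified for large enough $a$, or alternatively via the observation that $HF(L^{\delta_{n+k}}(D), L^{\delta_n}(D))\cong \CC$ makes the choice of nonzero continuation element immaterial.
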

\begin{proof}
We will establish the quasi-equivalence by relating the categories prior to localization. For a sequence $\{ \delta_n \}_{n \in \{ 0\} \cup \bb{N} }$ with $\delta_0$ small, $\delta_{n+1} \in (0, \delta_n/2)$, so that  $\delta_n \to 0$ as $n \to \infty$, we can construct the conical tropical Lagrangian sections $L^{\delta_n}(D)$ for all toric divisors $D$ on $\check X_\Sigma$ so that $L^{\delta_n}(D_1)$ and $L^{\delta_k}(D_2)$ intersect transversely for $k > n$. Note that all of these intersections occur in the fixed compact set $V$ by the assumption that $\delta_{n+1} \in (0, \delta_n/2)$. Clearly, the subcategory $\mathcal{A} \subset \mathcal{W}((\bb{C}^*)^n, \stp_\Sigma)$ consisting of the $L^{\delta_0}(D)$ is quasi-equivalent to the subcategory of conical tropical Lagrangian sections. Moreover, by \cref{lemma:cofinal}, the sequence $L^{\delta_n}(D)$ is cofinal for each toric divisor $D$. Thus, we can construct $\mathcal{A}$ by localizing the directed category $\mathcal{O}_\mathcal{A}$ whose objects are the pairs $(L^{\delta_n}(D), n )$ and appropriately ordered morphisms are computed with conical almost complex structures. 
By \cite[Proposition 3.19]{hanlon2019monodromy}, we can assume that $\eps = \delta_0/\min_{D}(C_D)$.\footnote{This minimum is achieved by $C_{D_\alpha}$ for some $\alpha \in A$.} As a consequence, the interpolating tropical Lagrangian sections $L^{\delta_n}_a(D)$ as defined in the proof of \cref{lemma:samehoms} are monomially admissible with $a$ large enough so that $\val(V) \subset B_a(0)$. 
Set $\mathcal{O}_\Delta$ to be the directed category whose objects are the pairs $(L^{\delta_n}_{a}(D),n)$ (with large enough $a$ in the sense of the preceding sentence) and appropriately ordered morphisms 
$$ m^k \colon CF \left(L_{a_{k-1}}^{\delta_{n_{k-1}}}(D_{k-1}), L_{a_{k}}^{\delta_{n_k}}(D_k) \right) \otimes \hdots \otimes CF\left(L_{a_0}^{\delta_{n_0}}(D_0), L_{a_1}^{\delta_{n_1}}(D_1) \right) \to CF \left( L_{a_0}^{\delta_0}(D_0), L_{a_{k}}^{\delta_{n_k}}(D_k) \right) $$
are computed with monomially admissible almost complex structures that are conical on the subset $\val^{-1}(B_{\min a_j} (0) \setminus V)$. The localization of $\mathcal{O}_\Delta$ at continuation elements is quasi-equivalent to $\mathcal{F}^s_\Delta$. Note that $\mathcal{O}_\Delta$ has a ``filtration" by subcategories $\mathcal{O}_\Delta^k$ whose objects are pairs $(L^{\delta_n}_{a}(D),n)$ with $a \geq k$. As changes in $a$ are implemented by compactly-supported Hamiltonian isotopies of each object, the inclusion $\mathscr{G}_{k \to \ell}\colon \mathcal{O}^k_\Delta \hookrightarrow \mathcal{O}^\ell_\Delta$ for $k \geq \ell$ is a quasi-equivalence with a Floer-theoretically defined inverse $\mathscr{G}_{\ell \to k}$ taking $(L^{\delta_n}_{a}(D),n)$ with $a < k$ to $(L^{\delta_n}_{k}(D),n)$ (see, for example, Section 10a of \cite{seidel2008fukaya}).

Now, we define a strictly unital functor $\mathscr{F}\colon \mathcal{O}_\Delta \to \mathcal{O}_\mathcal{A}$ as follows. On objects, we set
$$ \mathscr{F}(L^{\delta_n}_a(D) , n ) = (L^{\delta_n}(D), n)$$
for all $a$. For any finite set of objects $L^{\delta_{n_0}}(D_0), \hdots, L^{\delta_{n_k}}(D_k)$ with $n_0 > \hdots > n_k$, there is an $R > 0$ such that all pseudo-holomorphic disks computing $A_\infty$ structure maps $m^j_{\mathcal{O}_\mathcal{A}}$ with $j \leq k$ on these objects in appropriate order are contained in $\val^{-1}(B_R(0))$ by the same monotonicity argument as in the proof of \cref{lemma:samehoms}. In particular, the moduli spaces computing these structure maps are identical to those computing the structure maps $m^j_{\mathcal{O}_\Delta}$ for $j \leq k$ on the objects $L_{a_0}^{\delta_{n_0}}(D_0), \hdots, L_{a_k}^{\delta_{n_k}}(D_k)$ for any $a_0, \hdots , a_k \geq R$. Thus, for arbitrary $a_0, \hdots, a_k$, we get a well-defined $A_\infty$-functor by taking
$$ \mathscr{F}^k \colon CF \left(L_{a_{k-1}}^{\delta_{n_{k-1}}}(D_{k-1}), L_{a_{k}}^{\delta_{n_k}}(D_k) \right) \otimes \hdots \otimes CF\left(L_{a_0}^{\delta_{n_0}}(D_0), L_{a_1}^{\delta_{n_1}}(D_1) \right) \to CF \left( L^{\delta_0}(D_0) , L^{\delta_{n_k}}(D_{k}) \right) $$
to be the composition of the identification $CF \left( L_{a_0}^{\delta_0}(D_0) , L_{a_{k}}^{\delta_{n_k}}(D_{n_k}) \right) = CF \left( L^{\delta_0} ({D_0}), L^{\delta_{n_k}} ({D_{k}})\right)$ with $\mathscr{G}^k_{\min a_j \to R}$. Moreover, $\mathscr{F}$ is a quasi-equivalence as it induces an isomorphism on cohomology by definition.

To deduce a quasi-equivalence between $\mathcal{F}^s_\Delta$ and $\mathcal{A}$ from one between $\mathcal{O}_\Delta$ and $\mathcal{O}_{\mathcal{A}}$, we need to verify that the continuation elements in $H^\bullet \mathcal{O}_\Delta$ are sent to continuation elements in $H^\bullet \mathcal{O}_{\mathcal{A}}$ (see Proposition 3.39 in \cite{ganatra2017covariantly}). This follows from the fact that the underlying chain complexes are always identical as we have chosen all the interpolation parameters, ${a_i}$, to be large enough and hence identifications for small positive isotopies coincide and these identifications characterize continuation elements as in \cite[Definition 3.25]{ganatra2017covariantly}. Alternatively, observe that $HF(L^{\delta_{n+k}}(D), L^{\delta_n}(D)) \cong \bb{C}$ for all conical tropical Lagrangian sections, and thus localizing at any system of continuation elements (which must be nonzero) gives the same result. 
\end{proof}

It immediately follows from \cref{thm:OrigHMS} and \cref{thm:embedding} that the conical tropical Lagrangian sections are mirror to line bundles.

\begin{cor} \label{cor:hmsline} There is an embedding 
$$ \Pic_{\dg}(\check X_\Sigma) \hookrightarrow \mathcal{W}((\CC^*)^n, \stp_\Sigma) $$
taking a line bundle $\mathcal{O}(D)$ to the conical tropical Lagrangian section $L^\delta(D)$ for small $\delta$.
\label{cor:linebundlehms}
\end{cor}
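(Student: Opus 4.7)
The plan is to obtain this corollary by composing the two quasi-equivalences already established in the section. By \cref{thm:OrigHMS}, there is a quasi-equivalence $\Pic_{\dg}(\check X_\Sigma) \simeq \mathcal{F}^s_{\Delta_\eps}$ which sends a line bundle $\mathcal{O}(D)$ to the monomially admissible tropical Lagrangian section $L(D)$ associated to a support function for $D$. By \cref{thm:embedding}, there is a quasi-equivalence between $\mathcal{F}^s_{\Delta_\eps}$ and the full subcategory of $\mathcal{W}((\CC^*)^n, \stp_\Sigma)$ spanned by conical tropical Lagrangian sections.

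First, I would compose these two quasi-equivalences to obtain a quasi-fully-faithful functor from $\Pic_{\dg}(\check X_\Sigma)$ into $\mathcal{W}((\CC^*)^n, \stp_\Sigma)$. The target being a larger category means the composition is no longer essentially surjective, but the fully faithful property is preserved, which is exactly what is needed for an embedding in the sense of the statement.

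Next, I would track the image of an individual line bundle through the composition. Under the equivalence of \cref{thm:OrigHMS}, $\mathcal{O}(D)$ is identified with $L(D)$. Under the functor $\mathscr{F}$ constructed in the proof of \cref{thm:embedding}, the object $L(D)$ (more precisely, the corresponding object in the directed category $\mathcal{O}_\Delta$) is sent to $L^{\delta}(D)$ for appropriately small $\delta$. In particular, after localization at continuation elements on both sides, $\mathcal{O}(D)$ maps to the isomorphism class of $L^{\delta}(D)$ as an object of $\mathcal{W}((\CC^*)^n, \stp_\Sigma)$, which is independent of the small parameter $\delta$ by the discussion following the definition of $L^\delta(D)$.

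I expect no real obstacle here: the only small point to check is bookkeeping, namely that the object-level correspondence between support functions, monomially admissible sections, and conical sections is consistent under both equivalences, so that the induced functor on objects really does send $\mathcal{O}(D)$ to $L^\delta(D)$ and not merely to some object quasi-isomorphic to it via an unspecified choice. This is clear from the construction of $\mathscr{F}$ in the proof of \cref{thm:embedding}, which was set up precisely so that its value on objects is given by $L_a^{\delta_n}(D)\mapsto L^{\delta_n}(D)$ without further modification.
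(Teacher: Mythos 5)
Your proposal is correct and matches the paper's argument, which simply observes that the corollary ``immediately follows from \cref{thm:OrigHMS} and \cref{thm:embedding}'' by composing the two quasi-equivalences. The extra bookkeeping you supply about the functor $\mathscr{F}$ acting on objects as $L_a^{\delta_n}(D)\mapsto L^{\delta_n}(D)$ is a reasonable unpacking of why the composite sends $\mathcal{O}(D)$ to $L^\delta(D)$ on the nose.
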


\begin{rem} \label{rem:notechnicalprob} Recall from \cref{subsec:HMSboundary} that we need to choose a possibly nonstandard K\"{a}hler form on $(\CC^*)^n$ to ensure that $\mathcal{F}^s_{\Delta_\eps}$ is well-defined. However, $\mathcal{W}((\CC^*)^n, \stp_\Sigma)$ does not depend on the holomorphic structure, and the underlying symplectic forms are all the standard form on $T^*\RR^n/T^*_{2\pi \ZZ} \RR^n$. Thus, \cref{thm:embedding} allows us to compute the subcategory of conical tropical Lagrangian sections in $\mathcal{W}((\CC^*)^n, \stp_\Sigma)$ without any additional assumptions.
\end{rem}

\begin{rem} \label{rem:abouzaidembedding} From \cref{rem:relatetoAb}, we also have an embedding $\mathscr{T}\Fuk \hookrightarrow \mathcal{W}((\CC^*)^n, \stp_\Sigma)$ where $\mathscr{T}\Fuk$ is Abouzaid's category of tropical Lagrangian sections from \cite{abouzaid2009morse} either immediately in the case that $\mathcal{F}^s_{\Delta_\eps}$ is well-defined for the standard K\"{a}hler form or by slightly modifying the proof.
\end{rem}

\begin{rem} The construction of conical tropical Lagrangian sections from support functions works for any toric variety $\check X_\Sigma$, and the wrapping lemma, \ref{lemma:cofinal}, requires only that $\Sigma$ is complete. However, the Floer-theoretic arguments used to prove \cref{thm:OrigHMS} require smoothness and projectivity (cf. \cite[Remark 3.33]{hanlon2019monodromy}).
\end{rem}

We end this section with a technical question. In \cref{thm:ageneration}, we will show that the image of the embedding from \cref{thm:embedding} generates. However, we do not show that $\mathcal{F}^s_{\Delta_\eps}$ generates $\mathcal{F}_{\Delta_\eps}$, which would be implied by an embedding $\mathcal{F}_{\Delta_\eps} \hookrightarrow \mathcal{W}((\CC^*)^n, \stp_\Sigma)$. Such an embedding is natural to expect due to the highly constrained nature of monomially admissible Lagrangians and the expected duality between partially wrapped and infinitesimally wrapped categories.

\begin{qs} Is there a more general ``conicalization" procedure that produces a conical Lagrangian in $(\CC^*)^n \setminus \stp_\Sigma$ from a Lagrangian that is monomially admissible with respect to $\Delta_\eps$ that preserves Floer theory?
\end{qs}

\subsection{Action of conical twisting Hamiltonians}

As in the monomially admissible Fukaya-Seidel category, twisting Hamiltonians should act on $\mathcal{W}((\CC^*)^n, \stp_\Sigma)$ in addition to giving a construction of tropical Lagrangian sections. The only difficulty in constructing this action is that the flows of conical twisting Hamiltonians do not preserve the stop $\stp_\Sigma$, but this can be remedied as follows.

Let $D$ be a toric divisor on $\check X_\Sigma$ and $\wh{H}^\delta_D$ a corresponding conical twisting Hamiltonian. If $L$ is a conical Lagrangian in $ (\bb{C}^*)^n$ that satisfies\footnote{Recall that $\text{Arg}$ takes values in $[-\pi, \pi)$.}
\begin{equation} \label{argleft} \left| \text{Arg}(z^\alpha|_L) \right|  > \delta \end{equation}
in $U_\alpha \setminus V$ for all $\alpha \in A$ where $\{ U_\alpha \}$ is the $\eps$-star open cover for $\Sigma$, then the third condition of \cref{lemma:conesctions} implies that
\[ \phi^1_{D, \delta}(L) \cap \bb{L}_\Sigma = \emptyset \]
outside of $V$ where $\phi^1_{D, \delta}$ is the time-$1$ flow of $\wh{H}_D^\delta$. Thus, if we set
$$ \stp_\Sigma^\delta = \{ (u, \thetaa) \in \partial_\infty(\CC^*)^n : |2\pi k - \thetaa \cdot \alpha| \leq \delta \text{ for some } k \in \ZZ \text{ if } u \in U_\alpha \}, $$
the objects of $\mathcal{W}((\CC^*)^n, \stp_\Sigma^\delta)$ satisfy \cref{argleft}.\footnote{We could also take $\stp_\Sigma^\delta$ to be somewhat smaller because of condition 2 in \cref{lemma:conesctions}.} Thus, carefully applying standard techniques (for example, see \cite[Section 10b]{seidel2008fukaya} and \cite[Section 4.1]{hanlon2019monodromy}), we get a functor
\[ \mathscr{F}_D^\delta : \mathcal{W}((\CC^*)^n, \stp_\Sigma^\delta) \to \mathcal{W}((\bb{C}^*)^n, \stp_\Sigma) \]
induced by the flow $\phi^1_{D,\delta}$. Since $\stp_\Sigma \subset \stp_\Sigma^\delta$, there is also an inclusion functor 
$$ \mathcal{W}((\CC^*)^n, \stp_\Sigma^\delta) \overset{\sim}{\hookrightarrow} \mathcal{W}((\bb{C}^*)^n, \stp_\Sigma) $$
which is an equivalence for small $\delta$ because $\mathcal{W}((\bb{C}^*)^n, \stp_\Sigma)$ is finitely generated by objects (either linking disks or as we later prove, tropical Lagrangian sections) whose morphisms can be computed in the complement of $\stp_\Sigma^\delta$. As a result, we obtain an autoequivalence $\mathscr{F}_D$ of $\mathcal{W}((\bb{C}^*)^n, \stp_\Sigma)$ for each toric divisor $D$ on $\check X_\Sigma$. 

\begin{rem} The requirement that objects not be too close to the stop in order to define $\mathscr{F}^\delta_D$ above is an indication that we need to be careful with wrapping when working with Lagrangian correspondences. We encounter this issue again when working with toric morphisms in \cref{subsec:toricmorphisms,subsec:inclusionmirror} and explore it in some generality in \cref{app:correspondenceadmissibility}.
\end{rem}

When $D$ is an effective divisor, we can also define a natural transformation $T_D: \mathscr{F}_{-D} \to \id$ as in \cite[Section 4.2]{hanlon2019monodromy} following \cite[Section 10c]{seidel2008fukaya} and using the following lemma.\footnote{As we are working in a stopped Liouville domain rather than a Liouville sector, compactness can be obtained by a standard maximum principle argument in the radial coordinate as opposed to careful monotonicity estimates.}

\begin{lem} \label{lemma:poseff} If $D$ is an effective toric divisor, the flow $\phi^t_{-D}$ is everywhere non-negative at infinity.
\end{lem}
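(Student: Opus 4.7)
The plan is to reduce the statement to a combinatorial fact about piecewise-linear support functions by exploiting the degree-$1$ homogeneity of $\wh{H}^\delta_{-D}$ guaranteed by Lemma \ref{lemma:conesctions}. The flow $\phi^t_{-D}$ is generated by the Hamiltonian $\wh{H}^\delta_{-D} \circ u$ on $(\CC^*)^n$, so to say it is non-negative at infinity means that $\lambda(X_{\wh{H}^\delta_{-D}}) \geq 0$ outside a compact set, where $\lambda = \sum u_i \, dp_i$ is the Liouville form already used in the proof of Lemma \ref{lemma:cofinal}. For any Hamiltonian depending only on $u$, one computes directly that $X_H$ is vertical along the fiber direction and $\lambda(X_H) = \pm u \cdot \nabla_u H$ with the sign determined by the paper's Hamiltonian convention; by Euler's identity, this equals $\pm H$ on the locus where $H$ is positively homogeneous of degree $1$. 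Since the first property in Lemma \ref{lemma:conesctions} makes $\wh{H}^\delta_{-D}$ degree-$1$ homogeneous outside $V$, verifying non-negativity of the flow at infinity reduces to a sign check on $\wh{H}^\delta_{-D}$ itself.

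Next, I would use the construction of $\wh{H}^\delta_{-D}$ as a mollification. Outside $V$, this smoothing is (a convex average of) the values of $2\pi F_{-D}$, so its sign is controlled by that of $F_{-D}$ on all of $Q$. The combinatorial input is then: the support function of an effective divisor has a definite sign on all of $Q$. Explicitly, write $D = \sum_{\alpha \in A} n_\alpha D_\alpha$ with $n_\alpha \geq 0$. From the convention $D_{\sF} = \sum_\alpha -\sF(\alpha)\, D_\alpha$ fixed in Section \ref{subsec:toricnotation}, we obtain $F_D(\alpha) = -n_\alpha$ and hence $F_{-D}(\alpha) = n_\alpha \geq 0$ on every primitive generator. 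Since $F_{-D}$ is linear on each cone $\sigma = \langle \alpha_1, \dots, \alpha_k \rangle \in \Sigma$ and every $u \in \sigma$ is a non-negative combination of the $\alpha_i$, the values $F_{-D}(u) \geq 0$ extend to all of $Q$, and so $\wh{H}^\delta_{-D} \geq 0$ on $Q \setminus V$. Together with the computation above, this yields the desired non-negativity of the flow at infinity.

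The main (minor) obstacle is bookkeeping with signs: one has to be consistent between the Liouville form $\lambda = \sum u_i \, dp_i$ (rather than the canonical $p \, du$), the Hamiltonian vector field convention matching Lemma \ref{lemma:cofinal}, and the convention $D_{\sF} = -\sum \sF(\alpha) D_\alpha$, which makes the relationship between effectivity of $D$ and positivity of $F_{-D}$ slightly subtle. Once these are aligned, the Euler-identity reduction plus the convexity of support functions on each cone delivers the result immediately, and no Floer-theoretic input is required.
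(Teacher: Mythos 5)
Your proof is correct and follows the same route as the paper: evaluate the Liouville form on the Hamiltonian vector field, use degree-$1$ homogeneity to reduce to the sign of the mollified support function, and conclude from $F_{-D}\geq 0$ by effectivity. The only cosmetic difference is that the paper reaches the integral $2\pi\int F_{-D}(u-Rx)\,\eta(x)\,dx$ by applying the piecewise-linear Euler identity inside the gradient formula \eqref{eq:conicalgradient}, whereas you apply Euler's identity directly to $\wh{H}^\delta_{-D}$ outside $V$; these are the same computation, and your version is marginally cleaner.
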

\begin{proof} This is a similar (but simpler) calculation to \cref{lemma:cofinal}. Using the same coordinates and notation, we have
$$ X_{\wh{H}^\delta_{-D}} = (0, \nabla \wh{H}_{-D}^\delta).$$
Thus, using \cref{eq:conicalgradient} and the fact that $F_{-D}$ is piecewise-linear, we have
\begin{align*} \lambda( \partial_t \phi^t_D(u, p)) &= \lambda\left(X_{\wh{H}^\delta_{-D}}\right) \\
&= 2\pi \int_{\RR^n} F_{-D}(u - Rx) \eta_{\eps_{-D}(t)}(x) \, dx. 
\end{align*}
However, $F_{-D}$ is non-negative because $D$ is effective.
\end{proof}

The mirrors of the corresponding functors and natural transformations were computed on $\mathcal{F}^s_{\Delta_\eps}$ in \cite[Theorems 4.5 and 4.9]{hanlon2019monodromy}. Combining those calculations with our \cref{thm:ageneration}, which appears later but does not rely on this section, we obtain the following.

\begin{thm} \label{thm:wrappingaction} Under the quasi-equivalence of \cref{cor:HMScor}, the functor $\mathcal{F}_D$ corresponds to 

$$ \_ \otimes \mathcal{O}(D): D^b_{\dg}\Coh(\check X_\Sigma) \to D^b_{\dg}\Coh(\check X_\Sigma) $$

and the natural transformation $T_D$ to

$$ \_ \otimes s_D : \_ \otimes \mathcal{O}(-D) \to \id $$

where $s_D$ is a defining section of $D$.
\end{thm}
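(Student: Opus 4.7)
The plan is to reduce the statement to the analogous result for the monomially admissible Fukaya-Seidel category \cite[Theorems 4.5 and 4.9]{hanlon2019monodromy}, using the embedding of \cref{thm:embedding}, the generation result \cref{thm:ageneration}, and a careful comparison of the twisting action in both settings.

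First, by \cref{thm:ageneration} and \cref{thm:embedding}, the conical tropical Lagrangian sections $\{L^\delta(D')\}$ generate $\mathcal{W}((\CC^*)^n, \stp_\Sigma)$, and the subcategory they span is quasi-equivalent to $\mathcal{F}^s_{\Delta_\eps}$, which in turn is equivalent to $\Pic_{\dg}(\check X_\Sigma)$ by \cref{thm:OrigHMS}. Since line bundles generate $D^b_{\dg}\Coh(\check X_\Sigma)$, to identify an exact autoequivalence it suffices to identify it on this generating subcategory, together with the morphisms between generators. So I would reduce the claim to showing that the restriction of $\mathscr{F}_D$ to the subcategory of conical tropical sections is quasi-isomorphic, as an $A_\infty$-functor, to the restriction of $\_ \otimes \mathcal{O}(D)$ to $\Pic_{\dg}(\check X_\Sigma)$ under the identification from \cref{thm:embedding}.

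Second, I would verify this identification on objects by a direct Hamiltonian computation: $\phi^1_{D,\delta}(L^\delta(D'))$ is, up to a small Hamiltonian isotopy supported in the complement of $\stp_\Sigma$, the conical tropical Lagrangian section $L^{\delta'}(D+D')$ for an appropriate $\delta'$. The key point is that the conical twisting Hamiltonians $\wh H_D^\delta$ and $\wh H_{D'}^\delta$ have gradients that nearly add to $\nabla \wh H_{D+D'}^\delta$ outside $V$ (both sides asymptote to $2\pi(\sF_D + \sF_{D'}) = 2\pi \sF_{D+D'}$ by \cref{lemma:conesctions}), so the time-one flow composition differs from a single conical twisting by a compactly-supported exact Lagrangian isotopy avoiding $\LL_\Sigma$. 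On morphisms, I would use the interpolating sections $L_a^\delta(D)$ of the proof of \cref{lemma:samehoms} to simultaneously present both the action of $\mathscr{F}_D$ and the action of the monomially admissible twisting flow on the same Floer complexes, thereby upgrading the object-level identification to a strictly unital $A_\infty$-functor identification. This reduces the claim on $\mathscr{F}_D$ to \cite[Theorem 4.5]{hanlon2019monodromy}.

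Third, for the natural transformation $T_D$ when $D$ is effective, I would follow the construction in \cite[Section 4.2]{hanlon2019monodromy} parallel to \cite[Section 10c]{seidel2008fukaya}: $T_D$ is defined from the noncompact count of pseudoholomorphic sections of a Lefschetz fibration model for the positive Hamiltonian isotopy $\phi^t_{-D,\delta}$ — the positivity at infinity being supplied by \cref{lemma:poseff}. Under the interpolation to monomially admissible sections $L_a^\delta(D')$ used above, the resulting chain-level natural transformation coincides with the monomially admissible one, and its mirror is computed by \cite[Theorem 4.9]{hanlon2019monodromy} to be $\_\otimes s_D$. The main obstacle I expect is the compatibility of this natural transformation with the interpolation, since one must ensure that the moduli spaces of sections of the Lefschetz fibration used to build $T_D$ in the conical setting remain inside the region where the interpolating Lagrangians agree with the conical ones; this is handled by the same a priori monotonicity estimates used in \cref{lemma:samehoms,thm:embedding}, applied uniformly over the one-parameter family of fibration data.
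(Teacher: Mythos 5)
Your proposal is correct and follows the same route as the paper: reduce to the monomially admissible results of Hanlon (Theorems 4.5 and 4.9 of \cite{hanlon2019monodromy}) via the embedding of \cref{thm:embedding}, then extend to all of $\mathcal{W}((\CC^*)^n, \stp_\Sigma)$ using generation by tropical sections (\cref{thm:ageneration}). The paper gives this proof in one sentence and leaves implicit the compatibility of the conical twisting action with the monomially admissible one under the embedding; your additional work with the interpolating sections $L_a^\delta(D)$ and the monotonicity estimates is exactly the content needed to make that compatibility precise, and is consistent with how \cref{thm:embedding} and \cref{lemma:samehoms} were set up.
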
 
   \subsection{Cocores}
   \label{subsec:cocores}
   The cotangent fibers of $T^*T^n$ form another natural set of Lagrangian sections in $\mathcal{W}( (\CC^*)^n, \stp_\Sigma)$.
In fact, these are the cocores of the $\LL_0$ components of $\LL_\Sigma$.
Note that while $\LL_0$ is a smooth $T^n$, the intersections of the FLTZ components with $\LL_0$ subdivide it into a collection of smaller strata. 
More generally, define the singular locus of the stratum to be $\LL_{\Sigma,\bot}:=\bigcup_{\substack{\sigma_1, \sigma_2\in \Sigma\\ \sigma_1\neq \sigma_2}} \LL_{\sigma_1}\cap \LL_{\sigma_2}$.
We define $\Conn(\Sigma, \sigma):=\{\II_{\sigma;i}\}$ to be the set of connected components of $\LL_\sigma \setminus \LL_{\Sigma, \bot}$. 
We call the points $x\in \II_{\sigma; i}$ the internal points of the FLTZ skeleton. 
Associated to each internal point, we have a linking disk $\cocore(x)$. If $x, x'$ are in the same connected component, $\II_{\sigma; i}$, then the linking disks $\cocore(x)$ and $\cocore(x')$ are isotopic. 
For our purposes, it will be important to know that these objects in $\mathcal{W}((\CC^*)^n, \stp_\Sigma)$ are all represented by conical tropical Lagrangian sections when $\sigma = 0$.

\begin{lem} \label{lemma:cocoresaretropical} If $x \in \II_{0;i}$ for some $i$, then the cocore $\cocore(x)$ is admissibly Hamiltonian isotopic to a conical tropical Lagrangian section.
\end{lem}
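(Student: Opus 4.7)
The plan is to identify, for each cocore of the top-dimensional component $\LL_0 = \{0\} \times T^n$ of the FLTZ skeleton, a conical tropical Lagrangian section with matching asymptotic angular data, and then to connect them via a straightforward linear interpolation. Since the Liouville vector field is $Z = \sum u_i \partial_{u_i}$, it preserves the fiber coordinate $\thetaa$, so the cocore $\cocore(x)$ at $x = (0, \thetaa_0)$ is the constant section $\thetaa(u) \equiv \tilde\thetaa_0$ lifted to $T^*\RR^n$ via any choice of lift $\tilde\thetaa_0 \in M \otimes \RR$ of $\thetaa_0$. The condition $x \in \II_{0; i}$ forces $\alpha \cdot \tilde\thetaa_0 \notin 2\pi\ZZ$ for every primitive generator $\alpha \in A$.

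First I would construct the matching divisor. For each $\alpha \in A$, let $k(\alpha) \in \ZZ$ be the unique integer with $\alpha \cdot \tilde\thetaa_0 \in (2\pi k(\alpha), 2\pi(k(\alpha)+1))$, and set $n_\alpha = -k(\alpha) - 1$. The conical smoothing estimates in \cref{lemma:conesctions} and \cref{eq:anglebound} show that the lift $p_\infty$ of $L^\delta(D)$ to $T^*\RR^n$ (for $D = \sum n_\alpha D_\alpha$) satisfies $\alpha \cdot p_\infty(u) \approx -2\pi n_\alpha - \delta = 2\pi(k(\alpha)+1) - \delta$ on $U_\alpha$ outside $V$, and with the chosen $n_\alpha$ this value also lies in the open interval $(2\pi k(\alpha), 2\pi(k(\alpha)+1))$ once $\delta$ is small enough. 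Because $\check X_\Sigma$ is smooth, each cone of $\Sigma$ is generated by a lattice basis, so any assignment of integers to the primitive generators extends uniquely to a continuous piecewise linear support function; in particular the collection $\{n_\alpha\}$ arises from a genuine toric divisor $D$.

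Next I would carry out the interpolation. Set $p_t(u) = (1-t)\tilde\thetaa_0 + t\, p_\infty(u)$ for $t \in [0,1]$, and let $L_t$ be the projection to $(\CC^*)^n$ of the graph of $p_t$. Each $L_t$ is the graph of an exact one-form, hence an exact Lagrangian section of $\val$; moreover, since $p_\infty$ is homogeneous of degree zero outside $V$ by conicality of $L^\delta(D)$, and the constant section is trivially so, each $L_t$ is conical at infinity. For any $u$ outside $V$, pick a generator $\alpha$ of the cone of $\Sigma$ containing $u$ so that $u \in U_\alpha$. Then $\alpha \cdot p_t(u)$ is a convex combination of $\alpha \cdot \tilde\thetaa_0$ and $\alpha \cdot p_\infty(u)$, both of which lie in the common open interval $(2\pi k(\alpha), 2\pi(k(\alpha)+1))$, so $\alpha \cdot p_t(u) \notin 2\pi\ZZ$. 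Hence $L_t$ avoids $\LL_\Sigma$ outside $V$ for every $t$ and in particular avoids $\stp_\Sigma$ at infinity, so $\{L_t\}$ is an admissible Hamiltonian isotopy in $((\CC^*)^n, \stp_\Sigma)$ from $\cocore(x)$ at $t = 0$ to $L^\delta(D)$ at $t = 1$.

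The main subtlety is the correct calibration of the integers $n_\alpha$: they must be chosen so that the tropical section's asymptotic $\thetaa$-value lies on the same side of each hyperplane $\{\alpha \cdot \thetaa \in 2\pi\ZZ\}$ as the cocore's. Once this is arranged, convexity of the linear interpolation in each $\alpha$-direction, combined with the conical smoothing estimates from \cref{lemma:conesctions}, makes admissibility at every intermediate time automatic; nothing needs to be said about the compact region, since admissibility is only a condition at infinity.
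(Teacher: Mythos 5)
Your proof is correct and follows essentially the same route as the paper: lift the cocore to the constant section at $\tilde\thetaa_0$, choose the divisor $D=\sum n_\alpha D_\alpha$ so that the asymptotic angle of $L^\delta(D)$ and $\alpha\cdot\tilde\thetaa_0$ lie in the same open interval $(2\pi k(\alpha),2\pi(k(\alpha)+1))$ for every $\alpha\in A$, and interpolate linearly, using convexity of the interval to keep the straight-line isotopy off the stop. The only cosmetic difference is your indexing convention ($n_\alpha=-k(\alpha)-1$ versus the paper's $m_\alpha=-n_\alpha$) and your explicit aside that smoothness of $\Sigma$ guarantees that the data $\{n_\alpha\}$ determines a support function, which the paper leaves implicit.
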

\begin{proof} We lift to $T^* \RR^n$ with coordinates $(u,p)$ and write $x= (0, p_o)$ for some $p_o$ such that $\alpha \cdot p_o \not \in 2\pi \ZZ$ for all $\alpha \in A$. Then, for each $\alpha \in A$, there exists $m_\alpha \in \ZZ$ such that
$$ 2\pi(m_\alpha - 1) < \alpha \cdot p_o < 2\pi m_\alpha .$$
Let $D = \sum_{\alpha \in A} - m_\alpha D_\alpha$. By \cref{eq:anglebound}, we have
$$ 2\pi m_\alpha - 2 \delta < \nabla H_D^\delta(u) < 2\pi m_\alpha $$
for all $u \in U_\alpha$ near infinity. Thus,
$$ 2\pi(m_\alpha - 1) \leq 2\pi(m_\alpha -1) + t(1-2\delta) < (1-t)(p_o \cdot \alpha) + t\nabla \wh{H}_D^\delta(u) < 2\pi m_\alpha $$
for all $u \in U_\alpha$ near infinity, $\delta$ small, and $t \in [0,1]$ so that
$$ L_t = \left(u, tp_o + (1-t)\nabla \wh{H}_D^\delta(u) \right)$$
gives an admissible Hamiltonian isotopy from $\cocore(x)$ to the tropical Lagrangian section $L^\delta(D)$ after passing to the quotient $T^*\RR^n/T_{2\pi \ZZ}^* \RR^n$. 
\end{proof}

\begin{rem}
    One may ask about the Lagrangian ``zero'' section $L^\delta(0)$. 
    This Lagrangian is not a cocore or linking disk, as it is not contained in the interior of any of the $\II_{\sigma; k}$ strata. 
    Rather, it is something like a linking disk to all FLTZ strata simultaneously. 
    This distinction (that it is not actually a linking disk) will become important in the later discussion of generation and localization (\cref{sec:applications}). 
\end{rem}

\begin{ex} To illustrate \cref{lemma:cocoresaretropical}, we will characterize these cocores in the example of mirror symmetry for $\CP^n$.
Let $\Sigma_n$ be the fan for $\CP^n$ with generators $e_1, \hdots, e_n, -e_1 - \hdots - e_n$ where $e_i$ are standard basis vectors. Let $\LL_0=T^n$ be the core of the FLTZ skeleton.
The decomposition of $\LL_0\subset \LL_{\Sigma_n}$ into Lagrangian stratum 
    \[\LL_0\setminus \bigcup_{\alpha\in A} \LL_\alpha\]
    has $n$ connected components.
    Furthermore, the components can be indexed $\II_{0; 1}, \ldots \II_{0;n}$ so that the cocore $\cocore(x_i)$ at $x_i\in \II_{0;i}$ is Hamiltonian isotopic to a tropical section mirror to $\mathcal{O}(-n-1 +i)$.
    
    To see this, observe first that
    \[\LL_0\setminus \bigcup_{\alpha\in A} \LL_\alpha= (0, 2\pi )^n\setminus( H_{\langle 1, \ldots, 1\rangle}+2\pi \ZZ^n)\]
    where $H_{\langle 1, \ldots, 1 \rangle}$ is the plane perpendicular to the $\langle 1, \ldots , 1 \rangle$ direction. 
    The $2\pi \ZZ^n$ translates of this plane intersect the cube $n-1$ times. 
    The $n$ components $\II_{0;i}$ of $ (0, 2\pi )^n\setminus ( H_{\langle 1, \ldots, 1\rangle}+2\pi \ZZ^n)$ can be indexed in the order which they appear in the $e_1+\cdots+ e_n$ direction. As a result, the cocore for the stratum $\II_{0;i}$ can be taken at the point
    $$2\pi \left(\frac{i-1/2}{n}\right)(e_1+\cdots+ e_n) \in T^n. $$
    In the proof of \cref{lemma:cocoresaretropical}, we can then take $m_{e_j} = 1$ and $m_{-e_1 - \hdots - e_n} = 1 - i$ so that $D$ is in the class of $-n-1 +i$ times a hyperplane.
    
    We draw examples of these sections in \cref{fig:cp2section}. For simplicity, we use the notation
    $$ L^\delta(-1) = L^\delta(-D_{e_1} - D_{e_2} + D_{-e_1-e_2}) $$
    and
    $$ L^\delta(-2) = L^\delta(-D_{e_1} - D_{e_2}) $$
    for the two tropical Lagrangian sections Hamiltonian isotopic to $\cocore(x_1)$ and $\cocore(x_2)$, respectively.
    The ``zero section'' $L^\delta(0)$ has been drawn in \cref{fig:cp2section0}. This corresponds to taking the cotangent fiber above the origin in $T^2$ and pushing off the stop slightly in the negative direction. In \cref{fig:cp2section1}, notice that wrapping $L^\delta(-1)$ to $L^\delta(0)$ involves passing through several stop components at the origin.
    Finally, in \cref{fig:cp2section2} we see that there is a positive wrapping from $L^\delta(-2)$ to $L^\delta(-1)$ which passes through a single FLTZ skeleton component.
    It follows that the mapping cone of $L^\delta(-2)\to L^\delta(-1)$ is a linking disk to the FLTZ skeleton.
    \label{exam:cpn}
\end{ex}
The intuition from this example is the basis for the argument generating linking disks in \cref{sec:generation}.
\begin{figure}
    \begin{subfigure}[t]{.3\linewidth}
        \centering
        \includegraphics{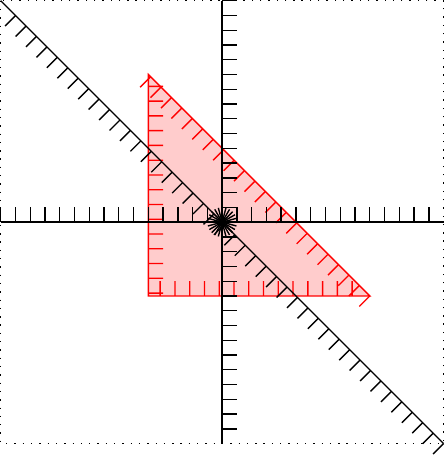}
        \caption{A piecewise linear approximation of the argument projection of the Lagrangian ``zero section'' $L^\delta(0)$.}
        \label{fig:cp2section0}
    \end{subfigure}
    \begin{subfigure}[t]{.3\linewidth}
        \centering
        \includegraphics{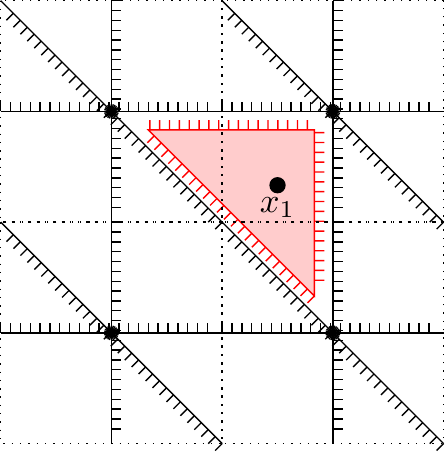}
        \caption{A piecewise linear approximation of the argument projection of $L^\delta(-1)$ to a cover of $T^2$.  }
        \label{fig:cp2section1}
    \end{subfigure}
    \begin{subfigure}[t]{.3\linewidth}
        \centering
        \includegraphics{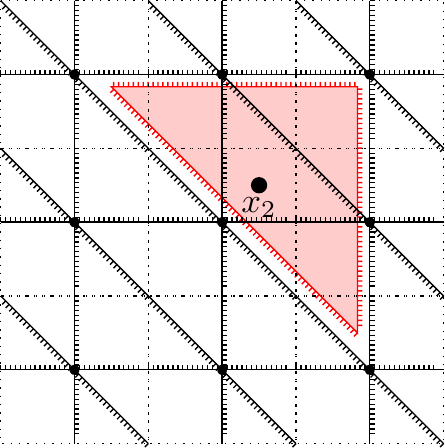}
        \caption{A piecewise linear approximation of the argument projection of $L^\delta(-2)$ to a cover of $T^2$. }
        \label{fig:cp2section2}
    \end{subfigure}
    \caption{}
    \label{fig:cp2section}
\end{figure}

We end this section with a characterization of tropical Lagrangian sections Hamiltonian isotopic to cocores.
\begin{lem}
    Let $D=\sum_{\alpha\in A} m_\alpha D_\alpha$ be a toric divisor.
    Then $L^\delta(D)$ is Hamiltonian isotopic to a cocore if and only if the set
    \[
        Q(D):=\bigcap_{\alpha\in A}\{p\;:\;2\pi(m_\alpha - 1) < \alpha \cdot p < 2\pi m_\alpha\}
        \]
    is nonempty for some $p\in M\tensor \RR$. 
    \label{lem:tropicalarecocore}
\end{lem}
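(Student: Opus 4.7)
My plan is to prove both implications using the invariant that in each cone-neighborhood $U_\alpha$ at infinity, the value of $\alpha \cdot p$ on any admissible Lagrangian section avoids $2\pi \ZZ$ and so lies in a well-defined connected component of $\RR \setminus 2\pi\ZZ$. For the forward direction I would reuse the linear interpolation of the proof of \cref{lemma:cocoresaretropical}. If $p_o \in Q(D)$, the defining inequalities force $\alpha \cdot p_o \notin 2\pi\ZZ$ for every $\alpha$, so the descended point $x = (0, p_o \bmod 2\pi\ZZ^n)$ lies in a smooth stratum $\II_{0;i}$ and its cocore is represented up to admissible isotopy by the constant section $p \equiv p_o$. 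Equation \eqref{eq:anglebound} places $\alpha \cdot \nabla \wh H_D^\delta(u)$ in the convex interval $(2\pi(m_\alpha - 1), 2\pi m_\alpha)$ for $u \in U_\alpha$ near infinity, so the linear family $L_t = (u, (1-t)\nabla \wh H_D^\delta(u) + t p_o)$ stays in that interval after pairing with $\alpha$, yielding an admissible Hamiltonian isotopy from $L^\delta(D)$ to $\cocore(x)$.

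For the converse, suppose $L^\delta(D)$ is admissibly Hamiltonian isotopic to $\cocore(x)$ via an isotopy $\{\Phi_s\}_{s \in [0,1]}$ for some $x \in \II_{0;i}$. I would lift the generating Hamiltonian through the covering $\pi: T^*Q \to T^*Q/T^*_{2\pi\ZZ}Q$ to obtain a flow $\tilde \Phi_s$, and take $\tilde L_0 \subset T^*Q$ to be the canonical graph lift of $L^\delta(D)$. Since $\tilde \Phi_s(\tilde L_0)$ is connected throughout and the preimage of $\cocore(x)$ is a disjoint union of constant-$p$ graphs $\{p \equiv p_o + 2\pi k\}_{k \in \ZZ^n}$, the endpoint $\tilde \Phi_1(\tilde L_0)$ is a single such graph and so selects a definite lift $p_o \in M \otimes \RR$. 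For each $\alpha \in A$ I would then fix $u_0$ deep inside $U_\alpha$ and track the $\alpha$-component of $p$ on $\tilde \Phi_s(\tilde L_0)$ above $u_0$: this is continuous in $s$, equals a value in $(2\pi(m_\alpha -1), 2\pi m_\alpha)$ at $s=0$ by \eqref{eq:anglebound}, equals $\alpha \cdot p_o$ at $s=1$, and avoids $2\pi\ZZ$ throughout since otherwise $\Phi_s(L^\delta(D))$ would meet $\LL_\alpha$ at infinity, contradicting admissibility. Connectedness of the interval then forces $\alpha \cdot p_o \in (2\pi(m_\alpha -1), 2\pi m_\alpha)$ for every $\alpha$, so $p_o \in Q(D)$.

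The main subtlety I expect is the tracking step above: the intermediate Lagrangians $\tilde \Phi_s(\tilde L_0)$ are not a priori sections of $u$, so the ``$p$-coordinate above $u_0$'' is not obviously single-valued. I would handle this by arranging the isotopy to be conical at infinity and exploiting that section Legendrians form an open subset of admissible Legendrians in $S^{n-1}_\infty \times T^n \setminus \stp_\Sigma$; together with the section endpoints, this permits a small perturbation keeping the intermediate Legendrians in section form near $u_0$. Alternatively, one can follow a single trajectory $\tilde \Phi_s(\tilde x_0)$ for $\tilde x_0 \in \tilde L_0$ above $u_0$ and use the conicality of the flow at infinity to control when its $u$-coordinate remains in $U_\alpha$.
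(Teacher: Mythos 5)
Your forward direction coincides exactly with the paper's: pick $p_0 \in Q(D)$, note it descends to an internal point of a stratum of $\LL_0$, and run the linear interpolation of \cref{lemma:cocoresaretropical}.

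For the converse, the paper takes a very different route and yours has a genuine gap. The paper observes that any cocore $\cocore((0,p_0))$ is Hamiltonian isotopic to some tropical section $L^\delta(D')$ with $p_0\in Q(D')$ (by \cref{lemma:cocoresaretropical}), so the hypothesis becomes $L^\delta(D)\sim L^\delta(D')$, and then invokes the fully faithful embedding of \cref{cor:hmsline} to conclude $\mathcal O(D)\cong\mathcal O(D')$, hence $D$ and $D'$ are linearly equivalent and $Q(D)$ is a nonempty translate of $Q(D')$. This is clean but uses the full strength of HMS for line bundles. Your proposed alternative is a direct topological tracking argument, which would be nice precisely because it avoids Floer theory — but the gap you flag in the final paragraph is fatal as stated, and neither of your two proposed repairs closes it. Openness of section Legendrians only tells you that small perturbations of a section are sections; it does not let you push a path that \emph{leaves} the section locus back into it, and there is no a priori reason the admissible isotopy can be chosen to stay section-valued (fold creation/cancellation is exactly the kind of move that obstructs this). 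Following a single trajectory $\tilde\Phi_s(\tilde x_0)$ fails for a different reason: nothing constrains the $u$-coordinate of that trajectory to remain in $U_\alpha$, so you lose the stop-avoidance input $\alpha\cdot p\notin 2\pi\ZZ$ at intermediate times. To make a direct argument work you would need a bona fide isotopy invariant — e.g. a signed count of intersections with a linking disk of $\LL_\alpha$, or a winding class extracted from the Legendrian boundary in $(S^{n-1}\times T^n)\setminus\stp_\Sigma$ — rather than pointwise tracking of the $p$-coordinate over a fixed base point. As written, the proof would not go through, and the HMS detour in the paper is what supplies the missing rigidity.
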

\begin{proof}
    Suppose that the intersection is nonempty.
    Then pick some $p_0$ in the intersection.
    By \cref{lemma:cocoresaretropical}, we obtain a Hamiltonian isotopy between $L^\delta(D)$ and $\cocore((0,p_0))$.

    Now suppose that $L^\delta(D)$ is admissibly Hamiltonian isotopic to a $\cocore((0,p_0))$. There exists $D'=\sum_{\alpha\in A} m_\alpha D_\alpha$ so that $p_0\in Q(D')$, and $L^\delta(D)$ is admissibly Hamiltonian isotopic to $\cocore((0, p_0))$.
    By homological mirror symmetry (\cref{cor:hmsline}), $D$ and $D'$ are linearly equivalent.
    The sets $Q(D)$ and $Q(D')$ are then related by a linear transformation; in particular, $Q(D)$ is non-empty.
\end{proof} \section{Mirror to a toric divisor}
   \label{sec:mirrordivisor}
   
We now fix an $n$-dimensional toric variety $\check X_\Sigma$ and its mirror space $(X=(\CC^*)^n, \stp_\Sigma)$.
We construct a Lagrangian submanifold $L_{\alpha>0}\subset (X, \stp_\Sigma)$ which is mirror to the structure sheaf of a toric divisor $\mathcal O_{\alpha >0}$.
This is a warm-up for the next section, where we will construct a Lagrangian correspondence $\Lc_{\alpha 0}: (X_2, \stp_{\afan})\Rightarrow(X_1, \stp_\Sigma)$ mirror to the inclusion $i_{\alpha0}: \check X_{\str(\alpha)/\alpha}\to \check X_\Sigma$. 
The later construction will recover $L_{\alpha>0}$ as a special case by taking pushforward of the mirror to the structure sheaf,
\[L_{\alpha>0}=L_{\alpha 0}\circ L_\alpha(0).\] 
\subsection{The mirror Lagrangian}
\label{sub:inclusioncorrespondence}
The Lagrangian submanifold $L_{\alpha>0}$ is constructed in a similar fashion to the tropical Lagrangian submanifolds considered in \cite{hicks2020tropical}. 
Given a tropical polynomial\footnote{Tropical polynomials are piecewise affine convex, as opposed to support functions which are piecewise linear.} $\sF: Q\to \RR$, the tropical Lagrangian submanifold $L_{\sF}$ was constructed by taking two monomially admissible sections of an SYZ fibration and surgering them along their common overlap.
Since the overlap approximates the complement of the tropical locus of $\sF$,  the image $\val(L_{\sF})$ is contained in a small neighborhood of the tropical locus of $\sF$. 
The constructed tropical Lagrangian submanifold is monomially admissible and mirror to a sheaf supported on a complex hypersurface with matching tropicalization.
Our current goal is to construct the mirror to a toric divisor by the same technique.

\begin{thm}[Mirror to a toric divisor]
   Let $\Sigma$ be a smooth fan, and let
   $\alpha \in A$ generate a 1-dimensional cone.
   \begin{itemize}
   \item (\Cref{def:divisormirror}) There exists an admissible  Lagrangian disk  $L_{\alpha>0}(0)\subset (X, \stp_\Sigma)$, along with a Lagrangian cobordism:
   \[K_\alpha:(L_0(\sF_\alpha), L_0(0))\rightsquigarrow L_{\alpha>0}(0).\]
   \item (\Cref{prop:mirrordivisormatching}) Under the mirror quasi-equivalence from \cref{cor:linebundlehms}, $L_{\alpha>0}(0)$ is mirror to a toric divisor $\mathcal O_{D}(0)\subset \check X_{\Sigma}$.
   \item (\Cref{claim:valuationofdivisormirror}) In the complement of a compact set $X^{int}\subset X$
   \[(\val(L_{\alpha>0}(0)\setminus X^{int}))\subset \str(\alpha).\]
   \end{itemize}
   \label{thm:divisormirror}
\end{thm}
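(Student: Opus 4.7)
The plan is to realize $L_{\alpha>0}(0)$ as a Lagrangian surgery of the two conical tropical sections $L^\delta(\sF_\alpha)$ and $L^\delta(0)$, and then to extract the cobordism $K_\alpha$ from the trace of that surgery, following the tropical Lagrangian construction of \cite{hicks2020tropical} in its Liouville-domain variant developed in \cref{app:cobordism}. On the $B$-side, these sections are mirror to $\mathcal O(-D_\alpha)$ and $\mathcal O$ by \cref{cor:linebundlehms}, and the defining section of $D_\alpha$ fits into the short exact sequence
\[ 0 \to \mathcal O(-D_\alpha) \to \mathcal O \to \mathcal O_{\alpha>0}(0) \to 0, \]
so morally $L_{\alpha>0}(0)$ is the mapping cone of the mirror morphism $L^\delta(\sF_\alpha) \to L^\delta(0)$, and a Lagrangian cobordism of Biran--Cornea type is the natural geometric incarnation of this cone.

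First I would analyze the relative position of the two sections. Since $\sF_\alpha$ vanishes on every cone $\sigma\in\Sigma$ that does not contain $\alpha$, the conical twisting Hamiltonian producing $L^\delta(\sF_\alpha)$ can be chosen to vanish outside a neighborhood of $\str(\alpha)$ in $Q$; hence the two sections agree there. Over $\str(\alpha)$, their fiberwise difference winds once around the $\alpha$-cocircle direction as one travels to infinity, and after a small admissible Hamiltonian perturbation their argument projections intersect cleanly along a codimension-$1$ Morse--Bott locus contained in a compact region of $X$. Applying a Polterovich-type surgery along this clean intersection produces a single embedded admissible Lagrangian disk, which we name $L_{\alpha>0}(0)$. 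Outside the compact surgery region, $L_{\alpha>0}(0)$ coincides with the portions of $L^\delta(\sF_\alpha) \cup L^\delta(0)$ that propagate to infinity; both of these are supported over $\str(\alpha)$ (one by construction of the perturbation, one because $L^\delta(\sF_\alpha)$ and $L^\delta(0)$ agree elsewhere and can be surgered to cancel), yielding the valuation statement $\val(L_{\alpha>0}(0)\setminus X^{int})\subset \str(\alpha)$ to be formalized in \cref{claim:valuationofdivisormirror}. The cobordism $K_\alpha$ is then obtained as the Lagrangian surgery trace from \cref{app:cobordism}, whose negative ends are $L^\delta(\sF_\alpha)$ and $L^\delta(0)$ and whose positive end is $L_{\alpha>0}(0)$.

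For the mirror identification of \cref{prop:mirrordivisormatching}, I would apply the Biran--Cornea iterated cone formula in its Liouville-domain form (\cref{app:generation}) to $K_\alpha$, which identifies $L_{\alpha>0}(0)$ with $\cone(L^\delta(\sF_\alpha)\to L^\delta(0))$ in $\mathcal W((\CC^*)^n,\stp_\Sigma)$ for some morphism $\phi$. Under \cref{cor:linebundlehms} this becomes the cone of a morphism $\mathcal O(-D_\alpha)\to \mathcal O$, and since such a morphism is unique up to scalar and $\phi$ can be checked to be nonzero from the local Floer model at the clean surgery locus, the cone is $\mathcal O_{\alpha>0}(0)$. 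The main obstacle is that, unlike in \cite{hicks2020tropical}, monomial admissibility is unavailable: conical sections only control argument coordinates up to an error of order $\delta$, so the whole surgery and cobordism must be confined to a fixed compact region in $Q$ and the defect $\delta$ then tuned small enough that the resulting Lagrangian and its cobordism remain disjoint from $\stp_\Sigma$ at infinity. Managing this ``tune $\delta$ last'' dance with the stop is exactly what the Liouville-domain appendices are set up to handle.
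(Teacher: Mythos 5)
Your high-level plan—realize $L_{\alpha>0}(0)$ as a surgery of $L_0(\sF_\alpha)$ and $L_0(0)$, take the trace cobordism, apply Biran--Cornea to get a cone, and match with the short exact sequence $\mathcal O(-D_\alpha)\to\mathcal O\to\mathcal O_{D_\alpha}$—matches the paper's architecture. But the geometric picture of the intersection is wrong in a way that propagates through the construction. The two conical sections do not intersect cleanly along a compactly supported codimension-one Morse--Bott locus. They coincide on the entire full-dimensional region where $\Halpha$ (and hence $d\Halpha$) vanishes, namely a conical, non-compact subset of $X$ lying over the complement of a neighborhood of $\str(\alpha)$. The surgery in the paper is a \emph{generalized} surgery along this open overlap (in the sense of the surgery data of \cref{def:surgerydata} and \cref{prop:generalizedsurgeryprofile}), which cancels the overlap and leaves a Lagrangian supported over $\str(\alpha)$; it is precisely this cancellation that yields \cref{claim:valuationofdivisormirror}. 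A Polterovich surgery at a clean compact intersection would leave both sections intact over most of $Q$ and would not give the stated valuation bound, nor would it allow the cobordism to be conical at infinity without the appendix's conically-invariant surgery neck.

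You also misidentify the main technical obstacle. It is not merely that ``conical sections only control arguments up to $O(\delta)$.'' The real difficulty, which the paper isolates and solves in \cref{lemma:positiveintersectionlemma}, is that $\sF_\alpha$ is not convex (e.g.\ for the exceptional ray of a blow-up), and its conical mollification $\Halpha$ need not even have convex slices in the $\alpha$-direction. The lemma shows that slices $t \mapsto \Halpha(q + t\alpha)$ are nevertheless monotone increasing and that $d\Halpha=0$ precisely where $\Halpha=0$—this is what produces a valid surgery datum, and it is a phenomenon specific to the variable-radius (conical) mollifier, not shared with the monomially admissible smoothing.

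Finally, in the mirror identification step, asserting that the relevant morphism is ``unique up to scalar'' is insufficient: $\Hom(\mathcal O(-D_\alpha),\mathcal O)\cong H^0(\mathcal O(D_\alpha))$ can have dimension greater than one, and different sections have different vanishing loci, hence different cones. The paper closes this gap by first observing that $L_{\alpha>0}(0)$ is displaceable from every SYZ fiber (so its mirror is a sheaf supported on toric strata, pinning down the divisor class), and then identifying the connecting morphism as the continuation element which, under \cref{thm:wrappingaction}, is the leading term of the natural transformation $T_{D_\alpha}$ and hence the defining section of $D_\alpha$. You would need both of these inputs to complete \cref{prop:mirrordivisormatching}.
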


There are some differences between our construction of the mirror to a toric divisor and the construction of tropical Lagrangian submanifolds from \cite{hicks2020tropical}. 
A toric divisor does not tropicalize, and thus the valuation of the Lagrangian we construct will not approximate the tropicalization of $D$; instead, the Lagrangian we build is Hamiltonian displaceable from all SYZ fibers.
The Lagrangian sections considered for tropical Lagrangian submanifolds came from tropical polynomials $\sF:Q\to \RR$.
These are convex, and thus represent sections of base-point free divisors. 
As toric divisors are not necessarily ample or base-point free, the support function $\sF_\alpha$ need not be convex.
Finally, we will work in the conically admissible setting rather than the monomially admissible one; as the definition of a conically admissible Lagrangian section is a bit messier than the monomially admissible section, we will have to do a bit of additional book-keeping to obtain surgery data similar to that used in \cite{hicks2020tropical}. 

Let $\alpha\in A$ be a cone, to which we can associate the divisor $-D_\alpha$, the support function ${\sF}_\alpha$, and the  Lagrangian section $L^\delta(\sF_\alpha)= L^\delta(-D_\alpha) \subset (X, \stp_\Sigma)$.
We will always use the same $\delta$ between all Lagrangians, and therefore drop the superscript $\delta$.
We would like to apply Lagrangian surgery to the sections $L(\sF_\alpha)$ and $L(0)$. 
If the function $\sF_\alpha$ is convex, it immediately follows that the intersection of $L_0(\sF_\alpha)$ with $L_0(0)$ satisfies the conditions of \cref{def:surgerydata}.
Generally, this need not be the case; consider the example of the toric blowup of $\CP^2$ at a point, whose fan has generators $A=\{e_1, e_2, e_1+e_2, -e_1-e_2\}$.
Let $\alpha=e_1+e_2$ so that 
 the divisor $D_\alpha$ is the exceptional divisor of the blow-up of $\CP^2$, which is a (-1)-curve. The support function $\sF_\alpha$, whose contour sets are drawn in  \cref{fig:conicalsection}, fails to be convex.
However, $\sF_\alpha$ satisfies a property which can be used as a substitute for convexity, which is that when restricted along an $\alpha$-parallel ray going through a fixed point $q$, 
\[\sF_\alpha(q+t\cdot \alpha)= \max(0, (t-t_0))\] 
for some $t_0$.
Having convex slices along the $\alpha$-parallel rays also implies the existence of a surgery data  (\cref{def:surgerydata}).

The Lagrangian section $L(\sF_\alpha)$ is obtained by smoothing ${\sF}_\alpha$ with a variable radius smoothing kernel. Unfortunately, the slices of the resulting smoothing,  $\Halpha(q+t\cdot \alpha)$, will generally not be convex.
See, for example, \cref{fig:nonconvexity}  which plots the slice $\frac{d}{dt}\Halpha(q+t\cdot \alpha)$ where $\alpha$ is the exceptional divisor of the one point blow-up above.
Note that for small values of $t$ the slice $\frac{d}{dt}\Halpha(q+t\cdot \alpha)$  approximates a smoothing of the step function --- which would be increasing.  
However, at large $t$ the function becomes concave as the radius of smoothing kernel for $\Halpha$ increases to the point where it intersects the non-convexity locus of ${\sF}_\alpha$. 

Ultimately, this is not too problematic as the non-convexity of $\Halpha(q+t\cdot \alpha)$ occurs away from where $L(\sF_\alpha)$ and $L(0)$ intersect.
Without loss of generality, we can assume that in the description of the slice, we choose the point $q$ so that $\sF_\alpha(q)=0$ and $q$ is on the locus of non-linearity; equivalently, $q$ belongs to the boundary of $\str(\alpha)$. 
With this choice of $q$,  \[\sF_\alpha(q+t\cdot \alpha)=\max(0, t).\] 
\begin{figure}\centering
   \begin{subfigure}{.45\linewidth}
      \centering
      \includegraphics[scale=.45]{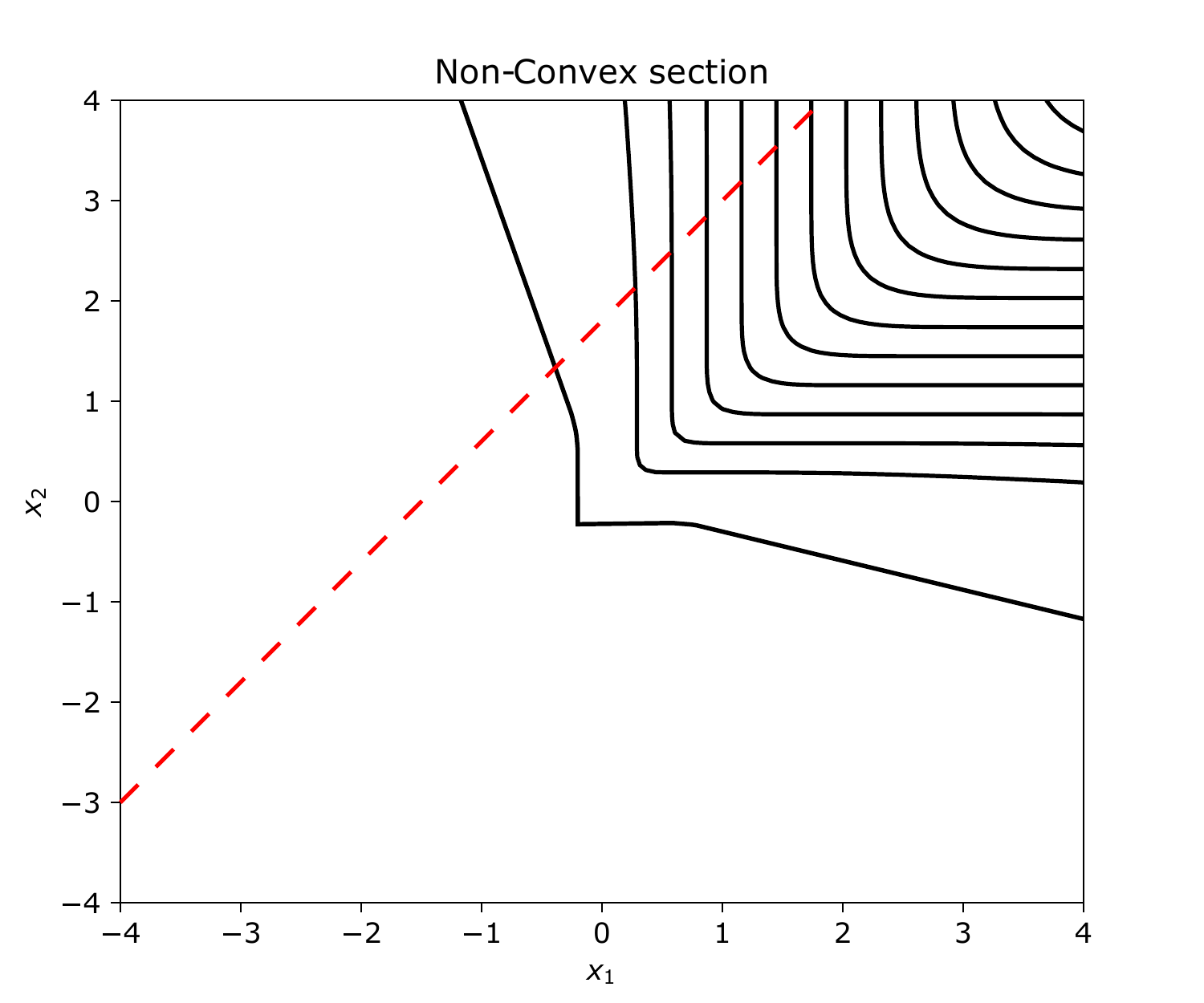}
      \caption{The smoothing of a non-convex ${\sF}_\alpha$. Here, $D_\alpha$ is the exceptional divisor of the blow-up of $\CP^2$ at a point. Note that there is a neighborhood of the ray $\alpha$ where this is non-convex. The dotted line parameterizes $q+t\cdot \alpha$, along which we slice function. }
   \end{subfigure}\;\;\;\;
   \begin{subfigure}{.45\linewidth}
      \centering
      \includegraphics[scale=.45]{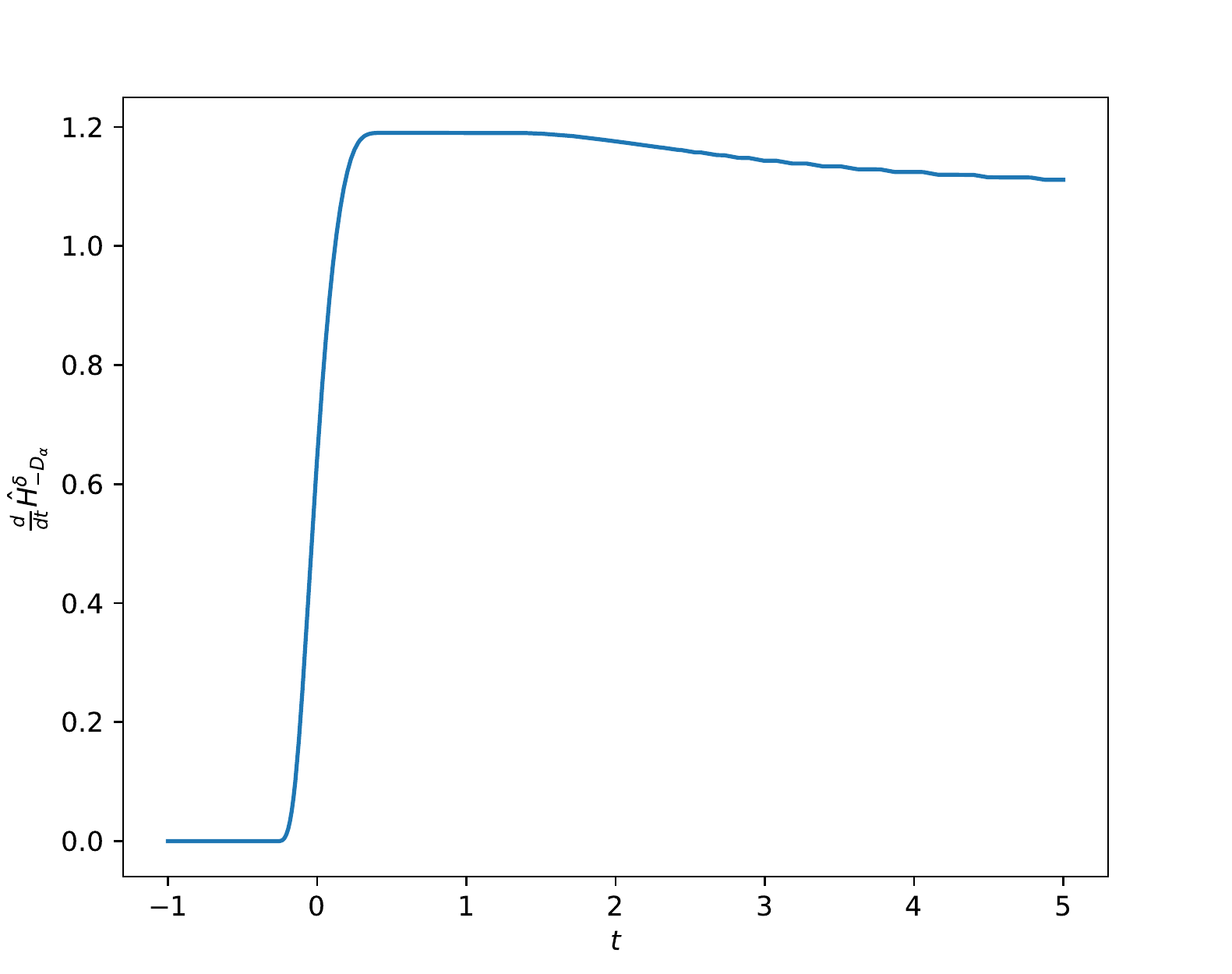}
      \caption{The graph of $\frac{d}{dt} \Halpha(q+t\cdot \alpha)$, where $\alpha=e_1+e_2$. The dotted portion of the domain represents an interval where we expect $\Halpha(q+t\cdot \alpha)$ to be convex. There is only a slight amount of non-convexity, which occurs when $q+t\cdot \alpha$ enters the $\eps |r|$ cone of $\alpha$.  }
   \end{subfigure}
   \caption{How we work with tropical sections when $\sF_\alpha$ is non-convex.}
   \label{fig:nonconvexity}
\end{figure}
\begin{lem}
   Slices of $\Halpha$ in the $\alpha$ direction are increasing, so that $\frac{d}{dt}\Halpha(q+t\cdot \alpha)\geq 0$.
   Furthermore, the set on which $d\Halpha=0$ is exactly the locus where $\Halpha=0$. 
   \label{lemma:positiveintersectionlemma}
   \label{claim:intersectioncharacterization}
\end{lem}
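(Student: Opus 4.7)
I would prove the two assertions using the explicit construction of $\Halpha$ from $\sF_\alpha$. The starting point is the piecewise-linear structure of $\sF_\alpha$: as the support function with $\sF_\alpha(\alpha) = 1$ and $\sF_\alpha(\beta) = 0$ for every other primitive generator $\beta$, its restriction to each maximal cone $\sigma$ is the unique linear function determined by those boundary values, so $\partial_\alpha \sF_\alpha|_\sigma = 1$ when $\alpha \in \sigma$ and $\partial_\alpha \sF_\alpha|_\sigma = 0$ otherwise. Thus $\sF_\alpha \geq 0$ and $s \mapsto \sF_\alpha(y + s\alpha)$ is non-decreasing for every $y \in Q$, so the two assertions reduce to showing the smoothing preserves these properties.

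Outside the compact set $V$, $\Halpha$ agrees with the positively homogeneous degree-$1$ smoothing $G(u) = 2\pi \int \sF_\alpha(u - r x)\eta_\eps(x)\,dx$ from the proof of \cref{lemma:conesctions}, with $r = \|u\|$. Differentiating in the $\alpha$ direction under the integral and using Euler's identity $\nabla \sF_\alpha(y) \cdot y = \sF_\alpha(y)$ together with the degree-$0$ homogeneity of $\nabla \sF_\alpha$, I would obtain the decomposition
\[
\partial_\alpha \Halpha(u) \;=\; \frac{\hat u \cdot \alpha}{r}\,\Halpha(u) \;+\; 2\pi \int \nabla \sF_\alpha(\hat u - x)\cdot \alpha^{\perp}\, \eta_\eps(x)\,dx,
\]
where $\hat u = u/r$ and $\alpha^{\perp} = \alpha - (\hat u \cdot \alpha)\hat u$ is the component of $\alpha$ tangent to the unit sphere at $\hat u$. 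When $\Halpha(u) = 0$, the closed ball of radius $\eps r$ around $u$ is disjoint from the interior of $\str(\alpha)$, so $\nabla \sF_\alpha(\hat u - x)$ vanishes almost everywhere on the support of $\eta_\eps$ and both terms vanish; in particular the monotonicity holds trivially and $d\Halpha(u) = 0$. When $\Halpha(u) > 0$, I would symmetrize the second integral under $x \mapsto -x$ and combine it with the first term, exploiting the pointwise bound $\partial_\alpha \sF_\alpha \in \{0, 1\}$ from the first paragraph to conclude non-negativity.

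The characterization of the critical set is more direct. By degree-$1$ homogeneity of $\Halpha$, Euler's identity yields $\Halpha(u) = \nabla \Halpha(u) \cdot u$, so $d\Halpha(u) = 0$ immediately forces $\Halpha(u) = 0$. Conversely, $\Halpha(u) = 0$ is equivalent to the open condition that $\bar B_{\eps r(u)}(u)$ misses the interior of $\str(\alpha)$, so $\Halpha$ vanishes identically on a neighborhood of $u$ and therefore $d\Halpha(u) = 0$ as well.

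The hardest part will be the non-negativity of the decomposed expression when $\hat u \cdot \alpha < 0$ and $\Halpha(u) > 0$: the two terms on the right-hand side can then have opposite signs, and one must carefully compare them using the geometry of $B_{\eps r}(u)$ relative to the cones of $\Sigma$ containing $\alpha$ and the bound $|\alpha^\perp| \leq \|\alpha\|$. The interpolation region inside $V$ can be handled by choosing the interpolating bump function so that both assertions extend compatibly from the region outside $V$.
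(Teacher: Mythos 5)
Your decomposition $\partial_\alpha\Halpha(u) = (\hat u\cdot\alpha)\Halpha(u)/r + 2\pi\int\nabla\sF_\alpha(\hat u - x)\cdot\alpha^\perp\,\eta_\eps(x)\,dx$ is correct, but the plan you sketch for closing the argument when $\Halpha(u) > 0$ does not actually close it, and you acknowledge as much. Symmetrizing the second integral under $x\mapsto -x$ buys nothing here: the mollifier symmetry is used elsewhere (for the agreement of the smoothing with $\sF_D$ on deep intersections of the $U_\alpha$), not for this estimate, and it does not resolve the competition between your two summands when $\hat u\cdot\alpha < 0$.

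The step you are missing is to apply Euler's identity a second time, to $\nabla\sF_\alpha(\hat u - x)\cdot\hat u$ rather than to $\sF_\alpha$ itself. Writing $\alpha^\perp = \alpha - (\hat u\cdot\alpha)\hat u$ and using $\nabla\sF_\alpha(y)\cdot y = \sF_\alpha(y)$ at $y = \hat u - x$, the two summands in your decomposition partially cancel and the whole expression collapses to
\[
\partial_\alpha\Halpha(u) \;=\; 2\pi\int \chi_{\str(\alpha)}(\hat u - x)\,\bigl[\,1 - (\hat u\cdot\alpha)\,\nabla\sF_\alpha(\hat u - x)\cdot x\,\bigr]\,\eta_\eps(x)\,dx,
\]
which is precisely the expression the paper obtains by differentiating $t\mapsto\int\sF_\alpha(q+t\alpha - A(t)x)\,\eta(x)\,dx$ directly (with $A'(t) = O(\eps)$ playing the role of $\eps(\hat u\cdot\alpha)$). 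The key point, which your proposal does not use, is that $\nabla\sF_\alpha$ vanishes off $\str(\alpha)$, so \emph{both} of your summands already carry the factor $\chi_{\str(\alpha)}(\hat u - x)$; once it is factored out, the remaining bracket is $1 + O(\eps)$ uniformly, by $|\hat u\cdot\alpha|\le\|\alpha\|$, $\|x\|\le\eps$ on the support of $\eta_\eps$, and $\sup\|\nabla\sF_\alpha\| < \infty$. For $\eps$ small the integrand is therefore pointwise non-negative, which gives the monotonicity, and it vanishes identically exactly when the $\eps$-ball around $\hat u$ misses $\str(\alpha)$, i.e.\ exactly when $\Halpha(u) = 0$. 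So the second assertion falls out of the same computation; you do not need a separate Euler-identity argument for it.

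This is just as well, because your separate argument for ``$d\Halpha = 0 \Rightarrow \Halpha = 0$'' via $\Halpha(u) = \nabla\Halpha(u)\cdot u$ relies on degree-$1$ homogeneity, which fails on the interpolation region $V$. The paper handles this by taking $\Halpha$, for the purposes of this lemma, to be a genuine variable-radius convolution throughout, with a convex radius function $\rho$ equal to $1$ near the origin and $\eps|q|$ far out; then the bound on the error term holds globally and no bump-function interpolation and no appeal to homogeneity is needed. If you instead insist on a bump-function interpolation near $V$, you must verify both assertions there by hand; ``choosing the bump function compatibly'' is not automatic.
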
Before we prove this lemma, we would like to state that if we additionally assume that ${\sF}_\alpha$ is convex, then $\Halpha(q+t\cdot \alpha)$ is convex. The proof in this setting is also dramatically simpler.
\begin{proof}
    We first show that $\frac{d}{dt}\Halpha(q+t\cdot \alpha)\geq 0$. Recall that $\eta_{t}(q)= \frac{1}{t^n} \eta\left(\frac{q}{t}\right)$.
   Take  $\rho: \RR_{\geq 0}\to \RR$ a smooth function which convexly interpolates between the following two domains of definition:
   \begin{align*}
      \rho(t)=1 \text{ if $t<1$} && \rho(t)=\eps t \text{ if $t\gg 1$}
   \end{align*}
   The function $\Halpha$ is defined by smoothing the function $\sF_\alpha$  with a variable radius smoothing kernel $\eta_{\rho(|q|)}$.
   Let $A(t)=\rho(|q+t\cdot \alpha|),$ which has the property that $A'(t)< |\alpha|\eps$.
   Mimicking the argument of \cref{lemma:cofinal}:
\begin{align*}
   \frac{1}{2\pi}&\frac{d}{dt}\Halpha(q+t \cdot \alpha)=\frac{d}{dt}\int_{\RR^n} \sF_\alpha(q+t \cdot \alpha-x)\cdot\eta_A(x)\, dx\\
   =& \frac{d}{dt}\int_{\RR^n} {\sF_\alpha}(q+t\alpha -A(t)x)\eta(x)\, dx\\
   =& \int_{\RR^n}  \del F_\alpha(q+t\alpha - A(t)x)\cdot (\alpha - A'(t)x)\eta(x) \, dx\\
   =& \int_{\RR^n} \left(\left(\chi_{\str(\alpha)}(q+t\alpha - A(t) x)  \right)- A'(t) \left( \del \sF_\alpha (q+t\alpha - A(t)x)\cdot x\right)\right) \cdot \eta(x) \, dx
   \intertext{ Since  $-\del \sF_\alpha (q+t\alpha - A(t)x)$ is supported on $\str(\alpha)$}
   =& \int_{\RR^n} \left(\left(\chi_{\str(\alpha)}(q+t\alpha - A(t) x)  \right) \cdot \left(1- A'(t) \left( \del \sF_\alpha (q+t\alpha - A(t)x)\cdot x\right)\right)\right) \cdot \eta(x) \, dx
\end{align*}
Here  $\chi_{\str(\alpha)}$ is the characteristic function for the star of $\alpha$.
It suffices to show that $A'(t)\del \sF_\alpha (q+t\alpha - A(t)x)$ is sufficiently small.\footnote{We use this footnote the point out that in the monomial admissible setting,  the radius of the smoothing kernel has no dependence on $q$ and the last term vanishes.}
Since $\eta(x)$ has bounded support, we are only integrating over $x\in B_1(0)$. 
Therefore, the absolute value of $\del F_\alpha(q+t\alpha - A(t)x)\cdot x\eta(x) $ is bounded by the maximal norm of $\del F_\alpha$. 
It follows that 
\begin{align*}
   \left|A'(t) \int_{\RR^n} \del \sF_\alpha (q+t\alpha - A(t)x)\cdot x \eta(x) \, dx\right|\leq & |A'(t) |\int_{B_1(0)} \left|\del \sF_\alpha (q+t\alpha - A(t)x)\cdot x \right| \, dx\\
   \leq &|A'(t)|\cdot \Vol(B_1(0)) \cdot  \max_{q\in \RR^n}  \Vert\del \sF_\alpha(q) \Vert.
\end{align*}
Since $A'(t)< \eps \Vert \alpha\Vert$, and $\eps$ can be chosen as small as desired, this quantity is as small as desired. We therefore conclude that $\frac{d}{dt}\Halpha(q+t \cdot \alpha)\geq 0$. 
Because the support of $\frac{d}{dt}\Halpha(q+t \cdot \alpha)$ matches the support of $\Halpha(q+t \cdot \alpha)$, we can also conclude from this computation that $d\Halpha=0$ if and only if $\Halpha =0$.
\end{proof}
It will also be useful to think of this in the following way:
\begin{cor}
   For $C>0$, we can parameterize the level set $\Halpha=C$ by $\funnyi:Q_{\str(\alpha)/\alpha}\to Q_{\str(\alpha)}$, which is a smooth section\footnote{Regarding the unusual choice of notation: $\funnyi:Q_{\str(\alpha)/\alpha}\to Q_{\str(\alpha)}$ is not a piecewise linear function, but it is a smooth approximation of the PL parameterization of $\partial(\str(\alpha))+C\alpha$.} of the projection $\underline \pi_{\alpha}: Q_{\str(\alpha)}\to Q_{\str(\alpha)/\alpha}$. 
   \label{cor:funnysection}
\end{cor}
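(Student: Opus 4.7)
The plan is to obtain $\funnyi$ as the graph of the unique solution $t(q')$ to $\Halpha(q + t\alpha) = C$ along each $\alpha$-parallel ray, then verify smoothness via the implicit function theorem. Here I choose coordinates on $Q_{\str(\alpha)}$ of the form $(q',t)$ where $q' \in Q_{\str(\alpha)/\alpha}$ is the image under $\underline\pi_\alpha$ and $t$ parameterizes the $\alpha$-ray.

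First I would establish existence and uniqueness of the level along each ray. Fix $q' \in Q_{\str(\alpha)/\alpha}$ and any lift $q$. For existence, note that $\sF_\alpha$ grows linearly in the $\alpha$-direction (it equals $\max(0, t - t_0)$ on $q + t\alpha$ for some $t_0$ depending on $q$), so its smoothing $\Halpha(q+t\alpha)$ tends to $+\infty$ as $t \to +\infty$ and vanishes identically for sufficiently negative $t$ (where the smoothing kernel centered at $q+t\alpha$ does not meet $\str(\alpha)$). By continuity, the equation $\Halpha(q+t\alpha) = C$ admits a solution. For uniqueness, I would revisit the computation in the proof of \Cref{claim:intersectioncharacterization}, which expresses
\[
   \frac{1}{2\pi}\frac{d}{dt}\Halpha(q+t\alpha) = \int_{\RR^n} \chi_{\str(\alpha)}(q+t\alpha - A(t)x) \cdot \bigl(1 - A'(t)(\nabla\sF_\alpha \cdot x)\bigr)\,\eta(x)\,dx,
\]
where the factor in parentheses is strictly positive for $\eps$ small. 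Thus the integrand is non-negative, and vanishes identically in $x \in B_1(0)$ only when the ball of radius $A(t)$ around $q+t\alpha$ is disjoint from $\str(\alpha)$, which is precisely the condition $\Halpha(q+t\alpha) = 0$. Consequently, at any point where $\Halpha(q+t\alpha) = C > 0$, the derivative along $\alpha$ is strictly positive, so the map $t \mapsto \Halpha(q+t\alpha)$ is strictly increasing at the level $C$ and its preimage of $C$ is a single point $t(q')$.

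Next, I would deduce smoothness of $q' \mapsto t(q')$ from the implicit function theorem applied to $(q',t) \mapsto \Halpha(q+t\alpha) - C$: strict positivity of $\partial_t \Halpha$ on $\{\Halpha = C\}$ provides the required non-vanishing of the partial derivative. Defining $\funnyi(q') = q + t(q')\alpha$ yields the desired smooth parameterization. Composing with $\underline\pi_\alpha$ gives $\underline\pi_\alpha \circ \funnyi = \id_{Q_{\str(\alpha)/\alpha}}$, which is the section property.

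The main obstacle is confirming strict positivity of the directional derivative along $\alpha$ on the level set $\{\Halpha = C\}$; while \Cref{claim:intersectioncharacterization} only asserts that $d\Halpha$ vanishes exactly on $\{\Halpha = 0\}$, the stronger statement needed here is that the $\alpha$-component of $d\Halpha$ is itself non-vanishing off $\{\Halpha = 0\}$. Fortunately this is already built into the integral formula above, so no substantive new input is required beyond unpacking that calculation.
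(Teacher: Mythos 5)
Your argument is correct and is, as far as I can tell, the one the authors intended but left implicit: the paper states the corollary without proof, as an immediate consequence of \cref{lemma:positiveintersectionlemma}. You have correctly identified the one point that requires unpacking the lemma's \emph{proof} rather than just its statement — that on the positive level set $\{\Halpha = C>0\}$ the directional derivative $\frac{d}{dt}\Halpha(q+t\alpha)$ is \emph{strictly} positive — and you have correctly observed that this is already contained in the integral computation (the integrand is a product of a characteristic function with a factor that is strictly positive for $\eps$ small, so it vanishes only when the ball $B_{A(t)}(q+t\alpha)$ misses $\str(\alpha)$, which is precisely where $\Halpha=0$). With that in hand, existence along each $\alpha$-ray (from the growth of $\sF_\alpha$ and the eventual disjointness of the smoothing ball from $\str(\alpha)$ as $t\to-\infty$), uniqueness (from strict monotonicity on $\{\Halpha>0\}$), and smoothness (implicit function theorem in the $t$ variable) all go through as you describe. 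No gap.
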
 
The locus where $d\Halpha=0$ is highlighted in \cref{fig:conicalsection}.
By \cref{lemma:positiveintersectionlemma}, there exists a surgery data (\cref{def:surgerydata}) for the Lagrangians $L_0(\sF_\alpha)$ and $L_0(0)$, with  surgery region $U_{\Halpha{}}$ defined to be the locus where $\Halpha{} \leq \rho c_\eps$.
Here, $\rho: X\to \RR$ is a function which increases in the symplectization coordinate and $c_\eps$ is a constant, both defined in the proof of \cref{prop:generalizedsurgeryprofile}.
\Cref{prop:generalizedsurgeryprofile} constructs for us an admissible surgery of these two sections.
\begin{df}
   The mirror to the divisor $D_\alpha$ is the  Lagrangian submanifold
   \[L_{\alpha>0}(0):=L_0(\sF_\alpha)\#_{U_{\Halpha{}}} L_0(0).\]
   which comes with an exact Lagrangian surgery cobordism 
   \[K_\alpha:(L_0(\sF_\alpha), L_0(0))\rightsquigarrow L_{\alpha>0}(0).\]
   \label{def:divisormirror}
\end{df}
We will now justify the name in \cref{prop:mirrordivisormatching}.
The surgery of $L_0(\sF_\alpha)$ and $L_0(0)$ is homotopic to gluing them along their boundary.
Since these are both disks, the Lagrangian surgery cobordism $K_\alpha$ has the topology of a disk.
It follows that both $K_\alpha$ and $L_{\alpha>0}$ are exact Lagrangian submanifolds.
In addition, \cref{prop:generalizedsurgeryprofile} guarantees admissibility of these Lagrangian submanifolds.
\begin{prop}
   The Lagrangian $L_{\alpha>0}(0)$ is mirror to $\mathcal O_{\alpha>0}(0)$. 
   \label{prop:mirrordivisormatching}
\end{prop}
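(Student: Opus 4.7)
The plan is to realize $L_{\alpha>0}(0)$ as a mapping cone in $\mathcal{W}((\CC^*)^n, \stp_\Sigma)$ and then read off its mirror via \cref{cor:hmsline} and \cref{thm:wrappingaction}. The exact Lagrangian surgery cobordism
$$ K_\alpha:(L_0(\sF_\alpha), L_0(0)) \rightsquigarrow L_{\alpha>0}(0) $$
supplied by \cref{def:divisormirror}, once fed into the Biran--Cornea iterated-cone formalism extended to Liouville domains in \cref{app:generation} (following \cite{bosshard}), produces an exact triangle
$$ L_0(\sF_\alpha) \xrightarrow{\mu} L_0(0) \to L_{\alpha>0}(0) \to L_0(\sF_\alpha)[1] $$
in the partially wrapped category. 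Under \cref{cor:hmsline}, $L_0(\sF_\alpha) \simeq \mathcal{O}(-D_\alpha)$ and $L_0(0) \simeq \mathcal{O}$, so the mirror of $L_{\alpha>0}(0)$ is determined by the class of $\mu$ in $\Hom_{\mathcal{W}}(L_0(\sF_\alpha),L_0(0))\cong H^0(\check X_\Sigma,\mathcal{O}(D_\alpha))$.

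The next step is to identify $\mu$ with the component at $L_0(0)$ of the natural transformation $T_{D_\alpha}:\mathscr{F}_{-D_\alpha} \to \id$ constructed in the paragraph preceding \cref{thm:wrappingaction}. Geometrically, $L_0(\sF_\alpha)$ is nothing but the image of $L_0(0)$ under the small positive wrapping by the effective-divisor Hamiltonian $\wh{H}_{-D_\alpha}^\delta$ (up to the defect factor $\psi_\delta^{-\delta/2\pi}$), and by \cref{claim:intersectioncharacterization}, the surgery region $U_{\Halpha}=\{\Halpha \leq \rho c_\eps\}$ is a small thickening of the clean intersection locus $\{\Halpha = 0\}$ at which this wrapping begins to separate the two sections. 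Thus the surgery handle is the Lagrangian-level incarnation of the continuation chain producing $T_{D_\alpha}(L_0(0))$, and the exact triangle determined by $K_\alpha$ must coincide with the mapping-cone triangle of this natural transformation.

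Applying \cref{thm:wrappingaction} then translates $\mu$ across the embedding of \cref{cor:hmsline} into tensoring by the defining section $s_{D_\alpha}:\mathcal{O}(-D_\alpha)\to \mathcal{O}$, whose cone is the short exact sequence cokernel $\mathcal{O}_{D_\alpha}=(i_{\alpha 0})_*\mathcal{O}_{\str(\alpha)/\alpha}=\mathcal{O}_{\alpha>0}(0)$. Combining the two triangles yields the desired mirror identification $L_{\alpha>0}(0) \simeq \mathcal{O}_{\alpha>0}(0)$.

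The principal obstacle is the identification of $\mu$ with $T_{D_\alpha}(L_0(0))$ rather than with some other element of $H^0(\mathcal{O}(D_\alpha))$. The cleanest route I see is to produce an explicit exact Hamiltonian isotopy, supported inside $U_{\Halpha}$, between the surgery profile of \cref{prop:generalizedsurgeryprofile} and the time-$t$ image of $L_0(0)$ under $\wh{H}_{-D_\alpha}^\delta$ for small $t$; the positivity in \cref{lemma:positiveintersectionlemma} and the fact that $\Halpha$ vanishes precisely on the zero set of $\sF_\alpha$ should make this isotopy canonical. As a fallback, one can characterize the image of $\mu$ as the unique-up-to-scalar global section of $\mathcal{O}(D_\alpha)$ vanishing to first order exactly along $D_\alpha$, using that the surgery is performed along the clean intersection lying over the complement of $\str(\alpha)$ and not elsewhere; this forces $\mu$ to be a scalar multiple of $s_{D_\alpha}$, which suffices to identify the cones.
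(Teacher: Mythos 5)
Your overall strategy coincides with the paper's: extract an exact triangle from the surgery cobordism $K_\alpha$, identify the connecting morphism with (the $L_0(0)$-component of) the natural transformation $T_{D_\alpha}$, and close via \cref{thm:wrappingaction}. The divergence is in how the identification of the connecting morphism is carried out, and that step is where your proposal is not yet sound.

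The paper's proof first constrains the answer geometrically: since $L_{\alpha>0}(0)$ is Hamiltonian-displaceable from every SYZ fiber, its mirror must be supported on toric strata, so the cone must be $\mathcal{O}_{D}$ for some \emph{toric} divisor $D$ in $|D_\alpha|$. It then pins down the morphism $a$ by invoking Proposition 4.27 of \cite{hicks2020tropical}, which identifies a surgery performed at a single intersection region with the mapping cone on the corresponding morphism in the sense of \cite{fukaya2007lagrangian}, and then argues that this intersection is the one at the minimum of $\wh{H}^\delta_\alpha$, i.e.\ the leading-order term of $T_{D_\alpha}$. Your proposal replaces this chain of reasoning with the assertion that the surgery handle ``is the Lagrangian-level incarnation of the continuation chain producing $T_{D_\alpha}(L_0(0))$,'' supported by a proposed ``explicit exact Hamiltonian isotopy \ldots between the surgery profile \ldots and the time-$t$ image of $L_0(0)$.'' As written this does not parse: the surgery profile is a surgered Lagrangian with a handle, not a small wrapping of the zero section, so no such isotopy exists, and even if you isotope the surgery parameter to zero you recover the disjoint union of the two sections, not a statement about which element of $\Hom_{\mathcal W}(L_0(\sF_\alpha),L_0(0))$ is picked out by the cobordism triangle. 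The identification of that element is precisely the content you would need to prove, and it requires either the surgery-equals-cone result cited by the paper or an equivalent count of the strip/triangle contributions coming from the handle.

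Your fallback — characterize $\mu$ as the unique-up-to-scalar section of $\mathcal{O}(D_\alpha)$ vanishing to first order exactly along $D_\alpha$ — is a reasonable alternative at the level of slogans, but you do not say how to read off the zero divisor of $\mu$ from the surgery data. The natural route would be to analyze which intersection point of $L_0(\sF_\alpha)\cap L_0(0)$ contributes, via the HMS dictionary of \cite{abouzaid2009morse, hanlon2019monodromy}, to the monomial $z^0$ in $H^0(\mathcal{O}(D_\alpha))$, and show the surgery handle produces exactly that generator; this is effectively what the paper does by tracking the minimum of $\wh{H}^\delta_\alpha$. Without that accounting, there is nothing ruling out $\mu$ being a different monomial section (whose cone would be a structure sheaf of a different toric divisor, or a non-toric divisor if $\mu$ is a nontrivial linear combination). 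I would also note that the paper's observation about disjoinability from SYZ fibers is doing real work in narrowing the possibilities before identifying $a$; you omit it, which puts extra weight on the (undemonstrated) claim that $\mu$ is the defining section.
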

\begin{proof}
By \cref{thm:cobordismgeneration}, there exists an exact triangle in the Fukaya category,
\[L_0(\sF_\alpha)\xrightarrow{a} L_0(0)\xrightarrow{b} L_{\alpha>0}(0).\]
Under \cref{cor:linebundlehms}, this is identified with an exact sequence:
\[\mathcal O_{\check X_\Sigma}(\sF_\alpha)\to \mathcal O_{\check X_\Sigma}\to \mathcal O_D(0)\]
where $\mathcal O_{\check X_\Sigma}(-D)=\mathcal O_{ \sF_\alpha}$.
Before we identify $a$, we note that the exact triangle determines $D$ up to linear equivalence. 
Furthermore, since $L_{\alpha>0}$ is disjoinable from every SYZ fiber, it must be mirror to a toric divisor. 

The morphism $a$ can be identified up to a scalar as a certain continuation element.
The Floer complex $\HF (L_0(\sF_\alpha), L_0(\sF_\alpha))$ has a single generator in degree zero, which we call $e$.
The continuation element $a$ is the image of $e$ under the continuation map  \[\Phi:\HF(L_0(\sF_\alpha), L_0(\sF_\alpha))\to \HF(L_0(\sF_\alpha), L_0(0)).\]
Note that we only identify this intersection between $L_0(\sF_\alpha)$ and $L_0(0)$ by choosing a support function $\sF_\alpha$ which determines the continuation Hamiltonian. 
The support function also identifies $a$ as the intersection corresponding to the minimum (after possibly simplifying $H^\delta_K$ to have a unique minimum) of $\wh H^{\delta}_\alpha$.
Since we are surgering at a single intersection region, we can use \cite[Proposition 4.27]{hicks2020tropical} to identify this surgery with the  mapping cone at this intersection as in \cite{fukaya2007lagrangian}. 

It remains to identify the morphism $a$ under \cref{cor:HMScor}. However, $a$ is simply the leading order term of the natural transformation in \cref{thm:wrappingaction} and thus is a defining section of $D_\alpha$. 
\end{proof}

\begin{prop}
   Outside of a compact set $X^{int}$,   $\val(L_{\alpha>0}\setminus X^{int})\subset \str(\alpha)$.
   \label{claim:valuationofdivisormirror}
\end{prop}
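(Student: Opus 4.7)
The plan is to decompose $L_{\alpha>0}(0)$ using its surgery presentation from \cref{def:divisormirror} and analyze each piece separately. Outside the surgery region $U_{\Halpha}=\{\Halpha \leq \rho c_\eps\}$, the Lagrangian coincides with the union of $L_0(\sF_\alpha)$ and $L_0(0)$; inside $U_{\Halpha}$ it is the handle produced by \cref{prop:generalizedsurgeryprofile}. Since both regions contain points going to infinity in $X$, both need to be controlled, and the argument has a different flavor in each.

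For points outside $U_{\Halpha}$ the condition $\Halpha(\val(x)) > \rho(x) c_\eps > 0$ together with \cref{claim:intersectioncharacterization} forces $\val(x)$ into the support of $\Halpha$. Combined with property (2) of \cref{lemma:conesctions}, on the deep interior of any top-dimensional cone not containing $\alpha$ we have $\Halpha = 2\pi\sF_\alpha = 0$, so the positivity of $\Halpha$ confines $\val(x)$ to $\str(\alpha)$ together with a bounded $\eps$-angular collar around its boundary; by enlarging $X^{int}$ (and scaling $\delta,\eps$ down as in \cref{lemma:conesctions}) this collar can be absorbed into the compact set.

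For the handle inside $U_{\Halpha}$ I would invoke \cref{cor:funnysection}: for each $C>0$ the level set $\{\Halpha=C\}$ is the image of the smooth section $\funnyi: Q_{\str(\alpha)/\alpha} \to Q_{\str(\alpha)}$, hence lies in $\str(\alpha)$. The surgery profile of \cref{prop:generalizedsurgeryprofile} is built using this foliation --- the handle interpolates between $L_0(\sF_\alpha)$ and $L_0(0)$ across the level sets of $\Halpha$ --- and its admissibility forces the handle to be conical at infinity. Conically, the boundary profile $\{\Halpha = \rho c_\eps\}$ blows up to infinity along these $\funnyi$-sections, so the projection of the handle outside a compact $X^{int}$ is contained in $\bigcup_{C\geq C_0}\funnyi(Q_{\str(\alpha)/\alpha}) \subset \str(\alpha)$. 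Taking $X^{int}$ large enough to absorb both the collar of the previous paragraph and the noncompact part of $U_{\Halpha}$ where $\Halpha$ is still near zero yields the claim.

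The main obstacle is pinning down this last conical statement: one must check that the surgery profile of \cref{prop:generalizedsurgeryprofile}, which was designed only to guarantee admissibility, actually extends along the $\funnyi$-foliation at infinity rather than permitting the handle to wander into the complement of $\str(\alpha)$. This should follow from the construction of the handle as a smoothing along the gradient flow of $\Halpha$ (mirroring the tropical surgery in \cite{hicks2020tropical}), since that flow is tangent to the level sets and hence to $\str(\alpha)$; with this in hand, the remaining verification is a straightforward combination of the two regimes above.
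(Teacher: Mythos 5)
Your decomposition misdescribes the surgery geometry, which leads you to manufacture an obstacle that the construction already closes. In \cref{prop:generalizedsurgeryprofile}, the surgered Lagrangian has \emph{no} points over the region $\{\Halpha < \rho c_\eps\}$: that interior is removed entirely, the neck is built over the collar $\rho c_\eps \leq \Halpha \leq 2\rho c_\eps$ inside the Weinstein neighborhood of $L_0(\sF_\alpha)$, and only for $\Halpha > 2\rho c_\eps$ does $L_{\alpha>0}$ coincide with $L_0(\sF_\alpha)\cup L_0(0)$. So "inside $U_{\Halpha}$ it is the handle" is not correct; the handle lives \emph{outside} $U_{\Halpha}$, and your second paragraph is treating a case that does not occur. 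The paper's argument collapses both of your regimes into one: the whole of $L_{\alpha>0}$ has valuation confined to $\{\Halpha \geq \rho c_\eps\}$, which is a conical set (since $\rho$ grows like the radius) contained in a small conical neighborhood of $\{\sF_\alpha > \rho c_\eps\}\subset\str(\alpha)$. The conicality worry you flag at the end is already built into \cref{prop:generalizedsurgeryprofile} via the rescaled profiles $r_\rho, s_\rho$; no additional argument via the $\funnyi$-foliation is needed.

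There is also a quantitative slip in your first paragraph. You argue from $\Halpha > 0$ that the valuation is confined to $\str(\alpha)$ together with an $\eps$-angular collar, and propose to absorb the collar by enlarging $X^{int}$. But an angular collar is conical, so it cannot be absorbed into a compact set. The resolution is to use the full constraint $\Halpha \geq \rho c_\eps$ (not merely $\Halpha > 0$): because $\Halpha$ is a smoothing of $2\pi\sF_\alpha$ with kernel of radius $\eps r$ and $\sF_\alpha$ vanishes off $\str(\alpha)$, any point $q$ outside $\str(\alpha)$ has $\Halpha(q) \lesssim \eps r$, which is strictly less than $c_\eps r$ for $\eps$ small relative to $c_\eps$. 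Thus the collar you worry about carries no Lagrangian at all, rather than being absorbed at finite radius.
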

\begin{proof}
   $L_{\alpha>0}$ is constructed by surgering away the overlap of $L_0(\sF_\alpha)$ and $L_0(0)$.
   These are both SYZ sections, and the surgery region is where $\Halpha\leq \rho c_\eps$.
   The remaining region, where $\Halpha\geq \rho c_\eps$, lies in a small conical neighborhood  of where $\sF_\alpha>\rho c_\eps$, which is a subset of $\str(\alpha)$ (see \cref{fig:conicalrchart}).
\end{proof}
This last property --- that the valuation projection of this Lagrangian is contained within the star of $\alpha$ ---  should be compared to \cite[Proposition 3.1]{hicks2020tropical}, which shows that the valuation projection of a tropical Lagrangian submanifold lives in a small neighborhood of the defining tropical subvariety. 
One should interpret $\str(\alpha)\subset Q$ as a best attempt at tropicalizing the divisor $D_\alpha\subset X_\Sigma$.

\begin{figure}
   \centering
   \begin{subfigure}{.4\linewidth}
      \centering
      \includegraphics[scale=.3]{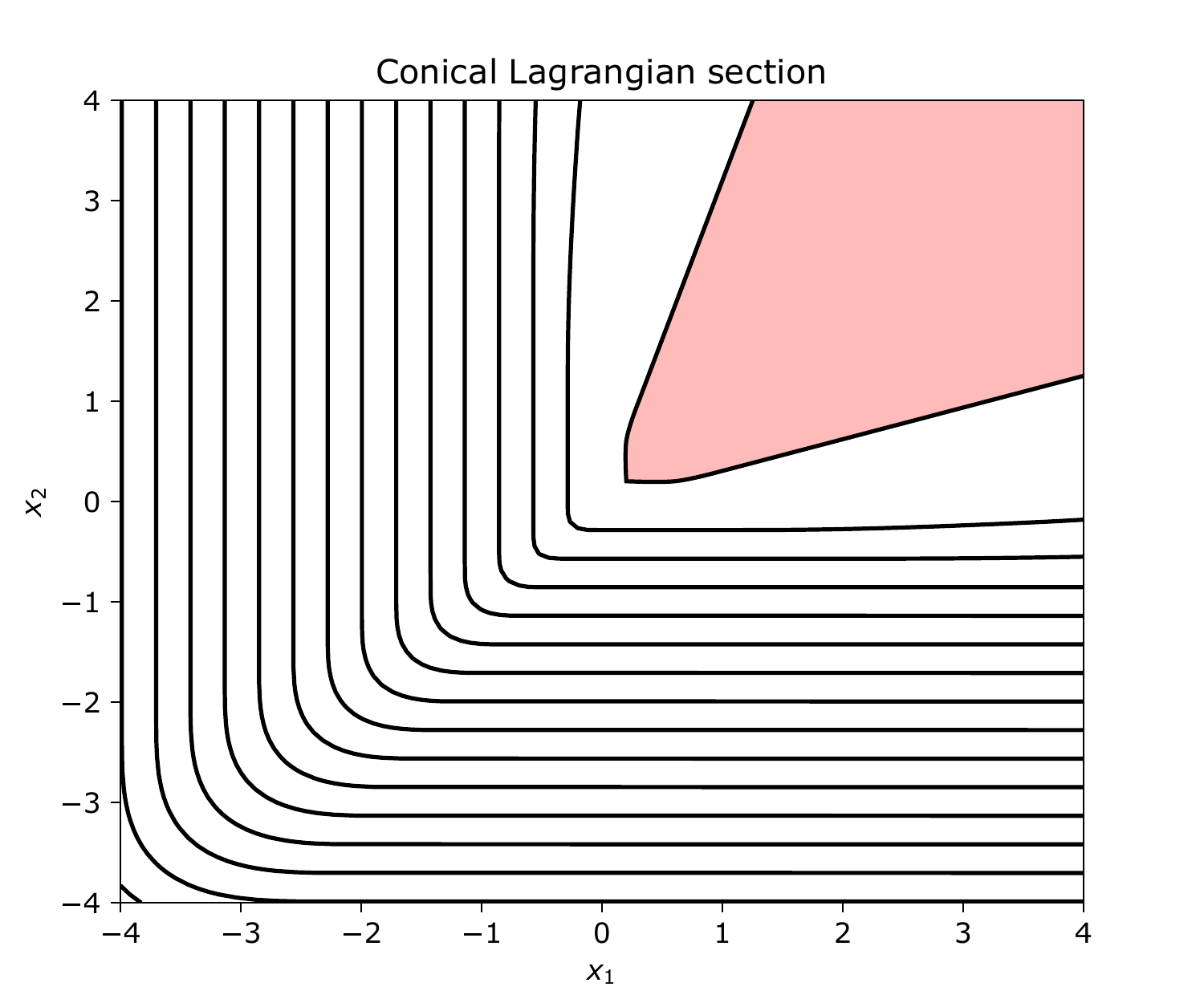}
      \caption{The conical Lagrangian section, associated to $\sF_{-e_1-e_2}$ for $\CP^2$ with region of intersection with the zero section highlighted in red. }
      \label{fig:conicalsection}
   \end{subfigure}\;\;\;\;\;\;
   \begin{subfigure}{.4\linewidth}
      \centering
      \includegraphics[scale=.3]{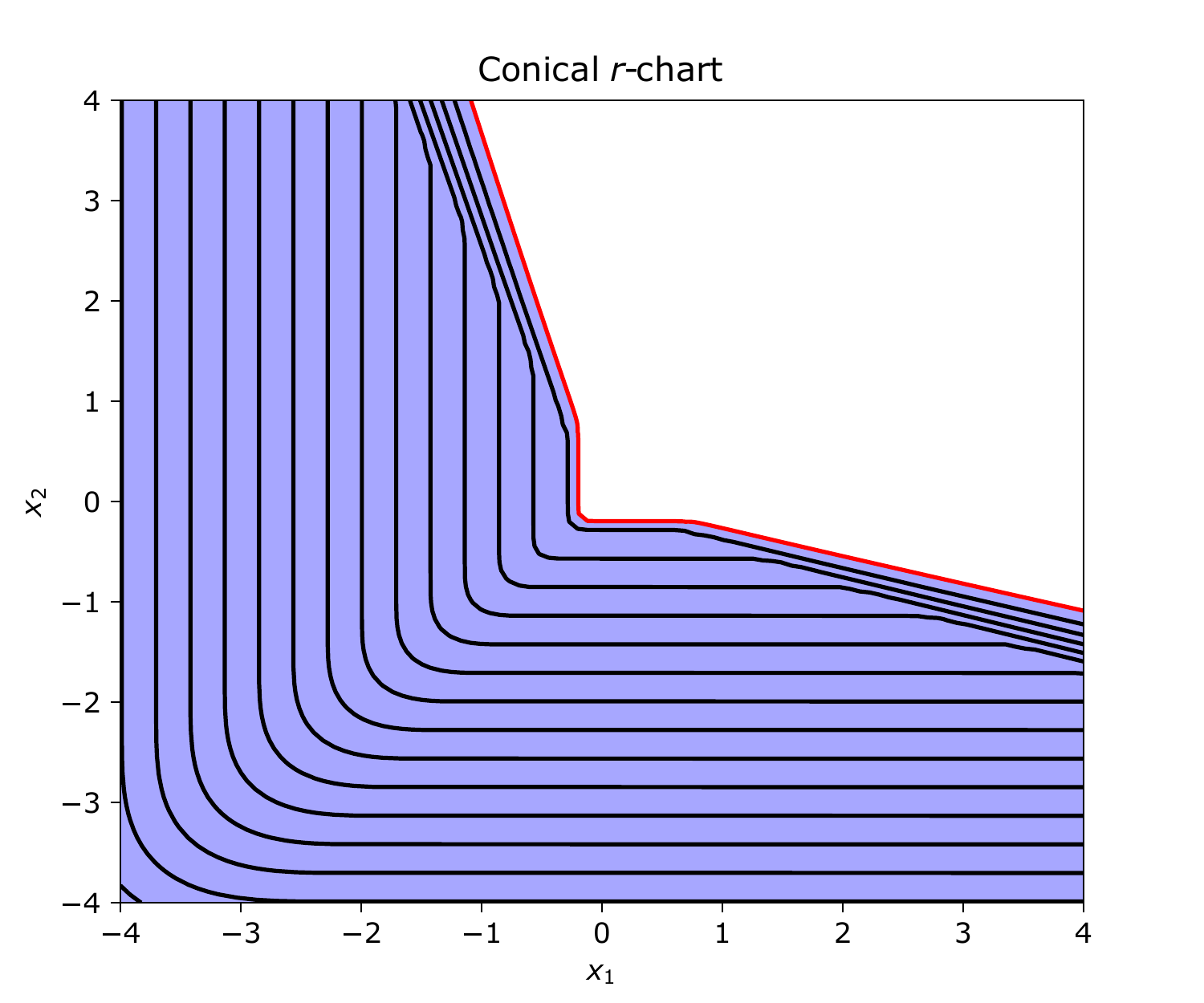}
      \caption{The valuation post-surgery. Note that $\val(L_{\alpha>0})$ is contained within $\str(\alpha)$ outside of a compact set. The image of $\funnyi: Q_1\to Q_2$ is the red line.}
      \label{fig:conicalrchart}
   \end{subfigure}
   \caption{Valuation of overlap region and resulting surgery.}
\end{figure}

\subsection[Example: Toric divisors on proj. line]{Example: Toric divisors of $\CP^1$}
\label{subsec:p1}
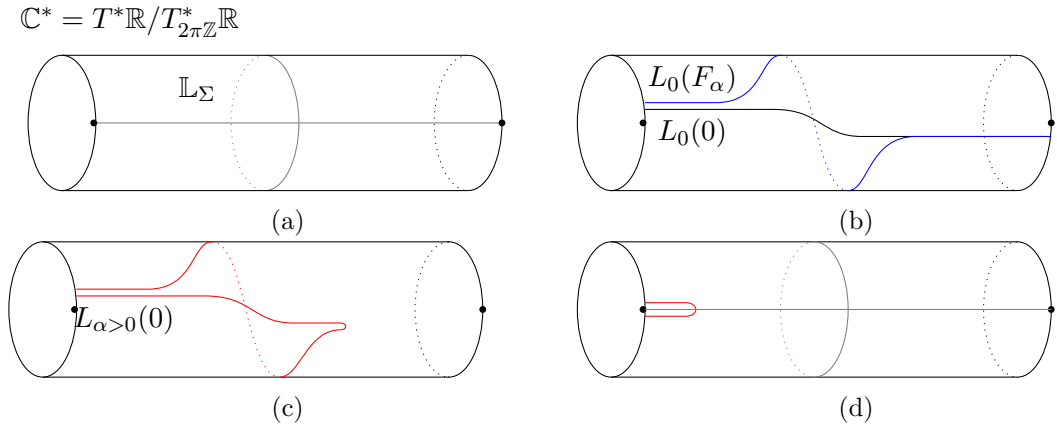
\begin{figure}
    \centering
    \begin{subfigure}[t]{.45\linewidth}
        \begin{tikzpicture}[scale=.9]
      
            \node at (0.5,0) {$\mathbb C^*=T^*\mathbb R/T^*_{2\pi \ZZ}\mathbb R$};

               \draw  (-0.5,-1.5) ellipse (0.5 and 1);
   
               \begin{scope}[shift={(4,-3.5)}]
                  \begin{scope}[]
   
                     \clip  (1.5,3) rectangle (2.5,1);
                     \draw  (1.5,2) ellipse (0.5 and 1);
   
                  \end{scope}
                  \begin{scope}[]
   
                     \clip  (1.5,1) rectangle (0.5,3);
                     \draw[dotted]  (1.5,2) ellipse (0.5 and 1);
                  \end{scope}
               \end{scope}
               \begin{scope}[shift={(1,-3.5)},gray]
                  \begin{scope}[]
   
                     \clip  (1.5,3) rectangle (2.5,1);
                     \draw  (1.5,2) ellipse (0.5 and 1);
   
                  \end{scope}
                  \begin{scope}[]
   
                     \clip  (1.5,1) rectangle (0.5,3);
                     \draw[dotted]  (1.5,2) ellipse (0.5 and 1);
                  \end{scope}
               \end{scope}
   
\draw[gray] (0,-1.5) -- (6,-1.5);
               \draw (-0.5,-2.5) -- (5.5,-2.5);
               \draw (-0.5,-0.5) -- (5.5,-0.5);

         \node[fill=black, circle, scale=.25] at (-0.0296,-1.5) {};
         \node[fill=black, circle, scale=.25] at (6,-1.5) {};

\node at (1.5,-1) {$\mathbb L_\Sigma$};
\end{tikzpicture}         \caption{ }
        \label{fig:p1skeleton}
    \end{subfigure}
    \begin{subfigure}[t]{.45\linewidth}
        \begin{tikzpicture}[scale=.9]
      
            \node[circle, fill=white] at (0.5,0) {};

               \draw  (-0.5,-1.5) ellipse (0.5 and 1);
   
               \begin{scope}[shift={(4,-3.5)}]
                  \begin{scope}[]
   
                     \clip  (1.5,3) rectangle (2.5,1);
                     \draw  (1.5,2) ellipse (0.5 and 1);
   
                  \end{scope}
                  \begin{scope}[]
   
                     \clip  (1.5,1) rectangle (0.5,3);
                     \draw[dotted]  (1.5,2) ellipse (0.5 and 1);
                  \end{scope}
               \end{scope}
   
               \draw (-0.5,-2.5) -- (5.5,-2.5);
               \draw (-0.5,-0.5) -- (5.5,-0.5);

         \node[fill=black, circle, scale=.25] at (-0.0296,-1.5) {};
         \node[fill=black, circle, scale=.25] at (6,-1.5) {};

\draw (0,-1.3) .. controls (1,-1.3) and (1.7,-1.3) .. (1.9,-1.3) .. controls (2.6,-1.3) and (2.6,-1.7) .. (3.2,-1.7) .. controls (3.6,-1.7) and (3.8,-1.7) .. (4,-1.7) .. controls (4.35,-1.7) and (5.55,-1.7) .. (5.9,-1.7);

               \begin{scope}[blue]
                  \draw (1.05,-1.2) .. controls (1.75,-1.2) and (1.7,-0.5) .. (2,-0.5);
                  \draw[dotted] (2,-0.5) .. controls (2.5,-0.5) and (2.5,-2.5) .. (3,-2.5);
                  \draw (3,-2.5) .. controls (3.3,-2.5) and (3.3,-1.7) .. (4,-1.7);
                  \draw (0,-1.2) -- (1.05,-1.2);
                  \draw (6,-1.7) -- (4,-1.7);
                  
               \end{scope}

\node at (0.7,-1.65) {$L_0(0)$};
\node at (0.7,-0.85) {$L_0(\sF_\alpha)$};
\end{tikzpicture}         \caption{ }
        \label{fig:p1sections}
    \end{subfigure}
    \begin{subfigure}[t]{.45\linewidth}
        \begin{tikzpicture}[scale=.9]

               \draw  (-0.5,-1.5) ellipse (0.5 and 1);
   
               \begin{scope}[shift={(4,-3.5)}]
                  \begin{scope}[]
   
                     \clip  (1.5,3) rectangle (2.5,1);
                     \draw  (1.5,2) ellipse (0.5 and 1);
   
                  \end{scope}
                  \begin{scope}[]
   
                     \clip  (1.5,1) rectangle (0.5,3);
                     \draw[dotted]  (1.5,2) ellipse (0.5 and 1);
                  \end{scope}
               \end{scope}
   
               \draw (-0.5,-2.5) -- (5.5,-2.5);
               \draw (-0.5,-0.5) -- (5.5,-0.5);

         \node[fill=black, circle, scale=.25] at (-0.0296,-1.5) {};
         \node[fill=black, circle, scale=.25] at (6,-1.5) {};

\draw[red] (0,-1.3) .. controls (1,-1.3) and (1.7,-1.3) .. (1.9,-1.3) .. controls (2.6,-1.3) and (2.6,-1.7) .. (3.2,-1.7) .. controls (3.6,-1.7) and (3.7,-1.7) .. (3.9,-1.7);

               \begin{scope}[red]
                  \draw (1.05,-1.2) .. controls (1.75,-1.2) and (1.7,-0.5) .. (2,-0.5);
                  \draw[dotted] (2,-0.5) .. controls (2.5,-0.5) and (2.5,-2.5) .. (3,-2.5);
                  \draw (3,-2.5) .. controls (3.3,-2.5) and (3.4,-1.8) .. (3.9,-1.8);
                  \draw (0,-1.2) -- (1.05,-1.2);
                  
               \end{scope}

\node at (0.7,-1.65) {$L_{\alpha>0}(0)$};
\draw[red] (3.9,-1.7) .. controls (4,-1.7) and (4,-1.8) .. (3.9,-1.8);
\end{tikzpicture}         \caption{}
        \label{fig:aftersurgery}
    \end{subfigure}
    \begin{subfigure}[t]{.45\linewidth}
        \begin{tikzpicture}[scale=.9]

               \draw  (-0.5,-1.5) ellipse (0.5 and 1);
   
               \begin{scope}[shift={(4,-3.5)}]
                  \begin{scope}[]
   
                     \clip  (1.5,3) rectangle (2.5,1);
                     \draw  (1.5,2) ellipse (0.5 and 1);
   
                  \end{scope}
                  \begin{scope}[]
   
                     \clip  (1.5,1) rectangle (0.5,3);
                     \draw[dotted]  (1.5,2) ellipse (0.5 and 1);
                  \end{scope}
               \end{scope}
   
               \draw (-0.5,-2.5) -- (5.5,-2.5);
               \draw (-0.5,-0.5) -- (5.5,-0.5);

         \node[fill=black, circle, scale=.25] at (-0.0296,-1.5) {};
         \node[fill=black, circle, scale=.25] at (6,-1.5) {};

\draw[red] (0,-1.6) .. controls (0.2,-1.6) and (0.4,-1.6) .. (0.6,-1.6) .. controls (0.8,-1.6) and (0.8,-1.4) .. (0.6,-1.4) .. controls (0.4,-1.4) and (0.2,-1.4) .. (0,-1.4);

               \begin{scope}[shift={(1,-3.5)},gray,thin]
                  \begin{scope}[]
   
                     \clip  (1.5,3) rectangle (2.5,1);
                     \draw  (1.5,2) ellipse (0.5 and 1);
   
                  \end{scope}
                  \begin{scope}[]
   
                     \clip  (1.5,1) rectangle (0.5,3);
                     \draw[dotted]  (1.5,2) ellipse (0.5 and 1);
                  \end{scope}
               \end{scope}
   
\draw[gray, thin] (0,-1.5) -- (6,-1.5);

\end{tikzpicture}         \caption{}
        \label{fig:afterhamiltonian}
    \end{subfigure}
    \caption{The surgery procedure for the mirror to $\CP^1$. }
\end{figure} 
The construction of the mirror to a toric divisor can be drawn out for $\check X_\Sigma=\CP^1$.
The fan for $\CP^1$  has two 1-dimensional cones.
We label the generators as $A=\{ \alpha=-e_1, \beta=e_1\}$.
The stop $\mathfrak f_\Sigma$ is a point on each of the 2 contact boundaries. The skeleton $\mathbb L_\Sigma$ has three components (indexed by the cones of the fan) and is the union of the unit circle and the real axis as shown in \cref{fig:p1skeleton}. 
With our choice of push-off, the Lagrangian zero section is the black curve drawn in \cref{fig:p1sections}, which is nearly wrapped to the stop. 

We consider the ray $\alpha$ and $L_0(\sF_\alpha)$.
This Lagrangian section intersects $\LL_0$. 
We then wrap from $L_0(0)$ to $L_0(\sF_\alpha)$ by translating the left end nearly a full rotation and leaving the right end fixed. 
The surgery of $L_0(\sF_\alpha)$ and $L_0(0)$, drawn in \cref{fig:aftersurgery}, removes the overlap on the right hand side of the figure.
As a result, the surgery is disjoint from the skeleton components on the right-hand side.
In this particular example, we can take a further Hamiltonian isotopy to identify $L_{\alpha>0}$ with a linking disk living in a neighborhood of $\LL_{\alpha}$ (\cref{fig:afterhamiltonian}). 
Generally this is not the case, and in \cref{rem:whythingsarenotlinkingdisks}, we discuss what is special about the example of $\CP^1$.
     
\section{Inclusion of a toric divisor}
   \label{sec:inclusion}
   In \cref{sec:mirrordivisor}, we showed that the Lagrangian $L_{\alpha>0}$ is mirror to $\mathcal O_{\alpha>0}$.
On the $B$-side, one can construct $\mathcal O_{\alpha>0}$ as the pushforward of a line bundle along the inclusion $\check i_{\alpha 0}:\check X_{\str(\alpha)/\alpha}\to \check X_\Sigma$. 
The goal of this section is to construct the mirror to this inclusion, thereby extending the previous discussion to line bundles supported on any toric stratum.

The mirror relation between $A$- and $B$-model morphisms is understood by representing a morphism of symplectic or complex manifolds by the submanifold in the product equal to the graph of the morphism.
Given a morphism $\check f: \check X_1\to \check X_2$, the graph is a subvariety $ \Gamma(\check f)\subset \check X_1\times \check X_2$.
Similarly, given a symplectomorphism $f: X_1\to X_2$, the graph is a Lagrangian submanifold $\Gamma(f)\subset X_1^-\times X_2$, where $X_1^{-}$ is equipped with its negative symplectic form. 
We expect that maps $f, \check f$ are mirror if the Lagrangian $\Gamma(f)$ is mirror to the sheaf  $\mathcal O_{\Gamma(\check f)}$.
More generally, the appropriate set of morphisms between symplectic manifolds are \emph{Lagrangian correspondences}. 
We refer to \cref{app:correspondenceadmissibility} for the definition of Lagrangian correspondences in the partially wrapped setting.
Lagrangian correspondences are predicted to give functors between the Fukaya category of $X_1$ and $X_2$ via geometric composition. 
The expected mirrors to Lagrangian correspondences are \emph{Fourier-Mukai kernels}, which are coherent sheaves on $\check X_1\times \check X_2$ that provide a functor between the derived category of coherent sheaves via the associated Fourier-Mukai transform.

From here on, we will fix a complete smooth projective fan $\Sigma$ of dimension $n$. 
Let $\alpha\in \Sigma$ be a 1-dimensional cone, and let $\check i_{\alpha 0}: \tb{\alpha}\to \check X_\Sigma$ be the inclusion of the toric orbit closure into $\check X_\Sigma$.
$\tb{\alpha}$ is a toric variety as well, and our goal for this section is to describe a Lagrangian correspondence $ \Lc_{\alpha 0}:((\CC^*)^{n-1}, \stp_{\str(\alpha)/\alpha})\Rightarrow( (\CC^*)^{n},\stp_\Sigma)$ which is mirror to $\mathcal O_{\Gamma(\check i_{\alpha 0})}\in D^b\Coh(\tb{\alpha}\times \check X_\Sigma)$. 
We will also describe how this correspondence acts on Lagrangian sections and linking disks. 
We write $L_\sigma(\sF)$ for a conical tropical Lagrangian section of $((\CC^*)^{n-k}, \stp_{\str(\sigma)/\sigma})$, and we will write $L_{\tau>\sigma}$ for a linking disk of the $\LL_{\tau/\sigma}$ component of the $\LL_{\str(\sigma)/\sigma}$ skeleton.
We now summarize the construction and results of this section in a theorem.
\begin{thm}
   Let $\alpha\in \Sigma$ be a 1-dimensional cone.
   There exists a toric inclusion Lagrangian correspondence (\cref{def:inclusioncorrespondence})  
   \[\Lc_{\alpha 0}: ((\CC^*)^{n-1}, \stp_{\afan})\Rightarrow ((\CC^*)^{n}, \stp_{\Sigma})
   \]
    which satisfies the following properties:
   \begin{itemize}
      \item(\Cref{property:admissibility})  $ \Lc_{\alpha 0}$ and $ \Lc^{-1}_{\alpha 0}$ are  admissible Lagrangian correspondences.
      \item(\Cref{property:topology}) If $L\subset((\CC^*)^{n-1}, \stp_{\afan})$ is an admissible Lagrangian, the geometric composition $ \Lc_{\alpha 0}\circ L$ is topologically $L\times \RR$.
      \item(\Cref{property:intertwining,property:apullback}) 
      Let $\sF\in \Supp_\alpha(\Sigma)$ be a support function transverse to $\alpha$. 
      The inclusion correspondence relates tropical Lagrangian sections 
      \[L_\alpha(\underline i^*_{\alpha0}\sF)\in \mathcal W ((\CC^*)^{n-1} ,\stp_{\afan})\]
       and  $L_0(\sF)\in\mathcal W( (\CC^*)^{n},\stp_\Sigma) $ 
       by 
       \[ \Lc_{\alpha 0}^{-1}\circ(L_0(\sF))=L_\alpha(\underline i_{\alpha 0}^*\sF).\]
      \item(\Cref{property:aexactsequence}) There exists an admissible Lagrangian cobordism (\cref{def:cobordism})
      \[(L_0(\sF+\sF_\alpha), L_0(\sF))\rightsquigarrow  \Lc_{\alpha 0}\circ L_\alpha(\underline i_{\alpha 0}^*\sF)=L_{\alpha>0}(\underline i_{\alpha 0}^* F).\]
      The topology of this cobordism is a disk.
      \item(\Cref{property:linkingdisks}) Let $\sigma>\alpha$ be a cone. Let $x\in \LL_{\underline i_{\alpha 0}^*\sigma}\subset \LL_{\afan}$ be an internal point, giving us a linking disk $\cocore(x)$. Then $ \Lc_{\alpha 0}\circ\cocore(x)$ is a linking disk for $\LL_\sigma\subset \LL_\Sigma$.
   \end{itemize}
   \label{thm:inclusioncorrespondence}
\end{thm}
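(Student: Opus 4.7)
The plan is to construct $\Lc_{\alpha 0}$ as a Lagrangian correspondence built by surgery in the product $((\CC^*)^{n-1})^-\times (\CC^*)^n$, in direct analogy with the construction of $L_{\alpha>0}$ in \cref{sec:mirrordivisor}. The mirror to the inclusion $\check i_{\alpha 0}$ factors naturally through the mirror to the toric projection $\check X_{\str(\alpha)} \to \check X_{\str(\alpha)/\alpha}$ (which is a genuine toric morphism and hence should admit a graph-like correspondence), followed by the mirror to including the zero section of the normal bundle, which is the surgery step.

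First, I would build a ``projection correspondence'' $\Lc_{\pi}: ((\CC^*)^{n-1}, \stp_{\afan}) \Rightarrow ((\CC^*)^n, \stp_{\str(\alpha)})$ associated to the dual linear map of $\underline\pi_\alpha : Q_{\str(\alpha)} \to Q_{\str(\alpha)/\alpha}$. Using the section $\funnyi$ from \cref{cor:funnysection}, this correspondence admits a natural description as (a neighborhood of) the conormal to $\underline\pi_\alpha$ inside $T^*Q_{\str(\alpha)/\alpha}\oplus T^*Q_{\str(\alpha)}$, made admissible via the conical twisting Hamiltonians of \cref{sec:embedding}. One verifies directly that $\Lc_\pi$ pulls sections over $Q_\Sigma$ to sections over $Q_{\str(\alpha)/\alpha}$ and intertwines twisting Hamiltonians, by a calculation parallel to \cref{thm:wrappingaction}. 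Second, I would perform a Lagrangian surgery of $\Lc_\pi$ with the product correspondence $L_{\afan}(0) \times L_0(\sF_\alpha)$ along their overlap --- exactly as in \cref{def:divisormirror}, using the surgery profile from \cref{prop:generalizedsurgeryprofile} --- and define $\Lc_{\alpha 0}$ as the result. By construction, the surgery cobordism itself provides the Lagrangian cobordism of \cref{property:aexactsequence}: geometrically composing with $L_\alpha(\underline i_{\alpha 0}^* \sF)$ turns the pair input of the cobordism into the pair $(L_0(\sF+\sF_\alpha), L_0(\sF))$ and the cone into $L_{\alpha>0}(\underline i_{\alpha 0}^* \sF)$, recovering $L_{\alpha>0}$ when $\sF=0$ exactly as predicted.

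With $\Lc_{\alpha 0}$ in hand, I would verify the properties in the stated order. Admissibility of $\Lc_{\alpha 0}$ and $\Lc_{\alpha 0}^{-1}$ (\cref{property:admissibility}) reduces to checking that outside a compact set the surgered Lagrangian lies in the complement of $\stp_{\afan}^- \times \stp_\Sigma$ and is ``totally wrapped to the stop'' in the sense of \cref{app:correspondenceadmissibility}; this uses the analog of \cref{claim:valuationofdivisormirror}, namely that outside a compact set $\val(\Lc_{\alpha 0})$ lives in (the graph region of) $\str(\alpha)$. The topological claim (\cref{property:topology}) is then immediate because $\Lc_\pi \circ L$ is the pullback graph of $L$ (homeomorphic to $L\times\RR$) and surgery does not alter this topology in the cylindrical end. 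The pullback claim (\cref{property:apullback}) follows from the intertwining property of $\Lc_\pi$ with twisting Hamiltonians together with the observation that surgery with $L_0(\sF_\alpha)$ shifts a section by the mirror twist $\otimes\mathcal O(\sF_\alpha)$ only in the ``external'' factor. Finally, the linking disk property (\cref{property:linkingdisks}) is a direct geometric computation: a linking disk $\cocore(x)$ at a point in $\LL_{\underline i_{\alpha 0}^* \sigma}$ corresponds under $\Lc_\pi$ to a disk transverse to $\LL_{\pi^{-1}\sigma}$, and the surgery identifies this with the linking disk of the corresponding $\LL_\sigma\subset \LL_\Sigma$.

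The main obstacle will be \cref{property:admissibility} and the compatibility of the surgery with both stops simultaneously: the correspondence must be admissible not only as a Lagrangian in the product but must restrict to the wrapping constraint on both sides when computing geometric compositions. The delicate point, as flagged in the remark following \cref{thm:wrappingaction}, is that the twisting Hamiltonians used to make sections conical distort arguments near the stop, and one must show that for sufficiently small defect $\delta$ the surgery region can be chosen disjoint from the troublesome FLTZ components and that the resulting correspondence remains ``totally wrapped'' so that \cref{app:correspondenceadmissibility} applies to guarantee that geometric composition stays in the admissible class. Once this is secured, all of the remaining properties follow from the surgery cobordism formalism of \cref{app:cobordism,app:correspondenceandcobordism} combined with the mirror identifications of \cref{sec:embedding}.
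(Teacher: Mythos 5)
Your overall architecture matches the paper's: factor $\check i_{\alpha 0}$ through the normal bundle $\check X_{\str(\alpha)} \to \check X_{\str(\alpha)/\alpha}$, build a projection correspondence $\Lc_{\pi_\alpha^*}$ from the toric morphism via the conormal construction wrapped by a twisting Hamiltonian, then perform a Lagrangian surgery and restrict to the larger stop $\stp_\Sigma$. The verifications you sketch for the individual properties are also in the right spirit.

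However, the surgery step is wrong, and it is the load-bearing step. You propose to surgery $\Lc_\pi$ against the \emph{product} $L_{\afan}(0)\times L_0(\sF_\alpha) \subset X_2^- \times X_1$. These two Lagrangians do not form a surgery pair in the sense of \cref{def:surgerydata}: the valuation image of $\Lc_\pi$ is the $n$-dimensional graph $\Gamma(\underline\pi_\alpha)\subset Q_2\times Q_1$, whereas $L_{\afan}(0)\times L_0(\sF_\alpha)$ is an SYZ section over \emph{all} of $Q_2\times Q_1$. Their overlap is not of the form ``$L_2$ is the graph of $df$ over an open subset $U\subset L_1$,'' so \cref{prop:generalizedsurgeryprofile} does not apply, and the surgery is not defined. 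Moreover, even granting some generalized surgery, the geometric composition $(L_{\afan}(0)\times L_0(\sF_\alpha))\circ L_\alpha(\underline i^*_{\alpha 0}\sF)$ collapses the $X_2$ factor to the finite set $L_\alpha(\sF)\cap L_{\afan}(0)$ and returns a multiple of $L_0(\sF_\alpha)$, \emph{not} $L_0(\sF+\sF_\alpha)$; so the exact triangle in \cref{property:aexactsequence} would not come out correctly.

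The fix, which is what the paper does, is to surgery $\Lc_{\pi_\alpha^*}$ against its own twist $\wrpalpha(\Lc_{\pi_\alpha^*})$, where $\wrpalpha$ is the conical twisting Hamiltonian for $\sF_\alpha$ acting only on the $X_1$ factor. Both Lagrangians then have the same valuation image $\Gamma(\underline\pi_\alpha)$; their difference is encoded by the graph of $d\Halpha$ over that common base, and the positivity of $\Halpha$ along $\alpha$-directions (\cref{lemma:positiveintersectionlemma}) supplies the required surgery data (\cref{claim:convexintersectionofsections}). With this change, the intertwining property (\cref{claim:intertwining}) converts the two ends $\Lc_{\pi_\alpha^*}\circ L_\alpha(\underline i^*\sF)$ and $\wrpalpha(\Lc_{\pi_\alpha^*})\circ L_\alpha(\underline i^*\sF)$ into $L_0(\sF)$ and $L_0(\sF+\sF_\alpha)$ respectively, which is exactly what is needed for \cref{property:aexactsequence}, and the topological, admissibility, and linking-disk statements then follow by the chart-by-chart analysis ($r$- and $s$-charts) as you anticipate.
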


\begin{rem} \label{rem:functorstatement}
   The Lagrangian correspondence $L_{\alpha 0}:((\CC^*)^{n-1}, \stp_{\str(\alpha)/\alpha})\Rightarrow(\CC^*)^n, \stp_\Sigma)$ is expected to give a functor between $\mathcal W((\CC^*)^{n-1}, \stp_{\str(\alpha)/\alpha})\rightarrow W((\CC^*)^n, \stp_\Sigma)$. 
   We now give an argument for why if such functors (as considered by \cite{wehrheim2007functoriality}) can be constructed for Lagrangian correspondences between stopped Liouville domains, then the functor arising from $L_{\alpha 0}$ is mirror to the derived pushforward $(\check i_{\alpha 0})_*: D^b\Coh(\check X_{\str(\alpha)/\alpha})\to D^b\Coh(\check X_\Sigma)$. 
   By \Cref{thm:ageneration}, the partially wrapped Fukaya categories in discussion are generated by Lagrangian sections, and geometric composition with $L_{\alpha 0}$ maps generators to generators in a way consistent with the mirror functor.
   This shows that $L_{\alpha 0}$ gives a homomorphism between the Grothendieck groups $K_0(\mathcal W((\CC^*)^{n-1}, \stp_{\str(\alpha)/\alpha})\rightarrow K_0( W((\CC^*)^n, \stp_\Sigma))$, which is mirror to the corresponding map on the mirror.
   In addition to knowing how $L_{\alpha 0}$ acts on Lagrangian sections, we know that $L_{\alpha 0}$ maps linking disks to linking disks in a way consistent with mirror symmetry.
   We now sketch why this implies that an extension of $L_{\alpha 0}$ to a functor will be mirror to $\mathcal{R}(\check i_{\alpha 0})_*$.

   Since $\mathcal W((\CC^*)^{n-1}, \stp_{\str(\alpha)\alpha})$ and $ W((\CC^*)^n, \stp_\Sigma)$ are generated by Lagrangian sections, it suffices to check that this correspondence acts appropriately on homomorphisms between Lagrangian sections. 
   In fact, we need only compute 
   $\HF(L_\alpha(0), L_\alpha(\sF))$, with $-D_\sF$ base-point free.
   In this setting, a generating set for $\CF(L_\alpha(0), L_\alpha(\sF))$  is given by morphisms $f_v$, where $v\in \Delta_\sF$ is a lattice point contained in the Newton polytope of $\sF$. 
   These morphisms are characterized by the short exact sequences
   \[L_\alpha(0)\xrightarrow{f_v} L_\alpha(\sF)\to \bigoplus_{\substack{\sigma_i>\alpha\\|\sigma|=|\alpha+1|}} L_{\sigma_i> \alpha}\]
   for some collection of Lagrangian disks  $L_{\sigma_i>\alpha}$ with $|\sigma_i|=|\alpha+1|$.
   Because $L_{\alpha 0}$ is a triangulated functor which sends mirrors to toric strata to mirror to toric strata, we can conclude that $L_{\alpha 0}$ is mirror to $\check i_{\alpha 0}$. 

   In this case, we can also conclude that the functor associated to geometric composition with $L_{\alpha 0}^{-1}$ is the mirror to derived pullback $\mathcal L i^*: D^b\Coh(\check X_\Sigma)\to D^b\Coh(\check X_{\str(\alpha)/\alpha})$ by adjunction.
   We expect that a similar strategy will yield a mirror symmetry statement for the Lagrangian correspondences defined from the data of a toric morphism in \cref{def:toricmorphismcorrespondence}.
\end{rem}

\subsection[Construction of i on the B side]{Construction of $\check i_{\alpha 0}$: On the $B$-side}
\label{subsec:bsideconstruction}

We now outline a construction of the structure sheaf of the graph of $\check i_{\alpha 0}: \tb{\alpha}\to \check X_\Sigma$ which is the Fourier-Mukai kernel giving the map $(\text{R}\check i_{\alpha 0})_*: D^b\Coh(\tb{\alpha})\to D^b\Coh(\check X_\Sigma)$. 
This exposition serves as a template for our symplectic arguments. 
The tools that we use to build   $\mathcal O_{\Gamma(\check i_{\alpha 0})}$ will be restricted to inclusion of toric subvarieties, tensor products with line bundles, mapping cones, and composition of Fourier-Mukai kernels.
Restricting ourselves to these methods gives a roundabout path to constructing $\mathcal O_{\Gamma(\check i_{\alpha 0})}$, but one which we can replicate on the $A$-side. 

Our first step is to factor the inclusion $\check i_{\alpha 0}$ as
\[
    \tb{\alpha} \into  \check X_{\str(\alpha)} \into \check X_\Sigma
\]
The map $\check i_{\str(\alpha),0}: \check X_{\str(\alpha)} \into \check X_\Sigma$ is a toric morphism. 
While the map  $\check i_{\alpha, \str(\alpha)}: \tb{\alpha} \into  \check X_{\str(\alpha)} $ is not toric, there is a toric morphism $\check \pi_{\str(\alpha),\alpha}:\check X_{\str(\alpha)}\to \tb{\alpha}$. 
The structure sheaf of the graph $\mathcal O_{\Gamma(\check \pi_{\str(\alpha), \alpha})}\in \text{Sh}(\tb{\alpha}\times \check X_{\str(\alpha)})$ is a toric subvariety.\footnote{We note that this is not a Fourier-Mukai kernel for the derived category between $\tb{\alpha}$ and $\check X_{\str(\alpha)}$, as the projection $\check \pi_{\str(\alpha),\alpha}:  \check X_{\str(\alpha)}\to \tb{\alpha}$ is not proper, so it does not give a map of coherent sheaves.
}
The line bundle $\mathcal O_{\str(\alpha)}(\sF_\alpha)$ on $\check X_{\str(\alpha)}$ pulls back to a line bundle on $\tb{\alpha}\times \check X_{\str(\alpha)}$. 
There exists an exact sequence of sheaves in $\tb{\alpha}\times \check X_{\str(\alpha)}$ 
\begin{equation}
   \left(\mathcal O_{\Gamma(\check \pi_{\str(\alpha),\alpha})}\tensor \mathcal O_{\str(\alpha)}(\sF_\alpha)\right)\to\left( \mathcal O_{\Gamma(\check \pi_{\str(\alpha),\alpha})}\right)\to\left( \mathcal O_{\Gamma(\check i_{\alpha,\str(\alpha)})}\right).
\end{equation}

Finally, we obtain the Fourier-Mukai kernel of $\check i_{\alpha 0}:\tb{\alpha}\to \check X_\Sigma$ by the composition:
\[\mathcal O_{\Gamma(\check i_{\alpha 0})}=\mathcal O_{\Gamma(\check i_{\str(\alpha),0})}\circ \mathcal O_{\Gamma(\check i_{\alpha,\str(\alpha)})}.\]
We now discuss the properties of the associated Fourier-Mukai transform.
Let $\sF\in \Supp_\alpha(\Sigma)$ be a support function transverse to  $\alpha$.
First, we observe that this transform intertwines pullbacks for line bundles and the linear pullback of support functions:
\begin{equation}
   \check i^*_{\alpha0}\mathcal O_0(\sF)= \mathcal O_{\alpha} (\underline i^*_{\alpha0}\sF).
   \tag{Mirror to \cref{property:apullback}}
\end{equation}
Additionally, there is an exact triangle 
\begin{equation}
   \mathcal O_0(\sF+\sF_\alpha)\to \mathcal O_0( \sF)\to \mathcal O_{\alpha>0}(\underline i^*_{\alpha 0} \sF).
   \tag{Mirror to  {\cref{property:aexactsequence}}}
\end{equation}
Finally, we remark that the inclusion of toric orbit is again a toric orbit, that is if $\sigma>\alpha$, then 
\begin{equation}
   (\check i_{\alpha 0})_* \mathcal O_{\sigma>\alpha}= \mathcal O_{\sigma>0}. 
\end{equation}

\subsection{Lagrangian correspondences from toric morphisms}
\label{subsec:toricmorphisms}
We first fix some notation for this section. 
$(X_1, \stp_{\Sigma_1})$ and $(X_2, \stp_{\Sigma_2})$ will always be the mirrors to the toric varieties $\check X_{\Sigma_1}, \check X_{\Sigma_2}$, and we will oftentimes refer to them by simply $X_1, X_2$ and suppress the notation of the stop.
We will frequently use the argument-valuation decomposition, identifying 
\[X_1=(\CC^*)^n=T^* Q_1/T^*_{2\pi \ZZ} Q_1 =Q_1\times \sP_1\]
 where $Q_1=\RR^n$ and $\sP_1=T^n$. 
 We give $T^*Q_1$ the standard symplectic coordinates $(q_1, p_1)$; we will also use these as coordinates on $X_1$ as well (where the $p_1$ coordinate is taken up to the period of $T^*_{2\pi \ZZ} Q_1)$. 
Recall that a Lagrangian section given by support function $\sF\in \Supp(\Sigma)$  can be parameterized by 
\[L(\sF)=(q, d(\hamH{\sF}+\hamK{}))\subset (X, \stp_\Sigma).\]
The constructions in this section do not use the parameter $\delta$, and we drop it from our notation.

Our construction will use an SYZ mirror symmetry description of  Lagrangian correspondences.
A Lagrangian torus fibration  $\val: X\to Q$  locally provides coordinates $(q, p)$ to $X$. The fiberwise sum of Lagrangian submanifolds $L', L''\subset X$ is defined (in local coordinates) by
\[L'+_QL'':= \{(q, p)\; : \; p=p'+p'', (q, p')\in L',  (q, p'')\in L''\}.\]
This operation on Lagrangian submanifolds of $X$ in the Fukaya category is conjecturally mirror to the tensor product of coherent sheaves on $\check X$ \cite{subotic2010monoidal}.
This holds for Lagrangian sections, where
\[L_0(\sF')+L_0(\sF'')=\wrpK{}(L_0(\sF'+\sF''))\sim L_0(\sF'+\sF'').\]
If the fiberwise sum is cut out transversely, then it is a Lagrangian submanifold of $X$. 
\begin{prop}
   \label{prop:syzcomp} 
Let $\bar \Lc_{01}: X_0^-\times X_1$ and $\bar \Lc_{12}: X_1^-\times X_2$ be Lagrangian correspondences between SYZ fibrations $\val:X_i\to Q_i$.
 Let $\Lc_{01}\subset X_0\times X_1$ and $\Lc_{12}\subset X_1\times X_2$ be the Lagrangians defined in local coordinates by:
\begin{align*}
   \Lc_{01}=&\{(p_0, p_1, q_0, q_1)\; : \; (q_0, q_1, -p_0, p_1)\in \bar \Lc_{01}\}\\
   \Lc_{12}=&\{(p_1, p_2, q_1, q_2)\; : \; (q_1, q_2, -p_1, p_2)\in \bar \Lc_{12}\}
\end{align*}
We can rewrite the geometric composition $\bar \Lc_{02} : X_0^-\times X_2$ (see \cref{app:correspondenceandcobordism}) using fiberwise sum:
\begin{equation}
    L_{02}:= \pi_{02}\left((X_0\times Q_1 \times X_2)\cap ((Q_0\times \Lc_{12})+_Q(L_{01}\times Q_2))\right)\subset X_2.
   \label{eq:geocomp}
\end{equation}
\[L_{02}=\{(q_0, q_1, p_0, p_1)\; : \; (q_0, q_2, -p_0, p_2)\in \bar \Lc_{12}\circ \bar \Lc_{01}\} \]
   Here, $\pi_{02}:X_0\times X_1\times X_2\to X_0\times X_2$.
\end{prop}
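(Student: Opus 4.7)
The statement is a coordinate computation in the SYZ setting, and my plan is simply to expand both sides in the local coordinates $(q_0, q_1, q_2; p_0, p_1, p_2)$ on $X_0 \times X_1 \times X_2$ and match the resulting descriptions. The underlying intuition is that the fiberwise sum over $Q = Q_0 \times Q_1 \times Q_2$ adds momentum coordinates, while intersection with the zero section $X_0 \times Q_1 \times X_2$ forces the $X_1$-component of the summed momentum to vanish; together, these encode the matching condition of geometric composition, modulo the sign flips relating $\bar \Lc_{ij}$ and $\Lc_{ij}$.

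For the right-hand side of \cref{eq:geocomp}, the Lagrangian $Q_0 \times \Lc_{12}$ consists of tuples $(q_0, q_1, q_2; 0, a, p_2)$ with $(q_1, q_2, a, p_2) \in \Lc_{12}$, while $\Lc_{01} \times Q_2$ consists of $(q_0, q_1, q_2; p_0, b, 0)$ with $(q_0, q_1, p_0, b) \in \Lc_{01}$. Their fiberwise sum over $Q$ is therefore the set of points $(q_0, q_1, q_2; p_0, a+b, p_2)$ satisfying these two constraints. Intersecting with $X_0 \times Q_1 \times X_2$ forces $a + b = 0$, hence $b = -a$; projecting via $\pi_{02}$ then produces the set of $(q_0, q_2, p_0, p_2)$ for which there exist $q_1$ and $a$ with $(q_1, q_2, a, p_2) \in \Lc_{12}$ and $(q_0, q_1, p_0, -a) \in \Lc_{01}$.

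For the left-hand side, I would unpack $\bar \Lc_{12} \circ \bar \Lc_{01}$ as the set of $(x_0, x_2) \in X_0^- \times X_2$ for which there is an intermediate $x_1 \in X_1$ with $(x_0, x_1) \in \bar \Lc_{01}$ and $(x_1, x_2) \in \bar \Lc_{12}$. Writing $x_i = (q_i, \pi_i)$ and translating the $\bar \Lc_{ij}$ conditions back to $\Lc_{ij}$ via the stated sign flips on the $X^-$-momenta, the geometric composition becomes the set of $(q_0, q_2, p_0, p_2)$ such that there exist $(q_1, \pi_1)$ with $(q_0, q_1, p_0, \pi_1) \in \Lc_{01}$ and $(q_1, q_2, -\pi_1, p_2) \in \Lc_{12}$. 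Setting $a := -\pi_1$ recovers exactly the description obtained from the fiberwise sum, verifying the set-theoretic identity \cref{eq:geocomp}.

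The only real obstacle is the bookkeeping of sign conventions between $X_i$ and $X_i^-$, which is purely algebraic — no Floer-theoretic or analytic input is needed. When the geometric composition is cut out transversely, the fiberwise sum meets $X_0 \times Q_1 \times X_2$ transversely and $\pi_{02}$ restricts to an embedding on the intersection, so the identity above upgrades from one of sets to one of smooth Lagrangian submanifolds.
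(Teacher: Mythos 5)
Your proposal is correct and takes essentially the same approach as the paper's proof: a direct coordinate unwinding of the fiberwise sum, followed by the observation that intersecting with $X_0 \times Q_1 \times X_2$ forces the middle momentum components to cancel, and then translating the $\Lc_{ij}$-conditions back to $\bar\Lc_{ij}$ via the sign flips. The only difference is cosmetic: you observe immediately that the $Q_0$ and $Q_2$ factors contribute zero momentum (so only the $p_1$-coordinate carries a nontrivial sum $a+b$), whereas the paper writes all three components as $p_i = p_i' + p_i''$ before imposing the zero-section constraints; the added closing remark about transversality and embeddedness is sound but not needed for the set-theoretic identity being asserted.
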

\begin{proof}
   We compute that
\begin{align*}
   \pi_{02}&\left((X_0\times Q_1 \times X_2)\cap ((Q_0\times \Lc_{12})+_Q(L_{01}\times Q_2))\right)\\
    =&\pi_{02}\left((X_0\times  Q_1\times X_2) \cap \left\{(  q_0, q_1, q_2,p_0, p_1, p_2)\;\middle|\; \begin{array}{c} p_0=p_0'+p_0'', p_1=p'_1+p''_1, p_2=p'_2+p''_2 \\ 
      ( q_0 ,q_1, q_2,p_0'', p_1'', p_2'')\in L_{01}\times Q_2\\
      ( q_0,q_1, q_2,p_0',p_1', p_2')\in  Q_0\times L_{12}
   \end{array}\right\}\right)\\
    =& \pi_{02} \left( \{( q_0, q_1, q_2, p_0, 0, p_2)\; : \; -p'_1=p''_1,  (q_0, q_1 ,p_0,p_1'')\in L_{01},(q_1, q_2, p_1', p_2)\in L_{12}\}\right)\\
    =&\{(q_0 , q_2,- p_0,p_2)\; : \; (q_1, q_2, -p_1, p_2)\in L_{12}, (q_0, q_1 , p_0, p_1)\in \bar L_{01}\}\\
    =& \{( q_0, q_2,p_0, p_2)\; : \; (q_0, q_1, -p_0, p_1)\in \bar \Lc_{12}\circ \bar \Lc_{01}\}
\end{align*}
as desired.
\end{proof}
This is a description of the Fourier-Mukai transform from a SYZ mirror symmetry perspective, where fiberwise sum is replacing tensor product on $\check X_1\times \check X_2$.
\begin{notation}
   Since we are always working in the setting where $X_1, X_2$ come from SYZ fibrations, we will (for \cref{sec:inclusion,sec:applications}) 
    write $L_{12}: X_1\Rightarrow X_2$ as a Lagrangian $L_{12}\subset X_1\times X_2$, with composition of Lagrangian correspondences as defined in \cref{prop:syzcomp}.
   \label{notation:correspondenceadmissibility}
\end{notation}
\begin{ex}[\cite{subotic2010monoidal}]
   Let $K_1\subset X$ be a Lagrangian submanifold, whose points can be given coordinates $(q, p)\in K_1$.
   Then there is a Lagrangian correspondence $\Lc_{+_Q K_1}: X\Rightarrow X$  parameterized by 
   \begin{align*}
      \sP\times K_1\to X\times& X\\
      (\tilde p,( q, p))\mapsto &(q, q, \tilde p+p, -\tilde p)
   \end{align*}
   which has the property that for any Lagrangian $K\subset X$, 
   \begin{align*}
      \Lc_{+_Q K_1}\circ K = K_1+_Q K
   \end{align*}
   thus recovering the operation of fiberwise sum with $K_1$.
\end{ex}
Let $\check f: \check X_{\Sigma_1}\to \check X_{\Sigma_2}$ be a toric morphism.
Then there exists a linear map associated with this toric morphism $\underline f: Q_1 \to {Q_2}$, which maps the cones of $\Sigma_1$ to subsets of cones  of $\Sigma_2$.
We use  $\underline f: Q_1\to Q_2$ to construct the Lagrangian correspondence by first constructing the graph $\Gamma(\underline f)\subset  Q_1\times {Q_2}$, taking the associated tropical Lagrangian submanifold (conormal torus), and finally wrapping slightly by the wrapping Hamiltonian $K_{12}$ for $X_1\times X_2$. 
\begin{df}
   Let $\check f: \check X_{\Sigma_1}\to \check X_{\Sigma_2}$ be a toric morphism.
   The mirror correspondence is:
      \[\Lc_{f^*}:=\psi_\delta^{-\eps} (N^*\Gamma(\underline f)/N^*_{2\pi \ZZ}\Gamma(\underline f))\subset (X_1\times X_2 ).\]
      \label{def:toricmorphismcorrespondence}
\end{df}
Consider the map $\underline f^*: (\sP_2)_{f(q_1)}\to (\sP_1)_{q_1}$, which comes from periodizing the map $\underline f^*: T^*_{f(q_1)}Q_2\to T^*_{q_1}Q_1$. 
We can explicitly parameterize $\Lc_{f^*}$ by:
\begin{align*}
   Q_1\times \sP_2\to&  Q_1\times Q_2\times \sP_1\times \sP_2\\
   (q_1, p_2)\mapsto & (q_1,\underline f(q_1), -\underline f^* p_2+d\hamK{12}|_1, p_2+d\hamK{12}|_2). 
\end{align*}
here $d\hamK{12}|_i$ are the $\sP_i$ components of $d\hamK{12}$.
\begin{lem}
   Let $\check f: \check X_{\Sigma_1}\to \check X_{\Sigma_2}$ be a toric morphism. 
   The Lagrangian $L_{f^*}$ gives a Lagrangian correspondence  
   \[L_{f^*}:((\CC^*)^{|\Sigma_2|} , \stp_{\Sigma_2})\Rightarrow ((\CC^*)^{|\Sigma_1|}, \stp_{\Sigma_1}).\]
   \label{lemma:toriccorrespondence}
\end{lem}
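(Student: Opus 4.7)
The plan is to verify three properties of $\Lc_{f^*}$ in turn: that it is a Lagrangian submanifold of the product $X_1\times X_2$, that it is conical at infinity, and that it is disjoint from the stops $\stp_{\Sigma_1}$ and $\stp_{\Sigma_2}$ outside a compact set. Once these three are in hand, admissibility in the sense of \cref{notation:correspondenceadmissibility} follows.

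First I would verify the Lagrangian property. Since $\underline f \colon Q_1\to Q_2$ is linear, $\Gamma(\underline f)\subset Q_1\times Q_2$ is a linear subspace and its conormal bundle is the standard Lagrangian
\[
N^*\Gamma(\underline f)=\{(q_1,\underline f(q_1),\xi_1,\xi_2)\in T^*(Q_1\times Q_2)\;:\;\xi_1=-\underline f^*\xi_2\},
\]
which matches the given parameterization (prior to wrapping) under $\xi_i=p_i$. Quotienting by $T^*_{2\pi\ZZ}(Q_1\times Q_2)$ preserves the Lagrangian property, and so does applying the Hamiltonian flow $\psi_\delta^{-\eps}$, which simultaneously produces the shifts by $d\hamK{12}|_i$.

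Next I would verify conicality. The Liouville vector field on $X_1\times X_2$ is the radial vector field in the base directions $q_1,q_2$, and the unwrapped Lagrangian, being the conormal bundle of a linear cone through the origin, is invariant under this scaling. The small flow $\psi_\delta^{-\eps}$ modifies this only inside a compact set, so $\Lc_{f^*}$ remains conical outside a compact region.

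Finally I would check stop avoidance. The key input is that $\underline f$ is a map of fans: for every $\sigma_1\in\Sigma_1$ there is $\sigma_2\in\Sigma_2$ with $\underline f(\sigma_1)\subset\sigma_2$. By \cref{lemma:conesctions} applied to the canonical divisor $K_{12}$ of the product toric variety $\check X_{\Sigma_1}\times\check X_{\Sigma_2}$, the wrapping $d\hamK{12}|_i\cdot\alpha_i$ is approximately $2\pi$ on the star of any primitive $\alpha_i$ at infinity. Thus for $q_1$ in a cone $\sigma_1$ near infinity and a generator $\alpha_1\in\sigma_1$, the inner product $p_1\cdot\alpha_1=-p_2\cdot\underline f(\alpha_1)+d\hamK{12}|_1\cdot\alpha_1$ stays bounded away from $0\bmod 2\pi$, keeping $\Lc_{f^*}$ off $\stp_{\Sigma_1}$; a symmetric calculation in the $X_2$ factor handles $\stp_{\Sigma_2}$.

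The hard part will be the last step: one must show simultaneous disjointness from both stops, for \emph{all} values of $p_2$ realized on $\Lc_{f^*}$, and in particular handle the case where $\underline f$ contracts part of $\sigma_1$ to the origin (so that $q_2=\underline f(q_1)$ need not tend to infinity together with $q_1$). This is precisely the kind of wrapping subtlety addressed in \cref{app:correspondenceadmissibility}, and relies on the fan-map property to ensure $\underline f^*p_2$ cannot cancel the shift coming from $d\hamK{12}|_1$ along any generator of $\sigma_1$.
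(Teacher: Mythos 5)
The overall structure of your argument matches the paper's (Lagrangian property, conicality, stop avoidance), and your conicality discussion is essentially equivalent to the paper's description of $\Lc_{f^*}$ as $\psi^\eps_{K_{12}}$ applied to the conormal bundle $N^*_{\sP_1\times\sP_2}(f^*(\sP_2),\sP_2)$ inside $T^*(\sP_1\times\sP_2)$. However, your stop-avoidance argument has a real gap, and you have actually flagged the gap yourself without resolving it.

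The problem: you try to show separately that $p_1\cdot\alpha_1 = -p_2\cdot\underline f(\alpha_1) + d\hamK{12}|_1\cdot\alpha_1$ ``stays bounded away from $0\bmod 2\pi$,'' and then that a symmetric statement holds in the $X_2$ factor. Neither statement is true. The parameter $p_2$ ranges over the entire torus $\sP_2$, so $-p_2\cdot\underline f(\alpha_1)$ sweeps all of $\RR/2\pi\ZZ$ (whenever $\underline f(\alpha_1)\neq 0$) and will certainly cancel the small shift $d\hamK{12}|_1\cdot\alpha_1$; similarly $p_2\cdot\alpha_2 + d\hamK{12}|_2\cdot\alpha_2$ will hit $0$. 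No fan-map property will prevent this cancellation along a single factor. Moreover, the stop to be avoided is not a pair of conditions in $X_1$ and $X_2$ checked independently but the product stop $\overline{\stp_1\times\stp_2}$, which is the FLTZ stop of the product fan $\Sigma_1\times\Sigma_2$; admissibility requires that the \emph{simultaneous} perpendicularity condition defining a stratum $\LL_{\sigma_1\times\sigma_2}$ fail.

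The correct resolution, which is where the fan-map hypothesis is actually used, is to test perpendicularity against the single ``diagonal'' vector $(\vec v_q, \underline f(\vec v_q))$ for $q\in\sigma_1$. Under $\underline f^*p_2\cdot\vec v_q = p_2\cdot\underline f(\vec v_q)$, the $p_2$-contributions from the two factors cancel exactly, leaving
\[
(-\underline f^*p_2 + d\hamK{12}|_1, \; p_2 + d\hamK{12}|_2)\cdot(\vec v_q,\underline f(\vec v_q)) = d\hamK{12}(\vec v_q,\underline f(\vec v_q)).
\]
Because $\underline f$ is a map of fans, $(\vec v_q, \underline f(\vec v_q))$ lies in a cone of $\Sigma_1\times\Sigma_2$, so the wrapping Hamiltonian $K_{12}$ has strictly negative derivative on it at infinity; the constraint therefore cannot be satisfied. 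This is how the paper argues, and it is the step you would have to rewrite.
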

\begin{proof}
    We need to show that this Lagrangian is conical at infinity and disjoint from the stop.
   
   The Liouville structure on $X_1 \times X_{2}$ comes from identifying it with $T^*(\sP_1\times \sP_2)$, where $\sP_1\times \sP_2$ is the SYZ fiber over the origin $(0, 0)\in Q_1\times {Q_2}$. 
   Our Lagrangian graph can also be expressed as 
   \[\Lc_{f^*}=\psi^\eps_{K_{12}} (N^*_{\sP_1\times \sP_2} (f^*(\sP_2), \sP_2)).\]
    As all conormal bundles are conical at infinity, and the wrapping Hamiltonian is admissible, the Lagrangian  $\Lc_{f^*}$ is conical at infinity.
   Therefore, $L_{f^*}$ is conical at infinity as well.

   Similarly, we show that $\Lc_{f^*}$ is disjoint from the stop of $X_1\times X_2$. Here, we use that the stop of the product (\cref{eq:prodstop}) is the FLTZ stop of the product toric variety.
   Let $\LL_{\sigma_1\times \sigma_2}\in \LL_{\Sigma_1\times \Sigma_2}$ be a FLTZ skeleton component. 
   The points $(q_1, p_2)\in \sP_2\times Q_1$  which intersect the FLTZ skeleton component $\LL_{\sigma_1}\times \LL_{\sigma_2}$ under the parameterization of $\Lc_{f^*}$ must satisfy the constraint:
   \begin{align*}
      (-\underline f^* p_2+d\hamK{12}|_1, p_2+d\hamK{12}|_2) \cdot (\sigma_1\times \sigma_2)|_{(q_1, f(q_1))} =0.
   \end{align*}
    Let $\vec {v_q}$ be the unit vector which points in the $q$-direction, for a point $q\in \sigma_1$. 
   We test the condition on the vector $(\vec {v_q}, \underline f (\vec {v_q}))$.
   Since $f$ is a toric morphism, $(\vec {v_q}, \underline f (\vec {v_q}))$ belongs to a cone, so $d\hamK{12}(\vec {v_q}, \underline f (\vec {v_q}))<0$. 
   Therefore, $\Lc_{f^*}$ avoids the stop.
\end{proof}
\begin{rem}
   This Lagrangian correspondence does a remarkably good job of preserving the skeleton. Let $\check f: \check X_1\to \check X_2$ be a toric morphism which \emph{preserves cones}, so that $\underline f (\sigma_1)\in \Sigma_2$ for all $\sigma_1\in \Sigma_1$. Consider $\tilde \Lc_{f^*}:= N^*\Gamma(\underline f)/N^*_{2\pi \ZZ}\Gamma(\underline f)$, which should be considered as the ``monomially admissible'' version of our correspondence.\footnote{When $f$ does not preserve cones, this is not monomially admissible.}
   Let $\LL_{\sigma_1}\subset X_{1}$ be the component of the FLTZ skeleton associated to $\sigma_1\in \Sigma_1$, and $\LL_{\underline f(\sigma_1)}\subset X_{2}$ be the component of the FLTZ skeleton associated to its image. 
   \begin{align*}
      \tilde \Lc_{f^*}^{-1}\circ \LL_{\sigma_1}= &\pi_2\circ(\pi_1^{-1}( \LL_{\sigma_1})\cap L_{f^*})\\
   \intertext{Decomposing $X_1\times X_2= (Q_1\times Q_2)\times (\sP_1\times \sP_2)$,}
      =&\pi_2\circ(((\sigma_1\times Q_2))\times(\sigma_1^\bot\times \sP_{2})\cap ((Q_1, f(Q_1))\times(Q_1, f(Q_1))^\bot ))\\
      =&\left\{(q_2, p_2)\;\middle|\; \begin{array}{c}  -p_1\cdot Q_1+p_2\cdot f(Q_1)=0\\ q_2=f(q_1),q_1\in \sigma_1 , p_1\cdot Q_1=0\end{array}\right\}\\
      =& ( \underline f (\sigma_1))\times(\underline f (\sigma_1))^\bot=\LL_{\underline f(\sigma_1)}.
   \end{align*}
   This Lagrangian $\tilde \Lc_{f^*}$ is more rigid than $\Lc_{f^*}$, which only preserves a neighborhood of the stop as opposed to the stop itself. 
   The Lagrangian $\tilde \Lc_{f^*}$ can be obtained from $ \Lc_{f^*}$ by taking the wrapping Hamiltonian $-\eps K_{12}$ to zero. 
   As $\tilde \Lc_{f^*}$ has a cleaner description than $\Lc_{f^*}$, we will use it in proofs where taking the wrapping Hamiltonian $-\eps K_{12}$ to zero does not impact the argument.
\end{rem}
   We've chosen the convention that $\check f: \check X_{\Sigma_1}\to \check X_{\Sigma_2}$ gives a Lagrangian correspondence $\Lc_{f^*}: X_2 \Rightarrow X_1$ for reasons related to admissibility of the geometric composition. 
   We quickly highlight some reasons for this choice; a full discussion of the admissibility of Lagrangian correspondences is found in \cref{claim:compositionstop}.
   When $\Sigma_2$ is complete, then the inverse image functor gives a map from the derived category of coherent sheaves on $\Sigma_2$ to that of $\Sigma_1$.
   Geometric correspondence similarly gives us a map from (Hamiltonian isotopy classes of) admissible Lagrangian submanifolds of $(X_1, \stp_1)$ to those of $(X_2, \stp_2)$.  
   However, there need not be a well-defined map on Hamiltonian isotopy classes of Lagrangian submanifolds in the other direction. 
   For example, if $\Sigma_1$ is a subfan of $\Sigma_2$, then there is a morphism $((\CC^*)^n, \Sigma_2)\Rightarrow ((\CC^*)^n, \Sigma_1)$ arising from stop removal, but not necessarily a map in the other direction. 
   The stop removal morphism is mirror to localization at a toric stratum. An analogous situation occurs in the mirror, where (without a properness condition) the Fourier-Mukai transform only preserves coherence in one direction. 
   As an example, consider on the $B$-side the map $i: \CC^*\to \CC$.
   This induces a pullback map $i^*: \text{Sh}(\CC)\to \text{Sh}(\CC^*)$ on the level of coherent sheaves. However $i_*:\text{Sh}(\CC^*)\to \text{Sh}(\CC) $ only preserves quasi-coherence.
   
   However, even if $L_{f^*}^{-1}$ does not give an admissible Lagrangian correspondence from $X_1\Rightarrow X_2$, it is possible that for some $L\in X_1$, the composition $L_{f^*}^{-1}\circ L$ remains admissible. In particular, if $L$ is totally stopped (and conjecturally mirror to a compactly supported perfect complex) then $L_{f^*}^{-1}\circ L$ is admissible. 
   For example, in the mirror, there are plenty of coherent sheaves on $\CC^*$ which pushforward to coherent sheaves on $\CC$. 
   We will, on occasion, use a Lagrangian correspondence in the non-admissible direction to build Lagrangian submanifolds on $X_2$ from Lagrangian submanifolds on $X_1$, and we will point out if we need check the admissibility of the composition in these situations.

   The next two claims prove that this correspondence appropriately ``pulls back'' Lagrangian sections.
\begin{prop}
   Let $L_2(0)\subset X_2$ be the admissible Lagrangian zero section of the Lagrangian torus fibration $\val: X_2\to Q_2$.
   Then $L_{f^*}\circ L_2(0)$ is Hamiltonian isotopic to the zero section of $X_1\to Q_1$.
   \label{claim:toricpullback}
\end{prop}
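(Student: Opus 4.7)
The plan is to compute the geometric composition directly using the SYZ description from \cref{prop:syzcomp}, reducing to the unperturbed conormal Lagrangian and then absorbing the various twisting Hamiltonians into Hamiltonian isotopies on the output.

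First, I would replace $\Lc_{f^*}$ with the unperturbed conormal $\tilde \Lc_{f^*} = N^*\Gamma(\underline f)/N^*_{2\pi \ZZ}\Gamma(\underline f)$ and replace $L_2(0)$ with the literal zero section $Z_2 = \{(q_2, 0) : q_2 \in Q_2\}$ of $X_2$. By definition, $\Lc_{f^*} = \wrpK{12}(\tilde \Lc_{f^*})$, and $L_2(0)$ is by construction a small perturbation of $Z_2$ by the flow of $\hamH{0} + \hamK{2}$. Both replacements are Hamiltonian isotopies; moreover, since the wrapping Hamiltonian $\hamK{12}$ splits as a sum of pullbacks from the two factors, the Hamiltonian isotopy between $\Lc_{f^*}$ and $\tilde \Lc_{f^*}$ can be written as a product of fiberwise isotopies in $X_1$ and $X_2$.

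Second, I would plug into the explicit parameterization of $\tilde \Lc_{f^*}$, which consists of points $(q_1, \underline f(q_1), -\underline f^* p_2, p_2)$ with $p_2 \in T^*_{\underline f(q_1)} Q_2$, modulo the lattice. The intersection with $X_1 \times Z_2$ forces $p_2 = 0$ in the $X_2$ factor, which in turn forces $-\underline f^* p_2 = 0$ in the $X_1$ factor. Hence
\[
\tilde \Lc_{f^*} \cap (X_1 \times Z_2) = \{(q_1, \underline f(q_1), 0, 0) : q_1 \in Q_1\},
\]
and this intersection is clean (transverse in the fiber directions). Projecting to $X_1$ via $\pi_1$ gives precisely the zero section $Z_1 = \{(q_1, 0)\} \subset X_1$, so by \cref{prop:syzcomp} we have $\tilde \Lc_{f^*} \circ Z_2 = Z_1$ as Lagrangian submanifolds.

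Third, I would track the Hamiltonian isotopies undone in the first step back through the composition. Since each isotopy factored through the $X_1$-factor or the $X_2$-factor, composing with the correspondence intertwines them: the $X_2$ part acts on $L_2(0)$ before composition (and preserves the zero-section Hamiltonian isotopy class), while the $X_1$ part acts on the output after composition. The net effect is that $\Lc_{f^*} \circ L_2(0)$ is Hamiltonian isotopic to a perturbation of $Z_1$ by pieces of the flows of $\hamH{0}$ and $\hamK{1}$, which is itself the definition of the zero section $L_1(0)$ up to Hamiltonian isotopy.

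The main obstacle, as in much of \cref{sec:inclusion}, is not the algebra of the composition but the admissibility bookkeeping: one must check that throughout each stage of the isotopy, the correspondence and its composition with $L_2(0)$ remain admissible and avoid the relevant stops so that geometric composition is well-defined on Hamiltonian isotopy classes (cf.\ \cref{app:correspondenceadmissibility}). This is why the wrapping by $\wrpK{12}$ was built into \cref{def:toricmorphismcorrespondence}, and why working with $L_2(0)$ (rather than an arbitrary admissible Lagrangian in $X_2$) is essential — the zero section is totally wrapped enough to make the composition in the non-admissible direction still yield a well-defined admissible Lagrangian in $X_1$, as shown by the calculation above.
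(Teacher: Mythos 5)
Your first two steps are fine and essentially reproduce the algebraic core of the paper's computation: intersecting $\tilde\Lc_{f^*}$ with $\pi_2^{-1}(Z_2)$ forces $p_2 = 0$ and hence $\underline f^* p_2 = 0$, so the unperturbed composition is literally $Z_1$. The paper organizes this differently — it carries the wrapping Hamiltonians through the fiberwise-sum calculation of \cref{prop:syzcomp} rather than stripping them off first — but the content is the same.

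The gap is in Step 3, the ``tracking back.'' First, the factor-by-factor bookkeeping is not as clean as you assert: the Hamiltonian $\hamK{12}$ is the conical twisting Hamiltonian for the anticanonical divisor of the \emph{product} fan, built by convolving the support function on $Q_1 \times Q_2$ against a mollifier whose radius scales with $\|(u_1,u_2)\|$, so it does not split exactly as a sum of pullbacks from the two factors. More importantly, the composition does not simply hand you ``a perturbation of $Z_1$ by pieces of $\hamH{0}$ and $\hamK{1}$.'' What comes out of the calculation is the section
\[
\left\{\bigl(q_1,\; d\hamK{12}|_1 + \underline f^*\bigl(d\hamK{12}|_2 + d\hamK{2}\bigr)\bigr)\right\},
\]
and the $X_2$-side wrapping terms get transported into $X_1$ through $\underline f^*$, which is not the identity. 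There is no a priori reason this is the same Hamiltonian isotopy class as $(q_1, d\hamK{1})$; the substance of the paper's proof is precisely the estimate
\[
-1 < \bigl(d\hamK{12}|_1 + \underline f^*(d\hamK{12}|_2 + d\hamK{2})\bigr)\cdot \vec v < 0
\]
for $q_1 \in \sigma$, $\vec v \in \sigma$, $\|\vec v\| = 1$, outside a compact set, which guarantees the straight-line interpolation to $d\hamK{1}$ never meets $\stp_{\Sigma_1}$. This step is where the toric morphism condition $\underline f(\sigma_1)\subset\sigma_2$ actually gets used — $d\hamK{12}$ and $d\hamK{2}$ are built to be negative and bounded on cones, and $\underline f^*$ preserves this because it maps cones into cones. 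Your proposal never makes this verification, so it does not rule out the composed section drifting across a stop component during the isotopy.

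One smaller point: you describe $\Lc_{f^*}\circ L_2(0)$ as a ``composition in the non-admissible direction,'' but by the paper's convention in \cref{def:toricmorphismcorrespondence}, $\Lc_{f^*}\colon X_2\Rightarrow X_1$, so composing with $L_2(0)\subset X_2$ is the \emph{admissible} direction. The ``totally stopped'' hypothesis from \cref{app:correspondenceadmissibility} is what rescues the reverse composition $\Lc_{f^*}^{-1}$, which is discussed just after \cref{lemma:toriccorrespondence}.
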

In the construction of the geometric composition for admissible Lagrangian correspondences (\cref{claim:correspondencecomposition}), we're required to take an additional Hamiltonian isotopy to obtain admissibility of the composition. This is denoted by $\Phi(\pi^{-1}(L_1))$ in \cref{claim:correspondencecomposition}. 
For this section, we will ignore this additional Hamiltonian (whose incorporation does not modify the admissible Hamiltonian class of our computations).
\begin{proof}
   \begin{align*}
      L_{f^*}\circ L_2(0)=&\pi_1\left((Q_2\times X_1) \cap \left\{( q_1, q_2, p_1, p_2)\;\middle|\; \begin{array}{c} p_1=p'_1+p''_1, p_2=p'_2+p''_2 \\ (q_1, q_2, p_1', p_2')\in \Lc_{f^*}\\(q_1, q_2, p_1'', p_2'')\in Q_1\times L_2\end{array}\right\}\right)\\
      =&   \left\{( q_1, p_1)\;\middle|\; \begin{array}{c} p_1=p'_1+p''_1, 0=p'_2+p''_2 \\ p_1'=-\underline f^*(p)+d\hamK{12}|_1, p_2'=p+ d\hamK{12}|_2 ,q_2=\underline f(q_1)\\ p_2''=d\hamK{2}, p\in \sP_2, q_1\in Q_1\end{array}\right\}\\
      =&\left\{(q_1,d\hamK{12}|_1+\underline f^*(d\hamK{12}|_2+d\hamK{2})) \; \mid \;q_1\in Q_1\right\},
   \end{align*}
   which is a Lagrangian section of $X_1$.
   It remains to show that this section is admissibly Hamiltonian isotopic to $(d\hamK{1}, q_1)$. 
  Outside of a compact set, for all $\sigma$, and $q\in \sigma, \vec v\in \sigma$, $|\vec v|=1$, 
  \[-1<(d\hamK{12}|_1+\underline f^*(d\hamK{12}|_2+d\hamK{2})\cdot(\vec v)|_q<0 .\]
   Therefore, the isotopy of Lagrangian submanifolds which is parameterized by 
   \[\{(q_1, t d\hamK{1}+(1-t)(d\hamK{12}|_1+\underline f^*(d\hamK{12}|_2+d\hamK{2}))) \mid q_1\in Q_1\}\]
   avoids the stop, giving us an admissible Hamiltonian isotopy between $L_{f^*}\circ L_2(0)$ and $L_1(0)$. 
\end{proof}
For any support function $\sF: {Q_2}\to \RR$, the pullback function $\underline f^* \sF$ is a support function on $Q_1$. 
Let $\wrp{\sF}$ be the time $t$ flow of the associated conical twisting Hamiltonian $\hamH{\sF}$, with $\delta$ chosen small enough so that the Lagrangians we consider are still admissible.
\begin{prop}
   Let $\underline f: Q_1\to Q_2$ be a toric morphism  and consider a support function $\sF_2\in \Supp(\Sigma_2)$. Let $\underline{}\sF_1=\underline f^*\sF_2$ and $L_2\subset Q_2$ be an admissible Lagrangian submanifold.
   Then there is an admissible Hamiltonian isotopy
   \[L_{f^*}\circ (\wrp{\sF_2}( L_2)) \sim \wrp{\sF_1}(L_{f^*}\circ L_2).\]
   \label{claim:intertwining} 
\end{prop}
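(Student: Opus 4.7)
\emph{Plan.} The strategy is to lift $\wrp{\sF_2}$ to a Hamiltonian flow on $X_1\times X_2$, show that on the correspondence $L_{f^*}$ this flow agrees with one lifted from $X_1$, and then identify the resulting $X_1$-Hamiltonian with $\wh H_{\sF_1}^\delta$ up to an admissible isotopy.

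First, consider the two lifts $\tilde H_2=(\wh H_{\sF_2}^\delta\circ u)\circ\pi_2$ and $\tilde H_1=((\wh H_{\sF_2}^\delta\circ \underline f)\circ u)\circ\pi_1$ of twisting Hamiltonians from $X_2$ and $X_1$ to $X_1\times X_2$. Using the explicit parameterization of $L_{f^*}$ from \cref{lemma:toriccorrespondence}, the time-$t$ flow of $\tilde H_2$ shifts the $\sP_2$-coordinate by $t\,d\wh H_{\sF_2}^\delta(\underline f(q_1))$. Reparameterizing by $p_2'=p_2+t\,d\wh H_{\sF_2}^\delta(\underline f(q_1))$ and using the chain rule identity $\underline f^*d\wh H_{\sF_2}^\delta(\underline f(q_1))=d(\wh H_{\sF_2}^\delta\circ \underline f)(q_1)$ yields the equality of deformed correspondences
\[\phi^t_{\tilde H_2}(L_{f^*})=\phi^t_{\tilde H_1}(L_{f^*}).\]
Since $\tilde H_i$ depends only on the $X_i$-factor, this translates at the level of geometric composition into the identity
\[L_{f^*}\circ \wrp{\sF_2}(L_2)=\psi^t\bigl(L_{f^*}\circ L_2\bigr),\]
where $\psi^t$ denotes the time-$t$ flow on $X_1$ of the Hamiltonian $(\wh H_{\sF_2}^\delta\circ \underline f)\circ u$.

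Second, compare $\wh H_{\sF_2}^\delta\circ \underline f$ with $\wh H_{\sF_1}^\delta$. Since $\underline f^*\sF_2=\sF_1$ and $\underline f$ carries cones of $\Sigma_1$ into cones of $\Sigma_2$, both functions are smooth, positively homogeneous of degree one outside a compact set, and approximate $2\pi\sF_1$ in the sense of \cref{lemma:conesctions}. For the composite the bound follows from the chain rule
\[\nabla(\wh H_{\sF_2}^\delta\circ \underline f)(u)\cdot \alpha=\nabla \wh H_{\sF_2}^\delta(\underline f(u))\cdot \underline f(\alpha)\]
together with the gradient estimate for $\wh H_{\sF_2}^\delta$ applied on the cone of $\Sigma_2$ containing $\underline f(\alpha)$. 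The linear interpolation $H_s=(1-s)(\wh H_{\sF_2}^\delta\circ \underline f)+s\,\wh H_{\sF_1}^\delta$ then consists of conical twisting Hamiltonians for $\sF_1$ with defect bounded uniformly in $s\in[0,1]$.

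The main obstacle is the final admissibility check: one must verify that the isotopy obtained by running the flows of $H_s$ on $L_{f^*}\circ L_2$ remains disjoint from $\stp_{\Sigma_1}$. The key observation here is that, by the estimate above, every $H_s$ has gradient bound $|\nabla H_s\cdot \alpha +2\pi n^{(1)}_\alpha|<C\delta$ on $U^{(1)}_\alpha\setminus V$ for $\alpha\in A_1$, so for sufficiently small $\delta$ the entire isotopy is contained in the complement of the fattened stop $\stp_{\Sigma_1}^\delta$ and hence is admissible. This produces the desired admissible Hamiltonian isotopy between $\psi^t(L_{f^*}\circ L_2)$ and $\wrp{\sF_1}(L_{f^*}\circ L_2)$, completing the proof.
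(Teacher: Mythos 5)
Your proof is correct and follows essentially the same approach as the paper: you first show that $L_{f^*}\circ\wrp{\sF_2}(L_2)$ equals the flow of the pullback Hamiltonian $\underline f^*\hamH{\sF_2}$ applied to $L_{f^*}\circ L_2$ (the paper instead carries out the equivalent change of variables $\tilde p_2 = p_2 - d\hamH{\sF_2}$ directly inside the geometric composition formula, but the content is the same), and then interpolate admissibly to $\wrp{\sF_1}$. The only difference in presentation is that you spell out a chain-rule gradient estimate for the final admissibility check, whereas the paper simply defers to the same type of argument as in Proposition~\ref{claim:toricpullback}; that estimate does go through because $\underline f$ carries each cone of $\Sigma_1$ into a cone of $\Sigma_2$, so for $u \in U^{(1)}_\alpha$ both $\underline f(u)$ and $\underline f(\alpha)$ lie in a common cone where the bound from Lemma~\ref{lemma:conesctions} applies.
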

\begin{proof}
\begin{align*}
   L_{f^*}\circ (\wrp{\sF_2}( L_2))=\pi_1&\left((Q_2\times X_1) \cap \left\{( q_1, q_2, p_1, p_2)\;\middle|\; \begin{array}{c} p_1=p'_1+p''_1, p_2=p'_2+p''_2 \\ (p_1', p_2', q_1, q_2)\in \Lc_{f^*}\\(q_1, q_2, p_1'', p_2'')\in Q_1\times \wrp{\sF_2}(L_2)\end{array}\right\}\right)\\
   =& \left(  \left\{( q_1, p_1)\;\middle|\; \begin{array}{c} p_1=p'_1+p''_1, 0=p'_2+p''_2 \\ p_1'=d\hamK{12}|_1-\underline f^*(p_2)\\p_2'= d\hamK{12}|_2+p_2 ,q_2=f(q_1), (q_1, p_2)\in Q_1\times  \sP_2\\ p_2''=d\hamH{\sF_2}+\tilde p_2'', (q_2'',\tilde p_2'')\in L_2 \end{array}\right\}\right)\\
   \intertext{Letting $\tilde p'_2= p_2'+d\hamH{\sF_2},\tilde p_2=p_2-d\hamH{\sF_2}, \tilde p_1'=p_1'+\underline f^*(d\hamH{\sF_2}), \tilde p_1=p_1-\underline f^*(d\hamH{\sF_2})$} 
   =& \left(  \left\{( q_1, \tilde p_1+ f^*d\hamH{\sF_2})\;\middle|\; \begin{array}{c} \tilde p_1=\tilde p'_1+p''_1, 0=\tilde p'_2+\tilde p''_2 \\ \tilde p_1'=d\hamK{12}|_1-\underline f^*(\tilde p_2)\\ \tilde p_2'= d\hamK{12}|_2+\tilde p  ,q_2=\underline f(q_1) ,(q_2'',\tilde p_2'')\in L_2 \end{array}\right\}\right)\\
   \intertext{Letting $\tilde H_2=f^*\hamH{\sF_2}$}
   =& \wrp{\tilde H_2} (L_{f^*}\circ L_2).
\end{align*}
By a similar argument to \cref{claim:toricpullback}, this is admissibly Hamiltonian isotopic to $\wrp{\underline f^* \sF_2} (L_{f^*}\circ L_2)$. 
\end{proof} 
\begin{cor}
   If $L_{\Sigma_2}(\sF_2)\subset X_2$ is a tropical section, $L_{f^*}\circ L_{\Sigma_2}(\sF_2)\sim L_{\Sigma_1}(\underline f^*\sF_2)$.
   \label{property:intertwining}
\end{cor}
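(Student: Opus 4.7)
The plan is to deduce the corollary by combining the two preceding results with the definition of tropical Lagrangian sections. Recall from \cref{subsec:troplag} and \cref{sec:embedding} that a tropical Lagrangian section is defined essentially as the image of the zero section under the flow of the (conical) twisting Hamiltonian associated to its support function, up to an additional admissible Hamiltonian isotopy by $\psi_\delta^{-\delta/2\pi}$ that has no effect on the class of the Lagrangian in $\mathcal{W}((\CC^*)^n, \stp_\Sigma)$. Thus, up to admissible Hamiltonian isotopy we may identify
\[ L_{\Sigma_2}(\sF_2) \sim \wrp{\sF_2}(L_{\Sigma_2}(0)). \]

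Applying \cref{claim:intertwining} with $L_2 = L_{\Sigma_2}(0)$, we obtain
\[ L_{f^*} \circ L_{\Sigma_2}(\sF_2) \sim L_{f^*} \circ \wrp{\sF_2}(L_{\Sigma_2}(0)) \sim \wrp{\underline f^* \sF_2}(L_{f^*} \circ L_{\Sigma_2}(0)). \]
By \cref{claim:toricpullback}, the geometric composition $L_{f^*} \circ L_{\Sigma_2}(0)$ is admissibly Hamiltonian isotopic to the zero section $L_{\Sigma_1}(0)$ in $X_1$. Applying the flow $\wrp{\underline f^* \sF_2}$ to both sides of this isotopy yields an admissible isotopy (since the Hamiltonian flow preserves admissibility by construction of the conical twisting Hamiltonians), so
\[ \wrp{\underline f^* \sF_2}(L_{f^*} \circ L_{\Sigma_2}(0)) \sim \wrp{\underline f^* \sF_2}(L_{\Sigma_1}(0)) \sim L_{\Sigma_1}(\underline f^* \sF_2), \]
where the last identification is again via the definition of tropical Lagrangian sections.

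The only delicate point is to ensure that the composed isotopies never meet the stop $\stp_{\Sigma_1}$, so that we genuinely obtain an admissible Hamiltonian isotopy in $\mathcal{W}((\CC^*)^n, \stp_{\Sigma_1})$. This is not an obstacle here because each step already produces an admissible Hamiltonian isotopy: \cref{claim:toricpullback} builds an explicit straight-line isotopy that avoids the stop using the estimate on $d\hamK{12}|_1 + \underline f^*(d\hamK{12}|_2 + d\hamK{2})$, and the flow $\wrp{\underline f^* \sF_2}$ is the flow of a conical twisting Hamiltonian that, by construction in \cref{lemma:conesctions}, preserves the complement of the stop when $\delta$ is chosen sufficiently small. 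Composing these gives the required admissible Hamiltonian isotopy, completing the proof.
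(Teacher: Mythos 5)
Your proof is correct and is precisely the implicit argument the paper intends: the corollary is stated immediately after \cref{claim:intertwining} with no proof because it follows by specializing that proposition to $L_2 = L_{\Sigma_2}(0)$ (so $\wrp{\sF_2}(L_2)\sim L_{\Sigma_2}(\sF_2)$) and then applying \cref{claim:toricpullback}, exactly as you do. Your closing remark about the admissibility of the composed isotopies is a sensible precaution and is consistent with the choices of $\delta$ and the wrapping Hamiltonian made earlier in the section.
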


\subsection{Construction of the inclusion correspondence}
\label{subsec:inclusionmirror}
We now construct a correspondence mirror to the inclusion $\check i_{\alpha 0}:\tb{\alpha}\into \check X_\Sigma$. 
As before, we fix some notation. In this subsection, we reserve $X_1=(\CC^*)^n, X_2=(\CC^*)^{n-1}$ for symplectic tori, which can be equipped with stops to give us $(X_1, \stp_{\Sigma}), (X_1, \stp_{\str(\alpha)})$ and $(X_2, \stp_{\str(\alpha)/\alpha})$. 
These come with Lagrangian torus fibrations $X_1=Q_1\times \sP_1$, and $X_2=Q_2\times \sP_2$. 
Denote by $\underline \pi_{\alpha}: Q_1\to Q_2$ the linear map whose corresponding toric morphism is the projection $\check X_{\str(\alpha)}\to \tb{\alpha}$.
We write $\underline \pi_{\alpha}^*: \sP_2\to \sP_1$ for the dual map on the fibers.

The idea of the construction is to use \cref{subsec:toricmorphisms} to build the mirror to $\Gamma(\check \pi_\alpha)\subset \check X_{\str(\sigma)}\times X_{\str(\alpha)/\alpha}$, and then mimic \cref{subsec:bsideconstruction}. 
\begin{prop}
   Let $\alpha$ be a 1-dimensional cone of $\Sigma$.
   Consider the morphism of fans $\underline \pi_{\alpha}:\str(\alpha)\to \afan$.
   Let $\Lc_{\pi_{\alpha}^*}:(X_2,\stp_{\afan})\Rightarrow (X_1, \stp_{\str(\alpha)})$ be the associated Lagrangian correspondence built in \cref{lemma:toriccorrespondence}.
   Let $\wrpalpha: X_1\to X_1$ be the wrapping Hamiltonian for the support function $\sF_\alpha$ which is $1$ on the generator $\alpha$, and $0$ on all other 1-dimensional cones. 
   Then the intersection of the Lagrangians $\Lc_{\pi_{\alpha}^*}$ and $ \wrpalpha( \Lc_{\pi_{\alpha}^*})$ has a surgery data (as in \cref{def:surgerydata}).
   \label{claim:convexintersectionofsections}
\end{prop}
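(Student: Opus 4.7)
The plan is to reduce the claim directly to the surgery-data construction already carried out in \cref{sec:mirrordivisor}. First, I would write down the parameterization of both Lagrangians in the explicit form used in \cref{lemma:toriccorrespondence}, namely
\[
(q_1, p_2) \mapsto \big(q_1,\, \underline\pi_\alpha(q_1),\, -\underline\pi_\alpha^* p_2 + d\hamK{12}|_1,\, p_2 + d\hamK{12}|_2\big)
\]
from $Q_1 \times \sP_2$ into $X_1 \times X_2$. Since $\wrpalpha = \phi_{\sF_\alpha}^{1-\eps}$ is the time-$(1-\eps)$ flow of the Hamiltonian $\wrpH{\alpha}\circ \val$, which depends only on the $Q_1$-coordinate of $X_1$, its action on $\Lc_{\pi_\alpha^*}$ modifies only the $\sP_1$-component by adding $(1-\eps)\, d\wrpH{\alpha}(q_1)$. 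Both Lagrangians are thus parameterized by the same $Q_1 \times \sP_2$, and two points paired up by this common parameterization intersect in $X_1 \times X_2$ if and only if $(1-\eps)\, d\wrpH{\alpha}(q_1) \in 2\pi M$ in the $\sP_1$-fiber.

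Next, I would identify the intersection locus. By \cref{claim:intersectioncharacterization} the differential $d\wrpH{\alpha}$ vanishes precisely where $\wrpH{\alpha}$ vanishes, and by the quantitative bound in \cref{lemma:conesctions} the $\alpha$-component of $d\wrpH{\alpha}$ lies in the interval $[0, 2\pi)$ (with $(1-\eps)$ scaling it further away from the period). The intersection therefore reduces exactly to the common vanishing locus $\{q_1 : \wrpH{\alpha}(q_1) = 0\} \times \sP_2$, which up to the $\eps$-smoothing is $(Q_1 \setminus \text{int}(\str(\alpha))) \times \sP_2$. This is a clean intersection along a large overlap region, the direct analogue of the overlap of $L_0(\sF_\alpha)$ with $L_0(0)$ in \cref{def:divisormirror}.

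To produce the surgery data, I would then verify the convexity structure on the normal directions. Within $\Lc_{\pi_\alpha^*}$ the normal directions to the overlap are the fibers of $\underline\pi_\alpha$, i.e.\ the $\alpha$-parallel lines in $Q_1$, while the $\sP_2$-direction is a free factor tangent to the overlap. By \cref{claim:intersectioncharacterization} the function $\wrpH{\alpha}$ is monotone along these $\alpha$-slices, with level sets cut out by the section $\funnyi$ of \cref{cor:funnysection}. This is precisely the hypothesis fed into \cref{prop:generalizedsurgeryprofile} in \cref{sec:mirrordivisor}, and the surgery region can be taken to be $U_{\Halpha} = \{\Halpha \leq \rho c_\eps\}$ times $\sP_2$.

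The main obstacle I anticipate is admissibility at infinity. The perturbation $\hamK{12}$ built into $\Lc_{\pi_\alpha^*}$ pushes the correspondence off the stop $\stp_{\afan \times \Sigma}$, and one must verify that the resulting surgery region $U_{\Halpha} \times \sP_2$ is compatible with the conical end of $\Lc_{\pi_\alpha^*}$, in particular that the constants $c_\eps, \delta$ can be chosen uniformly in $(q_1, p_2)$ so the surgery profile stays disjoint from both stop components. This amounts to a technical but routine bookkeeping check, running in parallel to the admissibility estimates in the proof of \cref{lemma:toriccorrespondence} and the valuation control of \cref{claim:valuationofdivisormirror}, and does not require any new ideas beyond those already developed in \cref{sec:mirrordivisor} and \cref{app:cobordism}.
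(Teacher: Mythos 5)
Your proposal is correct and follows essentially the same route as the paper: parameterize $\Lc_{\pi_\alpha^*}$ over $Q_1\times\sP_2$, observe that $\wrpalpha$ only shifts the $\sP_1$-component by $d\big((1-\eps)\Halpha\big)$, reduce the intersection to that of the zero section with the graph of $d\Halpha$ in $T^*Q_1/T^*_{2\pi\ZZ}Q_1$, and invoke \cref{lemma:positiveintersectionlemma} to obtain a collar function. One small clarification: the admissibility-at-infinity discussion in your last paragraph is not actually part of what \cref{claim:convexintersectionofsections} asserts — both $\Lc_{\pi_\alpha^*}$ and $\wrpalpha(\Lc_{\pi_\alpha^*})$ are already admissible by \cref{lemma:toriccorrespondence}, so \cref{def:surgerydata} applies directly, and the compatibility of the surgery region with the conical end (the $\rho c_\eps$ rescaling) is the job of \cref{prop:generalizedsurgeryprofile}, which consumes the surgery data rather than being part of its verification.
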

Before we prove the claim, we note that by taking the $1-\eps$ flow $\wrpalpha$, the Lagrangians $\Lc_{\pi_{\alpha}^*}$ and $ \wrpalpha( \Lc_{\pi_{\alpha}^*})$ only intersect over the region where $\frac{d}{dt}\phi^t_{\sF_\alpha}=0$.
\begin{proof}
   The Lagrangian submanifold $\Lc_{\pi_{\alpha}^*}:(X_2,\stp_{\afan})\Rightarrow (X_1, \stp_{\str(\alpha)})$  is topologically a  $\sP_2\times Q_1$,  parameterized by 
   \[L_{\pi^*_\alpha}=\{((q, \underline \pi_\alpha(q)), (-\underline \pi_{\alpha}^* p+d\hamK{12}|_1, p+d\hamK{12}|_2))\; : \; q\in Q_1, p\in \sP_2\}\subset Q_1\times Q_2\times \sP_1\times \sP_2\]
   The wrapping Hamiltonian acts on the $\sP_1$ factor, and sends this Lagrangian submanifold to 
   \[\wrpalpha (L_{\pi^*_\alpha})= \{( (q, \underline \pi_\alpha(q)),(-\underline \pi_{\alpha}^* p+d\hamK{12}|_1+d(1-\eps)\Halpha, p+d\hamK{12}|_2))\; : \; q\in Q_1, p\in \sP_2\}.\]
   Here, $\wrpalpha$ is the conical smoothing of $\sF_\alpha$, which in \cref{sec:embedding} was denoted $\widehat H^{\delta}_{-D_\alpha}$.
   The intersection between $\Lc_{\pi_{\alpha}^*}$ and $ \wrpalpha( \Lc_{\pi_{\alpha}^*})$ is determined by the intersection between the zero section and the Lagrangian section $d\Halpha$ in $T^*Q_1/T^*_{2\pi \ZZ} Q_1$.
   By \cref{lemma:positiveintersectionlemma}, the intersection of the zero section and the graph of $d\Halpha$ form a surgery data (\cref{def:surgerydata}). 
\end{proof}
This allows us to use \cref{prop:generalizedsurgeryprofile} to build the mirror to the graph of the inclusion $i_{\alpha 0}: \check X_{\afan}\to \check X_\Sigma$.
\begin{df}
   Let $\alpha\in \Sigma$ be a 1-dimensional cone.
   The local model for the toric orbit inclusion is the Lagrangian correspondence $\Lc_{\alpha 0}^{pre}:(X_2, \stp_{\afan})\Rightarrow (X_1, \stp_{\str(\alpha)})$ defined by the surgery
   \[
      \Lc_{\alpha 0}^{pre}=\wrpalpha(\Lc_{\pi_{\alpha}^*})\#_{U_{\Halpha}} \Lc_{\pi_{\alpha}^*} .
   \]
   We additionally obtain a Lagrangian surgery cobordism, 
   \[K_{\alpha 0}^{pre}:(\wrpalpha(\Lc_{\pi_{\alpha}^*}),\Lc_{\pi_{\alpha}^*}^{pre} )\rightsquigarrow \Lc_{\alpha 0}.\]
   Let $\check i_{\str(\alpha),0}: \check X_{\str(\alpha)}\to \check X_\Sigma$ be the inclusion of the normal bundle to $\check X_{\afan}$, and $\Lc_{i_{\str(\alpha),0}^*}: (X_1, \stp_{\Sigma})\Rightarrow (X_1, \stp_{\str(\alpha)})$ be the mirror correspondence.\footnote{This is a stop removal correspondence, which will be explored further in \cref{subsec:quotient}.}
   The toric orbit inclusion is the Lagrangian correspondence
   \[
      \Lc_{\alpha 0}:= ((\Lc_{\alpha 0}^{pre})^{-1} \circ \Lc_{i_{\str(\alpha),0}^*})^{-1} : (X_2, \stp_{\afan})\Rightarrow (X_1, \stp_{\Sigma}),
   \]
   which similarly comes with a surgery cobordism.
   \label{def:inclusioncorrespondence}
\end{df}
The correspondence $\Lc_{i_{\str(\alpha),0}^*}$ does not significantly modify the local model of the toric orbit inclusion. In fact, if we naively include $\Lc_{\alpha 0}^{pre}$ as a subset of $(X_1, \stp_{\Sigma})$, it is admissibly Hamiltonian isotopic to $L_{\alpha 0}$. 
\begin{prop}
   The valuation of the $X_1$ component of $\Lc_{\alpha 0}$ is contained within the star of $\alpha$:
   \[\val(\pi_1(\Lc_{\alpha 0}))\subset \str(\alpha).\]
   \label{claim:valuationofcorrespondence}
\end{prop}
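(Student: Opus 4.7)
The plan is to reduce the statement to the local surgery computation already carried out in \cref{claim:valuationofdivisormirror}. By \cref{def:inclusioncorrespondence}, $\Lc_{\alpha 0}$ is obtained from $\Lc_{\alpha 0}^{pre}$ by an inverse composition with the stop-removal correspondence $\Lc_{i^*_{\str(\alpha),0}}$. This correspondence arises from the inclusion of subfans $\str(\alpha) \subset \Sigma$, for which the associated linear map is the identity on $Q_1$, so by \cref{def:toricmorphismcorrespondence} the Lagrangian $\Lc_{i^*_{\str(\alpha),0}}$ is just a small wrapping of the diagonal in $X_1 \times X_1$. Since the text following \cref{def:inclusioncorrespondence} already observes that $\Lc_{\alpha 0}$ and $\Lc_{\alpha 0}^{pre}$ are admissibly Hamiltonian isotopic as Lagrangians in $X_2 \times X_1$, and Hamiltonian isotopy only deforms by compactly supported modifications in the interior, it suffices to verify the claim for $\Lc_{\alpha 0}^{pre}$.

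The key surgery computation then proceeds almost verbatim as in \cref{claim:valuationofdivisormirror}. Recall $\Lc_{\alpha 0}^{pre} = \wrpalpha(\Lc_{\pi_\alpha^*}) \#_{U_{\Halpha}} \Lc_{\pi_\alpha^*}$. The two Lagrangians being surgered are parametrized over $Q_1 \times \sP_2$ and differ only in the $\sP_1$ component by the gradient $d((1-\eps)\Halpha)$, which is a function of $q \in Q_1$ alone. By \cref{claim:intersectioncharacterization}, this gradient vanishes precisely on the locus $\{\Halpha = 0\} \subset Q_1$, so the two Lagrangians coincide over the complement of a small conical neighborhood of $\str(\alpha)$. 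The surgery region $U_{\Halpha} = \{\Halpha \leq \rho c_\eps\}$ contains this overlap region outside a compact set, and applying \cref{prop:generalizedsurgeryprofile} eliminates the coinciding sheets, so the $Q_1$-projection of $\pi_1(\Lc_{\alpha 0}^{pre})$ outside a compact set is contained in $\{\Halpha > \rho c_\eps\}$. The estimate at the end of the proof of \cref{claim:valuationofdivisormirror} shows this set sits in $\str(\alpha)$ outside a sufficiently large compact $X^{int}$.

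The main obstacle I anticipate is not the surgery itself --- which is formally identical to the divisor case --- but rather pinning down exactly how the composition with the stop-removal correspondence $\Lc_{i^*_{\str(\alpha),0}}$ interacts with the projection $\pi_1$ after geometric composition. Specifically, one should verify that the extra wrapping factor introduced by $\hamK{12}$ in \cref{def:toricmorphismcorrespondence} (needed for admissibility against $\stp_\Sigma$) does not drag the $Q_1$ projection outside $\str(\alpha)$; this amounts to checking that the wrapping Hamiltonian is supported on the fiber directions and acts by a compactly-supported perturbation on the valuation, which should follow directly from its construction as a smoothing of $-\eps F_K$.
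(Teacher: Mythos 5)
Your argument is correct and matches the paper's intent; the paper's own proof of this proposition is simply the one-line remark that it is ``analogous to \cref{claim:valuationofdivisormirror},'' and you unpack that analogy accurately. The key point, as you identify, is that the surgery removes everything over $\{\Halpha \leq \rho c_\eps\}$, so both charts $\Lc_r$, $\Lc_s$ of $\Lc_{\alpha 0}$ project in $Q_1$ to $\{\Halpha \geq \rho c_\eps\}$, which lies inside $\str(\alpha)$ outside a compact set by the same estimate as in \cref{claim:valuationofdivisormirror}.

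One imprecision is worth fixing. In justifying the reduction to $\Lc_{\alpha 0}^{pre}$ you assert that ``Hamiltonian isotopy only deforms by compactly supported modifications in the interior,'' but admissible Hamiltonian isotopies in a Liouville setting are generally not compactly supported, so this cannot be the reason. Likewise, the concern you raise about $\hamK{12}$ possibly dragging the $Q_1$-projection outside $\str(\alpha)$ is a false alarm for a cleaner reason than the one you suggest. The Hamiltonians entering all of these modifications --- the wrapping Hamiltonian $K_{12}$, the twisting Hamiltonian $\wh H^\delta_\alpha$ in $\wrpalpha$, and the ones underlying the isotopy between $\Lc_{\alpha 0}^{pre}$ and $\Lc_{\alpha 0}$ --- are all functions of the valuation coordinates alone. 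In the coordinates $(u,\thetaa)$ with $\omega = \sum du_i \wedge d\thetaa_i$, the Hamiltonian vector field of any $H\circ \val$ points purely in the $\thetaa$-directions, so the flow leaves the valuation of every point unchanged, not merely perturbed. This single observation both justifies passing from $\Lc_{\alpha 0}$ to $\Lc_{\alpha 0}^{pre}$ and resolves the worry about $\hamK{12}$; there is nothing to estimate.
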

\begin{proof}
   Analogous to \cref{claim:valuationofdivisormirror}.
\end{proof}
\begin{prop}
$\Lc_{\alpha 0}: (X_2, \stp_{\str(\alpha)/\alpha})\Rightarrow (X_1, \stp_\Sigma)$ is an admissible Lagrangian correspondence.
\label{property:admissibility}
\end{prop}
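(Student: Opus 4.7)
The plan is to verify admissibility in two stages. In the first stage, we check that the intermediate object $\Lc_{\alpha 0}^{pre}$ is admissible as a Lagrangian correspondence $(X_2, \stp_{\afan}) \Rightarrow (X_1, \stp_{\str(\alpha)})$. In the second stage, we enlarge the stop on the target from $\stp_{\str(\alpha)}$ to $\stp_\Sigma$, using the valuation containment of \Cref{claim:valuationofcorrespondence} to rule out intersections with the newly added stop components.

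For the first stage, \Cref{lemma:toriccorrespondence} gives that $\Lc_{\pi_\alpha^*}$ is an admissible Lagrangian correspondence $(X_2, \stp_{\afan}) \Rightarrow (X_1, \stp_{\str(\alpha)})$. The flow $\wrpalpha$ is generated by a conical twisting Hamiltonian $\hamH{\alpha}$, which, for $\delta$ chosen sufficiently small (in the sense of Section 3.2 so that we work with $\stp^\delta_{\str(\alpha)}$ before reversing the enlargement), produces an admissible Lagrangian $\wrpalpha(\Lc_{\pi_\alpha^*})$ via the functor $\mathscr F^\delta_{-D_\alpha}$. \Cref{claim:convexintersectionofsections} verifies that these two Lagrangians meet in a surgery datum, so the admissible-surgery result of \Cref{app:cobordism} (\Cref{prop:generalizedsurgeryprofile}) produces an admissible Lagrangian $\Lc_{\alpha 0}^{pre}$ inside $X_1 \times X_2$ avoiding the product stop associated with $\stp_{\str(\alpha)}$.

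For the second stage, the correspondence $\Lc_{i^*_{\str(\alpha),0}}$ is the mirror of an open inclusion of toric varieties; as the underlying linear map is the identity, $\Lc_{i^*_{\str(\alpha),0}}$ is (a small wrapping-Hamiltonian perturbation of) the diagonal in $X_1 \times X_1$, and its composition with $\Lc_{\alpha 0}^{pre}$ amounts to viewing $\Lc_{\alpha 0}^{pre}$ inside $(X_1 \times X_2, \stp_\Sigma \cup \stp_{\afan})$ after a small Hamiltonian adjustment to realize admissibility at the enlarged stop. By \Cref{claim:valuationofcorrespondence}, $\val(\pi_1(\Lc_{\alpha 0})) \subset \str(\alpha)$ outside a compact set, and a slight strengthening of that argument (tracking which directions the smoothing kernel sees) shows this containment holds in the interior of $\str(\alpha)$ along the asymptotic directions. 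For any $\sigma \in \Sigma \setminus \str(\alpha)$, the skeleton $\LL_\sigma = \sigma \times \sigma^\perp$ projects to $\sigma$, and $\sigma \cap \str(\alpha)$ is contained in proper faces of $\sigma$ not containing $\alpha$, so $\Lc_{\alpha 0}$ is disjoint from these extra skeletal pieces at infinity. Admissibility of $\Lc_{\alpha 0}^{-1}$ is then automatic from that of $\Lc_{\alpha 0}$ (together with \Cref{notation:correspondenceadmissibility} on the symmetric role of the two factors), with the additional confirmation that the relevant geometric compositions remain admissible because the target-side Lagrangians we ever compose against are totally wrapped to $\stp_\Sigma$.

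The main obstacle is coordinating the parameters --- the defect $\delta$, the time $1-\eps$ of the twisting flow, the surgery region $U_{\Halpha}$, and the interpolation to a true diagonal in $\Lc_{i^*_{\str(\alpha),0}}$ --- so that all five Lagrangians ($\Lc_{\pi_\alpha^*}$, $\wrpalpha(\Lc_{\pi_\alpha^*})$, their surgery, $\Lc_{\alpha 0}^{pre}$, and the final $\Lc_{\alpha 0}$) simultaneously miss the correct stop. This is precisely the kind of bookkeeping the appendix is designed to handle, and the crucial geometric input is the positivity and support properties of $\frac{d}{dt}\Halpha$ from \Cref{claim:intersectioncharacterization}, which control where the overlap region lies and thus where the surgery can (and must) take place.
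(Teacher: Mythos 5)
Your two-stage plan is sound, and it fills in a genuine gap that the paper's own one-sentence proof glosses over. The paper's proof reads, in full: ``Since $\Lc_{\pi_{\alpha}^*},\wrpalpha(\Lc_{\pi_{\alpha}^*})$ are both admissible, the surgery $\Lc_{\alpha 0}$ is admissible (\Cref{prop:generalizedsurgeryprofile}).'' Strictly speaking, what \Cref{prop:generalizedsurgeryprofile} gives is admissibility of $\Lc_{\alpha 0}^{pre}$ as a correspondence into $(X_1, \stp_{\str(\alpha)})$, not into $(X_1, \stp_\Sigma)$; the paper relies on the informal remark after \Cref{def:inclusioncorrespondence} that composing with $\Lc_{i_{\str(\alpha),0}^*}$ ``does not significantly modify'' $\Lc_{\alpha 0}^{pre}$ and that a na\"ive inclusion into $(X_1,\stp_\Sigma)$ is admissibly Hamiltonian isotopic to $\Lc_{\alpha 0}$. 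You are right to treat the stop-enlargement step as a separate task requiring justification.

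Where your argument could use tightening is the second stage. You invoke ``a slight strengthening'' of \Cref{claim:valuationofcorrespondence} to get the valuation of $\Lc_{\alpha 0}$ into the \emph{interior} of $\str(\alpha)$ asymptotically. This is the right kind of claim, but it is not immediate: the valuation containment in \Cref{claim:valuationofdivisormirror} is obtained as ``a small conical neighborhood of $\{\sF_\alpha > \rho c_\eps\}$,'' and such a neighborhood can in principle leave $\str(\alpha)$ if the mollifier radius $\eps r$ is not small compared with $c_\eps$. So you do need to verify that the parameter choices already baked into the surgery (the choice of $\rho$, the defect $\delta$, and $c_\eps$ from \Cref{prop:generalizedsurgeryprofile}) in fact put the conical neighborhood strictly inside $\text{int}(\str(\alpha))$. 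An alternative that avoids this entirely, and is closer in spirit to the paper's construction, is to treat the stop enlargement via the admissible correspondence $\Lc_{i_{\str(\alpha),0}^*}: (X_1, \stp_\Sigma) \Rightarrow (X_1, \stp_{\str(\alpha)})$ (admissible by \Cref{lemma:toriccorrespondence}) and then apply the composition admissibility machinery of \Cref{claim:correspondencecomposition}; this shifts the work from a pointwise valuation estimate to the structural results of the appendix, which is where the paper intends it to live. For the invertibility direction ($\Lc_{\alpha 0}^{-1}$), note that the symmetry in $\overline{\stp_1\times\stp_2}$ gives admissibility of the correspondence as a submanifold, but actually carrying out geometric compositions with $\Lc_{\alpha 0}^{-1}$ requires the totally-stopped condition of \Cref{claim:compositionstop} on the input Lagrangian, as the paper emphasizes in the discussion of pullback versus pushforward directions.
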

\begin{proof}
Since $\Lc_{\pi_{\alpha}^*},\wrpalpha( \Lc_{\pi_{\alpha}^*})$ are both admissible, the surgery $\Lc_{\alpha 0}$ is admissible (\cref{prop:generalizedsurgeryprofile}). 
\end{proof}
From here on, for the sake of notational clarity, we write $\wh H_\alpha^\delta$ for $(1-\eps)\wh H_\alpha^\delta$.
From its construction in \cref{prop:generalizedsurgeryprofile}, $\Lc_{\alpha 0}$ comes with two charts which sit inside $Q_1\times Q_2\times \sP_1\times \sP_2$ as 
   \begin{align*}
      \Lc_{r}:=&\left\{\left((q_1,\underline \pi_\alpha(q_1)),(\underline \pi_{\alpha}^* p_2+d\hamK{12}|_1+d(r_{\rho c_\eps} \circ \Halpha),p_2+d\hamK{12}|_2)\right)\; : \; q_1\in Q_1, \Halpha\geq \rho c_\eps \right\} \\
      \Lc_{s}:=&\left\{\left((q_1,\underline \pi_\alpha(q_1)),(\underline \pi_{\alpha}^* p_2+d\hamK{12}|_1+d(s_{\rho c_\eps} \circ \Halpha),p_2+d\hamK{12}|_2)\right)\; : \; q_1\in Q_1, \Halpha\geq \rho c_\eps\right\}\\
   \end{align*}
   which we will call the $r$ and $s$-charts.
   In the same way that the addition of the wrapping Hamiltonian $d\hamK{}$ made using coordinate charts for $\Lc_{f^*}$ a bit unwieldy in computations in \cref{subsec:toricmorphisms}, the presence of the wrapping Hamiltonian can clutter computations here. 
   As before, the $d\hamK{12}$ terms which are required to disjoin $\Lc_{\alpha 0}$ from the stop can be chosen to be very small, and for some computations, we can set them equal to 0 without impacting the result; equivalently, this is the same as working with the Lagrangian $\psi^\eps_{K_{12},\delta}(\Lc_{\alpha 0 })$.
   We work with this simplified Lagrangian frequently enough that we will call it $\tilde \Lc_{\alpha 0 }$; we will similarly use the charts $\tilde \Lc_r, \tilde \Lc_s$.

\begin{prop}
   Let $L\subset (X_2, \stp_{\str(\alpha)/\alpha})$ be an admissible Lagrangian submanifold. Then $\Lc_{\alpha 0} \circ L$ has the topology of $L\times \RR$.
   \label{property:topology}
\end{prop}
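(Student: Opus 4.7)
The plan is to show that $\Lc_{\alpha 0}$ is diffeomorphic to $X_2 \times \RR$ as a trivial $\RR$-bundle over $X_2$ via the projection $\pi_{X_2}$, and then to deduce the topology of the composition $\Lc_{\alpha 0} \circ L$ by a straightforward fibered product argument.

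First, I would analyze the topology of the local model $\Lc_{\alpha 0}^{pre}$ using the two surgery charts $\Lc_r$ and $\Lc_s$ listed immediately after \cref{def:inclusioncorrespondence}. Each chart is parameterized by pairs $(q_1, p_2)$ with $q_1 \in \{\Halpha \geq \rho c_\eps\}$ and $p_2 \in \sP_2$. Applying \cref{lemma:positiveintersectionlemma} (strict monotonicity of $\Halpha$ along $\alpha$-parallel rays inside $\str(\alpha)$) together with \cref{cor:funnysection} (the smooth section $\funnyi$ parameterizing a level set of $\Halpha$), I would identify $\{\Halpha \geq \rho c_\eps\}$ diffeomorphically with $Q_2 \times [\rho c_\eps, \infty)$. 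Each chart is therefore diffeomorphic to $X_2 \times [0, \infty)$, with the boundary $\{\Halpha = \rho c_\eps\}$ corresponding to $X_2 \times \{0\}$.

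Next, the surgery construction from \cref{prop:generalizedsurgeryprofile} glues $\Lc_r$ and $\Lc_s$ smoothly along this common boundary, so that $\Lc_{\alpha 0}^{pre}$ is diffeomorphic to $X_2 \times \RR$, with $\pi_{X_2}$ corresponding to projection onto the first factor. Passing from $\Lc_{\alpha 0}^{pre}$ to $\Lc_{\alpha 0}$ involves composition with the stop-removal correspondence $\Lc_{i_{\str(\alpha), 0}^*}$ and the admissibility isotopy from \cref{claim:correspondencecomposition} (see also the remark after \cref{def:inclusioncorrespondence} that these two Lagrangians are admissibly Hamiltonian isotopic); both are Hamiltonian-type deformations and preserve the diffeomorphism type. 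Hence $\Lc_{\alpha 0}$ is also diffeomorphic to $X_2 \times \RR$ over $X_2$.

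Finally, for the composition I would observe that $\Lc_{\alpha 0} \cap (L \times X_1)$ is the pullback of $L \hookrightarrow X_2$ along the trivial $\RR$-bundle $\Lc_{\alpha 0} \to X_2$, hence diffeomorphic to $L \times \RR$. The geometric composition $\Lc_{\alpha 0} \circ L = \pi_{X_1}(\Lc_{\alpha 0} \cap (L \times X_1))$ is the image of this pullback under an embedding (after a small perturbation of $L$ to guarantee transversality of the intersection and injectivity of the projection), and so has topology $L \times \RR$.

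The main obstacle is verifying that the surgery charts $\Lc_r$ and $\Lc_s$ glue smoothly into a manifold diffeomorphic to $X_2 \times \RR$ rather than to a more complicated identification space; this relies on the explicit form of the surgery profile functions from \cref{prop:generalizedsurgeryprofile}. Once the $\RR$-bundle structure is in place, the rest of the argument is routine, with the transversality needed for the composition supplied by a generic perturbation of $L$.
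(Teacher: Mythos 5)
Your proof is correct and uses the same essential ingredients as the paper's: the $r$- and $s$-charts of the surgery, \cref{lemma:positiveintersectionlemma}, and the section $\funnyi$ of \cref{cor:funnysection}. The paper computes the charts $\tilde\Lc_r\circ L$ and $\tilde\Lc_s\circ L$ of the composition directly and identifies them as $L\times\RR_{\geq 0}$ and $L\times\RR_{\leq 0}$ glued along $L$; you instead first isolate the cleaner intermediate fact that $\pi_{X_2}\colon\Lc_{\alpha 0}\to X_2$ is a trivial $\RR$-bundle and then pull back along $L\hookrightarrow X_2$, which is a mild but tidy reorganization of the same argument. One small simplification you could make: the perturbation of $L$ at the end is unnecessary, since $\pi_{X_2}$ restricted to $\Lc_{\alpha 0}$ is a submersion (so the fibered product with $L$ is automatically transverse) and $\pi_{X_1}$ is injective on $\Lc_{\alpha 0}$ because $\underline\pi_\alpha^*\colon\sP_2\to\sP_1$ is injective.
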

\begin{proof}
   This computation can be simplified by using $\tilde \Lc_{\alpha 0 }$ instead of $\Lc_{\alpha 0}$.
   We use the $r$ and $s$-charts to compute the geometric composition, as  $\tilde \Lc_r\circ L $ and  $\tilde \Lc_s \circ  L$ form charts for $\tilde \Lc_{\alpha0}\circ L$.
   By \cref{cor:funnysection}, we can choose a section  $\funnyi: Q_2\into Q_1$ which parameterizes  the $\rho c_\eps$ level set of $\Halpha$. See \cref{fig:conicalrchart}.
   We look at one of these charts:
   \begin{align*}
   \tilde \Lc_r\circ L =&\left\{( q_1,\underline \pi_{\alpha}^*(p_2)+d(r_{\rho c_\eps}\circ \eps \Halpha))\; : \;\Halpha(q_1)\geq \rho c_\eps , (\underline \pi_\alpha(q_1), p_2)\in L\right\}\\
    \intertext{We wish to replace $\underline \pi_{\alpha}(q_1)=q_2$ with $q_1\in \underline \pi_{\alpha}^{-1}(q_2)$. 
    Since the set $\Halpha(q_1)\geq \rho c_\eps$ fibers over the image of $\funnyi$ under the projection $\underline \pi_\alpha$, we can parameterize this preimage as: }
   =& \left\{( \funnyi(q_2)+ \alpha\cdot t,\underline \pi_{\alpha}^*(p_2)+d(r_{\rho c_\eps} \circ \eps \Halpha))\; : \; (q_2, p_2)\in L , t\in \RR_{>0}\right\}. \tag{$r$-parameterization} \label{eq:rparameterization}
   \end{align*}
This chart has the topology of $L\times \RR_{\geq 0}$. 
A similar argument shows that $\tilde \Lc_s \circ L$ has the topology of $L\times \RR_{\leq 0}$. These two charts are identified along their boundaries,  so that $\tilde \Lc_{\alpha 0}\circ L \cong L\times \RR$.
\end{proof}
\begin{rem}
There are a few constructions in the literature similar to  $\Lc_{\alpha 0}\circ L$ .
Particularly relevant examples come from Lagrangian cobordisms and  Lefschetz fibrations.
\begin{itemize}
   \item In Lagrangian cobordisms, there is an inclusion of Fukaya categories:
   \begin{align*}
      \Fuk(X)\to &\Fuk(X\times \CC)\\
      L\mapsto &L\times \gamma
   \end{align*}
   where $\gamma\subset \CC$ is a curve.
   Parallel transport along a $u$-shaped curve around a stop of the cobordism is used in \cite{biran2014lagrangian} to give $\Fuk(X\times \CC)$ the structure of a module over $\Fuk(X)$. 
   \item In Lefschetz fibrations, there is a ``cup functor'' from $\Fuk(W^{-1}(z))\to \Fuk(X, W)$, which includes Lagrangians by parallel transport along a curve in the base of the Lefschetz fibration. It is called the cup functor as the shape of the curve has a $\cup$ shape. This functor is also sometimes referred to as the Orlov functor and the image objects as $u$-shaped Lagrangians. The Orlov functor for stopped Liouville domains where the stop is a smooth hypersurface is well-understood \cite{sylvan2019orlov}. Our construction can be viewed as (a special case of) the Orlov functor for singular stops.
\end{itemize}
Our Lagrangian $\Lc_{\alpha 0}$ gives an interpretation to these constructions as arising from a Lagrangian correspondence.
\end{rem}
\begin{prop}
   Let $L_0\subset (X_1, \stp_\Sigma)$ be the Lagrangian zero section. Then $\Lc_{\alpha 0}^{-1}\circ L_0$ is a Lagrangian zero section of $(X_2, \stp_{\str(\alpha)/\alpha})$.
   \label{property:apullback}
\end{prop}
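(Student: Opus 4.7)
The plan is to reduce the proposition to results we have already established. Using \cref{def:inclusioncorrespondence}, decompose
\[ \Lc_{\alpha 0}^{-1} = (\Lc_{\alpha 0}^{pre})^{-1} \circ \Lc_{i_{\str(\alpha), 0}^{*}}. \]
Since $\check i_{\str(\alpha), 0}: \check X_{\str(\alpha)} \to \check X_{\Sigma}$ is a toric morphism, \cref{claim:toricpullback} shows that $\Lc_{i_{\str(\alpha), 0}^{*}} \circ L_0$ is admissibly Hamiltonian isotopic to the zero section $L_0'' \subset (X_1, \stp_{\str(\alpha)})$. Hence it suffices to prove that $(\Lc_{\alpha 0}^{pre})^{-1} \circ L_0''$ is admissibly isotopic to the zero section $L_2(0)$ of $(X_2, \stp_{\str(\alpha)/\alpha})$.

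For this, I would parametrize $\Lc_{\alpha 0}^{pre}$ using the $r$- and $s$-charts introduced after \cref{property:admissibility}, together with the surgery handle from \cref{prop:generalizedsurgeryprofile}. In each chart, intersecting $(\Lc_{\alpha 0}^{pre})^{-1}$ with $L_0'' \times X_2$ reduces to the equation
\[ \underline\pi_{\alpha}^{*} p_2 + d(\hamK{12}|_1 + r_{\rho c_\eps} \circ \Halpha) = d\hamK{1}^{\str(\alpha)}, \]
and similarly with $s_{\rho c_\eps}$. The image of $\underline\pi_{\alpha}^{*}$ is the annihilator $\alpha^\perp \subset \sP_1$. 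By \cref{claim:intersectioncharacterization}, the $\alpha$-slices of $\Halpha$ are monotone, and the level sets $\{\Halpha = c\}$ are parametrized by the smooth section $\funnyi: Q_2 \hookrightarrow Q_1$ of \cref{cor:funnysection}. Using this, for each $q_2 \in Q_2$ one obtains a unique solution $p_2 = p_2(q_2)$ of the equation, and the solutions from the two charts match smoothly across the surgery handle. Projecting to $X_2$ yields a Lagrangian section whose $p_2$-coordinate is bounded by the sizes of $\hamK{12}|_2$ and the surgery profiles.

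To finish, one uses a convex-combination admissible Hamiltonian isotopy through Lagrangian sections to deform the result to $L_2(0)$, as at the end of the proof of \cref{claim:toricpullback}; the bounds on $p_2(q_2)$ ensure the isotopy does not cross the stop $\stp_{\str(\alpha)/\alpha}$. The main obstacle is verifying that the surgery with $\wrpalpha(\Lc_{\pi_{\alpha}^{*}})$ resolves the $\alpha$-direction degeneracy of the naive composition $\Lc_{\pi_{\alpha}^{*}}^{-1} \circ L_0''$: without the surgery, each $q_2 \in Q_2$ has an entire line $\underline\pi_\alpha^{-1}(q_2) \subset Q_1$ of preimages in the intersection. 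The surgery handle, governed by $r_{\rho c_\eps}, s_{\rho c_\eps}$, collapses these preimages to a single transverse intersection over each $q_2$, producing a genuine section rather than a degenerate Lagrangian.
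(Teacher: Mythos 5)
Your reduction via $\Lc_{\alpha 0}^{-1} = (\Lc_{\alpha 0}^{pre})^{-1}\circ \Lc_{i_{\str(\alpha),0}^*}$ and \cref{claim:toricpullback} is fine, and you correctly identify the key convexity input (\cref{claim:intersectioncharacterization}, \cref{cor:funnysection}). However, the core of your argument — that in both the $r$-chart and the $s$-chart the equation picks out a unique $p_2 = p_2(q_2)$, and that ``the solutions from the two charts match smoothly across the surgery handle'' — misdescribes what actually happens and, as written, would not produce a section. Both $\Lc_r$ and $\Lc_s$ are parametrized by the \emph{same} region $\{\Halpha \geq \rho c_\eps\}\subset Q_1$, so they project to the same set of $q_2\in Q_2$. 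If both charts contributed intersection points, you would obtain a bivalued ``section,'' not a Lagrangian section of $(X_2,\stp_{\str(\alpha)/\alpha})$. The paper instead arranges for the $s$-chart to contribute \emph{nothing}: with $\eps_{12}$ chosen large enough and $c_\eps$ small enough (\cref{eq:separatingsections}), one ensures $d(-\eps_{12}K_{12} + s_{\rho c_\eps}\circ\Halpha)(\alpha) < -c_1$ over the $s$-chart while $|d(\eps_1 K_1)(\alpha)| < c_1$ on $L_0(0)$, so the two are separated in the $\alpha$-component of $\arg$ and $\Lc_s^{-1}\circ L_0(0) = \emptyset$. Only the $r$-chart contributes.

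Within the $r$-chart, the uniqueness of $q_1$ (hence of $p_2$) over each $q_2$ is not a consequence of monotonicity of $\Halpha$ alone, but of the strict convexity of $r_{\rho c_\eps}\circ\Halpha$ along $\alpha$-slices in the region where the equation can be satisfied; this uses that $r_{\rho c_\eps}''$ is made arbitrarily large near the handle boundary together with $\frac{d}{dt}\Halpha(q+t\alpha)\geq 0$ from \cref{lemma:positiveintersectionlemma}. The resulting locus is parametrized by the section $\funnyj:Q_2\to Q_1$ of \cref{eq:funnyjishere}, which is \emph{not} the level-set parametrization $\funnyi$ you cite: $\funnyj$ is the zero locus of $\left(d\hamK{12}|_1 + d(r_{\rho c_\eps}\circ\Halpha) + d(\eps_1 K_1)\right)(\alpha)$, a different (and parameter-dependent) hypersurface in $Q_1$. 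Without the emptiness of the $s$-chart and the correct identification of the intersection locus, the final step — an admissible Hamiltonian isotopy to $L_2(0)$ — cannot be carried out as stated.
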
 
\begin{proof}
We compute $ \Lc_{r}^{-1}\circ L_0(0)$ and $ \Lc_{s}^{-1}\circ L_0(0)$.
Recall that $L_0(0)$ is given by taking the zero section and pushing it off slightly by the Hamiltonian flow of $-\eps_1 K_1$. 
For any $c_1>0$, there exists a choice of $\eps_1$ small enough so that  $|d(\eps_1K_1)(\alpha)|<c_1$.
The charts $\Lc_{r}$ and $\Lc_{s}$ are determined by choices of wrapping Hamiltonian $\eps_{12}K_{12}, \Halpha$ and surgery parameter $c_\eps$.
We choose $\eps_{12}$ to be large enough and $c_\eps$ small enough so that over the region $\Halpha\geq \rho\cdot c_\eps$,
\begin{equation}
   d(-\eps_{12}K_{12}+s_{\rho c_\eps} \Halpha)(\alpha)<-c_1. \label{eq:separatingsections}
\end{equation}
For these choices, the $\Lc_{s}^{-1}$ correspondence does not intersect $L_0(0)$, as 
\begin{align*}
 \arg(\pi_1 ( \Lc_s^{-1} ))\subset \{p_1 \text{ such that } p_1(\alpha)<-c_1\} && \arg( L_0(0))\subset \{p_1 \text{ such that }  |p_1(\alpha)|<c_1\}.
\end{align*}
Therefore $ \Lc_{s}^{-1}\circ L_0(0)=\emptyset.$
\begin{figure}
   \centering
   \begin{tikzpicture}[]
\usetikzlibrary{decorations.pathreplacing}
\fill[gray!20]  (11,-4) rectangle (2.5,1);

         \node[fill=black, circle, scale=.25] at (11,-1.5) {};

\draw[red, ultra thick] (11,-3.5) .. controls (10.85,-3.5) and (4.2,-3.5) .. (4.1,-3.5) .. controls (4,-3.5) and (3.9,-3.5) .. (3.9,-3);
\draw[blue, ultra thick] (3.9,-3) .. controls (3.9,0.2) and (3.9,0.3) .. (4.1,0.5) .. controls (4.49,0.9) and (4.7,0.9) .. (4.9,0.9) .. controls (5.1,0.9) and (10.81,0.9) .. (11,0.9);
\node[red, right] at (11,-3.5) {$\pi_1( \Lc_s^{-1})$};
\node[blue, right] at (11,0.9) {$\pi_1( \Lc_r^{-1})$};
\node[orange, right] at (11,-2.1) {$L_0(0)$};
\draw[dotted, thick] (11,-2.75) -- (2.5,-2.75);
\node at (1.5,-2.75) {$-c_1$};
\node at (1.5,-1.5) {$p=0$};
\node at (1.5,1) {$p=2\pi$};
\draw[orange, ultra thick] (11,-2.1) .. controls (10.8,-2.1) and (3.8,-2.1) .. (3.6,-2.1) .. controls (3.4,-2.1) and (2.8,-0.95) .. (2.5,-0.95);
\draw[dashed, thick] (11,-4) -- (2.5,-4);
\draw[dashed, thick] (11,-1.5) -- (2.5,-1.5);
\draw[dashed, thick] (11,1) -- (2.5,1);
\node at (1.5,-4) {$p=-2\pi$};
\draw [thick,decorate,decoration={brace, amplitude=5pt,mirror,raise=4pt},yshift=0pt]
(5,-3.5) -- (5,-1.5) node [black,midway,xshift=10pt,right] {$d(-\eps_{12}K_{12}+s\circ\widehat H^\delta_\alpha )(\alpha)$};
\draw [thick,decorate,decoration={brace, amplitude=5pt,mirror,raise=4pt},yshift=0pt]
(6.5,-2.1) -- (6.5,-1.5) node [black,midway,xshift=10pt,right] {$d(-\eps_{1}K_{1})(\alpha)$};

\draw[thick,[-)] (4.2,1.5) -- (11,1.5) node[midway, fill=white] {$\widehat H_\alpha^\delta > \rho c_\epsilon$};
\node[fill=black, circle, scale=.6] at (3.9,-2.1) {};
\node at (1.5,-2.5) {$\funnyj(Q_2)$};
\draw[thick, dotted, ->] (2,-2.5) .. controls (3.5,-2.5) and (3.5,-2.5) .. (3.75,-2.25);
\end{tikzpicture}    \caption{A $\CC^*=\{(q\alpha, p\alpha)\}/(p\sim p+2\pi )$ slice of $X_1$, exhibiting the choices of $c_\eps, \eps_{12}, \eps_1$.}
   \label{fig:intersectionofsection}
\end{figure}
It remains to compute $ \Lc_{r}^{-1}\circ L_0(0)$: 
\begin{align*}
    \Lc_{r}^{-1}\circ L_0(0)=& \pi_2\circ\left(\left\{(q_1, q_2, 0,p_2)\;\middle|\;
   \begin{array}{c} 0=p_1'+p_1'', p_2=p_2'+p_2''\\
      p_1'=-\underline \pi_{\alpha}^*(p)+d(-\eps_{12}K_{12})|_1+d(r_{\rho c_\eps} \Halpha) \\
      p_2=p+d(-\eps_{12}K_{12})|_2, p\in \sP_2\\
      q_2= f(q_1), \Halpha(q_1)\geq \rho c_\eps\\
      p_1''=d(-\eps_1K_1),p_2''=0, q_2\in Q_2
   \end{array}\right\}\right)\\
   \intertext{We examine the condition $(\underline \pi_{\alpha}^*(p)+d(-\eps_{12}K_{12})|_1+d(r_{\rho c_\eps} \Halpha) =d(-\eps_1K_1)$.
   This states that $d(-\eps_{12}K_{12})|_1+d(r_{\rho c_\eps} \Halpha)-d(-\eps_1K_1)$ is the pullback of a differential form from $\sP_2$; equivalently, that $d(-\eps_{12}K_{12})|_1+d(r_{\rho c_\eps} \Halpha)+d(\eps_1K_1)$ vanishes on $\alpha$.}
   =&\left\{(p_1, \underline \pi_\alpha(q_1))\;\middle|\;
   \begin{array}{c}  p_1=d(-\eps_{12}K_{12})|_1+d(r_{\rho c_\eps} \Halpha)+d(\eps_1K_1)\\
      p_1(\alpha)=0, p_2=\underline \pi_\alpha^*(p), \Halpha(q_1)\geq \rho c_\eps
   \end{array}\right\}
\end{align*}
This reduces us to computing the locus on which $(d(-\eps_{12}K_{12})|_1+d(r_{\rho c_\eps} \Halpha)+d(\eps_1K_1))(\alpha)=0.$

We now construct  $\funnyj: Q_2\to Q_1$, a section of $\underline \pi_{\alpha}:Q_1\to Q_2$ which parameterizes the locus $\{q_1\; : \;(d(-\eps_{12}K_{12})|_1+d(r_{\rho c_\eps} \Halpha)+d(\eps_1K_1))(\alpha)=0\}$.  
This is equivalent to showing that the function $r_{\rho c_\eps} \circ \Halpha(q+t\cdot \alpha)$ is \emph{convex} in $t$ when $d(r_{\rho c_\eps} \circ \Halpha))(\alpha)=0$.
By \cref{lemma:positiveintersectionlemma}, $\frac{d}{dt}\left(\Halpha(q+t\cdot \alpha)\right)\geq 0$ in this region.
We have that 
\begin{align*}
   \frac{d^2}{dt^2} (r_{\rho c_\eps} \circ \Halpha(q+t\cdot \alpha))=& r_{\rho c_\eps}'' \cdot  \frac{d}{dt}\Halpha(q+t\cdot \alpha) +  r_{\rho c_\eps}' \cdot  \frac{d^2}{dt^2}\Halpha(q+t\cdot \alpha)
\end{align*}
In the region near the level set, $r_{\rho c_\eps}''$ is arbitrarily large (see profile curve \cref{fig:profilecurves}), $\frac{d}{dt}\Halpha(q+t\cdot \alpha)$ is positive, and $ r_{\rho c_\eps}'$ and $\frac{d^2}{dt^2}\Halpha(q+t\cdot \alpha)$ are bounded. See \cref{fig:intersectionofsection} to see the set parameterized by $\funnyj(Q_2)$

We obtain a parameterization of the geometric composition:
\begin{equation} \Lc_{r}^{-1}\circ L_0(0)= \left\{(q_2, p)\;\middle|\;
\begin{array}{c}  p_1=d(\hamK{12}|_1+(r_{\rho c_\eps} \Halpha)-\hamK{1})\\
   q_1=\funnyj(q_2), p_2=\underline \pi_\alpha^*(p)
\end{array}\right\}
\label{eq:funnyjishere}
\end{equation}
which is a Lagrangian section admissibly Hamiltonian isotopic to the zero section.
\end{proof}
\begin{prop}
   Let $\sF\in \Supp_\alpha(\Sigma)$ be a support function transverse to the divisor $D_\alpha$. 
   There exists a Lagrangian cobordism in $(X_1, \stp_\Sigma)\times \CC$, whose topology is that of a disk and whose ends are 
   \begin{equation}
      (L_0(\sF+\sF_\alpha), L_0(\sF))\rightsquigarrow \Lc_{\alpha 0}\circ L_\alpha(\underline i^*_{\alpha0}\sF) \label{eq:brelation}
   \end{equation}
   \label{property:aexactsequence}
\end{prop}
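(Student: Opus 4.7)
The plan is to pull back the surgery cobordism $K_{\alpha 0}^{pre}$ from \cref{def:inclusioncorrespondence} through geometric composition with $L_\alpha(\underline i^*_{\alpha 0}\sF)$, using the interchange relation of Lagrangian cobordisms and Lagrangian correspondences established in \cref{app:correspondenceandcobordism}, and then transport the output through the stop-removal correspondence $\Lc_{i_{\str(\alpha),0}^*}^{-1}$ to land in $(X_1,\stp_\Sigma)\times\CC$. The two ``ends'' of the resulting cobordism will then be identified using the intertwining property \cref{property:intertwining} applied to the toric projection $\underline\pi_\alpha$, together with the observation (analogous to \cref{claim:intertwining}) that $\wrpalpha$ acts on tropical sections by shifting the support function by $\sF_\alpha$.

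First, recall that by construction $\Lc_{\alpha 0}^{pre}$ comes equipped with a surgery cobordism
\[
K_{\alpha 0}^{pre}\colon \bigl(\wrpalpha(\Lc_{\pi_\alpha^*}),\,\Lc_{\pi_\alpha^*}\bigr)\rightsquigarrow \Lc_{\alpha 0}^{pre}
\]
inside $\bigl((X_2,\stp_{\afan})^{-}\times(X_1,\stp_{\str(\alpha)})\bigr)\times\CC$. Geometrically composing this cobordism with the admissible Lagrangian section $L_\alpha(\underline i^*_{\alpha 0}\sF)$ via the interchange theorem produces an admissible Lagrangian cobordism
\[
\bigl(\wrpalpha(\Lc_{\pi_\alpha^*})\circ L_\alpha(\underline i^*_{\alpha 0}\sF),\,\Lc_{\pi_\alpha^*}\circ L_\alpha(\underline i^*_{\alpha 0}\sF)\bigr)\rightsquigarrow \Lc_{\alpha 0}^{pre}\circ L_\alpha(\underline i^*_{\alpha 0}\sF)
\]
in $(X_1,\stp_{\str(\alpha)})\times\CC$. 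The ends are identified as follows: \cref{property:intertwining} gives $\Lc_{\pi_\alpha^*}\circ L_\alpha(\underline i^*_{\alpha 0}\sF)\sim L_{\str(\alpha)}(\underline\pi_\alpha^*\underline i^*_{\alpha 0}\sF)$, and the fact that $\sF\in\Supp_\alpha(\Sigma)$ vanishes on $\alpha$ means this descends so that $\underline\pi_\alpha^*\underline i^*_{\alpha 0}\sF$ is simply $\sF$ restricted to $\str(\alpha)$. For the twisted end, a direct modification of the proof of \cref{claim:intertwining}, with $\wrpalpha$ playing the role of the twisting flow, yields $\wrpalpha(\Lc_{\pi_\alpha^*})\circ L_\alpha(\underline i^*_{\alpha 0}\sF)\sim L_{\str(\alpha)}(\sF+\sF_\alpha)$.

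Second, compose the resulting cobordism with $\Lc_{i_{\str(\alpha),0}^*}^{-1}$, applying the interchange theorem a second time. Although $\Lc_{i_{\str(\alpha),0}^*}^{-1}$ is not admissible in full generality (it runs in the non-admissible stop-inclusion direction), its geometric composition with tropical sections $L_{\str(\alpha)}(\sG)$ does produce admissible sections $L_0(\sG)$ in $(X_1,\stp_\Sigma)$ --- this is a consequence of the ``totally wrapped to the stop'' criterion developed in \cref{app:correspondenceadmissibility}, and the same criterion ensures the full composite cobordism remains admissible. The output is the required cobordism
\[
\bigl(L_0(\sF+\sF_\alpha),\,L_0(\sF)\bigr)\rightsquigarrow \Lc_{i_{\str(\alpha),0}^*}^{-1}\circ\Lc_{\alpha 0}^{pre}\circ L_\alpha(\underline i^*_{\alpha 0}\sF)=\Lc_{\alpha 0}\circ L_\alpha(\underline i^*_{\alpha 0}\sF).
\]
The topology claim follows because the surgery trace $K_{\alpha 0}^{pre}$ has the homotopy type of a disk (two contractible sections glued along their overlap region), and each subsequent geometric composition with a contractible factor (the section $L_\alpha(\underline i^*_{\alpha 0}\sF)\simeq\RR^{n-1}$ and the open correspondence $\Lc_{i_{\str(\alpha),0}^*}^{-1}$) preserves contractibility.

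The main obstacle is bookkeeping admissibility across the two interchange steps, particularly the composition with the non-admissible correspondence $\Lc_{i_{\str(\alpha),0}^*}^{-1}$. This is the reason the proof must invoke \cref{app:correspondenceadmissibility} rather than a generic functoriality statement: one has to verify that all intermediate Lagrangians remain disjoint from the appropriate stop, which hinges on the ``totally stopped'' behavior of tropical Lagrangian sections established in \cref{sec:embedding}.
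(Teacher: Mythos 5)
Your first two steps --- applying the interchange relation (\cref{claim:correspondenceofcobordism}) to push the surgery trace $K_{\alpha 0}^{pre}$ through composition with $L_\alpha(\underline i^*_{\alpha 0}\sF)$, and then identifying the resulting ends via \cref{property:intertwining} and \cref{claim:intertwining} --- reproduce the paper's proof exactly. However, your final step and your topology argument both contain flaws.

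For the passage from $\stp_{\str(\alpha)}$ to $\stp_\Sigma$, you invoke a second interchange with the non-admissible correspondence $\Lc_{i_{\str(\alpha),0}^*}^{-1}$ and justify admissibility of the output via the ``totally wrapped to the stop'' criterion of \cref{app:correspondenceadmissibility}. That criterion (\cref{claim:compositionstop}) requires the input Lagrangian to be totally stopped in $(X_1,\stp_{\str(\alpha)})$, but the tropical Lagrangian sections $L_{\str(\alpha)}(\sG)$ are \emph{not} totally stopped there: $\str(\alpha)$ is an incomplete fan, so near infinity over cones not containing $\alpha$ the stop $\stp_{\str(\alpha)}$ is empty, and the boundary Legendrian of a tropical section cannot Reeb-flow to it. The criterion is therefore inapplicable. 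The paper sidesteps this entirely: it simply observes that stop addition, while not admissible in general, is harmless here because all Lagrangians under consideration have already been shown to be disjoint from the \emph{full} stop $\stp_\Sigma$ (this comes from \cref{eq:anglebound} and the construction of conical twisting Hamiltonians over the complete fan). That direct check is the honest justification; the appendix criterion gives only a conditional statement whose hypothesis fails here.

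Your topology argument is also incorrect as stated: you claim that $K_{\alpha 0}^{pre}$ ``has the homotopy type of a disk (two contractible sections glued along their overlap region),'' but $\Lc_{\pi_\alpha^*}$ is topologically $\sP_2 \times Q_1 \cong T^{n-1}\times\RR^n$, which is not contractible, so neither is the trace $K_{\alpha 0}^{pre}$. The disk conclusion holds only \emph{after} composing with the disk $L_\alpha(\underline i^*_{\alpha 0}\sF)$: the interchanged cobordism is the surgery trace of the two disks $\Lc_{\pi_\alpha^*}\circ L_\alpha(\sF)$ and $\wrpalpha(\Lc_{\pi_\alpha^*})\circ L_\alpha(\sF)$ glued along a contractible overlap, and \cref{property:topology} confirms the right end is a disk. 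The order of the ``disk'' observation matters.
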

\begin{proof}
We combine \cref{def:inclusioncorrespondence}, which describes the surgery used to build our Lagrangian correspondence, and \cref{claim:correspondenceofcobordism}, which proves that Lagrangian cobordism and geometric correspondence can be interchanged.
We first consider the setting where $\check X_{\Sigma}=\check X_{\str(\alpha)}$. 
We can construct a sequence of Lagrangian cobordisms
\begin{align*}
   \left( \wrpalpha(\Lc_{\pi_{\alpha}^*}),\Lc_{\pi_{\alpha}^*}\right)\rightsquigarrow& \Lc_{\alpha 0} \tag{\cref{def:inclusioncorrespondence}}\\
   \left( \wrpalpha(\Lc_{\pi_{\alpha}^*})\circ L_{\alpha}(\underline i^*_{\alpha0} \sF), \Lc_{\pi_{\alpha}^*} \circ  L_{\alpha}(\underline i^*_{\alpha0} \sF) \right)\rightsquigarrow& \Lc_{\alpha 0}\circ L_{\alpha}(\underline i^*_{\alpha0} \sF)\tag{\cref{claim:correspondenceofcobordism}}\\
   \intertext{By \cref{property:intertwining}, the pullbacks $\wrpalpha(\Lc_{\pi_{\alpha}^*})\circ L_{\alpha}(\underline i^*_{\alpha0} \sF)$ and $\Lc_{\pi_{\alpha}^*} \circ  L_{\alpha}(\underline i^*_{\alpha0} \sF)$ are the desired Lagrangian sections of $(X_1, \stp_{\str(\alpha)})$ giving}
   ( L_0(\sF+\sF_\alpha),L_0(\sF))\rightsquigarrow & \Lc_{\alpha 0}\circ L_\alpha(\underline i^*_{\alpha0}\sF).\tag{\cref{claim:intertwining}}
\end{align*}
To obtain the same statement when $\check X_{\Sigma}$ is a completion of $X_{\str(\alpha)}$, we na\"ively include $(X_1, \stp_{\str(\alpha)})$ into $(X_1, \stp_\Sigma)$. This normally cannot be done (as stop addition usually does not preserve admissibility) however we have already checked that all of the Lagrangians considered are disjoint from $\stp_{\Sigma}$.
\end{proof}
\begin{prop}
   Let $\sigma>\alpha$ be a cone, and let $\LL_{\underline i_{\alpha 0}^*\sigma}\subset \LL_{\str(\alpha)/\alpha}$ be the stratum of the skeleton associated to $\sigma$. Let $L\subset (X_2, \LL_{\str(\alpha)/\alpha})$ be an admissible Lagrangian intersecting $\LL_{\underline i_{\alpha 0}^*\sigma}$ transversely at $k$ points.
   Then $(\Lc_{\alpha 0}\circ L)$ intersects $ \LL_{\sigma}$ transversely at $k$ points.
\label{property:intersections}
\end{prop}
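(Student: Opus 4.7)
The plan is to compute the intersection directly using the $r$- and $s$-chart parameterizations of $\Lc_{\alpha 0}$ introduced in the proof of \cref{property:topology}, mirroring the strategy used in \cref{property:apullback}. First I would replace $\Lc_{\alpha 0}$ with its simplified model $\tilde \Lc_{\alpha 0}$ (dropping the small correction from $-\eps K_{12}$), which differs by an admissible Hamiltonian isotopy and therefore does not affect the transverse intersection count with $\LL_\sigma$.

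Since $\sigma > \alpha$ implies $\alpha \in \sigma$, we have $\sigma^\perp \subset \alpha^\perp$, and the map $\underline \pi_\alpha^* : (\sigma/\alpha)^\perp \to \sigma^\perp$ is an isomorphism. Writing points of the $r$-chart as $q_1 = \funnyi(q_2) + \alpha t$ with $t > 0$ and $p_1 = \underline \pi_\alpha^*(p_2) + r'(\Halpha(q_1)) \, d\Halpha(q_1)$, I would decompose the skeleton condition $p_1 \in \sigma^\perp$ into two parts. First, pairing with $\alpha$: since $\underline \pi_\alpha^*(p_2) \cdot \alpha = 0$, one gets $r'(\Halpha) \cdot (\nabla \Halpha \cdot \alpha) = 0$. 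By \cref{claim:intersectioncharacterization}, $\nabla \Halpha \cdot \alpha > 0$ inside the surgery region $\Halpha \geq \rho c_\eps$, so this forces $r'(\Halpha) = 0$. Using the defining properties of the surgery profile from \cref{app:cobordism}, this equation has a unique solution $\Halpha = \Halpha^*$, and hence determines a unique $t = t^*(q_2)$ along each fiber, precisely as the section $\funnyj$ was produced in \cref{property:apullback}. Second, pairing with the other generators $\beta$ of $\sigma$: this becomes $p_2 \cdot \underline \pi_\alpha(\beta) = 0$ for each generator of $\sigma/\alpha$, i.e., $p_2 \in (\sigma/\alpha)^\perp$. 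Combined with $q_1 \in \sigma$, which projects to $q_2 \in \underline i^*_{\alpha 0}\sigma$, each transverse intersection point $(q_2, p_2) \in L \cap \LL_{\underline i^*_{\alpha 0}\sigma}$ determines exactly one point of $\tilde \Lc_{\alpha 0} \circ L$ lying in $\LL_\sigma$, namely $(\funnyi(q_2) + \alpha t^*(q_2), \underline \pi_\alpha^*(p_2))$.

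Next I would verify that the $s$-chart contributes no intersections. By the argument of \cref{property:apullback} (see also \cref{fig:intersectionofsection}), the quantity $p_1 \cdot \alpha$ along $\pi_1(\tilde \Lc_s)$ has a definite sign bounded away from zero, so it cannot vanish, and $\tilde \Lc_s \circ L$ is disjoint from $\LL_\sigma$. Finally, transversality at each intersection point follows from a tangent-space computation: $T(\Lc_{\alpha 0} \circ L) = TL \oplus \RR\,\partial_t$ by \cref{property:topology}, and the $\partial_t$ direction supplies a direction transverse to $\sigma^\perp$ inside $\sigma$, while the transversality of $L \cap \LL_{\underline i^*_{\alpha 0}\sigma}$ inside $X_2$ supplies the remaining directions in $\sigma \times \sigma^\perp$.

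The main obstacle is the uniqueness of $\Halpha^*$ in the $\alpha$-pairing step: one must verify that the surgery profile $r_{\rho c_\eps}$ from \cref{app:cobordism} has the property that the equation determining $\Halpha^*$ admits a unique root, analogous to (and essentially obtained by pointwise transcription of) the argument that produced $\funnyj$ in \cref{property:apullback}. Once this is in place the count and transversality are immediate by the chart-by-chart analysis above.
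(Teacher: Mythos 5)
Your approach follows the paper's strategy closely (chart-by-chart analysis via the $r$- and $s$-parameterizations, reducing to an intersection count of $L$ with $\LL_{\underline i^*_{\alpha 0}\sigma}$, and ruling out the $s$-chart), but there is a genuine gap at the central step, the $\alpha$-pairing computation.

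The problem is that you discard the $-\eps K_{12}$ correction at the outset, and this is exactly the term that creates the intersection points. Once you drop it, the $\alpha$-pairing along the $r$-chart is (up to the $\rho$-rescaling bookkeeping) $r'_{\rho c_\eps}(\Halpha)\cdot (\nabla\Halpha\cdot\alpha)$. You correctly observe that $\nabla\Halpha\cdot\alpha>0$ throughout the surgery region by \cref{claim:intersectioncharacterization}, but then conclude that the vanishing of the product ``forces $r'(\Halpha)=0$'' and that this has a unique root $\Halpha^*$. That cannot happen: by the defining properties of the surgery profile in \cref{app:cobordism} and \cref{fig:profilecurves}, $r$ is convex with $r'(c_\eps)=\tfrac{1}{2}$ and $r(a)=a$ for $a\geq 2c_\eps$, so $r'$ increases monotonically from $\tfrac12$ to $1$ and never vanishes. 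The product is therefore strictly positive (and strictly less than $2\pi$, thanks to the $(1-\eps)$ factor in $\Halpha$), so the equation you set up has \emph{no} solution, and $\tilde\Lc_r\circ L$ is in fact disjoint from $\LL_\sigma$. The locus $\funnyj$ you want to invoke from \cref{property:apullback} exists only because the negative contribution of $d\hamK{12}|_1(\alpha)$ balances against the positive $d(r_{\rho c_\eps}\circ\Halpha)(\alpha)$; without the wrapping term, that locus is empty.

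The justification you give for dropping the wrapping term is also not sound. $\tilde\Lc_{\alpha 0}$ is obtained from $\Lc_{\alpha 0}$ by turning off $-\eps K_{12}$, which makes it tangent to the stop -- it is not an admissible Lagrangian, so the interpolation is not an admissible Hamiltonian isotopy. Even if it were, a transverse intersection count with a skeleton stratum $\LL_\sigma$ is not an invariant of Hamiltonian isotopy. The paper explicitly reserves $\tilde\Lc_{\alpha 0}$ for arguments that do not depend on the wrapping term (such as the topological count in \cref{property:topology}); this proposition is precisely one where it does. The fix is to keep $d\hamK{12}|_1$ throughout: the $\alpha$-pairing equation $\bigl(d\hamK{12}|_1 + d(r_{\rho c_\eps}\circ\Halpha)\bigr)(\alpha)=0$ then has a well-defined solution locus along each $\alpha$-fiber, and along it the residual perturbation $d\hamK{12}|_1 + d(r_{\rho c_\eps}\circ\Halpha)$ pulls back from $\sP_2$ as a small form $\underline\pi_\alpha^*d\tilde H_2$, so the intersections of $\Lc_r\circ L$ with $\LL_\sigma$ biject with those of the $\tilde H_2$-perturbed $L$ with $\LL_{\underline i^*_{\alpha 0}\sigma}$, hence with those of $L$ itself once $\tilde H_2$ is small. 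Your $s$-chart argument via \cref{eq:separatingsections} is fine, and the transversality discussion is salvageable once the existence of the intersection locus is correctly established.
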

\begin{proof}
The proof is very similar to that of \cref{claim:toricpullback}.
We use \cref{eq:rparameterization}, and the analogous parameterization of the $s$-chart.
\begin{align*}
   \Lc_r\circ L=&\{(\funnyi(q_2)+ \alpha\cdot t, \underline \pi_{\alpha}^*(p_2)+d\hamK{12}|_1+ d(r_{\rho c_\eps} \circ \Halpha), (q_2, p_2)\in L ,t\in \RR_{\geq 0}\}\\
   \Lc_s\circ L=&\{(\funnyi(q_2)+ \alpha\cdot t, \underline \pi_{\alpha}^*(p_2)+d\hamK{12}|_1+ d(s_{\rho c_\eps} \circ \Halpha)), (q_2, p_2)\in L,t\in \RR_{\geq 0} \}
\end{align*}
As the condition that $\funnyi(q_2)+ \alpha\cdot t \in \sigma$ holds for all $q_2$ and $t$,  the intersection between $\Lc_r\circ L$ and $\LL_\sigma$ is given by the points $(q_2, p_2)\in L$ satisfying
\[\{(q_2, p_2) \; : \; \forall \vec v \in \sigma, (\underline \pi_\alpha^*(p_2)+d\hamK{12}|_1+d(r_{\rho c_\eps} \circ \Halpha))(\vec v)=0\}.\] 

When restricted to the image of $\funnyj$ (from \cref{eq:funnyjishere}), we can find $\tilde H_2: Q_2 \to \RR$ so that 
\[\left. d\hamK{12}|_1+d(r_{\rho c_\eps} \circ \Halpha)\right|_{\funnyj(x)}=  \underline \pi_\alpha^*  d\tilde H_2 |_x .\] 
We note that for any compact set $V\subset Q_2$, we can choose $\eps$ small enough and $r_{\rho c_\eps}$ so that $|\tilde H_2|, \|d\tilde H_2\|$ are as small as desired. Geometrically, this means that $L_{r} \circ L_{\alpha}(0)$ can be made to approximate $\LL_\alpha$ very closely.
We can describe the intersection between $\Lc_r\circ L$ and the skeleton component $\LL_\sigma$ by:
\begin{align*}
   \{(q_2, p_2)\; : \; \forall \vec v\in \sigma, & (\underline \pi_{\alpha}^*(p_2)+ \underline \pi_{\alpha}^*d \tilde H_2 )(\vec v)=0,  (q_2, p_2)\in L \}\\& = \{(q_2, p_2)\; : \;\forall w\in \underline i_{\alpha 0}^*\sigma, (p_2+d\underline \pi_\alpha^*\tilde H_2)( w)=0 , (q_2, p_2)\in L\}.
\end{align*}
This is the intersection between the $\tilde H_2$ perturbed $L$, and $\LL_{\underline i_{\alpha 0}^*\sigma}$. 
Because we can choose $\tilde H_2$ to be very small, this is in bijection with intersections of $L$ and $\LL_{\underline i_{\alpha 0}^*\sigma}$.

For the $s$-chart, we note that the bound from \cref{eq:separatingsections} implies that $\Lc_s \circ L$ is disjoint from the stop. 
\end{proof}
\begin{prop}
   \label{property:linkingdisks}
   Let $\sigma>\alpha$ be a cone, and let $(q,p)\in  \LL_{\underline i_{\alpha 0}^*\sigma}\subset \LL_{\str(\alpha)/\alpha}$ be a point away from the singular locus of $\LL_{\str(\alpha)/\alpha}$, giving us a linking disk $\cocore(x)$. 
   The Lagrangian $L_{\alpha 0 }\circ \cocore(x)$ is a  linking disk or cocore for $\LL_{\sigma}$.
\end{prop}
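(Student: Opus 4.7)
The plan is to combine the preceding structural results to show that $L_{\alpha 0}\circ \cocore(x)$ is an admissible Lagrangian disk in $(X_1, \stp_\Sigma)$ meeting the FLTZ skeleton $\LL_\Sigma$ transversely at a single interior point of the top-dimensional stratum $\LL_\sigma$, and then to invoke the characterization of linking disks recalled in \cref{subsec:partiallywrapped}.

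Topology and admissibility are immediate. Since $\cocore(x)$ is topologically a disk of dimension $n-1$, \cref{property:topology} gives $L_{\alpha 0}\circ \cocore(x)\cong \cocore(x)\times \RR\cong \RR^n$, and \cref{property:admissibility} ensures the composition is admissible in $(X_1,\stp_\Sigma)$.

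Next, I count intersections with the strata of $\LL_\Sigma$. Applying \cref{property:intersections} with $L=\cocore(x)$ and the cone $\sigma$ --- using that $\cocore(x)$ meets $\LL_{\underline i_{\alpha 0}^*\sigma}$ transversely at the single point $x$ --- produces exactly one transverse intersection point $y$ of $L_{\alpha 0}\circ \cocore(x)$ with $\LL_\sigma$. For any other cone $\tau \in \Sigma$, I argue that the intersection with $\LL_\tau$ is empty in two cases. If $\tau\notin \str(\alpha)$, then by \cref{claim:valuationofcorrespondence} one has $\val(\pi_1(L_{\alpha 0}))\subset \str(\alpha)$, so the composition is disjoint from $\LL_\tau$. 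If $\tau>\alpha$ with $\tau\neq \sigma$, repeating the computation in the proof of \cref{property:intersections} with $\tau$ in place of $\sigma$ identifies $L_{\alpha 0}\circ \cocore(x)\cap \LL_\tau$ with intersections of a small perturbation of $\cocore(x)$ with $\LL_{\underline i_{\alpha 0}^*\tau}$, which is empty because the linking disk at an interior point of one stratum avoids all other strata. Choosing $x$ away from the singular locus $\LL_{\str(\alpha)/\alpha,\bot}$ ensures that $y$ lies in the interior of $\LL_\sigma$.

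Finally, I identify $L_{\alpha 0}\circ \cocore(x)$ with the linking disk $\cocore(y)$. By the discussion in \cref{subsec:partiallywrapped}, the admissible Hamiltonian isotopy class of a linking disk depends only on the stratum of $\LL_\Sigma$ containing its intersection point; conversely, any admissible exact Lagrangian disk meeting $\LL_\Sigma$ transversely at a single interior point of a top-dimensional stratum is admissibly Hamiltonian isotopic to the corresponding linking disk, via an isotopy that first localizes the disk to a small neighborhood of $y$ using the Liouville flow and then matches it with the standard transverse disk model. I expect the main obstacle to be this final identification, which rests on the local uniqueness of admissible Lagrangian disks of this kind; the earlier steps are direct applications of the already-established properties of $L_{\alpha 0}$.
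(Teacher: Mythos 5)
Your overall plan mirrors the paper's: establish the topology via \cref{property:topology}, apply \cref{property:intersections} to the cone $\sigma$ itself to get a single transverse hit on $\LL_\sigma$, then rule out intersections with every other stratum $\LL_\tau$, and finally invoke the fact that the admissible Hamiltonian isotopy class of a linking disk depends only on the stratum. The final identification step and the use of \cref{property:intersections} for $\sigma$ itself are both correct and consistent with the paper.

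The gap is in the disjointness case analysis. Your two cases, ``$\tau \notin \str(\alpha)$'' and ``$\tau > \alpha,\ \tau \neq \sigma$,'' do not cover $\tau = \alpha$: the ray $\alpha$ belongs to $\str(\alpha)$, so it is excluded from the first case, and it fails the strict inequality in the second. This is not merely cosmetic. For $\tau = \alpha$, one has $\val(\LL_\alpha) = \alpha$, and the ray $\alpha$ lies \emph{inside} the interior of $\str(\alpha)$, so \cref{claim:valuationofcorrespondence} (which gives $\val(\pi_1(\Lc_{\alpha 0})) \subset \str(\alpha)$) cannot rule out this intersection; nor does \cref{property:intersections} apply, since it is stated only for cones strictly containing $\alpha$. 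Moreover, even in your first case, the containment $\val \subset \str(\alpha)$ from \cref{claim:valuationofcorrespondence} is too weak: a cone $\tau \notin \str(\alpha)$ can still share a face with $\str(\alpha)$, so $\tau \cap \str(\alpha) \neq \emptyset$ in general, and the disjointness does not follow without passing to a strict interior. The paper resolves both issues at once by sharpening the valuation estimate: it shows that $\val(\tilde{\Lc}_r \circ \cocore(x))$ lies in the \emph{interior} of $\str(\sigma)$ (using that the cocore is localized near $\val(x) \in \underline{i}^*_{\alpha 0}\sigma$, not just that the correspondence has valuation in $\str(\alpha)$), and for any $\tau$ with $\sigma \not\leq \tau$ — which includes $\tau = \alpha$ and all $\tau \notin \str(\alpha)$ — the cone $\tau$ avoids the interior of $\str(\sigma)$. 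For the remaining $\tau$ with $\sigma < \tau$, the paper explicitly uses the hypothesis that $x$ is away from the singular locus to get $\underline{\pi}_\alpha^*(p) \notin \tau^\perp$, and then controls the argument projection of $\Lc_r \circ \cocore(x)$ to show it misses $\tau^\perp$; your ``re-run \cref{property:intersections} with $\tau$ in place of $\sigma$'' is an acceptable shortcut for this subfamily, but you should note you are applying the proposition with $k=0$ (which is legitimate, as the empty intersection is vacuously transverse). To repair your argument: replace the case split by the paper's dichotomy $\sigma < \tau$ versus $\sigma \not\leq \tau$, and for the latter establish the stronger valuation containment $\val(\Lc_{\alpha 0} \circ \cocore(x)) \subset \mathrm{int}(\str(\sigma))$ rather than citing \cref{claim:valuationofcorrespondence} alone.
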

\begin{proof}
   By \cref{property:intersections} we know that $L_{\alpha 0}\circ \cocore(x)$ intersects $\LL_{\sigma}$ transversely at a single point.
   It remains to show that $L_{\alpha 0}\circ \cocore(x)$ is disjoint from all other skeleton components $\LL_\tau$.
   This can be achieved by computing the valuation and argument  of $L_{\alpha 0}\circ \cocore(x)$.
   We break into two cases: $\sigma<\tau $ or $\sigma \not \leq \tau$. Denote the $Q_2\times \sP_2$ coordinates of $x$ by $(q,p)$
   \begin{itemize}
      \item If $\sigma<\tau$, then $\alpha<\tau$. See \cref{fig:pushforwardlinkingdisk}. Because $p$ was chosen outside of the singular locus, $p\in \underline i^*_{\alpha 0}(\sigma^\bot) \setminus \underline i^*_{\alpha 0}(\tau^\bot).$ Then $\underline \pi_\alpha^*(p)\not\in \tau^\bot$, which we will show is enough to separate $L_{\alpha 0}\circ \cocore(x)$ from $\LL_\tau$.
      We will compute  $\arg(\Lc_r \circ \cocore(x))$ to show that these two sets are disjoint.
      \[\arg(\Lc_r \circ \cocore(x))= \{\underline \pi_{\alpha}^*(p_2)+d\hamK{12}|_1+ d(r_{\rho c_\eps} \circ \Halpha), (q_2, p_2)\in \cocore(x)\}.\]
      Since $\cocore(x)$ is a cocore of $\LL_{\underline i_{\alpha 0}^*\sigma}$, the image of $\arg(\cocore(x))\subset \sP_2$ is contained within a neighborhood of the point $p$, which is an interior point of  $\LL_{\underline i_{\alpha 0}^*\sigma}$:
      \[\{p_2\; : \; (q_2, p_2)\in \cocore(x)\}\subset B_\eps(p).\]
      It follows that the image     $\underline \pi_{\alpha}^*(\arg(\cocore(x)))$  is contained within a neighborhood of $\underline \pi_{\alpha}^*(p)$: 
      \[\{\underline \pi_{\alpha}^*(p_2)\;:\; (q_2, p_2)\in \cocore(x)\}\subset B_\eps (\pi_{\alpha}^* (p))).\]
     Since $d\hamK{12}|_1$ is small, $|d(r_{\rho c_\eps} \circ \Halpha)(\alpha)|<2\pi$, and $d(r_{\rho c_\eps} \circ \Halpha)(v)$ is approximately $\alpha\cdot v$, we obtain that $\arg(\Lc_r \circ \cocore(x))$ is contained within a small neighborhood $(\underline \pi^*_\alpha(p)+t\alpha)$. This is disjoint from $\tau^\bot$, so $\Lc_{0\alpha}\circ\cocore(x)$ is disjoint from $\LL_\tau$.
      \item If $\sigma \not \leq \tau$, then observe that 
      \[\val(\tilde \Lc_r  \cocore(x)) =   \funnyi(q_2)+ \alpha\cdot \RR_{>0}\]
      is contained within the interior of $\str(\sigma)$. Since $\sigma \not \leq \tau$, $\val(\LL_\tau)=\tau$ does not intersect the interior of $\str(\sigma)$.
   \end{itemize}
   A similar argument holds for $\Lc_s$.
\end{proof}
\begin{figure}
   \centering
   \begin{tikzpicture}
\usetikzlibrary{calc, decorations.pathreplacing,shapes.misc}
\usetikzlibrary{decorations.pathmorphing}

\tikzstyle{fuzz}=[
    postaction={draw, decorate, decoration={border, amplitude=0.15cm,angle=90 ,segment length=.15cm}},
]

   \begin{scope}[shift={(0.5,1.5)}]

\fill[fill=red!20]  (-2.5,1) rectangle (-0.5,0.5);
\fill[fill=red!20]  (-3,1) rectangle (-3.5,0.5);
\draw[fuzz] (-3.5,0) -- (-0.5,0);
\draw[dotted]  (-3.5,1.5) rectangle (-0.5,-1.5);
\draw[fuzz, orange] (-2,1.5) -- (-2,-1.5);
\draw[fuzz] (-0.5,-1.5) -- (-3.5,1.5);

\begin{scope}[shift={(-2,0)}]
\foreach \i in {6,...,12}
{
        \pgfmathtruncatemacro{\y}{15* \i };
        \draw (0,0)-- (\y:.2) ;
}

\node at (0,0) {};
\end{scope}

\begin{scope}[shift={(-2,0)}]
\foreach \i in {12,...,24}
{
        \pgfmathtruncatemacro{\y}{15* \i };
        \draw (0,0)-- (\y:.2) ;
}

\node at (0,0) {};
\end{scope}

\begin{scope}[shift={(-2,0)}]
\foreach \i in {0,...,6}
{
        \pgfmathtruncatemacro{\y}{15* \i };
        \draw[blue] (0,0)-- (\y:.2) ;
}

\node at (0,0) {};
\end{scope}
\end{scope}

\draw[gray!20,ultra thick] (5,0) -- (5,3.5);

\node at (5,1.5) {};
\draw (5,1.35) -- (5,1.65) ;
\node[circle, fill, scale=.5] at (5,1.5) {};
\draw[red] (5,2.65) -- (5,1.85);
\draw[ultra thick, red!20] (5,2.5) -- (5,2);
\draw[red, fuzz] (-2,2.5) -- (-2,2);
\draw[red, fuzz] (-2,2.5) -- (0,2.5) (-3,2.5) -- (-2.5,2.5) -- (-2.5,2) -- (-3,2) (0,2) -- (-2,2);

\fill[fill=gray!20]  (0,7) rectangle (-3,4);
\fill[blue!20]  (0,7) rectangle (-1.5,5.5);
\draw[->] (5,5.5) -- (5,7);
\draw[->] (5,5.5) -- (5,4);
\draw[->] (4.5,1.5) -- (1,1.5) node[midway, fill=white]{$\underline \pi^*_\alpha$};
\draw[->] (1,5.5) -- (4.5,5.5) node[midway, fill=white]{$\underline \pi_\alpha$};
\draw[->] (-1.5,5.5) -- (-1.5,7);
\draw[ultra thick, orange,->] (-1.5,5.5) -- (0,5.5) node[right] {$\sigma$};
\draw[->] (-1.5,5.5) -- (-3,4);
\node at (-1,6) {$\tau$};
\node at (-2.5,6.5) {$Q_1$};
\node at (4.5,6.5) {$Q_2$};
\node at (-2.5,0.5) {$F_1$};
\node at (4.5,0.5) {$F_2$};
\node[right] at (5,2.25) {$\mathfrak u(x)$};

\node[right] at (-1.575,2.25) {\small$L_{0\alpha}\circ\mathfrak u(x)$};
\node[circle, fill, scale=.5] at (-1.5,5.5) {};
\node[circle, fill, scale=.5] at (5,5.5) {};
\end{tikzpicture}    \caption{The pushforward of a linking disk is a linking disk.}
   \label{fig:pushforwardlinkingdisk}
\end{figure}
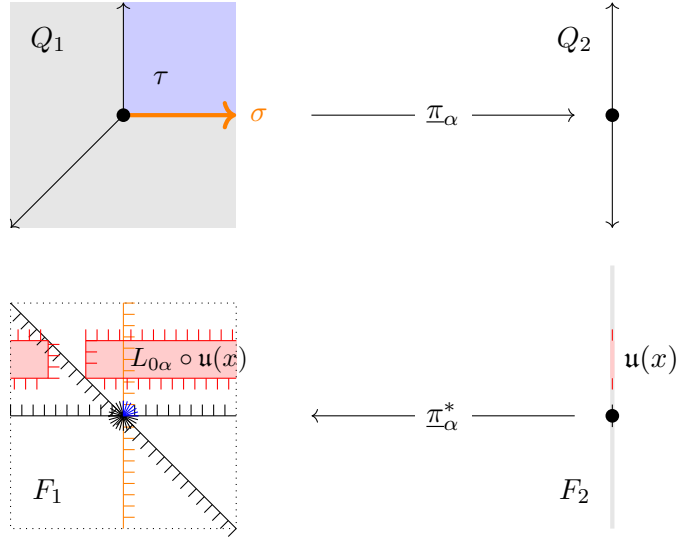
\begin{rem}
   The picture from \cref{fig:afterhamiltonian} can lead to the misconception that the Lagrangian $L_{\alpha>0}: =L_{\alpha 0} \circ L_\alpha(0)$ is the linking disk to $\LL_\alpha$. 
   This is usually \emph{not} a linking disk, as the Lagrangian $L_{\alpha}(0)$ is not a cocore or linking disk of the skeleton of $\LL_{\str(\alpha)/\alpha}$ as the origin is contained in the singular locus. After applying a Hamiltonian perturbation, $L_{\alpha}(0)$ is seen to hit several skeleton components.
   Furthermore, it is inaccurate to say that $\LL_\alpha$ has \emph{a} linking disk, as the linking disks of $\LL_\alpha$ are in general not admissibly Hamiltonian isotopic.
   
   In the particular case where $\check X_\Sigma=\CP^1$, it is a coincidence that $L_\alpha(0)\subset X_{\str(\alpha)/\alpha}=\{pt\}$ is a point, so it trivially is a linking disk. Therefore its pushforward as drawn in \cref{fig:afterhamiltonian} is the  linking disk.
   \label{rem:whythingsarenotlinkingdisks}
\end{rem}
\begin{cor}
   Let $\alpha\in \Sigma$ be a one-dimensional cone, and $\sigma\in \str(\alpha)$ a cone. Let $x\in \LL_\sigma$ be a point of the FLTZ skeleton that does not lie in the singular locus.
   Then there exists a linking disk which is admissibly Hamiltonian isotopic to $\cocore(x),$ and is contained in the image of $L_{\alpha 0}$.  
   \label{cor:linkingdisk}
\end{cor}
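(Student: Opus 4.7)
The plan is to reduce the corollary directly to \cref{property:linkingdisks} by a change of base point, together with the elementary fact (\cref{subsec:partiallywrapped}) that any two linking disks at interior points of the same stratum of $\LL_\stp$ are admissibly Hamiltonian isotopic.

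First I would observe that the linear pullback $\underline i^*_{\alpha 0}$ carries the cone $\sigma \in \str(\alpha)$ to a cone $\tau \in \str(\alpha)/\alpha$; note that $\tau = 0$ precisely when $\sigma = \alpha$, and $\tau$ is strictly positive dimensional when $\sigma > \alpha$. In either case, the stratum $\LL_\tau \subset \LL_{\str(\alpha)/\alpha}$ contains interior points, so I can pick some $y$ in the non-singular locus of $\LL_\tau$ and form the linking disk $\cocore(y) \subset (X_2, \stp_{\str(\alpha)/\alpha})$.

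Next, I would apply \cref{property:linkingdisks} in the case $\sigma > \alpha$ to conclude that $L_{\alpha 0} \circ \cocore(y)$ is a linking disk for $\LL_\sigma \subset \LL_\Sigma$. In the remaining case $\sigma = \alpha$, I would observe that the proof of \cref{property:linkingdisks} carries over without modification: \cref{property:intersections} produces a single transverse intersection with $\LL_\alpha$, and the two subcases ($\alpha < \tau'$ versus $\alpha \not\leq \tau'$) of the valuation/argument computation in that proof show disjointness of $L_{\alpha 0} \circ \cocore(y)$ from every other component $\LL_{\tau'}$ of $\LL_\Sigma$. Indeed, the dimension of $\sigma$ was never used in that computation --- only the fact that $y$ is an interior point of $\LL_{\underline i^*_{\alpha 0}(\sigma)}$ so that $\underline\pi_\alpha^*(p) \notin (\tau')^\perp$ whenever $\sigma < \tau'$, and the fact that the $Q_1$-projection of the geometric composition lies in $\str(\sigma)$ (\cref{claim:valuationofcorrespondence}).

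Finally, since $L_{\alpha 0} \circ \cocore(y)$ is a linking disk of $\LL_\Sigma$ at an interior point of the stratum $\LL_\sigma$, it is admissibly Hamiltonian isotopic to $\cocore(x)$ for any other interior point $x$ of the same stratum. This gives the desired representative in the image of $L_{\alpha 0}$. I do not expect any serious obstacle: the only non-cosmetic step is checking that the case $\sigma = \alpha$ is genuinely covered by the computations in the proof of \cref{property:linkingdisks}, and this is immediate from inspection.
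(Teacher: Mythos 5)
The final step of your argument has a genuine gap. You conclude by asserting that any two linking disks at interior points of the same stratum $\LL_\sigma$ are admissibly Hamiltonian isotopic, citing \cref{subsec:partiallywrapped}. But that statement concerns strata of the mostly-Legendrian stratification of the skeleton, which for $\LL_\Sigma$ is strictly finer than the coarse indexing by cones: after removing the singular locus $\LL_{\Sigma,\perp}$, the piece $\LL_\sigma = \sigma\times\sigma^\perp$ typically splits into several connected components $\II_{\sigma;i}$ (see \cref{subsec:cocores} and \cref{exam:cpn}), and linking disks based at points in distinct components are in general \emph{not} admissibly Hamiltonian isotopic --- this distinction is precisely what makes \cref{exam:cpn} interesting, where $\LL_0$ produces $n$ mutually non-isotopic cocores. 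So knowing that $L_{\alpha 0}\circ\cocore(y)$ is \emph{a} linking disk of $\LL_\sigma$, via \cref{property:linkingdisks}, does not yet show it is isotopic to the given $\cocore(x)$: you would also need to choose $y$ so that the resulting disk lands in the same component $\II_{\sigma;i}$ as $x$, and your proof never arranges this.

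That connectivity statement is exactly the missing content, and it is what the paper's proof supplies. The key observation is that since $\sigma\geq\alpha$, every $\tau>\sigma$ contains $\alpha$, so the affine line $x + \alpha\cdot\RR$ stays inside $\LL_\sigma\setminus\LL_{\Sigma,\perp}$. Following this line, one reaches a point lying over the cross-section $\funnyj(Q_2)$ \emph{without leaving the connected component $\II_{\sigma;i}$ of $x$}; that point canonically determines $x'\in\LL_{\underline i^*_{\alpha 0}\sigma}$ downstairs, and pushing forward $\cocore(x')$ yields a linking disk meeting $\LL_\sigma$ in the component of $x$. Your remark that the computations in the proof of \cref{property:linkingdisks} extend verbatim to $\sigma=\alpha$ is correct, but it addresses a cosmetic point; the connectivity argument above is where the real work lies.
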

\begin{proof}
   Let $\gamma: [0,1]\to \LL_\sigma$ be a path. 
   The Lagrangian linking disks  $\cocore(\gamma(t))$ are Lagrangian isotopic if the path $\gamma$ is disjoint from $\LL_{\tau}$, with $\tau>\sigma$ and $|\tau|=|\sigma|+1$.
   Since $\sigma$ is contained in the star of $\alpha$, $\tau>\sigma\geq\alpha$. 
   Therefore, the line $(x_0+\alpha \RR)\cap \LL_\sigma$ is disjoint from all of the $\LL_{\tau}$.
   Since $(x_0+\alpha \RR)\cap \LL_\sigma$ intersects every section of $\underline \pi_\alpha: Q_1\to Q_2$, it follows that there always exists a path from $x$ to the image of $\funnyj: Q_2\to Q_1$ which is disjoint from the singular locus of the FLTZ skeleton.
   We may therefore assume that after taking an admissible Hamiltonian isotopy, the cocore $\cocore(x)$ is centered at a point $x=(q_1, p_1)$, where $q_1$ is contained within the image of $\funnyj:  Q_2\to Q_1$.
   Let $q_2$ be the point in $Q_2$ so that $\funnyj (q_2)=q_1$.
   Since $x\in \LL_\sigma$, $p_1\cdot \alpha=0$ and there exists $p_2$ so that $\underline \pi_\alpha^*p_2=p_1$.
   The point $x'=(q_2, p_2)\in X_2$ lies in the FLTZ component $\LL_{\underline i_{\alpha 0}^*\sigma}$.
   Construct a linking disk $\cocore_{\LL_{\str(\alpha)/\alpha}}(x')$ for this point.
   The pushforward $L_{\alpha 0} \circ (\cocore_{\LL_{\str(\alpha)/\alpha}}(x')$ is a linking disk for $\LL_{\sigma}$, which by design intersects $\LL_{\sigma}$ at $x$.
\end{proof}

\section{Applications} \label{sec:applications}
We now look at applications of the constructions from \cref{sec:embedding,sec:inclusion}. The main idea is to combine \cref{cor:hmsline}, which proves homological mirror symmetry for the subcategory generated by tropical Lagrangian sections, with \cref{thm:inclusioncorrespondence}, which constructs Lagrangian correspondences acting on those tropical Lagrangian sections. 

In \cref{sec:generation}, we leverage those two theorems to show that tropical Lagrangian sections generate the partially wrapped Fukaya category. This proves \cref{cor:HMScor}, identifying the partially wrapped Fukaya category of $((\CC^*)^n, \stp_\Sigma)$ with the derived category of coherent sheaves on smooth projective $\check X_\Sigma$.
We look at the specific example of $\CP^n$ in \cref{subsubsec:cpngeneration}, recovering the mirror to the Beilinson exceptional collection.
We then examine how the stop removal functor on $\stp_\Sigma$ behaves under this mirror identification.
In \cref{subsec:quotient,subsec:blowup}, we use our matching of linking disks to line bundles supported on toric strata to identify the mirror operations to localization away from a toric orbit of $\check X_\Sigma$ and toric blow-up of $\check X_\Sigma$.
The former allows us to deduce homological mirror symmetry for smooth quasi-projective toric varieties. \subsection{Generation by tropical sections}
\label{sec:generation}
   In this section, we show that the partially wrapped Fukaya category, $\mathcal W ((\CC^*)^n, \stp_\Sigma)$ is generated by the tropical sections $L_0(\sF)$.
\begin{df}
   Let $\mathcal C$ be a triangulated category. We say that objects $A_1, \ldots, A_k\in \mathcal C$ generate $A_0\in \mathcal C$ if there exist objects $Z_1, \ldots Z_k\in \mathcal C$, and exact triangles 
   \[A_i \to Z_{i-1}\to Z_i \;\; \forall 2\leq i \leq k\]
   with $Z_1=A_1$ and $Z_k=A_0$.
   If every object of a subcategory $\mathcal C'$ can be generated by objects $A_1, \ldots, A_k$, we write $\mathcal C'= \langle A_1, \ldots, A_k\rangle.$
\end{df}

\begin{thm}
   Tropical sections generate $\mathcal W((\CC^*)^n, \stp_\Sigma)$.
   \label{thm:ageneration}
\end{thm}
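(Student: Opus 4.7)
By \cite{ganatra2018sectorial} (recalled in \cref{subsec:partiallywrapped}), the partially wrapped category $\mathcal{W}((\CC^*)^n, \stp_\Sigma)$ is split-generated by the linking disks $\cocore(x)$ as $x$ ranges over internal points of the strata of $\LL_\Sigma$. It therefore suffices to show that every such linking disk lies in the triangulated subcategory generated by conical tropical Lagrangian sections. My plan is to induct on the ambient dimension $n$, and for fixed $n$, on the codimension of the stratum $\LL_\sigma$ containing $x$.

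The base of the induction on strata is handled by \cref{lemma:cocoresaretropical}: a linking disk $\cocore(x)$ with $x$ an internal point of $\LL_0$ is admissibly Hamiltonian isotopic to a conical tropical Lagrangian section, hence already a generator. For the inductive step, suppose $x$ lies in a stratum $\LL_\sigma$ with $|\sigma| \geq 1$. Choose a one-dimensional face $\alpha \leq \sigma$. By \cref{property:linkingdisks} and \cref{cor:linkingdisk}, we may represent $\cocore(x)$ up to admissible Hamiltonian isotopy as the geometric composition $L_{\alpha 0} \circ \cocore(x')$, where $\cocore(x') \in \mathcal{W}((\CC^*)^{n-1}, \stp_{\afan})$ is a linking disk of the stratum $\LL_{\underline i^*_{\alpha 0} \sigma}$ in the orbit-closure fan $\afan$.

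By the inductive hypothesis applied to the smaller-dimensional pair $((\CC^*)^{n-1}, \stp_{\afan})$, the linking disk $\cocore(x')$ is generated by conical tropical Lagrangian sections $L_\alpha(\sF_1'), \ldots, L_\alpha(\sF_k')$. Via \cref{thm:cobordismgeneration} (the Biran--Cornea iterated cobordism sequence, promoted to the Liouville-domain setting in \cref{app:generation}), this generation statement is realized by an admissible exact Lagrangian cobordism
$$K' : (L_\alpha(\sF_1'), \ldots, L_\alpha(\sF_k')) \rightsquigarrow \cocore(x')$$
in $((\CC^*)^{n-1}, \stp_{\afan}) \times \CC$. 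Because the pullback $\underline i^*_{\alpha 0} : \Supp_\alpha(\Sigma) \to \Supp(\afan)$ is surjective modulo linear equivalence, we may assume $\sF_i' = \underline i^*_{\alpha 0} \sF_i$ for support functions $\sF_i \in \Supp_\alpha(\Sigma)$; adjusting $\sF_i'$ by a linear function only modifies $L_\alpha(\sF_i')$ by an admissible Hamiltonian isotopy. Applying the interchange relation between Lagrangian correspondences and cobordisms proven in \cref{app:correspondenceandcobordism}, the geometric composition $L_{\alpha 0} \circ K'$ is then an admissible Lagrangian cobordism in $((\CC^*)^n, \stp_\Sigma) \times \CC$ whose ends are
$$\bigl(L_{\alpha 0} \circ L_\alpha(\underline i^*_{\alpha 0}\sF_1),\, \ldots,\, L_{\alpha 0} \circ L_\alpha(\underline i^*_{\alpha 0}\sF_k)\bigr) \rightsquigarrow L_{\alpha 0} \circ \cocore(x') \simeq \cocore(x).$$
Reapplying \cref{thm:cobordismgeneration} expresses $\cocore(x)$ as an iterated cone of the pushforwards $L_{\alpha 0} \circ L_\alpha(\underline i^*_{\alpha 0}\sF_i) = L_{\alpha>0}(\underline i^*_{\alpha 0}\sF_i)$. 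Finally, each such pushforward is itself an iterated cone of two conical tropical Lagrangian sections on $(\CC^*)^n$, namely $L_0(\sF_i + \sF_\alpha)$ and $L_0(\sF_i)$, by the exact triangle produced by the cobordism of \cref{property:aexactsequence}. Concatenating these generation statements completes the induction.

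The principal technical obstacle is ensuring that the pushed-forward cobordism $L_{\alpha 0} \circ K'$ is genuinely an object of the target category: admissibility with respect to the enlarged stop $\stp_\Sigma$ (rather than only $\stp_{\str(\alpha)}$), transversality of the fiberwise composition producing honest Lagrangian ends, and exactness must all be verified. This is precisely what is achieved by the constructions and admissibility criteria of \cref{app:cobordism,app:correspondenceandcobordism,app:correspondenceadmissibility}, combined with the structural properties of $L_{\alpha 0}$ established in \cref{sec:inclusion} (especially \cref{property:admissibility} and \cref{property:topology}). A worked low-dimensional instance of the inductive step, which may be consulted as a template, is carried out for $\check X_\Sigma = \CP^2$ in \cref{subsec:warmupcp2}.
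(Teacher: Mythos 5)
Your overall strategy is the same as the paper's: induct on dimension, represent $\cocore(x)$ as $L_{\alpha 0} \circ \cocore(x')$, pull the lower-dimensional generating data through the correspondence using the interchange of correspondences and cobordisms, and resolve the pushforwards $L_{\alpha 0} \circ L_\alpha(\underline i^*_{\alpha 0}\sF_i)$ into honest sections via \cref{property:aexactsequence}. The base case, the choice of key lemmas, and even the sub-steps are all as in the paper.

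However, there is a genuine logical gap in the inductive step. You formulate the inductive hypothesis as the pure generation statement --- that $\cocore(x')$ lies in the subcategory generated by tropical sections --- and then claim that ``via \cref{thm:cobordismgeneration}\ \ldots\ this generation statement is realized by an admissible exact Lagrangian cobordism $K'$.'' But \cref{thm:cobordismgeneration} is a one-way implication: an exact admissible Lagrangian cobordism produces an iterated-cone relation, not the other way around. An abstract iterated-cone decomposition in $\mathcal{W}((\CC^*)^{n-1}, \stp_{\afan})$ does not automatically come from any geometric cobordism, so there is nothing to apply $L_{\alpha 0} \circ (\,\cdot\,)$ and \cref{claim:correspondenceofcobordism} to. The paper circumvents exactly this by proving the stronger \cref{prop:ageneration}, whose statement is the existence of a disk-topology Lagrangian cobordism from sections to $\cocore(x')$ rather than a generation statement; it is this cobordism, carried explicitly through the induction, that is fed into $L_{\alpha 0}$. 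Strengthening your inductive hypothesis to assert the existence of such a cobordism (and recording that it is a disk, which ensures exactness is preserved at each surgery and composition step) closes the gap and recovers the paper's argument verbatim.
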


The idea of the proof is as follows. We will use an inductive argument with \cref{thm:inclusioncorrespondence} to show that there are Lagrangian cobordisms with left ends Lagrangian sections $L(\sF)$ and right end a given linking disk $\cocore(x_i)$ with $x_i\in \LL_\sigma$ and $\sigma>\alpha$.
By \cref{thm:cobordismgeneration}, this means that Lagrangian sections generate the linking disks of $((\CC^*)^n, \stp_\Sigma)$. We then use  \cite[Theorem 1.1]{ganatra2018sectorial} to obtain generation of $\mathcal W((\CC^*)^n, \stp_\Sigma)$.
We obtain homological mirror symmetry by identifying the category generated by Lagrangian sections with the category of line bundles on the mirror, and \cite[Proposition 1.3]{abouzaid2009morse} which states that line bundles generate $D^b\Coh(\check X_\Sigma)$. A summary of the steps is given in  \cref{fig:outlineofgeneration}. 

\begin{figure}
   \centering
   \begin{tikzpicture}

\node at (2,0.5) {$D^b\text{Coh}(\check X_\Sigma)$};
\node at (2,2.5) {$\langle  (i_{\sigma,0})_*\mathcal O_{\sigma>0}\rangle$};
\node at (2,4.5) {$\langle \mathcal O(\sF)\rangle$};
\node at (-3.5,2.5) {$\langle \cocore(x) \rangle$};
\node at (-3.5,0.5) {$\mathcal{W}((\mathbb C^*)^n, \stp_\Sigma$)};
\node at (-3.5,4.5) {$\langle L(\sF)\rangle$};
\draw[<->] (-2,4.5) -- (0.5,4.5) node [midway, fill=white] {\cite{abouzaid2009morse}};
\draw[->] (2,4) -- (2,3) node [midway, fill=white] {\Cref{claim:bgeneration}};
\draw [->] (2,2) -- (2,1)node [midway, fill=white] {\cite[Prop. 1.3]{abouzaid2009morse}};
\draw[->]  (-3.5,4) -- (-3.5,3)node [midway, fill=white] {\Cref{cor:linkingdisk}, \cite{biran2014lagrangian}};
\draw[->]  (-3.5,2) -- (-3.5,1)node [midway, fill=white] {\cite[Thm. 1.1]{ganatra2018sectorial}};
\draw[<->,dashed] (-2,0.5) -- (0.5,0.5);
\end{tikzpicture}    \caption{Theorems for generation of $D^b\Coh(\check X_\Sigma)$ and its mirror.}
   \label{fig:outlineofgeneration}
\end{figure}

\Cref{claim:bgeneration} is a standard fact for toric varieties, whose proof is in the same spirit as the proof of \cref{thm:ageneration}.
\begin{prop}[B-generation]
   Suppose that the following statement holds for all smooth toric varieties $\check X_\Sigma$ with $\dim(\Sigma)<n$: For every cone $\sigma\in\Sigma$ and line bundle $\mathcal O_\sigma (\sF)$ on $\check X_{\str(\sigma)/\sigma}$, there are line bundles $\mathcal O_0(\sF_1), \ldots, \mathcal O_0(\sF_{2^{|\Sigma|-|\sigma|}})$ generating $ \mathcal O_{\sigma>0}(\sF)$ in $D^b\Coh(\check X_\Sigma)$.

   Then the same statement holds for all smooth toric varieties $\check X_\Sigma$ with $\dim(\Sigma)=n$.
   \label{claim:bgeneration}
\end{prop}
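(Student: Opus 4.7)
The plan is to prove the statement by induction on $|\sigma|$, peeling off one primitive generator $\alpha < \sigma$ at each step and trading in the $(n-1)$-dimensional toric variety $\check X_{\str(\alpha)/\alpha}$, to which the hypothesis applies.

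First, the base case $|\sigma| = 0$ is trivial: here $\mathcal O_{\sigma>0}(\sF) = \mathcal O_0(\sF)$ is already a line bundle on $\check X_\Sigma$. For the inductive step, fix $\sigma \in \Sigma$ with $|\sigma| \geq 1$ and choose any $\alpha \in A$ with $\alpha < \sigma$. Then $\sigma$ descends to a cone $\bar\sigma := \sigma/\alpha \in \str(\alpha)/\alpha$, and the two orbit closures $\check X_{\str(\sigma)/\sigma}$ and $\check X_{\str(\bar\sigma)/\bar\sigma}$ are naturally identified. Lift the given support function $\sF$ to some $\tilde \sF \in \Supp_\alpha(\Sigma)$, so that $\underline i_{\alpha 0}^* \tilde \sF$ restricts to $\sF$ on $\str(\alpha)/\alpha$. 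The hypothesis, applied to the smooth toric variety $\check X_{\str(\alpha)/\alpha}$ of fan dimension $n-1$ and to the cone $\bar\sigma \in \str(\alpha)/\alpha$, produces line bundles $\mathcal O_\alpha(\bar\sF_1), \ldots, \mathcal O_\alpha(\bar\sF_N)$ on $\check X_{\str(\alpha)/\alpha}$ that generate $\mathcal O_{\bar\sigma > 0}(\sF)$. Choose lifts $\tilde \sF_j \in \Supp_\alpha(\Sigma)$ with $\underline i_{\alpha 0}^* \tilde \sF_j = \bar\sF_j$.

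The second step is to push the generation forward to $\check X_\Sigma$. Since $i_{\alpha 0}\colon \check X_{\str(\alpha)/\alpha} \hookrightarrow \check X_\Sigma$ is a closed immersion of smooth varieties, $(i_{\alpha 0})_*$ is exact on $\Coh$ and hence preserves generation in the triangulated sense. Applying it to the generation on $\check X_{\str(\alpha)/\alpha}$ yields
\[ \mathcal O_{\sigma > 0}(\sF) \in \langle \mathcal O_{\alpha > 0}(\tilde \sF_1), \ldots, \mathcal O_{\alpha > 0}(\tilde \sF_N) \rangle \subset D^b\Coh(\check X_\Sigma), \]
using the compatibility $(i_{\alpha 0})_* \mathcal O_\alpha(\underline i_{\alpha 0}^* \tilde \sF_j) = \mathcal O_{\alpha > 0}(\tilde \sF_j)$ from \cref{subsec:toricnotation}.

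Finally, each $\mathcal O_{\alpha>0}(\tilde \sF_j)$ is itself generated by two line bundles on $\check X_\Sigma$ via the exact triangle that is the $B$-side mirror of \cref{property:aexactsequence}:
\[ \mathcal O_0(\tilde \sF_j + \sF_\alpha) \to \mathcal O_0(\tilde \sF_j) \to \mathcal O_{\alpha > 0}(\underline i_{\alpha 0}^* \tilde \sF_j). \]
Stacking the two generations completes the inductive step. The one piece of bookkeeping to be careful with is choosing the lifts $\tilde \sF_j$ inside $\Supp_\alpha(\Sigma)$ rather than merely in $\Supp(\Sigma)$, so that $\underline i_{\alpha 0}^* \tilde \sF_j$ is well-defined on the nose and the final exact triangle really recovers $\mathcal O_{\alpha > 0}(\tilde \sF_j) = \mathcal O_{\alpha>0}(\bar\sF_j)$; this is the only mildly subtle point, and transversality of $\tilde \sF_j$ to $\alpha$ is guaranteed by surjectivity of $\underline i_{\alpha 0}^* \colon \Supp_\alpha(\Sigma) \to \Supp(\str(\alpha)/\alpha)$.
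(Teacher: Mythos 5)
Your proof is correct and follows the same strategy as the paper's: choose a one-dimensional face $\alpha < \sigma$, apply the dimension-$(n-1)$ hypothesis to $\str(\alpha)/\alpha$ with the quotient cone $\sigma/\alpha$, push the resulting generation forward via the exact triangulated functor $(i_{\alpha 0})_*$, and then resolve each $\mathcal O_{\alpha>0}(\tilde\sF_j)$ by the two-term exact triangle involving $\sF_\alpha$. One small cosmetic note: the argument you give is really a single application of the dimension hypothesis rather than an induction on $|\sigma|$ (the ``inductive step'' never invokes the $|\sigma|-1$ case), but this does not affect correctness, and your explicit treatment of $|\sigma|=0$ and your care to take lifts in $\Supp_\alpha(\Sigma)$ are both sound.
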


\begin{proof}
   Let $\sigma\in\Sigma$ be a cone, $|\Sigma|=n, |\sigma|=k$ and $\sF\in \Supp_\sigma(\Sigma)$ be a support function whose support is transverse to $\sigma$.
   Let $\alpha>\sigma$ be a 1-dimensional cone.
   Recall that  $\mathcal O_{\sigma>0}(\underline i^*_{\sigma 0}\sF):=(i_{\alpha0}  )_*(\mathcal O_{\sigma>\alpha}(\underline i_{\sigma 0}^*\sF)).$
   By the inductive hypothesis, there exists support functions $\tilde \sF_1, \ldots, \tilde \sF_{2^{n-1-k}}\in \Supp(\str(\alpha)/\alpha)$ with $\sF(\alpha)\neq 0$ only if $\alpha'\in \underline i^*_{\sigma 0} \sigma$ , so that $\{\mathcal O_{\alpha}(\sF_i)\}$ generate $\mathcal O_{\sigma>\alpha}(\sF)\in D^b\Coh(X_{\str(\alpha)/\alpha})$. 
   Take  $ \sF_1, \ldots,  \sF_{2^{n-1-k}}\in \Supp_\alpha(\Sigma)$ so that $ (\underline i_{\alpha0})^*( \sF_i)=\tilde \sF_i$.
   The exact sequence 
   \[
      \mathcal O_0( \sF_i+\sF_\alpha)\to \mathcal O_0( \sF_i)\to (i_{\alpha 0})_*(O_{\alpha}(\tilde \sF_i))
   \]
   shows that the line bundles with support functions $ \{\sF_i, \sF_i+\sF_\alpha\}_{i=1}^{2^{n-1-k}}$ generate $\mathcal O_{\alpha>0}(\tilde \sF_i)$.
   As triangles in the derived category are preserved by $(i_{\alpha0})_*$, we get that $\{\mathcal O_0(\sF_i), \mathcal O_{0}(\sF_i+\sF_\alpha)\}_{i=1}^{2^{n-1-k}}$ generate  $\mathcal O_{\sigma>0}(\underline i^*_{\sigma 0}\sF)$ as desired. 
\end{proof}
Another way to see that  $\mathcal O_{\sigma}(\sF)$ can be generated from line bundles is to build them entirely inside $D^b\Coh(\check X_\Sigma)$ by  iterating the exact sequence:
\begin{equation}
   \mathcal O_{\sigma}(\sF+\sF_\alpha)\to \mathcal O_{\sigma}(\sF)\to \mathcal O_{\tau}(\underline i^*_{\tau\sigma} \sF)
   \label{eq:orbitrelation}
\end{equation}
where $\sF\in \Supp_{\tau}(\str(\sigma)/\sigma)$, and $\langle \tau,\alpha\rangle= \sigma$ with $\alpha \not\in \tau$.
Understanding the mirror to this exact sequence is the basis for the (motivational and expository) \cref{subsec:warmupcp2}, where we build the linking disks in the mirror of $\CP^2$ using piecewise linear representations of Lagrangian sections and linking disks. 
Later, in \cref{subsec:generationproof}, we prove \cref{cor:linkingdisk}, which is the $A$-model analogue to \cref{claim:bgeneration}. 
\subsubsection[Warm up: generating mirror to proj. plane]{Warm up: Generation of the mirror to $\CP^2$.} 
\label{subsec:warmupcp2}
We now explain \cref{fig:generationexample}, which demonstrates how tropical Lagrangian sections generate linking disks in the mirror to $\CP^2$. 
The basic principle is to use Lagrangian surgery to inductively remove intersection points with the FLTZ skeleton. 
We consider the fan whose one-dimensional cones are $e_1, e_2$ and $-e_1-e_2$. 
The Lagrangian $L(-2)$ intersects\footnote{As drawn, $L(-2)$ is not a cocore. However, it is Hamiltonian isotopic to a cocore.} the FLTZ skeleton components $\LL_0$ and $\LL_{-e_1-e_2}$, while the Lagrangian $L(-1)$ is a cocore for $\LL_0$.
The surgery of these two Lagrangian submanifolds yields a Lagrangian which only intersects $\LL_{-e_1-e_2}$; a candidate linking disk. 
\begin{figure}
   \centering
   \begin{subfigure}{\linewidth}
      \centering
   \includegraphics{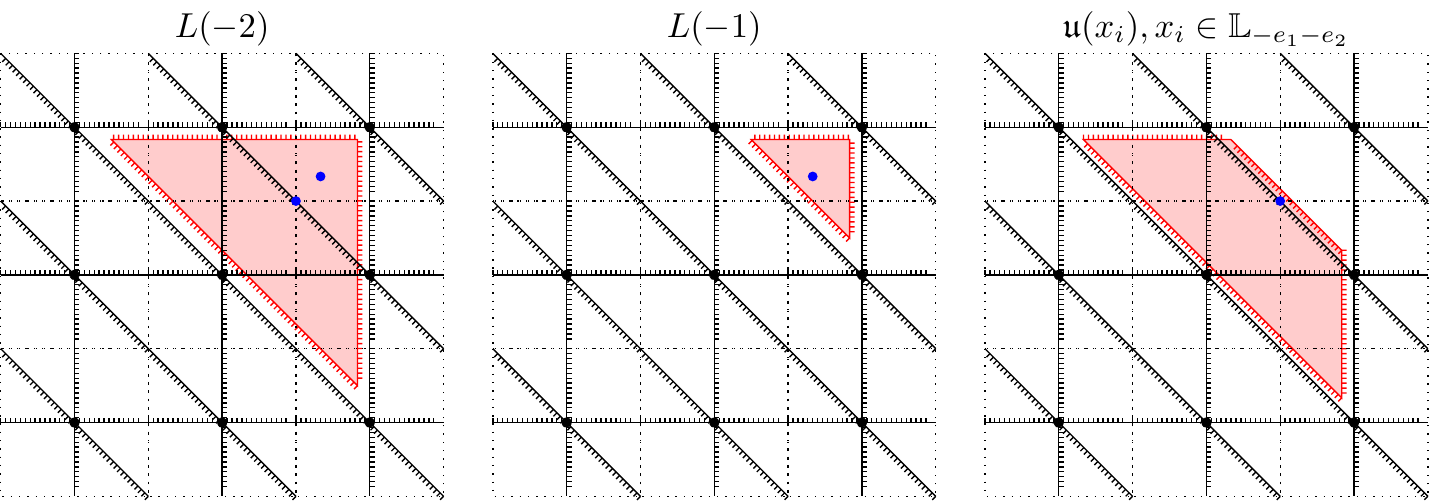}
   \caption{}
   \end{subfigure}
   \begin{subfigure}{\linewidth}
   \centering
   \includegraphics{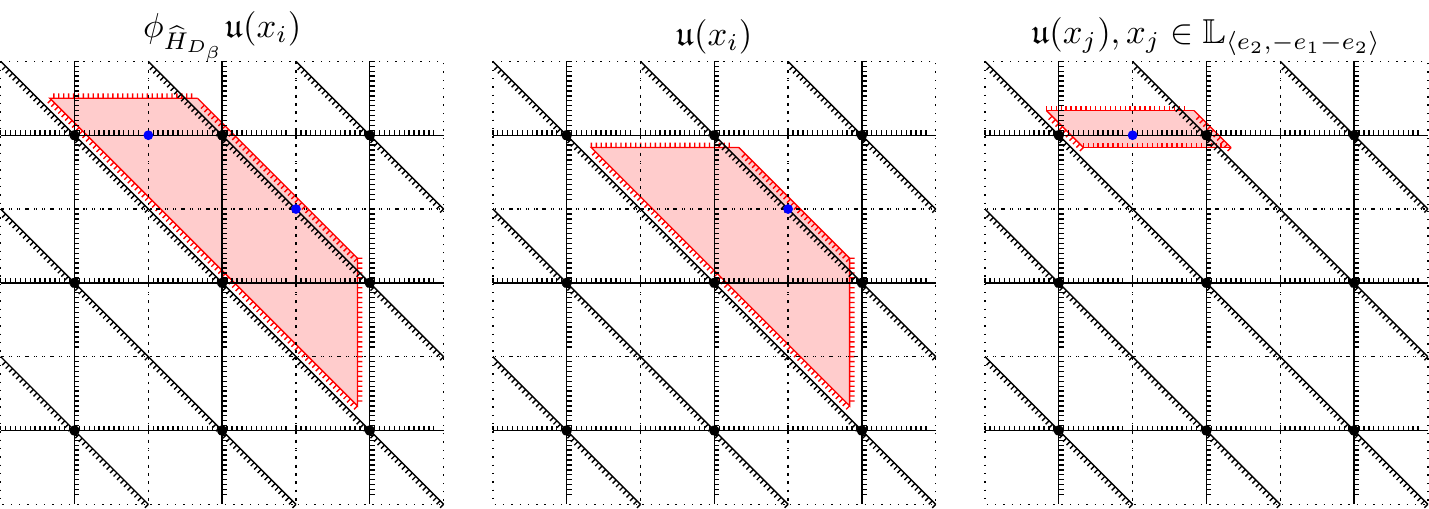}
   \caption{}
   \label{fig:generationexample2}
   \end{subfigure}
   \caption{Generating a linking disk for the mirror of $\CP^2$ from twists of linking disks. Here $x_i\in \LL_\alpha$, $\beta\neq \alpha \in A$ and $x_j$ is in $\LL_{\langle \alpha, \beta\rangle}$.}
   \label{fig:generationexample}
\end{figure}
This process will generate all of the linking disks of the $\LL_\alpha$ for $\alpha \in A$. However, to generate the linking disks to strata indexed by larger cones, one has to repeat this process with twists of $\cocore(x_i)$, where $x_i\in \LL_\alpha$. 
For example \cref{fig:generationexample2}, shows how the linking disk to $\LL_{\langle\alpha_2, -\alpha_1-\alpha_2\rangle}$ is generated, where $\langle\alpha_2, -\alpha_1-\alpha_2\rangle$ is a maximal cone. 
These pictures, while instructional, quickly become unwieldy; furthermore, checking that these Lagrangian disks all intersect appropriately to generate surgery data is difficult. 
For those reasons, we instead use the machinery developed in \cref{sec:inclusion} to build the Lagrangian disks. 
\subsubsection{Generation}
\label{subsec:generationproof}
 
\begin{prop}[A-generation]
   Let $\sigma\in \Sigma$ be a cone with $|\sigma|=k$.
   Let $x\in \LL_\sigma$ be a internal point of the FLTZ skeleton.
   Then there exists Lagrangian sections $L_0(\sF_1), \ldots, L_0(\sF_{2^k})$ and a Lagrangian cobordism $K:(L_0(\sF_1), \ldots, L_0(\sF_{2^k}))\rightsquigarrow \cocore(x_i)$, where $\cocore(x_i)$ is the linking disk to $\LL_{\sigma}$. Furthermore, $K$ has the topology of a disk.
   \label{prop:ageneration}
\end{prop}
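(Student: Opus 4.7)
The plan is to induct on $k = |\sigma|$, exactly mirroring the $B$-side argument of \cref{claim:bgeneration}, with \cref{thm:inclusioncorrespondence} serving as the geometric replacement for the exact sequence \cref{eq:orbitrelation}.

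\textbf{Base case $k=0$.} Here $\sigma = 0$ and $x$ is an internal point of $\LL_0$. By \cref{lemma:cocoresaretropical}, $\cocore(x)$ is admissibly Hamiltonian isotopic to a conical tropical Lagrangian section $L_0(\sF)$, which we regard as a trivial Lagrangian cobordism (a disk) with one end.

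\textbf{Inductive step.} Assume the result for all cones of dimension less than $k$. Given $\sigma$ with $|\sigma|=k$, choose a $1$-dimensional cone $\alpha<\sigma$. Then $\sigma$ projects to a cone $\underline i^*_{\alpha 0}\sigma \in \str(\alpha)/\alpha$ of dimension $k-1$, and $x$ can be realized by a point $x'\in \LL_{\underline i^*_{\alpha 0}\sigma}\subset \LL_{\str(\alpha)/\alpha}$ with $L_{\alpha 0}\circ \cocore(x')$ admissibly isotopic to $\cocore(x)$ via \cref{cor:linkingdisk} (and \cref{property:linkingdisks}). By the inductive hypothesis applied to the fan $\str(\alpha)/\alpha$ (of total dimension $n-1$), there exist support functions $\tilde \sF_1, \ldots, \tilde \sF_{2^{k-1}}\in \Supp(\str(\alpha)/\alpha)$ and a disk-topology Lagrangian cobordism
\[
K':\bigl(L_\alpha(\tilde\sF_1), \ldots, L_\alpha(\tilde\sF_{2^{k-1}})\bigr)\rightsquigarrow \cocore(x').
\]
After possibly modifying the $\tilde\sF_i$ by a linear function (which does not affect the Lagrangian isotopy class), we may lift them to $\sF_i\in \Supp_\alpha(\Sigma)$ with $\underline i^*_{\alpha 0}\sF_i = \tilde\sF_i$.

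\textbf{Assembling the cobordism.} Apply \cref{claim:correspondenceofcobordism} to $K'$ and the correspondence $L_{\alpha 0}$ to obtain an admissible Lagrangian cobordism in $((\CC^*)^n, \stp_\Sigma)\times\CC$
\[
L_{\alpha 0}\circ K':\bigl(L_{\alpha 0}\circ L_\alpha(\tilde\sF_1), \ldots, L_{\alpha 0}\circ L_\alpha(\tilde\sF_{2^{k-1}})\bigr)\rightsquigarrow L_{\alpha 0}\circ \cocore(x') \sim \cocore(x).
\]
By \cref{property:topology} the geometric composition with $L_{\alpha 0}$ multiplies the topology by $\RR$, so $L_{\alpha 0}\circ K'$ still has disk topology (a disk times $\RR$, after smoothing, remains a disk cobordism of the required dimension). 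Finally, for each $i$, \cref{property:aexactsequence} provides an admissible disk-topology cobordism
\[
C_i:\bigl(L_0(\sF_i+\sF_\alpha), L_0(\sF_i)\bigr)\rightsquigarrow L_{\alpha 0}\circ L_\alpha(\tilde\sF_i).
\]
Concatenating the $C_i$ onto the corresponding ends of $L_{\alpha 0}\circ K'$ (via the gluing of iterated cobordisms as in \cite[App.~A]{biran2014lagrangian}, adapted to Liouville domains in \cref{app:generation}) produces the desired Lagrangian cobordism
\[
K:\bigl(L_0(\sF_1+\sF_\alpha), L_0(\sF_1), \ldots, L_0(\sF_{2^{k-1}}+\sF_\alpha), L_0(\sF_{2^{k-1}})\bigr)\rightsquigarrow \cocore(x),
\]
with $2\cdot 2^{k-1} = 2^k$ left ends. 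Disk topology is preserved under gluing disks along boundary arcs, completing the induction.

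\textbf{Anticipated obstacle.} The only delicate point is verifying that all the pieces remain admissible with respect to $\stp_\Sigma$ when glued, and that the iterated cobordism of \cite{biran2014lagrangian} — which produces the required multi-ended disk by successive mapping cones — goes through in the partially wrapped setting; this is precisely the content of \cref{app:generation}. Tracking the disk topology across the geometric composition with $L_{\alpha 0}$ also deserves care, but follows from \cref{property:topology} since the product of a disk with a half-line (coming from the cobordism direction plus the extra $\RR$-factor) is again a disk after rounding corners.
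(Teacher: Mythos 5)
Your proof follows the paper's argument step for step: induct using $L_{\alpha 0}$ and \cref{claim:correspondenceofcobordism} to push a disk cobordism from $\str(\alpha)/\alpha$ up to $\Sigma$, identify the right end as a linking disk via \cref{property:linkingdisks} and \cref{cor:linkingdisk}, confirm disk topology via \cref{property:topology}, and concatenate with the cobordisms of \cref{property:aexactsequence}. The only cosmetic difference is that you index the induction by $|\sigma|$ rather than by $\dim\Sigma$ and spell out the $|\sigma|=0$ base case via \cref{lemma:cocoresaretropical}, which the paper leaves implicit.
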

\begin{proof}
   We prove by induction on the dimension of the fan. 
   If $\check X_\Sigma$ is 0 dimensional, then the FLTZ stop is empty, concluding the proof. 

   Now suppose that $((\CC^*)^n, \stp_\Sigma)$ is the mirror to a toric variety. 
   Suppose that $|\sigma|\geq 1$.  Then there exists a 1-dimensional cone $\alpha$ with $\alpha \leq \sigma$. 
      By the induction hypothesis, there is a Lagrangian cobordism 
      \[K': ( L_{\alpha}(\sF_1), \ldots, L_{\alpha}(\sF_{2^{k-1}}))\rightsquigarrow L_{\sigma>\alpha}\]
      where $L_{\sigma>\alpha}$ is the linking disk to  $\LL_{\underline i^*_{\alpha 0}\sigma}\subset \LL_{\afan}$.
      Furthermore, $K'$ is a disk.
      We can construct a cobordism using \cref{claim:correspondenceofcobordism},  
      \[L_{\alpha 0}\circ K':(\Lc_{\alpha 0}\circ L_{\alpha}(\sF_1), \ldots, \Lc_{\alpha 0}\circ L_\alpha(\sF_{2^{k-1}}))\rightsquigarrow\Lc_{\alpha 0}\circ L_{\sigma>\alpha}.\] 
      From \cref{property:linkingdisks,cor:linkingdisk}, $\Lc_{\alpha 0} \circ L_{\sigma>\alpha}$ is a linking disk to $\LL_\sigma$. 
      By \cref{property:topology}, $L_{\alpha 0}\circ K'$ has the topology of a disk.
      Because support functions on $\str(\alpha)/\alpha$ all arise by pulling back support functions on $\Sigma$, we can take $\tilde \sF_i\in \Supp(\Sigma)$ so that $\underline i^*_{\alpha 0}\tilde \sF_i= \sF_i$. 
      By \cref{property:aexactsequence}, there exists for every $\sF_i$ a Lagrangian cobordism
      \[K_i:(L(\tilde \sF_i+\sF_\alpha), L(\tilde \sF_i))\rightsquigarrow L_{\alpha 0}\circ L_\alpha(\underline i_{\alpha 0}^*\tilde \sF_i)= L_{\alpha 0}\circ L_\alpha(\sF_i).\]
      We define $K$ to be the composition of $L_{\alpha 0}\circ K'$ with all of the $K_i$.
\end{proof}
\begin{proof}[Proof of \cref{thm:ageneration}]
   From \cref{prop:ageneration} there exists a Lagrangian cobordism with the topology of a disk, right ends on a linking disk $L_{\sigma>0}$ of $\LL_\sigma$, and left ends on sections $L_0(\sF)$. 
   By \cref{thm:cobordismgeneration}, the Lagrangian sections $L_0(\sF_i)$ generate $L_{\sigma>0}$, which in turn generate $\mathcal W((\CC^*)^n, \stp_\Sigma)$ by \cite[Theorem 1.10 ]{ganatra2018sectorial}. 
\end{proof}
We expect the argument of \cref{prop:mirrordivisormatching} can pushed through the inductive construction of the linking disks to identify $L_{\sigma>0}$ with $\mathcal O_{\sigma>0}$ under the derived equivalence between $\mathcal W((\CC^*)^n, \stp_\Sigma)$ and $\Coh(\check X_\Sigma)$.
\subsubsection[Generating mirror the proj. plane]{Returning to generation of the mirror to $\CP^n$.}
\label{subsubsec:cpngeneration}
Recall from \cref{subsec:cocores} that we gave an explicit description of the cocores in the mirror to $\CP^n$. 
We now examine the subcategory generated by these Lagrangian sections. 
\begin{prop}
   Consider $((\CC^*)^n, \stp_{\Sigma_n})$, the mirror to the toric variety $\CP^n$.
   Let $\tau$ be a cone of codimension at least $k$, and let $x\in \LL_\tau$ be an interior point.  
   The Lagrangians $L(-k), \ldots, L(-n)$ generate $\cocore(x)$. 
   \label{prop:mirrorBeilinson}
\end{prop}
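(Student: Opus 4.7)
The plan is to proceed by induction on the dimension $n$ of $\CP^n$, following the general structure of \cref{prop:ageneration}. The key feature of $\CP^n$ that will be exploited is that every toric divisor $D_\alpha$ is linearly equivalent to the hyperplane class, so the twist by $\sF_\alpha$ decreases the degree of a Lagrangian section by exactly one. This ensures that pushforwards of Beilinson-type sections from $\CP^{n-1}$ through the inclusion correspondence $\Lc_{\alpha 0}$ assemble into consecutive Beilinson objects in $\CP^n$ rather than doubling the count of generators at each step as happens in the general case.

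When $k=n$ so that $\tau=0$, the cocore at $x_1\in\II_{0;1}$ is $L(-n)$ by \cref{exam:cpn}, which is trivially generated by itself. When $k<n$ I will pick any ray $\alpha \leq \tau$, which exists since $|\tau|\geq 1$. The quotient cone $\tau/\alpha$ has codimension $k$ inside the fan of $\CP^{n-1}\cong \check X_{\afan}$, so by the inductive hypothesis applied to $\CP^{n-1}$ there is a Lagrangian cobordism whose left ends are the sections $L_\alpha(-k), L_\alpha(-k-1), \ldots, L_\alpha(-(n-1))$ and whose right end is a linking disk $L_{\tau>\alpha}$ of $\LL_{\tau/\alpha}$.

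I will then apply \cref{claim:correspondenceofcobordism} to the inclusion correspondence $\Lc_{\alpha 0}$ of \cref{thm:inclusioncorrespondence} to produce a cobordism from the pushforwards $\Lc_{\alpha 0}\circ L_\alpha(-j)$ for $j=k,\ldots,n-1$ to $\Lc_{\alpha 0}\circ L_{\tau>\alpha}$, which is a linking disk of $\LL_\tau$ in $\CP^n$ by \cref{property:linkingdisks}. Next, \cref{property:aexactsequence} applied to each $L_\alpha(-j)$, combined with the fact that $\sF_\alpha$ represents $\mathcal O(-1)$ on $\CP^n$, yields cobordisms $(L(-j-1), L(-j))\rightsquigarrow \Lc_{\alpha 0}\circ L_\alpha(-j)$. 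Concatenating these cobordisms as in the proof of \cref{prop:ageneration}, the combined left end is exactly $\{L(-k), L(-k-1), \ldots, L(-n)\}$, and by \cref{cor:linkingdisk} the right end is admissibly Hamiltonian isotopic to $\cocore(x)$ for a suitable interior point $x\in \LL_\tau$. Invoking \cref{thm:cobordismgeneration} then completes the inductive step.

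The main difficulty I anticipate is organizational: one must verify that $\underline i_{\alpha 0}^*$ sends a representative of $\mathcal O(-j)$ on $\CP^n$ to one on $\CP^{n-1}$ of matching degree, and that each $\sF_\alpha$ indeed shifts the degree uniformly by $-1$. These identifications follow from $\Pic(\CP^n)\cong \ZZ$ with $D_\alpha\sim H$ for every ray, but must be tracked carefully when matching linear equivalence classes to specific tropical Lagrangian sections. A subtler point is that when $\tau=0$ the core component $\LL_0$ has multiple internal components with distinct cocores, so the inductive construction produces a linking disk for one specific internal point $x$ at a time rather than for all cocores of $\LL_0$ simultaneously.
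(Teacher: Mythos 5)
Your proposal follows the same strategy as the paper's proof: induct on $n$, pick a ray $\alpha < \tau$, push linking disks forward through $\Lc_{\alpha 0}$, and exploit the $\CP^n$-specific fact that twisting by $\sF_\alpha$ drops the degree of a section by exactly one so that the generators from \cref{property:aexactsequence} telescope ($\{L(-j),L(-j-1)\}$ for consecutive $j$) rather than doubling. You identify this key point clearly in your opening paragraph, and the cobordism bookkeeping via \cref{claim:correspondenceofcobordism} and \cref{cor:linkingdisk} matches the paper.

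The one structural divergence is your case split. You separate ``$k=n$, so $\tau=0$'' from ``$k<n$,'' and in the latter case justify picking $\alpha\leq\tau$ by asserting $|\tau|\geq 1$. But the hypothesis is ``codimension at least $k$,'' which reads as $n-|\tau|\geq k$, i.e.\ $|\tau|\leq n-k$; this does not rule out $\tau=0$ when $k<n$. In the paper the dichotomy is instead on whether $\tau=0$ or $\tau>0$, run at every level of the induction, with $\tau=0$ dispatched via \cref{lemma:cocoresaretropical} and \cref{exam:cpn} (the cocores of $\LL_0$ are literally among the $L(-j)$). You acknowledge the multiplicity of internal components of $\LL_0$ in your closing remarks but do not actually close that case. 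The fix is straightforward --- for each $n$, handle $\tau=0$ separately by the explicit identification of cocores in \cref{exam:cpn} before entering the $\alpha<\tau$ step --- but as written your inductive step does not reach it.
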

\begin{proof}
   We proceed by induction. For the mirror to $\CP^1$, it is clear.
   Now assume that the statement holds for $((\CC^*)^{n-i}, \stp_{\Sigma_{n-1}})$. 
   Let $\tau\in \Sigma_n$ be a non-maximal cone. In the case where $\tau=0$, this is \cref{lemma:cocoresaretropical}. 
   Otherwise, let $\alpha<\tau$ be a 1-dimensional cone. 
   By \cref{cor:linkingdisk}, we know that there exists $y\in \LL_{\underline i_{\alpha0}^* \tau}\subset \LL_{\str(\alpha)/\alpha}$ so that 
   $\Lc_{\alpha 0} \circ \cocore(y)$ is Hamiltonian isotopic to $\cocore(x)$.
   Furthermore, $\cocore(y)$ can be generated by $L_\alpha(-k), \ldots, L_\alpha(n-1)$.
   Since $\Lc_{\alpha 0}\circ L_{\alpha}(-i)$ is generated by $L(-i), L(-i-1)$, we conclude that $\cocore(x)$ can be generated by $L(-k), \ldots, L(-n)$.
\end{proof}
When adding the zero section, we obtain the mirror to the well-known Beilinson exceptional collection for $\CP^n$. 
\begin{cor}
   Let $\Sigma_n$ be the fan for $\CP^n$. 
   The Lagrangian cocores $L(-n), \ldots, L(-1)$ and the zero section $L(0)$ are a full exceptional collection for $\mathcal W((\CC^*)^n, \stp_{\Sigma_n})$
   \label{cor:exceptionalcollection}
\end{cor}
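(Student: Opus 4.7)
The plan is to derive this from the embedding of \cref{cor:hmsline} combined with the generation result of \cref{thm:ageneration}, reducing everything to the classical Beilinson exceptional collection on $\CP^n$. The proposition just preceding this corollary is in fact stronger than what is needed for fullness, so I would actually sidestep it.

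First I would establish the exceptional and semiorthogonality conditions. By \cref{cor:hmsline}, the full subcategory of $\mathcal{W}((\CC^*)^n, \stp_{\Sigma_n})$ spanned by conical tropical Lagrangian sections is quasi-equivalent to $\Pic_{\dg}(\CP^n)$, with $L(k)$ sent to $\mathcal{O}(k)$. Hence for any integers $i, j$ with $0 \leq i, j \leq n$,
\[ H^\bullet \Hom_{\mathcal W}(L(-n+i), L(-n+j)) \cong \Ext^\bullet_{\CP^n}(\mathcal{O}(-n+i), \mathcal{O}(-n+j)) \cong H^\bullet(\CP^n, \mathcal{O}(j-i)). \]
For $i = j$, this is $\CC$ concentrated in degree $0$, verifying that each $L(-n+i)$ is exceptional. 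For $i > j$, one has $-n \leq j - i \leq -1$, and the standard cohomology of line bundles on $\CP^n$ yields $H^\bullet(\CP^n, \mathcal{O}(j-i)) = 0$. This gives the semiorthogonality required of an ordered exceptional collection $L(-n), L(-n+1), \ldots, L(0)$.

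Next I would establish fullness. By \cref{thm:ageneration}, tropical Lagrangian sections generate $\mathcal W((\CC^*)^n, \stp_{\Sigma_n})$, so it suffices to show that $L(-n),\ldots,L(0)$ generate every conical tropical Lagrangian section. Under the fully faithful embedding of \cref{cor:hmsline}, this is equivalent to showing that the line bundles $\mathcal{O}(-n),\ldots,\mathcal{O}(0)$ generate the triangulated subcategory $\Pic_{\dg}(\CP^n) \subset D^b_{\dg}\Coh(\CP^n)$; this is (a weak form of) Beilinson's classical theorem, which in fact gives that these line bundles form a full exceptional collection for the whole derived category. Pulling back along the embedding, $L(-n),\ldots,L(0)$ generate all conical tropical Lagrangian sections, completing the proof.

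The only subtlety is one of bookkeeping: to avoid circularity, I would invoke only the embedding of \cref{cor:hmsline} and the generation statement of \cref{thm:ageneration} separately, rather than the full HMS equivalence of \cref{cor:HMScor} (which is itself obtained by combining these two ingredients). With this care, the corollary is a formal consequence of Beilinson's collection and the two structural theorems established earlier in the paper, with no new Floer-theoretic input required.
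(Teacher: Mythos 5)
Your proof is correct, but it takes a different route from the paper's. The paper derives this corollary from \cref{prop:mirrorBeilinson}, which is established by a direct Floer-theoretic induction: using the inclusion correspondence $\Lc_{\alpha 0}$ and the cobordisms of \cref{property:aexactsequence}, it shows explicitly that $L(-k),\ldots,L(-n)$ generate the linking disk at any point of a stratum of codimension at least $k$, and then the corollary follows by taking $k=0$ and invoking generation by linking disks. You instead sidestep \cref{prop:mirrorBeilinson} entirely: you deduce exceptionality and semiorthogonality from the cohomology computation on the B-side via \cref{cor:hmsline}, and then obtain fullness by combining \cref{thm:ageneration} with Beilinson's classical theorem, transported across the fully faithful embedding (full faithfulness preserves and reflects iterated-cone generation after passing to twisted complexes, so the reduction is clean). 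Your argument is shorter and more formal, relying only on the two structural theorems already in hand; the paper's argument yields the strictly stronger and geometrically informative \cref{prop:mirrorBeilinson}, which records exactly which of the $n+1$ sections are needed to produce each class of linking disk. Both are valid, and your care in avoiding a formal appeal to \cref{cor:HMScor} (even though there would be no actual circularity, given the order in which the paper establishes its results) is a reasonable bookkeeping precaution.
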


It may be interesting to study in what sense the line bundles corresponding to cocores in an arbitrary $\check X_\Sigma$ generalize this example. 
\subsection{Removing toric strata}
\label{subsec:quotient}
We now examine the relation between \cref{thm:inclusioncorrespondence} and stop removal. 
We say that a stopped Liouville domain  $(X, \mathfrak f')$  is obtained by stop removal from  $(X, \mathfrak f)$ if $\mathfrak f'\subset \mathfrak f$ and $\mathfrak f'\setminus \mathfrak f$ is mostly Legendrian.
\begin{thm}[\cite{ganatra2018sectorial}]
    Let $(X, \mathfrak f)$ be a stopped Louiville domain, which is obtained from another stopped Louiville domain $(X, \mathfrak f')$ by stop removal. 
    Let $\mathcal D$ be the subcategory generated by Lagrangian linking disks of $\mathfrak f'\setminus \mathfrak f$.
    There is an exact sequence 
    \[\mathcal D\to \mathcal W(X, \mathfrak f')\to \mathcal W(X, \mathfrak f).\]
\end{thm}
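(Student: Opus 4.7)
The plan is to build the stop removal functor as a localization and then identify its kernel with $\mathcal D$, following the general framework of sectorial descent. First, I would construct an $A_\infty$-functor $\iota \colon \mathcal W(X, \mathfrak f') \to \mathcal W(X, \mathfrak f)$ on the level of directed categories: every admissible Lagrangian in $(X, \mathfrak f')$ is automatically admissible in $(X, \mathfrak f)$ (removing stops only loosens the admissibility condition), and the cofinal wrapping used to define morphisms in $\mathcal W(X, \mathfrak f')$ embeds into a cofinal wrapping for $\mathcal W(X, \mathfrak f)$ since the allowed positive isotopies form a superset. The acceleration functor perspective of Ganatra-Pardon-Shende interprets $\mathcal W(X, \mathfrak f)$ precisely as the localization of $\mathcal W(X, \mathfrak f')$ at the continuation elements associated to wrappings that cross $\mathfrak f' \setminus \mathfrak f$.

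Second, I would verify that any linking disk $\cocore(x)$ for $x \in \mathfrak f' \setminus \mathfrak f$ is sent to a zero object. Since $x$ now lies in the allowed wrapping region for $(X, \mathfrak f)$, a small positive isotopy through $x$ displaces $\cocore(x)$ off itself, and all intersection points with its positive perturbation can be made to escape to infinity. Consequently the self-Floer cohomology $\hom_{\mathcal W(X, \mathfrak f)}(\cocore(x), \cocore(x)) = 0$, so $\cocore(x) \simeq 0$ in the localization. This shows $\iota(\mathcal D) = 0$ and hence $\iota$ descends to a functor $\wb \iota \colon \mathcal W(X, \mathfrak f')/\mathcal D \to \mathcal W(X, \mathfrak f)$.

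Third, to show $\wb \iota$ is a quasi-equivalence I would use a generation argument. By the linking disk generation theorem, both $\mathcal W(X, \mathfrak f)$ and $\mathcal W(X, \mathfrak f')$ are generated by linking disks of $\mathfrak f$, cocores of $\core_X$, and (on the $\mathfrak f'$ side) the linking disks of $\mathfrak f' \setminus \mathfrak f$. So the essential image of $\wb\iota$ already hits a generating set. To match morphisms, one decomposes a cofinal wrapping in $(X, \mathfrak f)$ into small wraps in $(X, \mathfrak f')$ interspersed with crossings of $\mathfrak f' \setminus \mathfrak f$; each crossing chord factors as a composition through a linking disk in $\mathcal D$, so the resulting morphism complex is exactly the Verdier localization complex.

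The main obstacle is this last step: turning the informal wrapping-chord decomposition into an honest $A_\infty$-level identification of $\mathcal W(X, \mathfrak f')/\mathcal D$ with $\mathcal W(X, \mathfrak f)$, compatibly with higher structure maps. This is where the sectorial descent machinery and the Chantraine-Dimitroglou Rizell-Ghiggini-Golovko-style cobordism arguments (compare \cref{app:generation}) do the heavy lifting, controlling the contribution of wrapping chords through $\mathfrak f' \setminus \mathfrak f$ and identifying them with cone sequences on linking disks.
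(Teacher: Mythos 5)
Note first that the paper does not prove this theorem: it is cited verbatim from Ganatra--Pardon--Shende (\cite{ganatra2018sectorial}) and used as a black box in the applications section. So there is no ``paper's own proof'' to compare against; what follows is an assessment of your sketch on its own merits.

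Your outline does track the GPS argument in broad strokes: the stop removal functor is a localization, linking disks of removed stop strata become null objects, and the induced functor from the Verdier quotient is an equivalence. But as you yourself flag, the entire technical content sits in the third step, and your sketch stops exactly where the actual proof begins. What GPS really do is prove a \emph{wrapping exact triangle}: for a Lagrangian $L$ and a small positive wrap $L \to L^w$ that crosses $\mathfrak f' \setminus \mathfrak f$ transversely at a single point $p$, there is an exact triangle $L \to L^w \to \cocore(p)$ (up to shift) already in $\mathcal W(X, \mathfrak f')$. That triangle is what makes ``each crossing chord factors through a linking disk'' a theorem rather than a heuristic, and it is what lets one compare the colimit computing $\hom_{\mathcal W(X,\mathfrak f)}$ with the morphism complex in the quotient $\mathcal W(X,\mathfrak f')/\mathcal D$. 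Your proposal never states or proves this lemma, so the third step is a restatement of the theorem, not a proof of it.

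Two smaller points. First, the claim that the cofinal wrapping in $(X, \mathfrak f')$ ``embeds into a cofinal wrapping for $\mathcal W(X, \mathfrak f)$'' is misleading as stated: the family is no longer cofinal after the stop is removed (more wrapping is now allowed), so the morphism complexes do not embed; one only gets a natural map of directed colimits, which is far from injective---indeed the content of the theorem is precisely a description of its kernel. Second, in step two it is cleaner to argue via the wrapping triangle applied to $\cocore(x)$ itself (wrapping it through $x$ identifies $\cocore(x)$ with a cone on its own identity, forcing it to be zero) rather than by the displacement argument, which needs more care to push through the colimit.
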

We are interested in stop removal for the FLTZ skeleton. 
Consider the upward closure of $\sigma$,  $\str(\sigma)=\{\tau\in \Sigma \; : \; \tau\geq \sigma\}.$
The stop $\mathfrak f_{\Sigma\setminus \str(\sigma)}$ is obtained from $\mathfrak f_\Sigma$ by stop removal at the set $\mathfrak g:=\left(\bigcup_{\tau\subset \str(\sigma)}\mathfrak f_\tau\setminus \bigcup_{\tau'\neq \tau} \mathfrak f_{\tau'}\right)$.
The linking disks of $\mathfrak g$ are in bijection with the linking disks of $\bigcup_{\tau\subset \str(\sigma)}\mathfrak f_\tau$.
The toric variety $\check X_{\Sigma\setminus \str(\sigma)}$ is $\check X_\Sigma \setminus \check X_{\str(\sigma)/\sigma}$, the complement of the toric orbit closure associated to $\sigma$.
\begin{prop}
    \label{prop:stopremovallocalization}
    $\mathcal W(X, \mathfrak f_{\Sigma\setminus \str(\sigma)})$ is derived equivalent to $D^b\Coh(\check X_{\Sigma\setminus \str(\sigma)})$, and the acceleration functor
    \[\mathcal W(X, \mathfrak f_{\Sigma})\to \mathcal W(X, \mathfrak f_{\Sigma\setminus \str(\sigma)}) \]
    is mirror to the pullback functor
    \[\check i^*:D^b\Coh(\check X_\Sigma)\to D^b\Coh(\check X_{\Sigma\setminus \str(\sigma)})\]
\end{prop}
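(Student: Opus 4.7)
The plan is to derive both claims from a comparison of two exact triangles: the stop removal sequence of Ganatra--Pardon--Shende on the symplectic side, and the localization sequence for coherent sheaves on the algebraic side. First, I would apply stop removal to the inclusion $\stp_{\Sigma \setminus \str(\sigma)} \subset \stp_\Sigma$ (which is a mostly Legendrian inclusion since $\Sigma$ is smooth) to obtain an exact sequence
\[
\mathcal D \longrightarrow \mathcal W((\CC^*)^n, \stp_\Sigma) \longrightarrow \mathcal W((\CC^*)^n, \stp_{\Sigma \setminus \str(\sigma)}),
\]
in which the second arrow is the acceleration functor and $\mathcal D$ is the full subcategory generated by linking disks to the removed component $\mathfrak g = \bigcup_{\tau \in \str(\sigma)} \stp_\tau \setminus \bigcup_{\tau' \not\in \str(\sigma)}\stp_{\tau'}$.

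The central step is to identify $\mathcal D$ with the subcategory $D^b_{\check X_{\str(\sigma)/\sigma}} \Coh(\check X_\Sigma)$ of complexes with cohomology supported on the toric orbit closure $\check X_{\str(\sigma)/\sigma}$. By \Cref{property:linkingdisks} applied iteratively along a chain of divisor inclusions ending at $\tau \geq \sigma$, each linking disk to a stratum $\stp_\tau$ is Hamiltonian isotopic to a Lagrangian of the form $L_{\tau>0}$ constructed by iterated geometric composition with inclusion correspondences; by the same inductive argument that gives \Cref{prop:mirrordivisormatching}, these are mirror to the pushforwards $\mathcal O_{\tau>0}$, and the action of conical twisting Hamiltonians (\Cref{thm:wrappingaction}) realizes all twists $\mathcal O_{\tau>0}(\sF)$. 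An induction on $\dim \tau$ analogous to \Cref{claim:bgeneration}, using the exact triangles \eqref{eq:orbitrelation} (whose mirrors are the cobordisms of \Cref{property:aexactsequence}), then shows that these twisted pushforwards generate $D^b_{\check X_{\str(\sigma)/\sigma}} \Coh(\check X_\Sigma)$.

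With $\mathcal D$ identified, the comparison is completed by invoking \Cref{cor:HMScor} for $\check X_\Sigma$ together with the classical algebraic localization sequence
\[
D^b_{\check X_{\str(\sigma)/\sigma}}\Coh(\check X_\Sigma) \longrightarrow D^b\Coh(\check X_\Sigma) \xrightarrow{\check i^*} D^b\Coh(\check X_\Sigma \setminus \check X_{\str(\sigma)/\sigma}) = D^b\Coh(\check X_{\Sigma\setminus \str(\sigma)}).
\]
Since both sequences are exact (in the appropriate sense, after passing to twisted complexes/Karoubi completion) and agree on the first two terms and first arrow, the induced equivalence on quotients gives the desired derived equivalence between $\mathcal W((\CC^*)^n, \stp_{\Sigma\setminus\str(\sigma)})$ and $D^b\Coh(\check X_{\Sigma\setminus\str(\sigma)})$, under which the acceleration functor corresponds to $\check i^*$.

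The main obstacle is the generation/identification step for $\mathcal D$: one must verify that the class of Lagrangians built from linking disks to the $\stp_\tau$ with $\tau \in \str(\sigma)$, together with their images under conical twisting Hamiltonians, really exhausts $D^b_{\check X_{\str(\sigma)/\sigma}}\Coh(\check X_\Sigma)$ rather than just a dense subset. This reduces to the statement that the $\mathcal O_{\tau>0}(\sF)$ generate $D^b\Coh(\check X_{\str(\tau)/\tau})$ for each $\tau \geq \sigma$; when the relevant orbit closures are smooth and projective, this is precisely \Cref{cor:HMScor} applied one dimension lower, but one must be slightly careful if some $\check X_{\str(\tau)/\tau}$ fails to be projective, in which case the quasi-projective extension of \Cref{subsec:quotient} is needed, creating a mild but manageable circularity handled by inducting on $n$.
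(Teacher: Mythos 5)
Your overall strategy---stop removal sequence on the $A$-side, localization sequence on the $B$-side, match the kernel subcategories---is the same as the paper's. The genuine difference is in how the kernel subcategory $\mathcal D$ is identified with $D^b\Coh_{\check Z}(\check X)$ where $\check Z = \check X_{\str(\sigma)/\sigma}$. You propose to establish generation of $D^b\Coh_{\check Z}(\check X)$ directly by an induction mirroring \Cref{claim:bgeneration}, using the exact triangles \eqref{eq:orbitrelation} and the cobordisms from \Cref{property:aexactsequence}. The paper instead sidesteps the generation question entirely by citing \cite[Lemma 2.13]{arinkin2010perverse}, which gives the algebraic statement that every object of $D^b\Coh_{\check Z}(\check X)$ is built from pushforwards $\check i_*$ of objects on $\check Z$; combined with \Cref{cor:linkingdisk} (which shows the linking disks generating $\mathcal D$ all lie in the image of the single correspondence $L_{\sigma 0}$) and the lower-dimensional generation result \Cref{thm:ageneration} already in hand, this gives $\mathcal D = D^b\Coh_{\check Z}(\check X)$ without any new induction. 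The paper's route is cleaner and requires less bookkeeping, but yours is a legitimate alternative.

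Two corrections to your proposal. First, your circularity worry is unfounded: since $\check X_\Sigma$ is smooth and projective, every orbit closure $\check X_{\str(\tau)/\tau}$ for $\tau \in \Sigma$ is again smooth and \emph{projective} (it is a closed subvariety of a projective variety, and its fan $\str(\tau)/\tau$ is complete and smooth), so \Cref{cor:HMScor} applies directly one dimension down with no need for the quasi-projective extension. The quasi-projective case in \Cref{cor:quasiprojective} is a downstream \emph{consequence} of this proposition, not an input to it. Second, your final step asserting that two exact sequences agreeing on the first two terms and first arrow induce an equivalence on quotients needs the universal property of dg/$A_\infty$ quotients made precise; the paper handles this by citing \cite{rouquier2010derived} for the $B$-side Verdier sequence and \cite{oh2020infinity} for the compatibility of dg-Verdier quotients with $A_\infty$-quotients.
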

\begin{proof}
    Let $\mathcal D_{\str(\sigma)}$ be the full subcategory generated by linking disks of  $\mathfrak g$.
    We must identify $\check {\mathcal D}$, the mirror of $\mathcal D_{\str(\sigma)}$ in $D^b\Coh(\check X_\Sigma)$. 
    By \cref{cor:linkingdisk}, the generating set of $\mathcal D_{\str(\sigma)}$ is contained within the image of $L_{\sigma 0}: ((\CC^*)^{n-k}, \stp_{\str(\sigma)/\sigma})\Rightarrow ((\CC^*)^n, \stp_\Sigma)$.
    These linking disks can therefore be generated by Lagrangians of the form $L_{\sigma> 0}(\sF)$. 
    Similarly, the linking disks in $\mathcal D_{\str(\sigma)}$ are pushforwards of linking disks in $((\CC^*)^{n-k},\stp_{\str(\sigma)/\sigma})$, and therefore generate the Lagrangians $L_{\sigma>0}(\sF)$. In summary,
    \[\mathcal D_{\str(\sigma)} = \langle L_{\sigma> 0}(\sF) \rangle.\]
    For notational brevity, let $\check X=\check X_\Sigma$, and let $\check Z= \check X_{\str(\sigma)/\sigma} \subset \check X_\Sigma$.
    On the mirror, $\check i_*(D^b\Coh(\check Z))$ is generated by the sheaves $\mathcal O_{\sigma>0}(\sF)$, which are mirror to the Lagrangians $L_{\sigma> 0}(\sF)$.
    It follows that $\Ob(\check i_*(D^b\Coh(\check Z))=\Ob(\check{\mathcal D})$. 
    
    Let $D^b\Coh_{\check Z}(X)$ denote the Serre subcategory of sheaves whose cohomology sheaves are supported on $\check Z$.
    Then by \cite[Lemma 2.13]{arinkin2010perverse}, $\Ob( D^b\Coh_{\check Z}(\check X))\subseteq  \Ob(\check i_*(D^b\Coh(\check Z))$. 
    It is clear that the cohomology sheaf of any $\mathcal F\in \check{\mathcal D}$ will be supported in $\check Z$, so 
    $\Ob( D^b\Coh_{\check Z}(X))=  \Ob(\check i_*(D^b\Coh(\check Z))= \Ob(\check{\mathcal D})$, and in particular 
    \[D^b\Coh_{\check Z}(\check X)= \check{\mathcal D}.\]
    By \cite[Remark 3.14]{rouquier2010derived}, there exists an exact sequence of categories 
    \[D^b\Coh_{\check Z}(\check X)\to D^b\Coh(\check X)\to D^b\Coh(\check X\setminus \check Z),\]
    which is then mirror to 
    \[\mathcal D_{\str(\sigma)}\to \mathcal W(X, \mathfrak f_{\Sigma})\to \mathcal W(X, \mathfrak f_{\Sigma\setminus \str(\sigma)}) \]
    using the universal property of quotients of $A_\infty$ categories and that exact dg-Verdier sequences are also universal in the $\infty$-category of $A_\infty$ categories as shown in \cite{oh2020infinity}.
 \end{proof}
 By iterating this stop removal, we can extend \cref{cor:HMScor} to quasi-projective toric varieties. Note that the resulting equivalence still takes tropical Lagrangian sections to line bundles, but the tropical Lagrangian sections now must be wrapped to compute their morphisms.
 \begin{cor}
 \label{cor:quasiprojective}
    Let $\check X_\Sigma$ be a smooth quasi-projective toric variety.
    The toric variety $\check X_\Sigma$ is homologically mirror to $( X, \mathfrak f_\Sigma)$.
 \end{cor}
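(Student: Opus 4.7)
The plan is to reduce to the projective case handled by \Cref{cor:HMScor} via iterated stop removal. I would first invoke the classical fact that any smooth quasi-projective toric variety $\check X_\Sigma$ admits a smooth projective toric compactification: there exists a smooth projective fan $\tilde\Sigma$ in $N\otimes \RR$ containing $\Sigma$ as a subfan, such that $\check X_\Sigma$ is an open toric subvariety of $\check X_{\tilde\Sigma}$. (See e.g.\ Sumihiro's equivariant completion theorem together with toric resolution of singularities.) Writing $\tilde\Sigma\setminus\Sigma=\{\sigma_1,\dots,\sigma_N\}$, ordered so that $|\sigma_i|$ is non-increasing, we have
\[
\check X_\Sigma=\check X_{\tilde\Sigma}\setminus\bigcup_{i=1}^N V(\sigma_i),
\]
and at each stage $\Sigma_k:=\tilde\Sigma\setminus\{\sigma_1,\dots,\sigma_k\}$ is again a (possibly incomplete) fan whose corresponding toric variety is obtained from $\check X_{\Sigma_{k-1}}$ by removing the closed toric stratum corresponding to $\sigma_k$ (and, because of the ordering, this stratum is closed in $\check X_{\Sigma_{k-1}}$).

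Next, I would apply \Cref{cor:HMScor} to $\check X_{\tilde\Sigma}$ to get a quasi-equivalence
\[
\Tw\,\mathcal W((\CC^*)^n,\stp_{\tilde\Sigma})\simeq D^b_{\dg}\Coh(\check X_{\tilde\Sigma})
\]
identifying conical tropical Lagrangian sections $L^\delta(\sF)$ with line bundles $\mathcal O(\sF)$. Then I would inductively apply \Cref{prop:stopremovallocalization}: at each stage, removing the stop components $\stp_{\str(\sigma_k)}\setminus\bigcup_{\tau\neq\sigma_k}\stp_\tau$ is mirror to restriction to the open complement of $V(\sigma_k)$. After $N$ iterations, the $A$-side becomes $\mathcal W((\CC^*)^n,\stp_\Sigma)$ and the $B$-side becomes $D^b_{\dg}\Coh(\check X_\Sigma)$, and the induced equivalence still sends $L^\delta(\sF)$ (now wrapped more freely, as the stop is smaller) to the restriction $\mathcal O_{\check X_\Sigma}(\sF)$ because the acceleration functor and the restriction functor are compatible with the identifications at each step.

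The main technical point --- and the only step beyond a citation to known toric geometry --- is verifying that at each stage the hypotheses of \Cref{prop:stopremovallocalization} are met, namely that the relevant subset of $\stp_{\Sigma_{k-1}}$ removed at step $k$ is mostly Legendrian and that the subcategory it generates is precisely the subcategory of sheaves scheme-theoretically supported on $V(\sigma_k)\subset \check X_{\Sigma_{k-1}}$. The first is immediate because the FLTZ stop is a union of tori of complementary dimension, so removing an open union of top-dimensional strata of $\stp_{\Sigma_{k-1}}$ is mostly Legendrian. The second follows from \Cref{cor:linkingdisk} and the mirror matching of linking disks with structure sheaves of toric strata, exactly as in the proof of \Cref{prop:stopremovallocalization}: the ordering of $\sigma_1,\dots,\sigma_N$ by decreasing dimension ensures that at step $k$, $\sigma_k$ indexes a closed stratum of $\check X_{\Sigma_{k-1}}$, and the linking disks of the removed stop components correspond under the inductive equivalence to generators of $D^b\Coh_{V(\sigma_k)}(\check X_{\Sigma_{k-1}})$.

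I expect the main obstacle to be purely bookkeeping: ensuring that the ordering of cones we remove produces a sequence of legitimate ``stop removal'' steps (each removed set is mostly Legendrian and the mirror identification of the removed subcategory is exactly $D^b\Coh_{V(\sigma_k)}$ at the relevant stage), so that the exact sequence of \Cref{prop:stopremovallocalization} applies iteratively. Everything else is either a direct appeal to known toric geometry or an immediate consequence of the results already established earlier in the paper.
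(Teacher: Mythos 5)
Your proposal takes essentially the same approach as the paper: compactify $\check X_\Sigma$ to a smooth projective toric variety, apply \cref{cor:HMScor} to the compactification, and reduce to $\check X_\Sigma$ by iterating \cref{prop:stopremovallocalization}. The paper's proof is a one-sentence sketch; you supply the natural bookkeeping (ordering the cones of $\tilde\Sigma\setminus\Sigma$ by decreasing dimension so that at each step the removed set is the star of a single maximal cone of the current fan), which is exactly the sort of iteration the paper has in mind.
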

 
 \begin{rem} Unlike projective toric varieties, quasi-projective toric varieties can be Calabi-Yau, and thus, can have mirrors that are spaces. Moreover, homological mirror symmetry for these mirrors has been studied in some examples (see, for instance, \cite{chan2016lagrangian}). It would be interesting to geometrically relate the Fukaya categories of the Calabi-Yau mirrors with those of the stopped domains in \cref{cor:quasiprojective}. One reasonable approach may be to use the dual fan constructions of \cite{clarke2016dual}.
 \end{rem}
 
  \subsection{Blow-up functors}
\label{subsec:blowup}
We briefly discuss toric blow-up in order to fix notation.
Consider a smooth projective toric variety $\check X_\Sigma$ with $\Sigma\subset Q$.
Let $\tau=\langle \alpha_i\rangle_{i=1}^{n-k}$ be a codimension $k$ cone of $\Sigma$, so that $\check X_{\str(\tau)/\tau}$ is a $k$-dimensional compact subvariety of $\check X_\Sigma$.
Let $A_{\str(\tau)/\tau}=\{\beta_j/\tau\}_{j=1}^{m}$ be the one-dimensional cones of $\str(\tau)/\tau$, and to each $\beta_j/\tau$ let $\beta_j$ be the corresponding primitive generator of $\Sigma$.
As $\str(\tau)/\tau$ is complete, there are at least $k$ such cones.

The normal bundle $N_{\check X_\Sigma} \check X_{\str(\tau)/\tau}$ is a toric variety with fan $\str(\tau)\subset \Sigma$.
We now describe a splitting of $N_{\check X_\Sigma} \check X_{\str(\tau)/\tau}$  as the sum of toric line bundles $\bigoplus_{i=1}^{n-k}\mathcal O_{\str(\tau)/\tau}(\sF_i)$.
The support functions $\sF_i: Q/\tau \to \RR$ are constructed by choosing an identification $Q= \RR\langle \beta_j\rangle_{j=1}^k \times \RR\langle \alpha_i\rangle_{i=1}^{n-k}$.
Using the splitting, write each primitive lift $\beta$ of $\beta/\tau\in A_{\str(\tau)/\tau}$ as
\[\beta=\left(\sum_{j=1}^k b_j\beta_j\right)+\left(\sum_{i=1}^{n-k}a_i\alpha_i\right).\]
We define $\sF_i(\beta):= a_i$.
From this construction, we see that the total space $\bigoplus_{i=1}^{n-k} \mathcal O_{\str(\tau)/\tau}(\sF_i)$ is the toric variety  $\check X_{\str(\tau)}$.

There is a $\CC^*$ action on $\check X_{\str(\tau)}$ coming from the primitive element $\alpha:=\alpha_1+\ldots +\alpha_{n-k} \in \RR\langle \beta_j\rangle_{j=1}^k \times \RR\langle \alpha_i\rangle_{i=1}^{n-k}$.
We note that the toric strata fixed by this action are indexed by cones which contain $\alpha$.
These are exactly the cones dominating $\tau=\langle \alpha_1, \ldots, \alpha_{n-k}\rangle$, i.e., the cones of $\str(\tau)/\tau$.
It will be convenient for us to write 
\[\RR\langle \beta_j\rangle_{j=1}^k \times \RR\langle \alpha_{i}\rangle_{i=1}^{n-k}=(Q/\tau)\times (\RR\langle \alpha_i/\alpha\rangle_{i=2}^{n-k})\times \RR\langle\alpha\rangle\]
In these coordinates, the primitives $\beta/\tau\in A_{\str(\tau)/\tau}$ lift to primitives $\beta\in A_{\str(\tau)}$ as
\[
    \beta=\left(\sum_{j=1}^k b_j\beta_j, \sum_{i=2}^{n-k} \left(\sF_i(\beta/\tau)-\sF_1(\beta/\tau)\right) \alpha_i/\alpha, \sF_1(\beta/\tau)\alpha \right).
\]
These should be compared to the coordinates chosen in the top right and top middle diagrams of \cref{fig:3dfans}.
The quotient by the $(\CC^*)^\alpha$ action is the projectivization $\PP(N_{\check X_\Sigma} \check X_{\str(\tau)/\tau})= \check X_{\str(\tau)/\alpha}$.
The $\alpha$-coordinate gives a support function $\sF_\alpha: Q/\alpha\to \RR$, which on one-dimensional cones $\alpha_i/\alpha, \beta/\alpha\in A_{\str(\tau)/\alpha}$ takes values
\begin{align*}
    \sF_\alpha(\alpha_i/\alpha)=\left\{\begin{array}{cc} 0 & i\neq 1\\ 1 & i=1\end{array}\right. && \sF_\alpha(\beta)=\sF_1(\beta/\tau)
\end{align*}
The associated line bundle $\mathcal O_{\str(\tau)/\alpha}(\sF_\alpha)$ is the tautological bundle of the projectivization.
The total space of the associated line bundle is the blow-up of $N_{\check X_\Sigma} \check X_{\str(\tau)/\tau}$ at $\check X_{\str(\tau)/\tau}$, and has cones given by the graph of $\sF_\alpha$ 
\[\Bl_\alpha(\str(\tau)):\{(\sigma, \sF_\alpha(\sigma))\;: \; \sigma\in \str(\tau)/\alpha\}\cup \{\langle (\sigma',  \sF_\alpha(\sigma')),\alpha\rangle\;: \; \sigma'\in \str(\tau)/\alpha\}\]
The blow-up of $\Sigma$ at $\tau$ can also be described by the  $\alpha$-subdivision of the fan $\Sigma$,
\[\Bl_\alpha(\Sigma):= \{\sigma \in \Sigma \; : \; \alpha \not \in \sigma\} \cup \{\langle\sigma' , \alpha\rangle \; : \; \sigma'\in \Sigma, \alpha\not\in \sigma', \exists \sigma  \in \Sigma \text{ with } \langle\sigma' , \alpha \rangle \subset \sigma\}.\] 

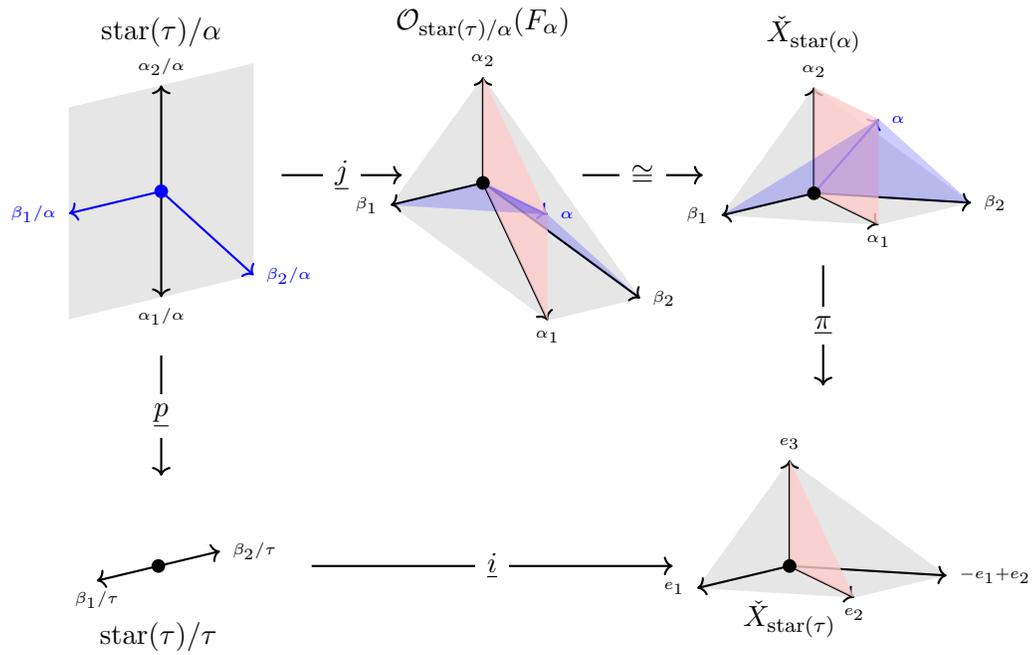
\begin{figure}
    \centering
\tdplotsetmaincoords{70}{145}

\begin{tikzpicture}[scale=.8]
    \node at (8,-5.5) {
\begin{tikzpicture}[tdplot_main_coords,scale=1.5]
    \draw[white] (2, 0, -1)--(-2, 0, -1)--(-2, 0, 1)--(2, 0, 1);
    \fill[fill=gray!20] (1, 0, 0)--(0,0,0)-- (0, 1, 0);
    \fill[fill=gray!20] (1, 0, 0)--(0,0,0)-- (0, 0, 1);
    \fill[fill=gray!20] (-1, 1, 0)--(0,0,0)-- (0, 0, 1);
    \fill[fill=gray!20] (-1, 1, 0)--(0,0,0)-- (0, 1, 0);

    \draw[thick,->] (0,0,0) -- (1,0,0) node [ left]{$\scriptscriptstyle e_1$} ;
    \draw[thick,->] (0,0,0) -- (0,1,0) node [ below]{$\scriptscriptstyle e_2$} ;
    \draw[thick,->] (0,0,0) -- (0,0,1) node [ above]{$\scriptscriptstyle e_3$} ;
    \draw[thick,->] (0,0,0) -- (-1,1,0)node [ right]{$\scriptscriptstyle -e_1+e_2$}  ;
    \fill[fill=red!20,fill opacity=.8] (0,1,0)-- (0,0,0)-- (0, 0, 1);
    \node at (0, 0, -.5) {$\check X_{\str(\tau)}$};
    \node[circle, fill=black, scale=.5] (0,0,0) {};
\end{tikzpicture}
    };

    \node at (8,1) {
\begin{tikzpicture}[tdplot_main_coords,scale=1.5]
    \draw[white] (2, 0, -1)--(-2, 0, -1)--(-2, 0, 1)--(2, 0, 1);
    \fill[fill=gray!20] (1, 0, 0)--(0,0,0)-- (0, 1, 0);
    \fill[fill=gray!20] (1, 0, 0)--(0,0,0)-- (0, 0, 1);
    \fill[fill=gray!20] (-1, 1, 0)--(0,0,0)-- (0, 0, 1);
    \fill[fill=gray!20] (-1, 1, 0)--(0,0,0)-- (0, 1, 0);

    \draw[thick,->] (0,0,0) -- (1,0,0) node [ left]{$\scriptscriptstyle \beta_1$} ;
    \draw[thick,->] (0,0,0) -- (0,1,0) node [ below]{$\scriptscriptstyle \alpha_1$} ;
    \draw[thick,->] (0,0,0) -- (0,0,1) node [ above]{$\scriptscriptstyle \alpha_2$} ;
    \draw[thick,->] (0,0,0) -- (-1,1,0)node [ right]{$\scriptscriptstyle \beta_2$} ; 
    \draw[thick,->,blue] (0,0,0) -- (0,1,1) node [ right]{$\scriptscriptstyle \alpha$} ;
    \fill[fill=red!20,fill opacity=.8] (0,1,0)-- (0,0,0)-- (0, 0, 1)--(0,1,1);
    \fill[fill=blue, fill opacity=.2] (-1,1,0)-- (0,0,0)--(0,1,1);
    \fill[fill=blue, fill opacity=.2] (1,0,0)-- (0,0,0)-- (0,1,1);
    \node at (0, 0, 1.5) {$\check X_{\str(\alpha)}$};
    \node[circle, fill=black, scale=.5] (0,0,0) {};
\end{tikzpicture}
    };

    \node at (2.5,1) {
\begin{tikzpicture}[tdplot_main_coords, scale=1.5]
    \draw[white] (2, 0, -1)--(-2, 0, -1)--(-2, 0, 1)--(2, 0, 1);
    \fill[fill=gray!20] (1, 0, 0)--(0,0,0)-- (0, 0, 1);
    \fill[fill=gray!20] (1, 0, 0)--(0,0,0)-- (0,1,-1);
    \fill[fill=gray!20] (-1,1,-1) --(0,0,0)-- (0,0,1);
    \fill[fill=gray!20] (-1,1,-1) --(0,0,0)-- (0,1,-1);

    \draw[thick,->] (0,0,0) -- (1,0,0) node [left ]{$\scriptscriptstyle \beta_1 $} ;
    \draw[thick,->] (0,0,0) -- (0,1,-1)node [below]{$\scriptscriptstyle  \alpha_1$} ;
    \draw[thick,->] (0,0,0) -- (0,0,1) node [above]{$\scriptscriptstyle  \alpha_2$} ;
    \draw[thick,->] (0,0,0) -- (-1,1,-1)node[right]{$\scriptscriptstyle \beta_2$} ; 
    \draw[thick,->,blue] (0,0,0) -- (0,1,0)  node[right]{$\scriptscriptstyle \alpha$} ; 
    \fill[fill=red!20,fill opacity=.8] (0,0,1)-- (0,0,0)-- (0,1,-1)--(0,1,0);
    \fill[fill=blue, fill opacity=.2] (-1,1,-1) -- (0,0,0)--(0,1,0);
    \fill[fill=blue, fill opacity=.2] (1,0,0)-- (0,0,0)-- (0,1,0);
    \node at (0, 0, 1.5) {$\mathcal O_{\str(\tau)/\alpha}(\sF_\alpha)$};
    \node[circle, fill=black, scale=.5] (0,0,0) {};
\end{tikzpicture}
    };

    \node at (-3,1) {
\begin{tikzpicture}[tdplot_main_coords,scale=1.5]
    \draw[white] (2, 0, -1)--(-2, 0, -1)--(-2, 0, 1)--(2, 0, 1);
    \fill[fill=gray!20] (1, 0, -1)--(1, 0, 1)--(-1, 0, 1)--(-1, 0,-1);
    \draw[thick,->,blue] (0,0,0) -- (1,0,0)  node [ left]{$\scriptscriptstyle \beta_1/\alpha$} ;
    \draw[thick,->] (0,0,0) -- (0,0,1)  node [ above]{$\scriptscriptstyle \alpha_2/\alpha$} ;
    \draw[thick,->] (0,0,0) -- (0,0,-1) node [ below]{$\scriptscriptstyle \alpha_1/\alpha$}  ;
    \draw[thick,->,blue] (0,0,0) -- (-1,0,-1)node [ right]{$\scriptscriptstyle \beta_2/\alpha$}  ;
    \node at (0, 0, 1.5) {$\str(\tau)/\alpha$};
    \node[circle, fill=black, scale=.5,blue] (0,0,0) {};
\end{tikzpicture}
    };

    \node at (-3,-5.5) {
        \begin{tikzpicture}[tdplot_main_coords]
            \draw[white] (2, 0, -1)--(-2, 0, -1)--(-2, 0, 1)--(2, 0, 1);
            \draw[thick,->] (0,0,0) -- (1,0,0) node [below]{$\scriptscriptstyle \beta_1/\tau$} ;
            \draw[thick,->] (0,0,0) -- (-1,0,0) node [ right]{$\scriptscriptstyle \beta_2/\tau$} ;
            \node at (0, 0, -1) {$\str(\tau)/\tau$};
            \node[circle, fill=black, scale=.5] (0,0,0) {};
        \end{tikzpicture}
            };

            \draw[->, thick] (4,1)--(6,1)   node [midway, fill=white]{$\cong$} ;
            \draw[->, thick] (-1,1)--(1,1)   node [midway, fill=white]{$\underline j$} ;
            \draw[->, thick] (-0.5,-5.5)--(5.5,-5.5) node [midway, fill=white]{$\underline i$} ;
            \draw[->, thick] (-3,-2)--(-3,-4) node [midway, fill=white]{$\underline p$} ;
            \draw[->, thick] (8,-.5)--(8,-2.5) node [midway, fill=white]{$\underline \pi$} ;
\end{tikzpicture} \caption{Blow-up of the curve $\tau$, highlighted in red in the bottom right. The cones of $T$ and $T/\alpha$ are marked in blue. In this example, all of the cones of $T$ have fiber codimension 0.}
\label{fig:3dfans}
\end{figure}

 We'll be interested in the symplectic and complex lifts of this blow-up of fans.
\setlength\mathsurround{0pt}
\[
    \begin{tikzcd} 
        D^b\Coh(\check X_{\str(\alpha)/\alpha} )\arrow{r}{\check j_*}   & D^b\Coh(\check X_{\Bl_\alpha(\Sigma)}) \\
        D^b\Coh(\check X_{\str(\tau)/\tau}) \arrow{u}{\check p^*}\arrow{r}{\check i_*}& D^b\Coh(\check X_\Sigma)\arrow{u}{\check\pi^*})
    \end{tikzcd}
    \begin{tikzcd} 
        ((\CC^*)^{n-1},\stp_{\str(\alpha)/\alpha}) \arrow{r}{\Lc_{\alpha 0}}   & ((\CC^*)^{n-1},\stp_{\Bl_\alpha(\Sigma}) \\
        ((\CC^*)^{k},\stp_{\str(\tau)/\tau}))\arrow{u}{\Lc_{p^*}} \arrow{r}{\Lc_{\tau>0}}& ((\CC^*)^n, \stp_\Sigma) \arrow{u}{\Lc_{\pi^*}}.
    \end{tikzcd}
\] 
\setlength\mathsurround{.8pt}
On the $B$-side, Orlov's blow-up formula gives a semi-orthogonal decomposition for the derived category of the blow-up.
\begin{thm}[\cite{orlov1993projective}]\label{thm:orlov} Let $\check X_{\Bl_\alpha(\Sigma)}\to \check X_\Sigma$ be the blow-up at a smooth center $\check Z := \check X_{\str(\tau)/\tau}$ of dimension $k$.
    The derived functors 
    \begin{align*}
        \Phi_{-k}:=\mathcal O_{\check X_{\str(\alpha)/\alpha}} (k\sF_\alpha)\tensor \check j_*\check p^* :&  D^b\Coh(\check Z)\to D^b\Coh(\check X_{\Bl_\alpha(\Sigma)})\\
        \check \pi^*:& D^b\Coh(\check X_\Sigma)\to D^b\Coh(\check X_{\Bl_\alpha(\Sigma)})
    \end{align*}
     are fully faithful. 
    Furthermore, $D^b\Coh(\check X_{\Bl_\alpha(\Sigma)})$ has semi-orthogonal decomposition
    \begin{align*}
        D^b\Coh&(\check X_{\Bl_\alpha(\Sigma)}) 
        =\left\langle\Phi_{-(n-k)} D^b\Coh(\check Z) ,\cdots, \Phi_{-1} D^b\Coh(\check Z), \check \pi^* D^b\Coh(\check X_\Sigma)\right\rangle^\bot.
    \end{align*}
\end{thm}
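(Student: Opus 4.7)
The plan is to follow Orlov's original strategy for the blow-up formula, which proceeds in three stages: fully faithfulness of each embedding, semi-orthogonality between the listed subcategories, and generation. The underlying geometric input is that the exceptional divisor $E = \check X_{\str(\alpha)/\alpha}$ is a $\PP^{n-k-1}$-bundle $\check p\colon E \to \check Z$ over $\check Z = \check X_{\str(\tau)/\tau}$, with $\check j^*\mathcal{O}_{\Bl}(-E) = \mathcal{O}_E(1)$ (in our conventions, coming from the tautological bundle via $\sF_\alpha$). All subsequent computations reduce to combining this with the projection formula, base change, and Beilinson's semi-orthogonal decomposition for projective space.

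First, fully faithfulness of $\check\pi^*$ follows from $R\check\pi_* \mathcal{O}_{\Bl} = \mathcal{O}_{\check X_\Sigma}$ (a standard property of blow-ups of smooth centers inside smooth varieties) and the projection formula: $R\check\pi_*\check\pi^* F \simeq F \otimes^L R\check\pi_* \mathcal{O}_{\Bl} \simeq F$, so the unit of the adjunction is an isomorphism. For $\Phi_{-i}$, I would show fully faithfulness by computing $R\check\pi_*(\Phi_{-i}(F)^\vee \otimes^L \Phi_{-i}(G))$ using base change along $\check i$ and $\check j$, reducing to a computation on $E$ of the form $R\check p_*(\mathcal{O}_E(0)\otimes \check p^*(F^\vee \otimes G)) \simeq F^\vee \otimes G$, which uses $R\check p_*\mathcal{O}_E = \mathcal{O}_{\check Z}$.

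Second, semi-orthogonality is the technical heart. The required vanishings all take the form $\Hom(\Phi_{-i} F, \Phi_{-j} G) = 0$ for $i>j$ (with both in the prescribed range), and $\Hom(\Phi_{-i} F, \check\pi^* G) = 0$. Using adjunction, these reduce via base change to vanishings on $E$ of $R\check p_*(\mathcal{O}_E(j-i) \otimes \check p^* H)$ and $R\check p_*(\mathcal{O}_E(-i)\otimes \check p^* H)$ respectively. Since $j-i$ and $-i$ lie in the range $\{-(n-k-1),\ldots,-1\}$ where $Rp_*\mathcal{O}(m) = 0$ for the projective bundle, Beilinson's computation gives exactly the required vanishings.

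Third, generation: one shows the listed subcategories span by exhibiting, for every object, a resolution whose terms lie in these subcategories. The cleanest route is to resolve the structure sheaf of the diagonal $\Delta_{\Bl} \subset \Bl \times \Bl$ by objects pulled back via the two projections from the building blocks, then convolve with an arbitrary object. Here one uses Beilinson's resolution of $\mathcal{O}_\Delta$ on the $\PP^{n-k-1}$-bundle $E\times_{\check Z} E$ twisted by powers of $\mathcal{O}_E(1)$, together with the inclusion $E\hookrightarrow \Bl$ and the projection formula. The hard part of the argument, and the step I expect to require the most bookkeeping, is precisely this resolution of the diagonal: tracking the correct twist indices $-1,\ldots,-(n-k)$ that appear in the definition of $\Phi_{-i}$, and verifying that no objects outside the listed range are needed, requires a careful Koszul-type argument for the conormal sequence of $E \subset \Bl$ combined with the Euler sequence on the fibers of $\check p$.
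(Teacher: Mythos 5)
This is a cited background theorem: the paper attributes it to Orlov (\cite{orlov1993projective}) and does not supply a proof, so there is no argument in the paper to compare against. Your sketch is a reasonable reconstruction of the standard proof strategy for Orlov's blow-up formula. The one place you depart from Orlov's original treatment is in the generation step: Orlov does not resolve the diagonal on $\check X_{\Bl_\alpha(\Sigma)} \times \check X_{\Bl_\alpha(\Sigma)}$, but instead argues directly that the right orthogonal of the listed subcategories is zero, using the adjunction counits and the Koszul/Euler data on the exceptional divisor to show that any object orthogonal to $\check\pi^* D^b\Coh(\check X_\Sigma)$ and to all $\Phi_{-i} D^b\Coh(\check Z)$ restricts trivially to $E$ and has vanishing pushforward, hence vanishes. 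The diagonal-resolution route you propose is the approach preferred in kernel-level refinements (e.g.\ Huybrechts' book) and does give the same result, at the cost of more bookkeeping on the fiber product $E \times_{\check Z} E$ exactly as you anticipate; the orthogonal-complement route is shorter and is what the cited reference actually does. One small point worth double-checking in your write-up: in the paper's conventions $\sF_\alpha$ gives the tautological bundle $\mathcal O_E(-1)$ rather than $\mathcal O_E(1)$ (note $D_{\sF_\alpha} = -D_\alpha$ in their sign convention), so the sign in $\check j^*\mathcal O_{\Bl}(-E)$ should be tracked against that; the range of twists $-1,\ldots,-(n-k)$ in the semi-orthogonal decomposition then comes out consistent, but a sign error here would silently break the vanishing range $R\check p_* \mathcal O_E(m) = 0$ you invoke.
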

We will now work towards an the analogous result on the mirror side.
We begin by examining the FLTZ skeleton of the blow-up.
Since $\Bl_{\alpha}(\Sigma)$ is obtained from $\Sigma$ by removing the cones $\sigma\geq \tau$ and adding in cones of the form $\langle\sigma,\alpha\rangle$, we can cover the FLTZ stop by   
\[\stp_{\Bl_{\alpha}(\Sigma)}=\left( \stp_\Sigma\setminus \bigcup_{\sigma\geq\tau}\stp_{\tau}\right)\cup \left(\bigcup_{\sigma\in \Bl_{\alpha(\Sigma)}, \sigma\geq \alpha} \stp_{\sigma}\right).\]
For each $\sigma>\tau$  spanned by $\sigma=\langle \alpha_1, \ldots, \alpha_{n-k}, \beta_{j_1}, \ldots, \beta_{j_l}\rangle $, let 
\begin{align*}
    \sigma_{\alpha_i, \alpha}:= \langle\alpha, \alpha_1, \ldots, \alpha_{i-1}, \alpha_{i+1}, \ldots, \alpha_{n-k}, \beta_{j_1}, \ldots, \beta_{j_l}\rangle\in \str(\alpha)
\end{align*}
The set of all such cones can be characterized as 
\[\{\sigma_{\alpha_i, \alpha}\} = \{\sigma' \in \str(\alpha) \; : \;  \sigma'\subset \sigma''\in \Sigma, |\sigma'|=|\sigma''|\}.\]
Partition the cones of $\Bl_\alpha(\Sigma)$ as 
\[\Bl_\alpha(\Sigma)=\{\sigma \in \Sigma \; : \; \alpha \not \in \sigma\} \sqcup  \{\sigma' \in \str(\alpha) \; : \;  \sigma'\subset \sigma''\in \Sigma, |\sigma'|=|\sigma''|\}  \sqcup T\]
The cones of $T$ are drawn in blue in \cref{fig:3dfans}, and are in bijection with the cones of $\str(\tau)/\tau$.
We obtain a decomposition of the FLTZ skeleton for the blowup as sets,
\begin{equation}
    \stp_{\Bl_{\alpha}(\Sigma)}=\stp_\Sigma\cup \left( \bigcup_{\sigma\in T} \stp_\sigma\right).
    \label{eq:notadecomp}
\end{equation}
From this, we conclude that $\stp_\Sigma\subset\stp_{\Bl_{\alpha}(\Sigma)}$ and deduce that the blow-down is given by stop removal. In particular, we obtain the mirror statement to part of \cref{thm:orlov} as the adjoint inclusion functor is easily defined on tropical Lagrangian sections and does not affect their Floer theory.
\begin{prop}
    The Lagrangian correspondence $(\Lc_{\pi}^*)^{-1}:((\CC^*)^n, \stp_{\Bl_\alpha(\Sigma)})\Rightarrow((\CC^*)^n, \stp_{\Sigma})$ is a stop removal. 
    The adjoint $\Lc_{\pi}^*:((\CC^*)^n,\stp_{\Sigma})\to ((\CC^*)^n,\stp_{\Bl_{\alpha}}(\Sigma))$ induces a fully faithful functor. 
\end{prop}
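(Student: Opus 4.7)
My plan is to exploit the fact that, because the blow-down $\check\pi:\check X_{\Bl_\alpha(\Sigma)}\to\check X_\Sigma$ restricts to the identity on the open algebraic torus, the associated map of cocharacter lattices $\underline\pi:Q\to Q$ is the identity. Therefore, by \cref{def:toricmorphismcorrespondence}, the correspondence $\Lc_{\pi^*}$ is a small wrapping (by the Hamiltonian $-\eps K_{12}$) of the conormal to the diagonal $\Delta_Q\subset Q\times Q$. Geometrically, it is essentially the diagonal correspondence, with the wrapping chosen precisely to disjoin it from the product stop $\stp_\Sigma\times\stp_{\Bl_\alpha(\Sigma)}$. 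Its inverse $(\Lc_{\pi^*})^{-1}$ therefore implements on underlying Lagrangians the map that views an admissible Lagrangian in $(X,\stp_{\Bl_\alpha(\Sigma)})$ as an admissible Lagrangian in $(X,\stp_\Sigma)$, i.e., precisely the acceleration functor.

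For the identification as stop removal, I would use the inclusion $\stp_\Sigma\subset\stp_{\Bl_\alpha(\Sigma)}$ recorded in \cref{eq:notadecomp}, together with the observation that the difference $\stp_{\Bl_\alpha(\Sigma)}\setminus\stp_\Sigma\subset\bigcup_{\sigma\in T}\stp_\sigma$ is a union of FLTZ components (each a smooth $\sigma^\perp$-torus bundle over a sphere stratum), hence mostly Legendrian. This fits the hypotheses of the stop removal theorem from \cite{ganatra2018sectorial}, so $(\Lc_{\pi^*})^{-1}$ realizes the stop removal, and we obtain an exact sequence
\[\mathcal{D}_T\to \mathcal W(X,\stp_{\Bl_\alpha(\Sigma)})\xrightarrow{(\Lc_{\pi^*})^{-1}}\mathcal W(X,\stp_\Sigma),\]
where $\mathcal D_T$ is generated by the linking disks to $\stp_\sigma$ for $\sigma\in T$.

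To deduce that $\Lc_{\pi^*}$ is fully faithful, I would appeal to the general fact that the right adjoint to a Verdier quotient is fully faithful, identifying $\Lc_{\pi^*}$ with that adjoint. The cleanest way to make this identification rigorous (sidestepping the general machinery of Lagrangian correspondences between stopped domains) is to verify full faithfulness directly on a generating set. By \cref{thm:ageneration}, $\mathcal W(X,\stp_\Sigma)$ is generated by conical tropical Lagrangian sections $L_\Sigma(\sF)$. Because $\underline\pi=\id$, \cref{property:intertwining} gives $\Lc_{\pi^*}\circ L_\Sigma(\sF)\simeq L_{\Bl_\alpha(\Sigma)}(\sF)$, and under the mirror equivalences of \cref{cor:HMScor,cor:quasiprojective} this matches $\check\pi^*\mathcal O_\Sigma(\sF)=\mathcal O_{\Bl_\alpha(\Sigma)}(\sF)$. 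Since $\check\pi^*$ is fully faithful by Orlov's blow-up formula (\cref{thm:orlov}), full faithfulness of $\Lc_{\pi^*}$ on morphisms between the generating tropical sections follows from the compatibility of the HMS equivalences with $\Lc_{\pi^*}$ and $\check\pi^*$.

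The main obstacle I expect is the last step, namely promoting the matching of $\Lc_{\pi^*}$ with $\check\pi^*$ on objects to an $A_\infty$-level matching of morphism complexes, since the general functoriality of Lagrangian correspondences in the partially wrapped setting is not yet fully established. I would handle this by computing $\Hom_{\mathcal W(X,\stp_{\Bl_\alpha(\Sigma)})}(L_{\Bl_\alpha(\Sigma)}(\sF_1),L_{\Bl_\alpha(\Sigma)}(\sF_2))$ directly via a cofinal wrapping supported inside $\stp_{\Bl_\alpha(\Sigma)}\setminus(\bigcup_{\sigma\in T}\stp_\sigma)$ (possible since both sections avoid those extra stop components up to a small perturbation), comparing it to $\Hom_{\mathcal W(X,\stp_\Sigma)}(L_\Sigma(\sF_1),L_\Sigma(\sF_2))$ plus the contributions coming from the linking disks in $\mathcal D_T$, and showing the latter are absorbed exactly as predicted by Orlov's semiorthogonal decomposition.
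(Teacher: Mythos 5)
Your core strategy matches the paper's, which is given only in the sentence preceding the proposition: observe $\stp_\Sigma\subset\stp_{\Bl_\alpha(\Sigma)}$ with mostly Legendrian difference (via \cref{eq:notadecomp}), apply the GPS stop removal theorem, and check full faithfulness of the adjoint inclusion directly on the generating set of conical tropical Lagrangian sections. Your identification of $\Lc_{\pi^*}$ as a small wrapping of the diagonal (since $\underline\pi=\id$) is correct and consistent with \cref{def:toricmorphismcorrespondence}. However, two of your alternative justifications for full faithfulness are problematic. The appeal to Orlov's blow-up formula (\cref{thm:orlov}) via \cref{cor:HMScor,cor:quasiprojective} is circular in spirit: the whole point of this proposition is to furnish the $A$-side, Floer-theoretic statement mirror to that part of Orlov's theorem, so one should not derive it from the $B$-side fact; moreover, as you yourself note, making that diagram commute requires knowing how $\Lc_{\pi^*}$ acts on morphism complexes, which is the thing to be proved. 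The appeal to the abstract fact that the right adjoint to a Verdier quotient is fully faithful requires first establishing that $\Lc_{\pi^*}$ \emph{is} such an adjoint, which you do not address. The intended argument, hinted at in your ``direct computation'' step but phrased incorrectly, is that a conical tropical section $L^\delta(\sF)$ for $\sF\in\Supp(\Sigma)$ is already admissible for the larger stop $\stp_{\Bl_\alpha(\Sigma)}$ (its cones refine those of $\Sigma$), and the wrapping $L^{\delta(t)}(\sF)$ with $\delta(t)\to 0$ from \cref{lemma:cofinal} is simultaneously cofinal in both $\mathcal W(X,\stp_\Sigma)$ and $\mathcal W(X,\stp_{\Bl_\alpha(\Sigma)})$ because its boundary converges to the larger stop everywhere; it is not a wrapping ``supported inside $\stp_{\Bl_\alpha(\Sigma)}\setminus\bigcup_{\sigma\in T}\stp_\sigma$,'' which is not coherent since a cofinal wrapping must converge to the entire stop, not avoid part of it. Once one has simultaneous cofinality, the intersection points and disk counts all occur in a fixed compact set and are identical in the two settings, so morphism complexes agree, giving full faithfulness on tropical sections and hence on the whole category by \cref{thm:ageneration}.
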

Define
\begin{align*}
    \mathfrak g:=& \stp_{\Bl_{\alpha}(\Sigma)}\setminus \stp_\Sigma,
\end{align*}
and let $\LL_{\mathfrak g}$ be the set of points which flow to the set $\mathfrak g$ under the Liouville flow. 
\Cref{eq:notadecomp} does not describe the set $\mathfrak g$.
Rather, for each $\sigma\in T$, let $\Conn(\Sigma, \sigma):=\{\II_{\sigma;i}\}$ index the connected components of $\LL_\sigma$ in the complement of the singular locus of the FLTZ skeleton. We may then write
\[\LL_{\mathfrak g}=\bigcup_{\sigma\in T, i\in\Conn(\Sigma, \sigma) } \II_{\sigma, i}.\]
To understand the localization $\mathcal W(X, \stp_{\Bl_{\alpha}(\Sigma)})\to \mathcal W(X, \stp_\Sigma)$, we must understand the linking disks of the $ \II_{\sigma, i}$.\footnote{We apologize for conflating stops and skeleton components when describing these linking disks; while we are performing a localization at the linking disks of the stop, our argument will describe these linking disks as cocores of the FLTZ skeleton associated to a lower dimensional toric variety.}
We first need a general lemma for stop removal:
\begin{lem}
    Let $\mathfrak f$ be a stop. Let $\Lambda$ be a connected Legendrian submanifold whose interior is disjoint from $\mathfrak f$. Suppose that $\partial \Lambda \not \subset \mathfrak f$. Then 
    $\mathcal W (X,   \mathfrak f \cup \Lambda) \simeq \mathcal W (X,   \mathfrak f ).$
    \label{lem:trivialLocalization}
\end{lem}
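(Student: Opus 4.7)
The strategy is to invoke the stop removal long exact sequence of Ganatra--Pardon--Shende (as recalled at the start of \cref{subsec:quotient}): the acceleration functor
\[
\mathcal W(X, \mathfrak f \cup \Lambda) \longrightarrow \mathcal W(X, \mathfrak f)
\]
is the quotient by the full subcategory generated by the linking disks of $\Lambda$ (in the partially wrapped category with stop $\mathfrak f \cup \Lambda$). It therefore suffices to show that this subcategory is trivial, i.e.\ that every linking disk of $\Lambda$ is a zero object in $\mathcal W(X, \mathfrak f \cup \Lambda)$. Since $\Lambda$ is connected and the linking disk class depends only on the connected component of the stop, all linking disks of $\Lambda$ represent the same object, so it suffices to produce one linking disk that vanishes.

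The key geometric input is the hypothesis $\partial \Lambda \not\subset \mathfrak f$, which lets us find a point $x_0 \in \partial\Lambda$ and an open neighborhood $U \subset \partial_\infty X^{int}$ of $x_0$ disjoint from $\mathfrak f$. Pick an interior point $x \in \Lambda \setminus \mathfrak f$ and a linking disk $D_x$ for the stratum of $\Lambda$ containing $x$. The plan is to build a positive wrapping of $D_x$ inside $X \setminus (\mathfrak f \cup \Lambda)$ whose endpoint escapes through the neighborhood $U$. Concretely, since $\Lambda$ is a connected Legendrian in the contact boundary, one can find a smooth path in $\Lambda$ from $x$ to a point on $\partial \Lambda$ lying in $U$, and push this path into the symplectization using a Hamiltonian that is positive on the Reeb direction transverse to $\Lambda$. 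This produces a family of admissible Lagrangian disks parameterized by $t \in [0,\infty)$, each meeting $\Lambda$ transversely at a single point moving along $\Lambda$ toward $\partial\Lambda$, and after time $T$ the intersection point exits $\Lambda$ through its boundary into $U \setminus (\mathfrak f \cup \Lambda)$. For $t > T$ the wrapped disk is disjoint from both $\Lambda$ and $\mathfrak f$ and is moved purely by a Reeb-positive flow off to infinity, giving a cofinal wrapping of $D_x$ in the stopped Liouville manifold $(X, \mathfrak f \cup \Lambda)$.

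Once we have this cofinal positive wrapping, the computation of self-Floer cohomology proceeds by the usual Hamiltonian-at-infinity calculation: the continuation cone of $D_x$ along this wrapping has no intersection points, because after time $T$ the wrapped Lagrangian is disjoint from the original $D_x$ (both in the interior where $D_x$ is small, and at infinity by the escape property) and no new intersections are created. Hence $HW^\bullet(D_x, D_x) = 0$ in $\mathcal W(X, \mathfrak f \cup \Lambda)$, so $D_x$ is a zero object, as desired. By the stop removal exact sequence the quotient functor is then a quasi-equivalence.

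The main obstacle is the technical verification that the sliding family of disks can be made into an admissible positive isotopy disjoint from $\mathfrak f \cup \Lambda$ throughout: one needs to arrange that while the intersection point with $\Lambda$ travels toward $\partial\Lambda$ it never crosses $\mathfrak f$ (possible because $\Lambda \setminus \mathfrak f$ is open in $\Lambda$ and path-connected along a path to $\partial\Lambda \cap U$, using connectedness of $\Lambda$ and the openness hypothesis on $U$), and that the Hamiltonian generating the sliding is genuinely positive at infinity so that the wrapping is cofinal in the sense of \cite[Lemma~2.1]{ganatra2018sectorial}. Both points are handled by standard contact-geometric bump function constructions.
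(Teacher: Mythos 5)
Your proposal is correct and relies on the same geometric observation as the paper: slide a linking disk of $\Lambda$ along a path in the interior of $\Lambda$ toward a boundary point $y \in \partial\Lambda \setminus \mathfrak{f}$, where it escapes. The only difference in packaging is that the paper detects the vanishing of the resulting linking disk by exhibiting it as Lagrangian nullcobordant (via \cref{thm:cobordismgeneration}), whereas you compute $HW^\bullet(D_x,D_x)=0$ directly from a cofinal positive wrapping that passes off to infinity through the neighborhood $U$; both are standard and correct routes to the same conclusion.
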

\begin{proof}
    Since $  \mathcal W (X,   \mathfrak f)$ is obtained from $\mathcal W(X , \mathfrak f\cup \Lambda)$ by stop removal, we need only prove that the linking disk to $x\in \Lambda$ is a trivial object of $\mathcal W(X, \mathfrak f\cup \Lambda)$.
    Let $y$ be any point in the boundary of $\Lambda$ which is not contained in $\mathfrak f$. 
    Consider a path $\gamma: I\to \Lambda$ from $x$ to $y$, and linking disks $\cocore(\gamma(t))$.
    Then, $\cocore(\gamma(1))$ is Lagrangian nullcobordant and Hamiltonian isotopic to $\cocore(x)$. 
\end{proof}

Each cone $\sigma\in T$ corresponds to a toric fixed stratum with both base and fiber components.
Define the \emph{fiber codimension} of a cone to be $|\sigma/\alpha|-|\underline p(\sigma/\alpha)|$; alternatively, this is the number of $\alpha_i$ in the primitive generators of $\sigma$. See \cref{fig:fibercodimension}. 

\begin{figure}
    \centering
    \tdplotsetmaincoords{70}{127}

\begin{tikzpicture}

    \node at (0,0) {
        \begin{tikzpicture}[tdplot_main_coords,scale=1.5]
            \fill[fill=gray!20] (1, 0, 0)--(0,0,0)-- (0, 1, 0);
            \fill[fill=gray!20] (1, 0, 0)--(0,0,0)-- (0, 0, 1);
            \fill[fill=gray!20] (0, 1, 0)--(0,0,0)-- (0, 0, 1);
        
            \draw[thick,->] (0,0,0) -- (1,0,0) node [ left] {$\scriptscriptstyle \alpha_1$} ;
            \draw[thick,->] (0,0,0) -- (0,1,0) node [ below]{$\scriptscriptstyle \alpha_2$} ;
            \draw[thick,->] (0,0,0) -- (0,0,1) node [ above]{$\scriptscriptstyle \alpha_3$} ;
            \draw[thick,->,blue] (0,0,0) -- (1,1,1)node [ right]{$\scriptscriptstyle \alpha$} ; 
            \fill[fill=blue, fill opacity=.2] (1,1,1)-- (1,0,0)-- (0, 0, 0);
            \fill[fill=blue, fill opacity=.2] (1,1,1)-- (0,1,0)-- (0, 0, 0);
            \fill[fill=blue, fill opacity=.2] (1,1,1)-- (0,0,1)-- (0, 0, 0);
            \node[circle, fill=black, scale=.5] (0,0,0) {};
        \end{tikzpicture}};

    \begin{scope}[shift={(-5,0)}]
    
    \fill[gray!20]  (-1,1) rectangle (1,-1);
    \draw[blue, ->] (0,0) -- (0,1) node [above] {$\alpha_2/\alpha$};
    \draw[blue, ->] (0,0) -- (1,0)node [right] {$\alpha_3/\alpha$};
    \draw[blue, ->] (0,0) -- (-1,-1)node [below left] {$\alpha_1/\alpha$};
    \node[fill=blue, circle, scale=.3] at (0,0) {};
    \end{scope}
    
    \node[fill, circle, scale=.3] at (-8.5,0) {};
    
    \node[above] at (-8.5,0) {$\alpha/\alpha$};
    \draw[->] (-6.5,0) -- (-7.5,0) node[midway, fill=white] {$\underline p$} ;
    \draw[->] (-2.5,0) -- (-1,0) node[midway, fill=white] {$\underline j$} ;
    \node[fill=gray!20] at (-5.5,-0.5) {$\scriptscriptstyle 1$};
    \node[fill=gray!20] at (-5,0.5) {$\scriptscriptstyle 1$};
    \node[fill=gray!20] at (-4.5,0) {$\scriptscriptstyle 1$};
    \end{tikzpicture}     \caption{Blowup of $\CC^3$ at the origin, with elements of $T$ highlighted in blue. The cone $\alpha$ has fiber codimension 0, while the cones $\langle \alpha, \alpha_i\rangle$ all have fiber codimension 1.}
    \label{fig:fibercodimension}
\end{figure}
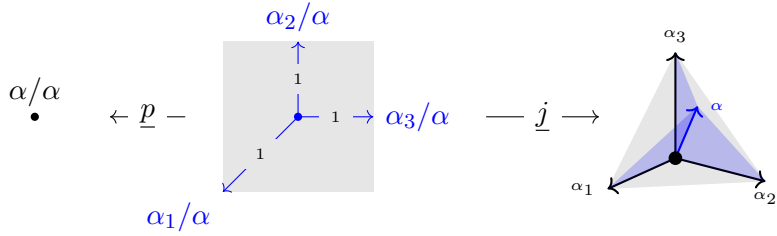
\begin{prop} 
    \label{prop:blowupstrata}
    Let $\mathfrak g^l$ be the collection of $\II_{\sigma, i}$ with $\sigma\in T$ and fiber codimension at most $l$.
    For all $0\leq l \leq n-k-2$, we have an equivalence of categories. 
    \[\mathcal W((\CC^*)^{n-1}, \mathfrak f_{\str(\alpha)/\alpha}\setminus \mathfrak g^0 )\cong\mathcal W((\CC^*)^{n-1}, \mathfrak f_{\str(\alpha)/\alpha}\setminus \mathfrak g^l).\]
\end{prop}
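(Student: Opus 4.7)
The plan is to induct on $l$, applying Lemma~\ref{lem:trivialLocalization} at each step. The base case $l=0$ is tautological; for the inductive step, it suffices to show that enlarging the stop from $\mathfrak f_{\str(\alpha)/\alpha}\setminus \mathfrak g^l$ to $\mathfrak f_{\str(\alpha)/\alpha}\setminus \mathfrak g^{l-1}$ by restoring the fiber codimension-$l$ components does not change the Fukaya category, whenever $1 \leq l \leq n-k-2$.

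First, I would unpack the combinatorics of $T$. Since $\alpha = \alpha_1 + \cdots + \alpha_{n-k}$, any cone $\sigma \in \Bl_\alpha(\Sigma)$ containing $\alpha$ has generators of the form $\{\alpha, \alpha_{i_1}, \ldots, \alpha_{i_s}\} \cup \{\beta\text{'s}\}$ with $s \leq n-k-1$. A dimension count in the definition of the middle piece of the decomposition of $\Bl_\alpha(\Sigma)$ shows that $\sigma$ lies in $T$ exactly when $s \leq n-k-2$. A direct computation then identifies $s$ with the fiber codimension of $\sigma$, so the fiber codimensions arising in $T$ range over $\{0, 1, \ldots, n-k-2\}$, matching the range of $l$ in the statement.

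Now fix $\sigma \in T$ with fiber codim $l \geq 1$ and pick one of its $\alpha_i$-generators. Deleting this generator yields a facet $\sigma'$ of $\sigma$ that still contains $\alpha$ and has $s-1 = l-1 \leq n-k-3$ many $\alpha_i$-generators, so $\sigma' \in T$ with fiber codim $l-1$. Consequently $\sigma'/\alpha$ is a codimension-one face of $\sigma/\alpha$ in $\str(\alpha)/\alpha$, and the adjacent stratum $\II_{\sigma', i'}$ lies in $\mathfrak g^{l-1}\subset \mathfrak g^l$ and so is \emph{absent} from $\mathfrak f_{\str(\alpha)/\alpha}\setminus \mathfrak g^l$. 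Since $\II_{\sigma', i'}$ touches the topological boundary of the closure of $\II_{\sigma, i}$ in the contact boundary, Lemma~\ref{lem:trivialLocalization} applies and shows that adding $\II_{\sigma, i}$ back to the stop does not change the Fukaya category. Restoring all fiber codimension-$l$ components one at a time (the ambient stop only grows, so the boundary non-containment property persists at each stage) completes the inductive step.

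The main obstacle I expect is bookkeeping the combinatorial identifications carefully enough to guarantee that the facet $\sigma' = \sigma \setminus \alpha_i$ remains in $T$. The inequality $l \leq n-k-2$ is precisely what keeps $s-1 \leq n-k-3$, preventing $\sigma'$ from being absorbed into the middle piece of the decomposition of $\Bl_\alpha(\Sigma)$; this is the one place where the hypothesis is actually used. A secondary technical point is verifying that the closure of each $\II_{\sigma, i}$ is indeed a connected Legendrian submanifold (with boundary) in the contact boundary, as required for the input to Lemma~\ref{lem:trivialLocalization}, but this should follow directly from the definition of the $\II_{\sigma, i}$ as connected components of $\LL_\sigma$ away from the singular locus.
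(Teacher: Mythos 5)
Your proof is essentially the same as the paper's: an induction on $l$ with \cref{lem:trivialLocalization} applied at each step, choosing as the ``free boundary'' a lower fiber codimension stratum adjacent to the one being restored. The paper merely asserts the existence of a suitable $\stp_{\sigma'}\subset \mathfrak g^{l-1}$ meeting $\partial\stp_\sigma$, whereas you produce it concretely by deleting an $\alpha_i$-generator from $\sigma$ and verify via a dimension count that the resulting facet stays inside $T$; that bit of explicit combinatorics (including the identification of $s$ with fiber codimension and the bound $s \leq n-k-2$ characterizing membership in $T$) is correct and a welcome elaboration. Two small caveats. First, the bound $l\leq n-k-2$ is what guarantees $T$ contains cones of fiber codimension $l$ at all; the inductive step itself only needs $l-1\leq n-k-2$, which already follows from $s\leq n-k-1$, so the hypothesis is not being used quite where you say it is. Second, you write that ``$\II_{\sigma',i'}$ touches the topological boundary of the closure of $\II_{\sigma,i}$,'' but strictly speaking $\partial\overline{\II_{\sigma,i}}$ lies in the singular locus of the skeleton, which the open stratum $\II_{\sigma',i'}$ by definition avoids, so it is the closure of $\II_{\sigma',i'}$ that touches it. Whether the hypothesis $\partial\Lambda\not\subset\mathfrak f$ of \cref{lem:trivialLocalization} is then literally satisfied hinges on the convention for how much of the (subcritical) singular locus is absorbed into $\mathfrak g^{l-1}$; the paper's own proof is equally terse on this point, so this is a shared gloss rather than a defect of your argument specifically.
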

\begin{proof}
We proceed inductively. 
The collection of Legendrians $\mathfrak g^l \setminus \mathfrak g^{l-1}$ is a disjoint union of Legendrians belonging to $\stp_{\sigma}$, where $\sigma$ has fiber dimension $l$.
For each such $\stp_{\sigma}$, there exists  $\stp_{\sigma'}\subset \mathfrak g^{l-1}$ whose boundary is partially contained inside the intersection $\partial( \stp_{\sigma})\cap \stp_{\sigma'}$.
Therefore by \cref{lem:trivialLocalization} stop removal at $\mathfrak g^{l-1}$ and $\mathfrak g^{l}$ produce quasi-isomorphic categories.
\end{proof}

We now need to show that the localization at the linking disks of $T$ is the mirror to blowdown.
By \cref{prop:blowupstrata}, it suffices to consider the localization at linking disks of fiber codimension 0 strata.
Let $T^0\subset T$ be the set of fiber codimension 0 cones.
Consider now the Lagrangian correspondence $\Lc_{\alpha 0}: ((\CC^*)^n, \stp_{\str(\alpha)/\alpha})\Rightarrow (X,\stp_{\Bl_{\alpha}(\Sigma)} )$ which is mirror to inclusion of the exceptional divisor.
By \cref{cor:linkingdisk}, for every interior point $x\in \LL_{\sigma}$ with $\sigma>\alpha$, there exists an interior point $y\in \LL_{\underline i^*\sigma} \subset \LL_{\str(\alpha)/\alpha}$ such that $L_{\alpha 0}\circ \cocore(y)$ is admissibly Hamiltonian isotopic to $\cocore(x)$.
Since every cone $\sigma\in T^0$ contains $\alpha$, there exists a set of cones $T^0/\alpha\subset \str(\alpha)/\alpha$ such that stop removal at $\mathfrak g$ is given by localization at the Lagrangians
\[\{\Lc_{\alpha 0}\circ \cocore(y)\;:\; y\in \LL_{\sigma/\alpha}, \sigma/\alpha\in T^0/\alpha\}.\]
It remains to generate these linking disks by pullback from $((\CC^*)^k, \str(\tau)/\tau)$.
The cones of $T^0/\alpha$ are in bijection with the cones of $\str(\tau)/\tau$ via the projection $\underline p: \str(\alpha)/\alpha\to \str(\tau)/\tau$.
However, the set of linking disks $\Conn(\str(\alpha)/\alpha, \sigma/\alpha):=\{\II_{\sigma/\alpha;i}\}$
of $\LL_{\sigma, i}, i\in \Conn(\str(\tau)/\tau, \sigma)$ differ from the linking disks of $\LL_{\underline p(\sigma)}\subset \LL_{\str(\tau)/\tau}$. 

For notational clarity, whenever $L_{\delta\delta'}$ is unclear (due to the possibility that $\delta$ and/or $\delta'$ are quotient cones), we will write $L_{\delta;\delta'}$.
By \cref{lemma:cocoresaretropical},  a linking disk $\cocore(y)$ for  $\LL_{\sigma/\alpha}\subset ((\CC^*)^{n-1}, \stp_{\str(\alpha)/\alpha})$ can be expressed as the image of a cocore $\cocore(z)\subset((\CC^*)^{n-1-j}, \stp_{\str(\sigma/\alpha)/(\sigma/\alpha)})$  under the correspondence
\[\Lc_{\sigma/\alpha;\alpha/\alpha}:((\CC^*)^{n-1-|\sigma/\alpha|}, \stp_{\str(\sigma/\alpha)/(\sigma/\alpha)})\Rightarrow((\CC^*)^{n-1}, \stp_{\str(\alpha)/\alpha}).\]
Let $\sF_z$ be a support function for $\check X_{\str(\sigma/\alpha)/(\sigma/\alpha)}$ corresponding to the cocore $\cocore(z)$ so that 
\[\Lc_{\sigma/\alpha;\alpha/\alpha}\circ L^\delta(\sF_z)=\cocore(y).\]
Let $[\underline i^*_{\sigma 0}\sF_{\alpha}]$ be a linear equivalence class of support function arising from pullback of $\sF_{\alpha}$.
Write $\sF_z=\sum_{i=1}^{n-k} m_{\alpha_i} \sF_{\alpha_i/\sigma}+ \sum_{\beta_j/\sigma} m_{\beta_j/\sigma} \sF_{\beta_j/\sigma}$. By taking a linear transformation, we may assume that the $m_{\alpha_i}=0$ for $i\neq 1$. 
Then, there exists a point which satisfies the inequalities $-1< \alpha_i/\alpha \cdot p < 0$ for all $i\neq 2$, and $m_{\alpha_1}-1<\alpha_1/\alpha\cdot p <m_{\alpha_1}$. 
In the $\alpha_i$ coordinates, this implies that $0<m_{\alpha_1}\leq n-k$; this computation should be compared to \cref{prop:mirrorBeilinson}. 
Write $l=m_{\alpha_1}$ and set $[\underline i^*_{\sigma 0}\sF_{\alpha}]=\sF_{\alpha_1}+ \sum_{\beta/\sigma} \sF_1(\beta/\alpha) \sF_{\beta/\sigma}$. Then 
\[\sF_z-l([\underline i^*_{\sigma 0}\sF_{\alpha}])=\sum_{\beta/\sigma}\left( l \sF_1(\beta/\alpha)+ m_{\beta/\sigma}\right) \sF_{\beta/\sigma}.\]
As this has no $\sF_{\alpha_i}$ component, $L^{\delta}(\sF_z-l([\underline i^*_{\sigma 0}\sF_{\alpha}]))=\phi^{-l}_{[\underline i^*_{\sigma 0}\sF_{\alpha}]} \circ L^{\delta}(\sF_z)$ arises as the pullback of a tropical section $L^\delta(\sF_w)\subset(\CC^*)^{n-1-j}, \stp_{\str(\sigma/\tau)/(\tau)})$ along $\Lc_{p_\sigma}^*: ((\CC^*)^{n-1-j}, \stp_{\str(\sigma/\tau)/(\tau)})\Rightarrow((\CC^*)^{n-1}, \stp_{\str(\sigma/\alpha)/(\sigma/\alpha)}$. 
Using the commutativity of the square in \cref{fig:furtherdecomposition}, we obtain the following:
\begin{prop}
    Let $\cocore(x)$ be a linking disk to $\LL_{\sigma}$ for some $\sigma\in T^0$.
    Then there exists a cocore $L^\delta(\sF_w)$ of $((\CC^*)^{n-1-j}, \stp_{\str(\sigma/\tau)/(\tau)})$, and $0<l\leq n-k$ so that 
    \[\cocore(x) \sim \Lc_{\alpha0}\circ \phi^l_{\sF_\alpha}\circ L_p^*\circ\Lc_{\sigma/\tau;\tau/\tau}\circ L^\delta(\sF_w).\]
\end{prop}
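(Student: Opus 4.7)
The plan is to unwind the linking disk $\cocore(x)$ through a chain of Lagrangian correspondences constructed earlier in the paper. Since $\sigma \in T^0$ has fiber codimension zero, we have $\sigma > \alpha$. By \cref{cor:linkingdisk} applied to the inclusion correspondence $\Lc_{\alpha 0}:((\CC^*)^{n-1}, \stp_{\str(\alpha)/\alpha}) \Rightarrow ((\CC^*)^n, \stp_{\Bl_\alpha(\Sigma)})$, there is an internal point $y \in \LL_{\underline i^*_{\alpha 0} \sigma}$ with $\cocore(x) \sim \Lc_{\alpha 0} \circ \cocore(y)$. Then $\cocore(y)$ is a linking disk to a stratum of $\LL_{\str(\alpha)/\alpha}$ indexed by the cone $\sigma/\alpha$, and hence by one more application of \cref{cor:linkingdisk} (this time for $\Lc_{\sigma/\alpha;\alpha/\alpha}$ inside $((\CC^*)^{n-1}, \stp_{\str(\alpha)/\alpha})$), it is the image under $\Lc_{\sigma/\alpha;\alpha/\alpha}$ of a linking disk to the zero stratum of the orbit-closure fan $\str(\sigma/\alpha)/(\sigma/\alpha)$. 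By \cref{lemma:cocoresaretropical}, this last linking disk is admissibly Hamiltonian isotopic to a conical tropical Lagrangian section $L^\delta(\sF_z)$ for some support function $\sF_z$.

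Next, expand $\sF_z = \sum_{i=1}^{n-k} m_{\alpha_i} \sF_{\alpha_i/\sigma} + \sum_j m_{\beta_j/\sigma} \sF_{\beta_j/\sigma}$. Since $L^\delta(\sF_z)$ depends only on the linear equivalence class, an integer change of coordinates on $Q/\sigma$ lets me assume $m_{\alpha_i}=0$ for $i \neq 1$; set $l := m_{\alpha_1}$. The criterion of \cref{lem:tropicalarecocore} requires $Q(D_{\sF_z})$ to be nonempty, which (after plugging in the chosen basis and the bounds $-1 < \alpha_i/\alpha \cdot p < 0$ for $i \neq 1$) forces $0 < l \leq n-k$. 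With this choice, the combination $\sF_z - l\,[\underline i^*_{\sigma 0} \sF_\alpha]$ has zero coefficient on every $\sF_{\alpha_i/\sigma}$, hence descends under $\underline p$ to a support function $\sF_w$ on $\str(\sigma/\tau)/\tau$.

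To assemble the identification, I use that subtracting a support function from the defining data of a tropical section corresponds to applying the inverse twisting Hamiltonian flow, so $L^\delta(\sF_z) \sim \phi^{l}_{\sF_\alpha} \circ L^\delta(\underline p^*\sF_w)$, and by \cref{property:intertwining} applied to the toric morphism $\underline p$, $L^\delta(\underline p^* \sF_w) \sim \Lc_{p_\sigma}^* \circ L^\delta(\sF_w)$. Finally, the commutative square in \cref{fig:furtherdecomposition} identifies $\Lc_{\sigma/\alpha;\alpha/\alpha} \circ \Lc_{p_\sigma}^* \sim \Lc_p^* \circ \Lc_{\sigma/\tau;\tau/\tau}$, giving the desired formula
\[
\cocore(x) \sim \Lc_{\alpha 0} \circ \phi^l_{\sF_\alpha} \circ \Lc_p^* \circ \Lc_{\sigma/\tau;\tau/\tau} \circ L^\delta(\sF_w).
\]

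The main obstacle is bookkeeping: I need to verify that each geometric composition along the chain remains admissible and that the twisting Hamiltonian $\phi^l_{\sF_\alpha}$ can be commuted past $\Lc_p^*$ into the exterior position without disrupting the stops. This should follow from \cref{property:admissibility} for the inclusion correspondences, the totally-wrapped-to-the-stop discussion in \cref{app:correspondenceadmissibility} (all objects involved are linking disks and hence totally stopped), and the intertwining result of \cref{claim:intertwining}, which handles the commutation of the twisting Hamiltonian with toric-morphism correspondences. The rest is algebraic manipulation of support functions, for which the explicit coordinates set up in \cref{subsec:blowup} make the computation direct.
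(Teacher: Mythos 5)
Your argument reproduces the paper's own proof: the paper likewise chains $\cocore(x)\sim\Lc_{\alpha 0}\circ\cocore(y)$ via \cref{cor:linkingdisk}, writes $\cocore(y)=\Lc_{\sigma/\alpha;\alpha/\alpha}\circ L^\delta(\sF_z)$ for a tropical cocore, normalizes $\sF_z$ to have nonzero $\alpha_i$-coefficient only on $\alpha_1$, bounds $l=m_{\alpha_1}$ using the nonemptiness condition from \cref{lem:tropicalarecocore}, observes that $\sF_z-l[\underline i^*_{\sigma 0}\sF_\alpha]$ descends to $\sF_w$ on $\str(\sigma/\tau)/\tau$, and closes with the commutative square of \cref{fig:furtherdecomposition}. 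The only cosmetic difference is that you peel off the twisting Hamiltonian $\phi^l_{\sF_\alpha}$ and commute it separately via \cref{claim:intertwining}, whereas the paper bakes $\phi^l$ into the vertical arrows of the square; these are equivalent by the intertwining property.
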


\begin{figure}
    \centering
    \begin{tikzpicture}
\begin{scope}[thick]

\fill[gray!20]  (-1,1) rectangle (1,-1);

\draw[->] (0,0) -- (0,1);
\draw[->] (0,0) -- (1,-1);
\draw[->] (0,0) -- (0,-1);
\draw[->] (0,0) -- (-1,0);
\node[fill, circle, scale=.5] at (0,0) {};
\end{scope}

\begin{scope}[thick]

\draw[->] (0,-3.5) -- (-1,-3.5);
\draw[->] (0,-3.5) -- (1,-3.5);
\node[fill, circle, scale=.5] at (0,-3.5) {};
\end{scope}

\begin{scope}[thick]
\draw[->] (-4.5,0) -- (-4.5,1);
\draw[->] (-4.5,0) -- (-4.5,-1);
\node[fill, circle, scale=.5] at (-4.5,0) {};
\end{scope}

\node[fill, circle, scale=.5] at (-4.5,-3.5) {};
\node at (0,1.5) {$\str(\alpha)/\alpha$};
\node at (-4.5,1.5) {$\str(\sigma/\alpha)/(\sigma/\alpha)$};
\node at (-4.5,-4) {$\str(\sigma/\tau)/(\sigma/\tau)$};
\node at (0,-4) {$\str(\alpha)/\tau$};

\draw[->] (-4,0) -- (-1.5,0)node [midway, fill=white]{$L_{\sigma/\alpha;\alpha/\alpha}$} ;
\draw[<-] (-4.5,-1.5) -- (-4.5,-3)node [midway, fill=white]{$\phi^l_{i^*[D_g]} L_{p_\sigma}^*$} ;
\draw[->] (-4,-3.5) -- (-1.5,-3.5)node [midway, fill=white]{$L_{\sigma/\tau;\tau/\tau}$} ;
\draw[<-] (0,-1.5) -- (0,-3)node [midway, fill=white]{$\phi^l_{D_g}L_p^*$} ;
\end{tikzpicture}     \caption{ The relation between linking disks of $\LL_{\sigma/\alpha}$ for $\sigma\in T^0$ to linking disks of components of $\str(\alpha)/\tau$ is similar to that between  linking disks of $\str(\sigma/\alpha)/(\sigma/\alpha)$ and $\str(\sigma/\tau)/\str(\sigma/\tau)$.}
    \label{fig:furtherdecomposition}
\end{figure}
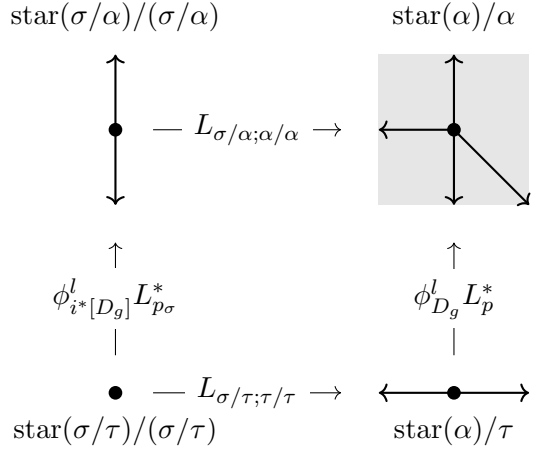
It follows that the subcategory generated by these cocores is contained within the subcategory generated by the image  of $\Lc_{\alpha 0}\circ \phi^l_{\sF_\alpha}\circ L_p^*$,
\[\langle \cocore(x) \;:\; x\in \LL_{\sigma}, \sigma\in T^0\rangle \subset\langle \Lc_{\alpha 0}\circ \phi^l_{\sF_\alpha}\circ L_p^*( \mathcal W((\CC^*)^k, \stp_{\str(\tau)/\tau}))\rangle.\]
To show that these subcategories agree, we compute the intersection of the disks $(\Lc_{\alpha 0}\circ \phi^l_{\sF_\alpha}\circ L_p^*)(L^\delta(\sF))$ with the FLTZ skeleton. 
\begin{prop}
    The Lagrangian $(\phi^l_{\sF_\alpha}\circ L_p^*)(L^\delta(\sF))$ is generated by linking disks of $\LL_{\sigma/\alpha}$ where $\sigma/\alpha\in T$.
\end{prop}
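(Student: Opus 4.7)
The plan is to identify the Lagrangian as an explicit tropical Lagrangian section and then peel off cocores using the exact triangles from \Cref{property:aexactsequence}, choosing to use only the ``base'' 1-dimensional cones $\beta/\alpha$ of $\str(\alpha)/\alpha$. First, combining \Cref{claim:intertwining} with the characterization of the conical twisting Hamiltonians on tropical sections from \Cref{thm:wrappingaction} (together with the identification of such sections in \Cref{sec:embedding}), I would obtain an admissible Hamiltonian isotopy
\[
(\phi^l_{\sF_\alpha}\circ L_{p^*})(L^\delta(\sF)) \sim L^\delta(\underline{p}^*\sF + l\sF_\alpha)
\]
inside $((\CC^*)^{n-1},\stp_{\str(\alpha)/\alpha})$. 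Writing $\sF':=\underline{p}^*\sF + l\sF_\alpha$, its values on the 1-dimensional generators of $\str(\alpha)/\alpha$ are $\sF'(\alpha_1/\alpha)=l$, $\sF'(\alpha_i/\alpha)=0$ for $i\neq 1$, and $\sF'(\beta/\alpha)=\sF(\beta/\tau)+l\sF_1(\beta/\tau)$.

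Next I would iteratively apply the exact triangles of \Cref{property:aexactsequence}, but only along 1-dimensional cones of the form $\beta/\alpha$. Each such triangle reads
\[
L^\delta(\sF''+\sF_{\beta/\alpha})\to L^\delta(\sF'')\to L_{(\beta/\alpha)>0}\bigl(\underline{i}^*_{(\beta/\alpha)0}\sF''\bigr),
\]
so after finitely many steps (an induction on $\sum_\beta m_{\beta/\alpha}(\sF'')$) the section $L^\delta(\sF')$ is generated by the residual pushforward terms together with the terminal tropical section $L^\delta(l\sF_{\alpha_1/\alpha})$. The inequality region $Q(l\sF_{\alpha_1/\alpha})$ from \Cref{lem:tropicalarecocore}, applied inside $\str(\alpha)/\alpha$, is nonempty precisely because $0<l\leq n-k$: a point $p$ satisfying $2\pi(l-1)<(\alpha_1/\alpha)\cdot p<2\pi l$ and $-2\pi<(\alpha_i/\alpha)\cdot p<0$ for $i\neq 1$ always exists. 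Hence the terminal section is admissibly Hamiltonian isotopic to a linking disk of $\LL_{\langle\alpha\rangle/\alpha}$, and $\langle\alpha\rangle\in T$.

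For the residual terms, each $L_{(\beta/\alpha)>0}(\cdots)$ factors as $\Lc_{(\beta/\alpha)0}\circ L_{\beta/\alpha}(\cdots)$ by \Cref{def:divisormirror}. The inner tropical section lives on the smaller toric variety with fan $\str(\alpha)/(\beta/\alpha+\alpha)$, which is itself a projective-bundle-style subdivision of a lower-dimensional fan. Applying the proposition inductively (on the dimension of the fan), these sections are generated by linking disks of $\LL_{\sigma'/(\beta/\alpha)}$ for cones $\sigma'$ in the analog of $T$ on the sub-fan. By \Cref{property:linkingdisks}, the correspondence $\Lc_{(\beta/\alpha)0}$ carries each such linking disk to a linking disk of $\LL_{\sigma/\alpha}$ for the cone $\sigma=\langle\sigma',\alpha\rangle$, and a direct check shows that these $\sigma$ again lie in $T$.

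The main delicate point is verifying that every cone $\sigma$ produced by the recursion belongs to $T$ rather than to the second piece $\{\sigma'\in\str(\alpha):\sigma'\subset\sigma''\in\Sigma,\;|\sigma'|=|\sigma''|\}$ in the partition of $\Bl_\alpha(\Sigma)$. Peeling off only $\beta/\alpha$-cones (never $\alpha_i/\alpha$) is what ensures the $\sigma$ we generate contain $\alpha$ but only a strict subcollection of the $\alpha_i$'s, which is precisely what disqualifies them from being of the second type. The hypothesis $0<l\leq n-k$ is essential at the base of the induction, as it guarantees the terminal section is a genuine cocore of $\LL_{\langle\alpha\rangle/\alpha}$ in the exceptional-divisor direction; outside this range the inequality region $Q(l\sF_{\alpha_1/\alpha})$ is empty and the argument breaks down.
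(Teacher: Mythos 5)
Your proposal takes a genuinely different route from the paper. The paper's proof constructs the explicit one-parameter family $L_t(l,\sF)$ interpolating between $L^\delta(\underline{p}^*\sF+l\sF_\alpha)$ and a constant translate $L_1(l,\sF)=(u,\nabla \wh H^\delta_{\underline{p}^*\sF}+\tfrac{l}{n-k}(\alpha_1/\alpha))$; this family is \emph{not} admissible, and the proof proceeds by a direct estimate showing the isotopy only crosses stop strata $\stp_{\sigma/\alpha}$ with $\sigma/\alpha\in T$, and that $L_1(l,\sF)$ itself meets the skeleton only in $T$-strata. Generation then follows from the wall-crossing/wrapping exact triangle (each crossing contributes a linking disk) plus the decomposition-into-linking-disks argument of \cref{app:generation}. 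You instead propose an iterated exact-triangle argument using \cref{property:aexactsequence} to peel off $\beta/\alpha$-terms, plus an induction on fan dimension for the residual pieces. The paper's approach buys a one-shot argument that never leaves the ambient $(\CC^*)^{n-1}$; yours is more algebraic but requires significantly more bookkeeping.

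The main gap is in your inductive step. After peeling off $\beta_0/\alpha$, the residual Lagrangian is $\Lc_{(\beta_0/\alpha)0}\circ L_{\beta_0/\alpha}(\underline{i}^*_{(\beta_0/\alpha)0}\sF'')$, and you want to ``apply the proposition inductively'' to the inner tropical section. But the proposition's hypothesis is about Lagrangians of the very specific form $(\phi^{l'}_{\sF_{\alpha'}}\circ L_{p'}^*)(L^\delta(\sF'''))$. You assert without proof that $\underline{i}^*_{(\beta_0/\alpha)0}\sF''$ decomposes as a pullback-plus-twist in the smaller fan $\str(\langle\alpha,\beta_0\rangle)/\langle\alpha,\beta_0\rangle$, and that the resulting cones land back in $T$; neither is checked, and the second is exactly the ``delicate point'' you flag but don't resolve. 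Relatedly, each application of \cref{property:aexactsequence} requires the intermediate support function to lie in $\Supp_{\beta/\alpha}$, so the naive iteration when coefficients $c_\beta>1$ does not directly apply without a normalization by linear equivalence, which in turn shuffles values on the other rays. Finally, the cocore criterion from \cref{lem:tropicalarecocore}: using $\sum_i (\alpha_i/\alpha)\cdot p=0$, the region $Q(l\sF_{\alpha_1/\alpha})$ is nonempty only for roughly $1\leq l\leq n-k-1$, not the range $0<l\leq n-k$ you cite; for $l=n-k$ the terminal section is not a cocore and the base of your induction fails.
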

\begin{proof}
    We follow \cref{exam:cpn}.
    Rewrite the tropical section as 
    \[L^\delta(\underline p^* \sF+l\sF_\alpha).\]
    For fixed $t$, consider  the Lagrangian submanifolds
    \[L_t(l, \sF)=\left(u, \del H^{\delta}_{\underline p^*(\sF)}+ \left(\frac{tl}{n-k}\right)(\alpha_1/\alpha)+ (1-t) \del H^{\delta}_{lF_{\alpha}}\right).\]
    This is not an admissible Lagrangian submanifold for all $t$, and crosses several of the stops.
    However, let $\sigma/\alpha$ be a stop which contains some $\alpha_i/\alpha$.
    Then for $i\geq 2$,
        \[\left(\frac{tl}{n-k}\right)\alpha_1/\alpha\cdot(\alpha_i/\alpha)+ (1-t) \del \wh H^{\delta}_{lF_{\alpha}}(\alpha_i/\alpha)\approx\frac{tl}{n-k}
    \]
    and for $i=1$,
    \[
        \left(\frac{tl}{n-k}\right)\alpha_1/\alpha\cdot(\alpha_1/\alpha)+ (1-t) \del \wh H^{\delta}_{lF_{\alpha}}(\alpha_1/\alpha)\approx tl+ (1-t)l= l.
    \]
    Since the projection of these value avoid the set $\ZZ-\delta$, $L_t(l, \sF)$ does not cross $\stp_{\sigma/\alpha}$ whenever $\sigma/\alpha$ contains $\alpha_i$.
    Therefore, $L_0(l, \sF)$ and $L_1(l,\sF)$ differ by the linking disks $\cocore(x)$ with $x\in \LL_{\sigma}$, for choices of cones $\sigma \in T$.
    Alternatively, we may write
    \[L_0(l, \sF)\in \langle L_1(l, \sF), \cocore(x) \;:\; x\in \stp_{\sigma}, \sigma\in T\rangle. \]
    The function $\underline p^*(\sF+lF_\alpha))$ has the property that for every $\alpha/\alpha_i$, $\del \wh H^{\delta}_{\underline p^*(\sF+l\sF_\alpha)}(\alpha/\alpha_i)\approx0$. 
    The cones of  $\sigma/\alpha\in (\str(\tau)/\alpha)\setminus( T^0/\alpha)$ all contain at least one of the  $\alpha_i$, for which we have
    \[\left(\left(\del \wh H^{\delta}_{\underline p^*(\sF+l\sF_\alpha)}\right)+\frac{tl}{n-k}\alpha\right)(\alpha_i)\approx \left(\frac{l}{n-k}\right).\] 
    In conclusion, $L_1(l, \sF)$ intersects only $\LL_{\sigma/\alpha}$ for $\sigma/\alpha\in T$, giving us the inclusion 
    \[\langle \Lc_{\alpha 0}\circ \phi^l_{\sF_\alpha}\circ L_p^* (\mathcal W((\CC^*)^k, \stp_{\str(\tau)/\tau}))\rangle \subset\langle \cocore(x) \;:\; x\in \LL_{\sigma}, \sigma\in T^0\rangle.\]
\end{proof}
From this, we can conclude that blowup is mirror to a stop removal.
\begin{cor}
    \label{cor:blowup}
    The stop removal functor $\Lc_{\pi_*}:\mathcal W(X, \stp_{\Bl_\alpha(\Sigma)})\to \mathcal W(X, \stp_\Sigma)$ is mirror to the derived pushforward $\pi_*: D^b\Coh(\check X_{\Bl_\alpha(\Sigma)})\to D^b\Coh(\check X_\Sigma)$. 
\end{cor}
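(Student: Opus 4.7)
The plan is to combine the Ganatra--Pardon--Shende stop removal theorem on the symplectic side with Orlov's blowup formula (\cref{thm:orlov}) on the algebraic side, using the matching between Lagrangian correspondences and $B$-model functors that has been assembled in the preceding propositions. Specifically, by stop removal, $\Lc_{\pi_*}$ is the Verdier quotient of $\mathcal W(X, \stp_{\Bl_\alpha(\Sigma)})$ by the thick subcategory $\mathcal D_\mathfrak{g}$ generated by linking disks to $\LL_\mathfrak{g}$. By \cref{prop:blowupstrata} it suffices to work with linking disks to fiber codimension $0$ strata indexed by $T^0$, and the two propositions preceding the statement identify
\[ \mathcal D_\mathfrak{g} \;=\; \left\langle \Lc_{\alpha 0}\circ \phi^l_{\sF_\alpha}\circ \Lc_p^*(L^\delta(\sF_w)) \;:\; 1 \leq l \leq n-k,\; \sF_w\in \Supp(\str(\tau)/\tau) \right\rangle. \]

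Next I would identify each building block with its mirror under \cref{cor:quasiprojective} applied to $\check X_{\Bl_\alpha(\Sigma)}$ (which is smooth projective since $\check X_\Sigma$ is). Namely, $\Lc_{p^*}$ is mirror to the toric pullback $\check p^*$ by \cref{property:intertwining}, the conical twisting $\phi^l_{\sF_\alpha}$ is mirror to $\_\otimes \mathcal O(l\sF_\alpha)$ by \cref{thm:wrappingaction}, and $\Lc_{\alpha 0}$ (now as a correspondence from the exceptional divisor $\check X_{\str(\alpha)/\alpha}$ into $\check X_{\Bl_\alpha(\Sigma)}$) is mirror to $\check j_*$ by \cref{thm:inclusioncorrespondence} together with \cref{rem:functorstatement}. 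Composing, $\Lc_{\alpha 0}\circ \phi^l_{\sF_\alpha}\circ \Lc_{p}^*$ is the mirror to Orlov's functor $\Phi_{-l} = \mathcal O(l\sF_\alpha)\otimes \check j_*\check p^*$, and $\mathcal D_\mathfrak{g}$ is mirror to $\mathcal D_{\mathrm{Orlov}} := \langle \Phi_{-l}(D^b\Coh(\check Z)) : 1\leq l \leq n-k \rangle$.

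Finally, by Orlov's theorem $D^b\Coh(\check X_{\Bl_\alpha(\Sigma)})$ has a semi-orthogonal decomposition with factors $\check \pi^* D^b\Coh(\check X_\Sigma)$ and $\mathcal D_{\mathrm{Orlov}}$. The projection formula, combined with the fact that $R\pi_*\mathcal O_E(l\sF_\alpha)=0$ on the projective bundle fibers for $1\leq l \leq n-k$, shows that $\pi_*\Phi_{-l}=0$ while $\pi_*\check \pi^* = \id$, so $\pi_*$ is the Verdier quotient by $\mathcal D_{\mathrm{Orlov}}$. Matching the two Verdier quotient descriptions completes the identification, and the natural compatibility with the fully faithful $\Lc_{\pi^*}$ from \cref{prop:stopremovallocalization} (adjoint to $\check \pi^*$) provides the section needed to promote this to a statement at the level of functors.

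The main obstacle I expect is the clean identification of $\Lc_{\alpha 0}$ with $\check j_*$ in the \emph{blown-up} fan $\Bl_\alpha(\Sigma)$ rather than $\str(\alpha)/\alpha\subset \Sigma$, since in $\Bl_\alpha(\Sigma)$ the divisor $\check X_{\str(\alpha)/\alpha}$ appears as a Cartier divisor (the exceptional one), and one must check that the inclusion correspondence construction of \cref{sec:inclusion} produces the same geometric object as when applied directly to the star fan of $\alpha$ inside $\Bl_\alpha(\Sigma)$. A secondary bookkeeping point is that the generation argument must account for the fact that $\Lc_p^*$ is a correspondence between stopped domains of different dimensions and is admissible only in one direction, so one should verify that geometric composition in the required direction lands in the correct thick subcategory; this should follow from the totally-wrapped criterion developed in \cref{app:correspondenceadmissibility}.
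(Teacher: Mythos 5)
Your proposal reconstructs precisely the argument the paper leaves implicit: identify the kernel $\mathcal D_{\mathfrak g}$ of the stop removal localization with the thick subcategory generated by $\Lc_{\alpha 0}\circ\phi^l_{\sF_\alpha}\circ L_p^*(L^\delta(\sF_w))$ via \cref{prop:blowupstrata} and the two unnamed propositions surrounding it, match these Lagrangians with the Orlov factors $\Phi_{-l}(D^b\Coh(\check Z))$ under the HMS identifications already in place, and appeal to the universal property of Verdier localization together with the projection-formula vanishing $\pi_*\Phi_{-l}=0$ and $\pi_*\check\pi^*=\id$ to conclude that $\pi_*$ is the matching quotient functor. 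This is the same route as the paper; two small corrections are that the concern about ambiguity in $\str(\alpha)/\alpha$ is moot since $\alpha$ is a ray only in the refined fan $\Bl_\alpha(\Sigma)$ (so the inclusion correspondence is unambiguously the one built there), and the HMS input you cite as \cref{cor:quasiprojective} should be \cref{cor:HMScor} applied to the smooth projective $\check X_{\Bl_\alpha(\Sigma)}$ while the fully faithfulness of $\Lc_{\pi^*}$ is the unnamed proposition in this subsection rather than \cref{prop:stopremovallocalization}.
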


\begin{ex} \label{ex:blowupexample} It is possible to pictorially check \cref{cor:blowup} for $\check X_\Sigma = \CP^2$ using similar pictures to those in \cref{subsec:warmupcp2}. See \cref{fig:blowupexample}. Note that $L_{\pi_*}(L(-E))$ is an admissible Lagrangian section in the mirror to $\CP^2$ that is not Hamiltonian isotopic to a tropical Lagrangian section (\cref{fig:blowupexample3}).
\end{ex}

\begin{figure}
   \centering
   \begin{subfigure}{.95\linewidth}
    \centering
   \includegraphics[scale=.8]{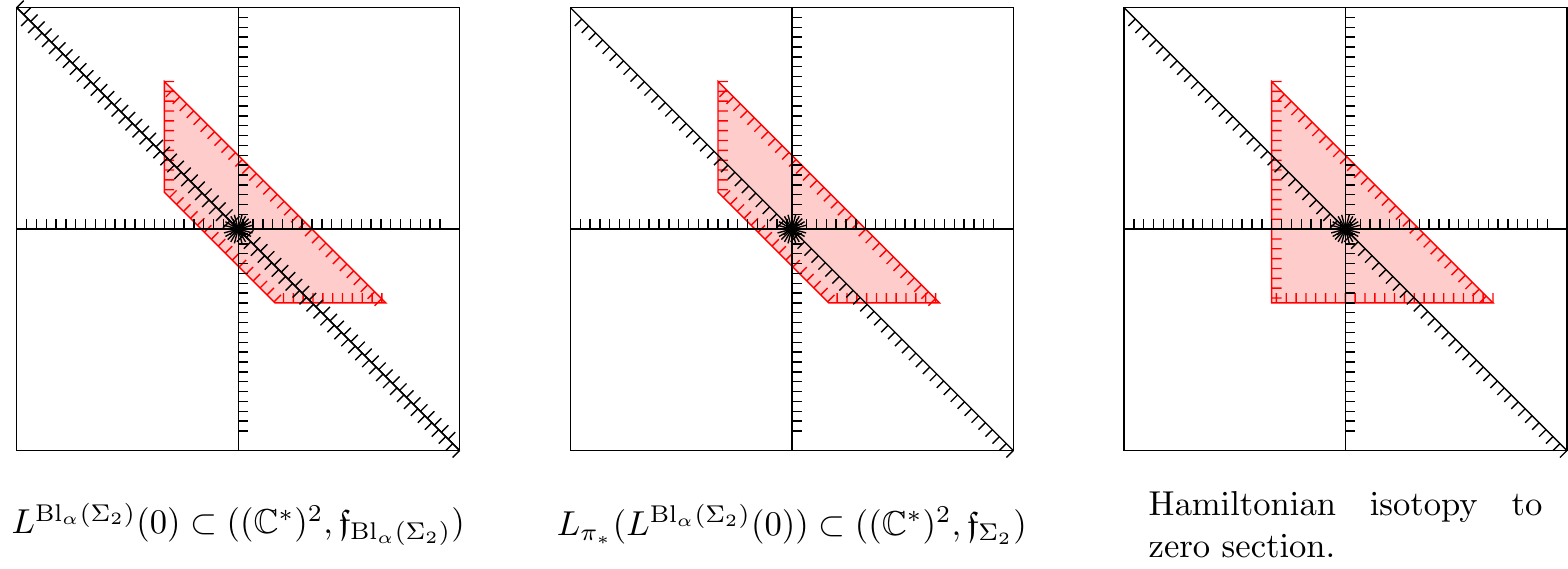}
   \caption{The mirror to $\check \pi_* \mathcal O_{\check X_{\Bl(\alpha)\Sigma}}=\mathcal O_{\check X_\Sigma}$} 
   \label{fig:blowupexample1}
   \end{subfigure}
   \begin{subfigure}{.95\linewidth}
    \centering
    \includegraphics[scale=.8]{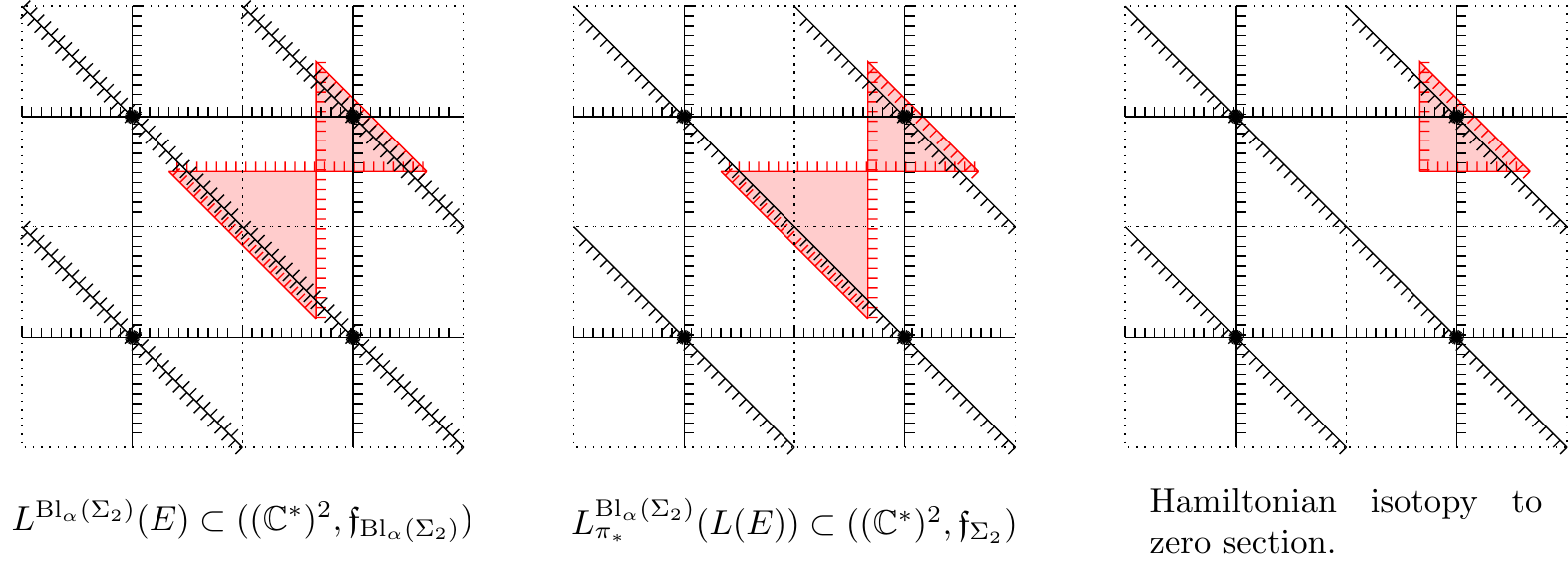}
    \caption{The mirror to $\check \pi_* \mathcal O_{\check X_{\Bl(\alpha)\Sigma}}(-E)=\mathcal O_{\check X_\Sigma}$} 
    \label{fig:blowupexample2}
    \end{subfigure}
   \begin{subfigure}{.95\linewidth}
    \centering
    \includegraphics[scale=.8]{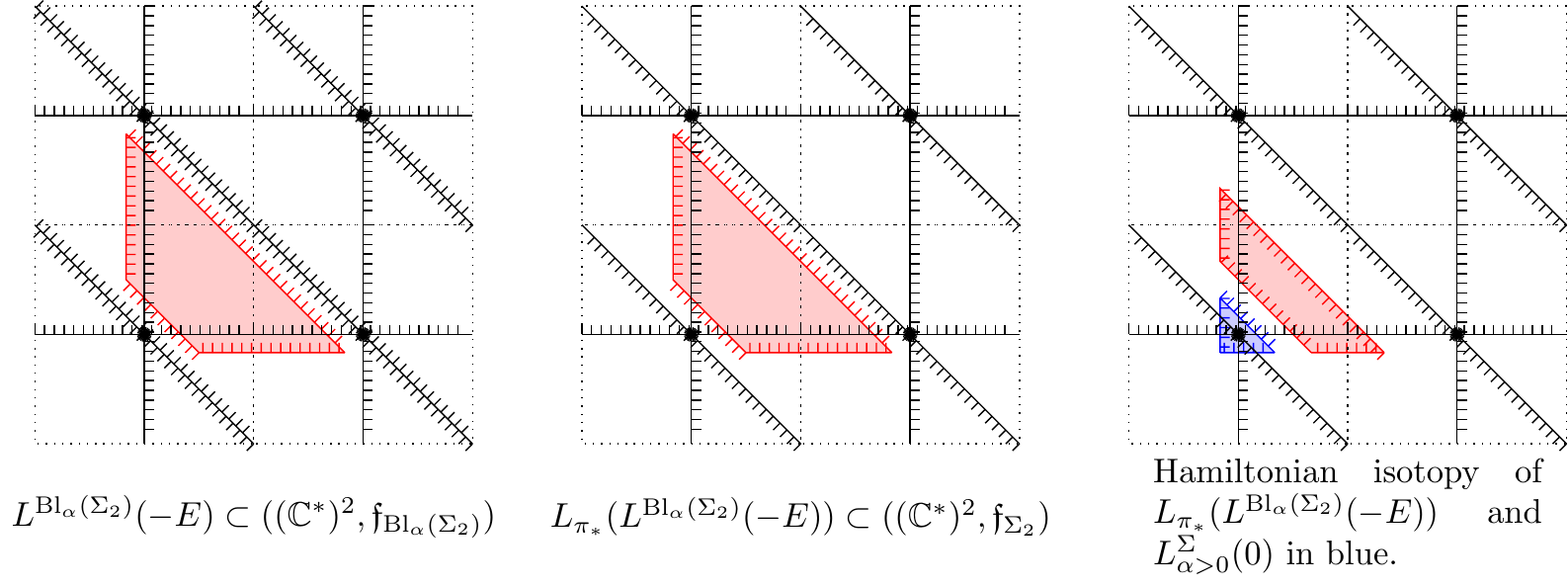}
    \caption{A Lagrangian section which is not tropical. Note that in the right most image, $(\Lc_{\pi_*}\circ L^{\Bl(\alpha)\Sigma}(-E) \# L^\Sigma_{\alpha>0}=L^\Sigma(0)$, identifying $\Lc_{\pi_*}\circ L^{\Bl(\alpha)\Sigma}(-E)$ with the ideal sheaf of a point in $\check X_\Sigma$.} 
    \label{fig:blowupexample3}
    \end{subfigure}
    \caption{Piecewise-linear approximations of argument images of Lagrangian sections under stop removal in the mirror to the blow up of a point in $\CP^2$}
    \label{fig:blowupexample}
\end{figure}

\begin{rem} For a more general star-subdivision, it is easy to see that the FLTZ skeleton for $\Sigma$ can also be obtained from that of $\Bl_\alpha(\Sigma)$ by stop removal. Thus, the discussion above should generalize to any star-subdivision, but this is combinatorially involved and requires a generalization of Orlov's theorem in the toric setting. In fact, every complete fan can be transformed by a sequence of star-subdivisions to a fan corresponding to a smooth projective toric variety by \cite[Theorem 11.1.9]{cox2011toric} and \cite[V Theorem 4.5]{ewald2012combinatorial}. Thus, realizing the general star subdivision functor as an appropriate quotient would allow one to deduce homological mirror symmetry for any toric variety from homological mirror symmetry for projective toric varieties. However, this seems to require significant new ideas beyond the smooth case presented here.
\end{rem}

\appendix

\section{Lagrangian cobordisms in Liouville manifolds}
\newcommand{\Cob}{\mathcal Cob}
\label{app:cobordism}

The words ``Lagrangian cobordism'' have come to mean  two related but slightly different kinds of Lagrangian submanifolds in symplectic geometry: \emph{Lagrangian cobordisms with Lagrangian ends} and \emph{cobordisms between Legendrians.} 
Unless otherwise specified, Lagrangian cobordisms will refer to a Lagrangian cobordism with Lagrangian ends.
We first look at Lagrangian cobordisms with Lagrangian ends, taking the viewpoint that a Lagrangian cobordism is a Lagrangian in an appropriately stopped Fukaya category \cite{tanaka2018generation,dyckerhoff2019symplectic,biran2014lagrangian,ganatra2017covariantly}. 

\begin{df}[\cite{arnol1980lagrange,bosshard}]
   Let $\{L_0, \ldots, L_k\}$ be admissible Lagrangian submanifolds of $(X, \stp)$, and let $(\CC, \stp_n)$ be a Liouville domain with $n$ disjoint stops whose flow makes the collection of rays  
   \[\LL_n=\{ \RR_{\gg 0}, {\rm i}+\RR_{\gg 0}, \ldots, (k-1){\rm i} +\RR_{\gg 0}\}\]
   admissible Lagrangian submanifolds. 
   A \emph{Lagrangian cobordism with Lagrangian ends}  $L_0, \ldots L_k$ is an admissible Lagrangian submanifold $K\subset (X\times \CC, \stp\times \stp_n)$ which for which there exists an open set $V\subset \CC$ so that 
   \begin{itemize}
      \item $\CC\setminus V$ is the normal neighborhood of the non-compact rays $\left\{\left(j-\frac{{\rm i}}{2}\right)+\RR_{> 0}\right\}_{j=0}^k$.
      \item When $K$ is restricted to the complement of  $V$,
            \[K\setminus \pi_\CC^{-1}(V)=\bigcup_{j=0}^k\left( L_j\times\left( \left(j-\frac{{\rm i}}{2}\right)+\RR_{> 0}\right)\right).\]
   \end{itemize}
	We denote such a cobordism by $K:(L_1, \ldots, L_k)\rightsquigarrow L_0$. 
	\label{def:cobordism}
\end{df}
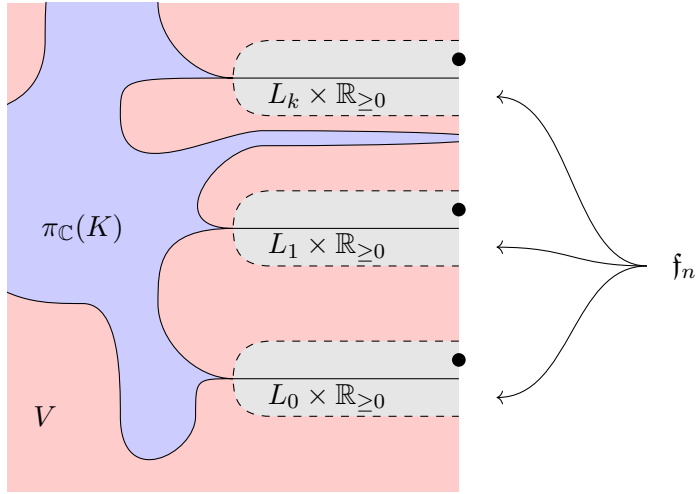
\begin{figure}
   \centering
   \begin{tikzpicture}

\fill[red!20]  (-3,-3) rectangle (3,3.5);\begin{scope}
\clip (-3,-3) rectangle (3,3.5);
\begin{scope}[shift={(0,1)}]

\draw[dashed, fill=gray!20] (3,2) .. controls (2.5,2) and (1,2) .. (0.5,2) .. controls (0.2,2) and (0,1.85) .. (0,1.5) .. controls (0,1.15) and (0.15,1) .. (0.5,1) .. controls (1,1) and (2.5,1) .. (3,1);

\draw (3,1.5) -- (0,1.5);
\end{scope}

\begin{scope}[shift={(0,-3)}]

\draw[dashed, fill=gray!20] (3,2) .. controls (2.5,2) and (1,2) .. (0.5,2) .. controls (0.2,2) and (0,1.85) .. (0,1.5) .. controls (0,1.15) and (0.15,1) .. (0.5,1) .. controls (1,1) and (2.5,1) .. (3,1);

\draw (3,1.5) -- (0,1.5);
\end{scope}

\begin{scope}[shift={(0,-1)}]

\draw[dashed, fill=gray!20] (3,2) .. controls (2.5,2) and (1,2) .. (0.5,2) .. controls (0.2,2) and (0,1.85) .. (0,1.5) .. controls (0,1.15) and (0.15,1) .. (0.5,1) .. controls (1,1) and (2.5,1) .. (3,1);

\draw (3,1.5) -- (0,1.5);
\end{scope}

\draw[fill=blue!20] (0,2.5) .. controls (-0.5,2.5) and (-1,3) .. (-1,3.5) .. controls (-1,4) and (-1.5,4.5) .. (-2,4.5) .. controls (-2.5,4.5) and (-2.5,3.5) .. (-2.5,3) .. controls (-2.5,2) and (-3.5,2) .. (-4,2) .. controls (-4,1.5) and (-4,1) .. (-3.5,0) .. controls (-3,-0.5) and (-2.5,-0.5) .. (-2,-0.5) .. controls (-1.5,-0.5) and (-1.5,-1.5) .. (-1.5,-2) .. controls (-1.5,-3) and (-0.5,-2.5) .. (-0.5,-2) .. controls (-0.5,-1.5) and (-0.5,-1.5) .. (0,-1.5) .. controls (-0.5,-1.5) and (-1,-1) .. (-1,-0.5) .. controls (-1,0) and (-1,0.5) .. (0,0.5) .. controls (-1,0.5) and (-0.2,1.6) .. (0.4,1.6) .. controls (4.2,1.6) and  (4.4,1.8) .. (0.4,1.8) .. controls (-0.2,1.8) and(-1.5,1) .. (-1.5,2) .. controls (-1.5,2.5) and (-1.5,2.5) .. (0,2.5);
\end{scope}
\draw[->] (5.5,0) .. controls (4.5,0) and (4.5,-1.75) .. (3.5,-1.75);
\draw[->] (5.5,0) .. controls (4.5,0) and (4.5,0.25) .. (3.5,0.25);
\draw[->] (5.5,0) .. controls (4.5,0) and (4.5,2.25) .. (3.5,2.25);
\node at (6,0) {$\stp_n$};
\node at (-2,0.5) {$\pi_{\mathbb C}(K)$};
\node at (-2.5,-2) {$V$};
\node[circle, fill, scale=.5] at (3,-1.25) {};
\node[circle, fill, scale=.5] at (3,0.75) {};
\node[circle, fill, scale=.5] at (3,2.75) {};

\node at (1.25,-1.75) {$L_0\times \mathbb{R}_{\geq 0}$};
\node at (1.25,0.25) {$L_1\times \mathbb{R}_{\geq 0}$};
\node at (1.25,2.25) {$L_k\times \mathbb{R}_{\geq 0}$};
\end{tikzpicture} \caption{A cobordism between admissible Lagrangians need not be compact in the projection to $\CC$, however it must fiber over rays near the stops.}
\label{fig:noncompactcobordism}
\end{figure}
When $X$ is compact, a Lagrangian $K\subset X\times \CC$ which fibers over real rays in $\CC$ outside of a compact set will necessarily be a Lagrangian cobordism for some set of ends. 
\begin{ex}
   An important example to consider which highlights the need to work with Lagrangians with non-compact projections to the base comes from Hamiltonian isotopy. 
   Let $H_t: X\times \RR\to \RR$ be a time-dependent Hamiltonian, compactly supported in $t$. 
   Let $\theta^s_{H_t}$ be the time $s$ Hamiltonian flow. 
   We can parameterize a Lagrangian submanifold by
   \begin{align*}
      L\times \RR\to &X\times \CC\\
      (x, s) \mapsto &(\theta^s_{H_t}(x), s+{\rm i} H_s).
   \end{align*}
   Notice that if $H_s$ is unbounded in the $L$-direction, then the $\pi_\CC$-projection of this cobordism will be unbounded in the imaginary direction.
   After rotating the ends to that they point all in the positive direction, we obtain a Lagrangian cobordism, called the suspension of $H_t$.
\end{ex}
One reason to use Lagrangian cobordisms is that they provide algebraic relations between the ends of the cobordism.
If $K: (L_1\ldots, L_k)\rightsquigarrow L_0$ is a monotone Lagrangian cobordism, then a result of Biran and Cornea states that the right end can be expressed as an iterated mapping cone of the Lagrangians on the left end. 
There is current work of Valentine Bosshard extending the results of Biran and Cornea to the setting of Liouville sectors by using the GPS framework.
We will use the following version of their results, which can be understood in terms of the wrapping triangle from \cite{ganatra2018sectorial}.

\begin{thm}[\cite{biran2014lagrangian}, \cite{bosshard}]
   Suppose that $K:(L_1, \ldots, L_k)\rightsquigarrow L_0$ is an admissible exact Lagrangian cobordism. 
   Then $(L_1, \ldots, L_k)$ generate $L_0$ in the Fukaya category.
   \label{thm:cobordismgeneration}
\end{thm}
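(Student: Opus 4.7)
The plan is to adapt the iterated cone construction of Biran--Cornea to the Liouville setting, following Bosshard's extension via the Ganatra--Pardon--Shende wrapping framework. I will realize $L_0$ as an iterated mapping cone of $L_1, \ldots, L_k$ by testing the cobordism $K$ against product Lagrangians in the ambient partially wrapped category $\mathcal W(X \times \CC, \stp \times \stp_n)$ and extracting the iterated cone from wrapping exact triangles.

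First, for each end of $K$, introduce test Lagrangians of the form $L \times \gamma_j$, where $L \subset X$ is an admissible Lagrangian and $\gamma_j \subset \CC$ is a small admissible arc placed transversely to the $j$-th end of $K$. Since $K$ coincides with $L_j \times ((j - \mathrm{i}/2) + \RR_{>0})$ outside $\pi_\CC^{-1}(V)$, we obtain canonical chain-level identifications
\[ \CF(K, L \times \gamma_j) \cong \CF(L_j, L) \]
for $j = 0, 1, \ldots, k$, computed with an almost complex structure that is a product outside a compact set in $\CC$.

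Next, consider a positive Hamiltonian isotopy $\{\gamma_0^t\}$ in $(\CC, \stp_n)$ that sweeps $\gamma_0$ across each of the stops separating it from the remaining ends. Each time $\gamma_0^t$ crosses a stop of $\stp_n$, the wrapping exact triangle of \cite{ganatra2018sectorial} produces an exact triangle in $\mathcal W(X \times \CC, \stp \times \stp_n)$ in which the linking disk at the crossed stop appears. Working in the product with a wrapping Hamiltonian that is compatible with the decomposition $X \times \CC$, one checks that this linking disk is admissibly Hamiltonian isotopic to $L \times \gamma_j$ for the appropriate $j$. Iterating across all intermediate stops assembles the fully swept arc $L \times \gamma_0^\infty$ as an iterated cone built from $L \times \gamma_j$ for $j = 1, \ldots, k$ together with the initial $L \times \gamma_0$.

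Finally, apply Yoneda. After the full wrap, $\gamma_0^\infty$ can be arranged so that $\CF(K, L \times \gamma_0^\infty) \cong \CF(L_0, L)$ up to terms that vanish by positivity in the $\CC$ direction. Pairing the iterated cone assembled above with $K$ and substituting the identifications $\CF(K, L \times \gamma_j) \cong \CF(L_j, L)$ yields an iterated cone expression for $\HF(L_0, L)$ in terms of $\HF(L_j, L)$, functorially in the test object $L$. The Yoneda embedding then upgrades this to the desired cone decomposition for $L_0$ inside $\mathcal W(X)$, proving that $L_1, \ldots, L_k$ generate $L_0$. The principal technical obstacle is the second step: ensuring that the GPS wrapping triangles interact cleanly with the product structure so that each linking disk produced by a sweep past a stop is genuinely of the form $L \times \gamma_j$ up to admissible isotopy. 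This requires the wrapping Hamiltonians on $\CC$ to be chosen compatibly with the product decomposition and is the core content of Bosshard's adaptation of the Biran--Cornea iterated cobordism exact sequence to stopped Liouville manifolds.
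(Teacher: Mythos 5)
The paper does not actually prove this theorem; it cites it to Biran--Cornea and Bosshard, adding only the aside that the result ``can be understood in terms of the wrapping triangle from GPS.'' So there is no in-paper proof to compare against, only that one-sentence remark, which your sketch is attempting to flesh out.

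Your outline is in the spirit of that aside, but Step 2 has a genuine gap. When $L\times\gamma_0$ is wrapped positively past a stratum of $\overline{\stp\times\stp_n}$, the linking disk appearing in the GPS wrapping exact triangle is a \emph{local} Lagrangian disk transverse to the crossed stratum; in the product this is (up to isotopy) a cocore of $X$ times a small linking arc in $\CC$, tensored with the local Floer data at the stop-intersection points of $L$. For a generic admissible test Lagrangian $L$ this object is not $L\times\gamma_j$ --- indeed $L\times\gamma_j$ is usually not a linking disk at all. So the GPS wrapping triangles you invoke do not present $L\times\gamma_0^{\infty}$ as an iterated cone on the objects $L\times\gamma_j$, and the identifications you feed into the Yoneda step are not the ones the triangles actually produce.

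The cited references instead analyze $\CF(K, L\times\gamma_0)$ directly: intersection points are filtered by their position in the $\CC$-factor, the associated graded pieces are identified with $\CF(L_j, L)$ by a K\"unneth-type computation using the cylindrical product structure of $K$ near its ends, and the iterated cone decomposition is read off from this filtration, functorially in $L$. A wrapping-triangle variant along your lines could plausibly be salvaged: first express $\gamma_0$ as an iterated cone on the linking arcs of $\stp_n$ purely inside $\mathcal{W}(\CC,\stp_n)$, then transport this through a K\"unneth exact functor $\delta\mapsto L\times\delta$ into $\mathcal{W}(X\times\CC, \overline{\stp\times\stp_n})$, and finally pair with $K$. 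But this still requires showing that pairing the honest linking arcs with $K$ recovers $\CF(L_j, L)$, which depends on the placement of the linking arcs relative to the ends of $K$ and is exactly the geometric input your sketch leaves unaddressed.
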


The Lagrangian cobordism that we will be using the most is the \emph{surgery trace} cobordism. 
We show here that the surgery construction presented in \cite{hicks2020tropical} carries over to the setting of Liouville domains with stops. 

\begin{df}
    Let $L_1$ and $L_2$ be two  admissible Lagrangian submanifolds of $(X, \stp)$.
    A surgery data is 
    \begin{itemize}
      \item $U\subset L_1$, an open neighborhood of $L_1\cap L_2$ called the \emph{surgery region}, and
      \item a function $f: U\to \RR$ called the collar function 
    \end{itemize}
    with the following properties:
    \begin{itemize}
        \item
              $f\geq 0$, and $f=0$ exactly on $L_1\cap L_2$.
        \item
              There exists a Weinstein neighborhood of $U$ symplectomorphic to the radius $c$ cotangent ball  $B^*_{c} (U)$, so that the Lagrangian $L_2|_{B^*_c (U)}$ is the graph of the section $df$. 
    \end{itemize}
    \label{def:surgerydata}
   \end{df}
   Given a subset $U\subset X$, and fixed $X^{int}\subset X$ a Liouville domain, the conicalization of  $U$ (relative to $X^{int}$) is the attachment of the downward flow space outside $X^{int}$ to $U$.
   \[U^{cone}:=U \cup\{\phi^t_Z(x) \; : \; x\in U\setminus X^{int}, t\in \RR_{>0}\}.\]
   \begin{prop}
   \label{prop:generalizedsurgeryprofile}
   Let $L_1, L_2$ be admissible Lagrangian submanifolds of $(X, \stp)$ with surgery data $(U, f)$. 
    For all sufficiently small $\eps$, there exists a Lagrangian surgery at $U$ contained within a conicalized neighborhood of the symmetric difference between $L_1$ and $L_2$:  
    \[L_1\#_U L_2\subset B_{\eps}^{cone} ((L_1\cup L_2)\setminus (L_1 \cap L_2)).\]
    Furthermore, there is a Lagrangian surgery trace cobordism 
    \[K:(L_1, L_2)\rightsquigarrow (L_1\#_U L_2).\] 
    The surgery $L_1\#_U L_2$ is conical at infinity. Furthermore, if $L_1, L_2$ are exact, and $U$ is connected, then $L_1\#_U L_2$ is exact.
    In particular, if $U$ is connected, then $L_1\#_U L_2$ is admissible.
\end{prop}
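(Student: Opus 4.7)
The plan is to adapt the surgery construction from \cite{hicks2020tropical} by allowing the surgery profile to scale with the symplectization coordinate so that admissibility and conicality can be preserved. First I would set up coordinates: using the Weinstein neighborhood theorem, identify a neighborhood of $U\subset L_1$ with $B^*_c(U)$, in which $L_1$ appears as the zero section and $L_2$ as the graph of $df$. Since $f\geq 0$ with zero set exactly $L_1\cap L_2$, the level sets $\{f=a\}$ for small $a>0$ foliate the surgery region and provide the natural parameter on which to perform the replacement.

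Next I would construct a pair of surgery profile functions $r_{c}, s_{c}:\RR_{\geq 0}\to \RR$, playing the role of the $r_{\rho c_\eps}, s_{\rho c_\eps}$ that already appear in the main text (see the charts $\Lc_r, \Lc_s$ of \cref{subsec:inclusionmirror}). These are chosen so that for $a\leq c/2$ one has $r(a)=s(a)=a$ (so the graphs are tangent to $L_1$-like behavior) and for $a\geq 2c$ one has $r(a)=a$, $s(a)=0$ with the appropriate derivatives, so that $\{(x, d(r\circ f)(x))\}$ and $\{(x, d(s\circ f)(x))\}$ patch together into a smooth Lagrangian $L_1\#_U L_2$ which coincides with $L_2$ where $f>2c$ and with $L_1$ where $f<c/2$. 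To handle the non-compactness, replace the constant $c$ by $\rho\cdot c_\eps$, where $\rho: X\to \RR_{>0}$ is a smooth function that is constant on a chosen Liouville domain $X^{int}$ and grows linearly in the Liouville coordinate outside it; this is exactly the recipe used for $\Halpha\geq \rho c_\eps$ in \cref{sub:inclusioncorrespondence}. The resulting surgery $L_1\#_U L_2$ is then contained in the conicalized $\eps$-neighborhood of $(L_1\cup L_2)\setminus(L_1\cap L_2)$, since the support of the modification lies in $\{f\leq 2\rho c_\eps\}$, which is an $O(\rho c_\eps)$ neighborhood of $L_1\cap L_2$.

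I would then verify the required properties. Conicality at infinity follows because the construction is done in a scale-equivariant fashion with respect to the Liouville flow once $\rho$ has been chosen compatible with the symplectization: the profile widens linearly as one moves out, so the surgered Lagrangian becomes parallel to the Liouville vector field outside a compact set. Admissibility with respect to $\stp$ follows from the fact that the surgery is supported in a neighborhood of $L_1\cup L_2$, each of which is disjoint from $\stp$, together with choosing $c_\eps$ small enough. For exactness with $U$ connected, one checks that on each component the primitives of $\lambda|_{L_1}$ and $\lambda|_{L_2}$ differ by $f$ (up to an additive constant), which is well-defined across the single component; the piecewise definition of the surgery using $r\circ f$ and $s\circ f$ then yields a globally defined primitive.

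Finally, for the surgery trace cobordism $K:(L_1,L_2)\rightsquigarrow L_1\#_U L_2$, I would suspend the one-parameter family of surgery profiles $(r_{c(t)}, s_{c(t)})$ with $c(t)$ interpolating from $0$ (at which the two sheets separate into $L_1$ and $L_2$) to the final value as $t\to \infty$, and lift this family to a Lagrangian in $X\times \CC$ fibering over appropriate horizontal rays at infinity, using the standard Lagrangian suspension applied to a compactly supported isotopy in the Weinstein neighborhood. The main obstacle is the interplay between conicality and the stop: ensuring that the $\rho$-dependent thickening remains inside the prescribed conical neighborhood and away from $\stp$ forces a careful choice of $c_\eps$ relative to $\rho$, and is what genuinely distinguishes the Liouville case from the compact case treated in \cite{hicks2020tropical}.
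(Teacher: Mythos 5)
Your overall plan for constructing the surgery—Weinstein coordinates, profile functions $r, s$ scaled by a function $\rho$ that grows in the symplectization coordinate—is essentially the paper's approach, but there are two genuine gaps.

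The larger one is the cobordism. You propose to ``suspend the one-parameter family of surgery profiles $(r_{c(t)}, s_{c(t)})$ with $c(t)$ interpolating from $0$.'' This cannot give the surgery trace cobordism: as $c(t)\to 0$ the family $L_1\#_{U,c(t)}L_2$ degenerates to the \emph{singular} Lagrangian $L_1\cup L_2$, not to the disjoint union $L_1\sqcup L_2$, and Lagrangian suspension only applies to an honest (Hamiltonian) isotopy, which preserves the topology and number of components of the Lagrangian. The trace cobordism has different topology on its two ends (it is a disk, joining two disks on the left end to one disk on the right), so it is not the suspension of anything. What the paper does, following \cite[Proposition 3.1]{hicks2020tropical}, is pick curves $\gamma_1,\gamma_2\subset\CC$ that intersect over the negative real axis and then apply the surgery construction \emph{again}, in $X\times\CC$, to $L_1\times\gamma_1$ and $L_2\times\gamma_2$ along $(L_1\cap L_2)\times\RR_{<0}$. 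The trace cobordism is itself a surgery, not a suspension.

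The smaller gap is in your treatment of the conical end. You say ``replace the constant $c$ by $\rho c_\eps$ where $\rho$ grows linearly in the Liouville coordinate''; for conicality one needs $\rho=e^t$ in the symplectization coordinate, not a linear function. More substantively, you take for granted that the collar function $f$ extends to the conical end in the right scale-invariant form. The paper has to work for this: it slices $L_1\cap L_2$ and the surgery region $U$ at $\partial X^{int}$ to obtain a Legendrian $\Upsilon\subset\Lambda_1$, represents $L_2$ near the end as the graph of a $1$-jet $J^1 g$ over $\Upsilon\times\RR_{>0}$, and then interpolates between the original collar $f$ on $X^{int}$ and the conically-invariant $e^t g$ on the end to produce a new collar $h$ to which the rescaled profiles are applied. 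Without producing such an $h$ it is not clear that the charts $d(r_\rho\circ h)$ and $d(s_\rho\circ h)$ are conical and glue to the cylindrical parts of $L_1,L_2$. Your exactness and admissibility arguments are fine in outline once this is in place.
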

    \begin{proof}
   We first construct a new surgery neighborhood and collar function from the data of  $U$ and $f$. This new surgery data will be invariant under the Liouville flow outside of a compact set. 
   Let $X=X^{int}\cup (\partial X^{int})\times\RR_{>0}$ be a decomposition of $X$ into a compact symplectic manifold with contact boundary and the symplectization of that boundary. 
   Assume that $X^{int}$ was chosen so that   $L_i|_{(\partial X^{int})\times\RR_{>0}} =\Lambda_i\times \RR_{>0}$ for choices of Legendrian submanifolds $\Lambda_i$.
   Write $L_i=L^{int}_i\cup (\Lambda_i\times \RR_{>0})$, and take $\Upsilon\subset \Lambda_1$ to be the restriction of the surgery region to a Legendrian slice of $L_1$ at $\partial X^{int}$.
   Because $0$ is the minimum of $f$, there exists a value $c_\eps$ so that $f|_{X^{int}}$ has no critical values for $f< 3 c_\eps$ except at 0. 
   By choosing a smaller subset of $U$ and a smaller value $c_\eps$, we can assume that $(\partial U)\cap X^{int}$ is the level set $f^{-1}(3c_\eps)$. 

   Over the conical end we can take a symplectic neighborhood of $ J^1_{c_1}(\Upsilon)\times \RR_{>0}\supset \Upsilon\times\RR_{>0}$,  where $J^1_{c_1}(\Upsilon)$ is a small neighborhood of the zero section in the jet bundle.  The radius $c_1$ of this jet bundle can be chosen small enough so that  $\Lambda_2\cap J^1_{c_1}(\Upsilon)$ is a section of the jet bundle. 
   Since $L_2$ is admissible, this section is invariant in the $\RR_{>0}$ direction and there exists a primitive $g:\Lambda_1\to \RR$ so that $L_2|_{J^1_{c}}(\Upsilon)\times\RR_{>0}\to \RR$  is the graph of $J^1g\times [0, \infty)$, where $J^1g$ is the 1-jet of $g$. 
   Note that there is not necessarily an inclusion (in either direction) of  $J^1_{c_1}(\Upsilon)\times [0,\infty)$ and $B^*_{c}(U)$. 
   The function $g$ satisfies similar properties to $f$, that is 
   \begin{itemize}
      \item $g\geq 0$, and $g=0$ exactly where $\Lambda_1\cap \Lambda_2$,
      \item $g$ achieves a minimal value of $3  c_{\eps}$ on the boundary of $\Upsilon$.
   \end{itemize}
   This gives us a ``conical'' version of the collar function over the contact end. We now combine this conical collar function with the original collar function.
   Take a Weinstein neighborhood $W$ of an open set of $L_1$ containing the intersection $L_1\cap L_2$. 
   This Weinstein neighborhood can be chosen so that inside of  $X^{int}$, $W$ is a subneighborhood of $B^*_c(U)$.
   Furthermore, we can take $W$ so that outside of a compact set $X^{int}$, $W$ splits as $B^*_{c_1}(\Upsilon)\times B^*_{c_2}(\RR_{>0})$ and is a subneighborhood of $J^1_{c_1}(\Upsilon)\times \RR_{>0}$.
   Finally, we note that we may pick $W$ small enough so that it avoids $\LL_{\stp}\setminus X^{int}$ (defined in \cref{eq:conicalizationofstop}).
   In this Weinstein neighborhood, $L_2|_W= dh$, where $h$ satisfies the following properties:
   \begin{itemize}
      \item $h \geq 0$, and $h=0$ exactly where $L_1\cap L_2$,
      \item Outside of a compact set, $h=e^t\cdot g$ where $t$ is the symplectization coordinate. 
      \item $h$ achieves a minimal value of $3c_{\eps'}$ on the boundary of $W\cap L_1$ inside $X^{int}$ for some value $c_{\eps'}$,
   \end{itemize}
\begin{figure}
   \centering
   
\begin{tikzpicture}[thick]
	\begin{scope}[]
		\draw[ <->] (-1.5,0)-- (1.8,0);
		\draw[ <->]  (-1,2.5)--(-1,-0.5);
		\node at (-1.5,2.4) {$s_\eps(t)$};
		\node at (2.4,0) {$t$};

		\node[above] at (0.6,0) {$2c_\eps$};
	
	\node[above]at (0,0) {$c_\eps $};
		
		\draw[ultra thick] (1.8,0) -- (0.6,0);
	\draw[ultra thick] (0.6,0) .. controls (0.4,0) and (0.2,0) .. (0,-0.1) node (v1) {};
	
	\draw (1.8,0);
\end{scope}
	\begin{scope}[shift={(0,-4)}]
		\draw[ <->] (-1.5,0)-- (1.8,0);
		\draw[ <->]  (-1,2.5)--(-1,-0.5);
		\node at (-1.5,2.4) {$s_\eps'(t)$};
		\node at (2.4,0) {$t$};

		\node[below] at (0.6,0) {$2c_\eps$};
	
	\node[below] at (0,0) {$c_\eps$};
		
		\draw[ultra thick] (1.8,0) -- (0.6,0);
	\draw [ultra thick](0.6,0) .. controls (0.2,0) and (0,0.2) .. (0,1);
	
	\draw[dashed] (0,0) -- (0,1)--(-1, 1);
\end{scope}
	\node at (-1.25,-2) {1};
	\node at (-1.25,-3) {$\frac{1}{2}$};

	\begin{scope}[shift={(-6.5,0)}]
	
	\begin{scope}[]
		\draw[ <->] (-1.5,0)-- (1.8,0);
		\draw[ <->]  (-1,2.5)--(-1,-0.5);
		\node at (-1.5,2.4) {$r_\eps(t)$};
		\node at (2.4,0) {$t$};

		\node[below] at (0.6,0) {$2c_\eps$};
		
		\draw[ultra thick] (1.4,2.4) -- (0.5,1.5);
	\draw[ultra thick] (0.6,1.6) .. controls (0.4,1.4) and (0.4,1.4) .. (0,1.2);
	
	\node[below] at (0,0) {$c_\eps $};
\draw[dashed, thick] (0,1.2) -- (0,0)  (0.6,1.6) -- (0.6,0);
\end{scope}
	\begin{scope}[shift={(0,-4)}]
		\draw[ <->] (-1.5,0)-- (1.8,0);
		\draw[ <->]  (-1,2.5)--(-1,-0.5);
		\node at (-1.5,2.4) {$r_\eps'(t)$};
		\node at (2.4,0) {$t$};

		\node[below] at (0.6,0) {$2c_\eps$};
		
	\node[below] at (0,0) {$c_\eps $};
		\draw[ultra thick] (1.8,2) -- (0.6,2);
	\draw[ultra thick] (0.6,2) .. controls (0.1,2) and (0,1.5) .. (0,1);
	
	\draw[dashed] (0,0) --(0,1) --(-1, 1) (-1, 2)--(0.6,2) -- (0.6,0);
\end{scope}
	\node at (-1.25,-2) {1};
	\node at (-1.25,-3) {$\frac{1}{2}$};
	
	\end{scope}
\end{tikzpicture}    \caption{The $r$ and $s$ surgery profiles}
   \label{fig:profilecurves}
\end{figure}
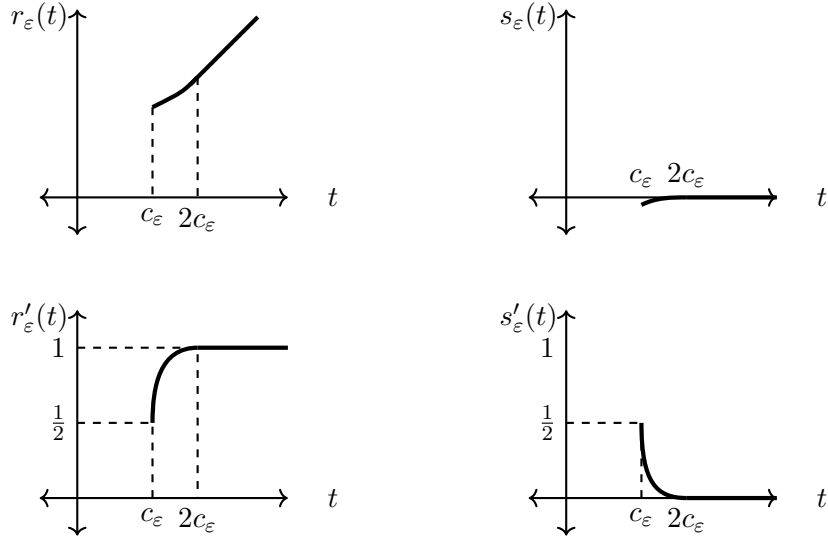
   We now follow \cite[Proposition 3.1]{hicks2020tropical}.  Define surgery profile functions $r, s: \RR_{>c_\eps}\to \RR$ which satisfy the following properties:
	\begin{itemize}
		\item
		      $r(a)=a$ for $a\geq 2c_\eps$ and $s_\eps(a)=0$ for $a\geq 2c_\eps$.
		\item
		      $r'(c_\eps)=s'(c_\eps)=\frac{1}{2}$.
		\item
		      $r(a)$ is convex, while $s(a)$ is concave.
		\item
            The concatenation of curves $(a, r'(a))$ and $(a, s'(a))$ is a smooth plane curve.
   \end{itemize}
   We now adapt these surgery profiles to give admissible Lagrangians in the symplectization region by choosing a rescaling function $\rho:L_1\to \RR$ satisfying:
   \begin{itemize}
      \item $\rho|_{L_1\cap X^{int}}=1$ 
      \item $\rho|_{L_1\setminus X^{int}}$ depends only on the symplectization coordinate. When restricted to $\partial X^{int}\times [1, \infty)$, $\rho$ is equal to $e^t$, where $t$ is the symplectization coordinate. 
   \end{itemize}
   The rescaled surgery profile functions are
   \begin{align*} 
      r_\rho(a):= \rho\cdot r(a/\rho) && s_\rho(a):=\rho\cdot s(a/\rho).
   \end{align*}
   This rescaling is similar to the rescaling of convolution kernel used in \cref{sec:embedding} which was needed to make tropical Lagrangian sections admissible objects of the category. 
   The surgery neck is built from two Lagrangian sections of $W|_{h\geq \rho c_\eps}$,
   \begin{align*}
      L_r':= d(r_\rho\circ h) &&  L_s':= d(s_\rho\circ h).
   \end{align*}
   The union of these two charts is the surgery neck, which is a smooth Lagrangian submanifold $L_r'\cup L_s'\subset W$. 
   Near the boundary of $W$ where $h>2\rho c_\eps$, we obtain,  
   \begin{align*}
      r_\rho\circ h |_{h>2\rho c_\eps} = h && s_\rho\circ h|_{h>2\rho c_\eps}=0,
   \end{align*}
   so we can glue these to our $L_1, L_2$ to form Lagrangian submanifolds with common boundary, 
   \begin{align*}L_r:= L_r'\cup ( L_2\setminus W) && L_s:= L_s'\cup (L_1\setminus W). 
   \end{align*}
   We call these the $r$ and $s$ charts of the surgery, and define the surgery $L_1\#_{U} L_2:= L_r\cup L_s$.

   It remains to show that this is a conical Lagrangian submanifold.
   Outside of the set $X^{int}$, the section is  $d(r_\rho\circ h)= d(e^t \cdot (r\circ g))$, and so the $r$-chart restricted to this portion  $L_r'\setminus X^{int}$ is parameterized by $J^1( r\circ g)\times [1, \infty)$. 
   Similarly, the $L_s$ chart in the conical region is given by $J^1(s\circ g)\times [1, \infty)$.
   $L_1\#_{U} L_2$ is conical at infinity Lagrangian because union of conical at infinity charts is conical at infinity.

   Finally, suppose that $L_1, L_2$ are exact, and the surgery region $U$ is connected.
   Then the $L_s$ and $L_r$ charts are exact, and there is no obstruction to gluing the primitives to obtain a primitive for $\lambda$ over $L_1\#_{U} L_2$.

   The Lagrangian surgery trace cobordism is constructed following \cite[Proposition 3.1]{hicks2020tropical}.
   Let $L_1, L_2$ be Lagrangians with surgery data $U, f$.
   Pick curves $\gamma_1, \gamma_2\subset \CC$ which intersect over the negative real axis (as drawn in \cref{fig:surgeryconfiguration}.) 
   Then $L_1\times \gamma_1, L_2\times \gamma_2\subset X\times \CC$ can be surgered along their intersection $(L_1\cap L_2)\times \RR_{<0}$, giving a Lagrangian cobordism as desired. 
\end{proof}
\begin{figure}
   \centering
   
\tikzset{
    side by side/.style 2 args={
        line width=2.5pt,
        #1,
        postaction={
            clip,postaction={draw,#2}
        }
    }}
\begin{tikzpicture}
    \draw[thick]  (-4,2) rectangle (2.5,-3);
    \draw (2.5,-0.5) -- (-1.5,-0.5);
    \draw[ultra thick, fill=gray!20] (-0.75,0) .. controls (-1,-0.25) and (-0.75,-0.5) .. (-0.5,-0.5) .. controls (-0.75,-0.5) and (-1,-0.5) .. (-1.5,-0.5) .. controls (-1,-0.5) and (-1,-0.25) .. (-0.75,0);
    \draw[ultra thick] (-0.75,0) .. controls (-0.5,0.25) and (-0.25,0.5) .. (0.25,0.5) .. controls (0.5,0.5) and (2.25,0.5) .. (2.5,0.5);
    \draw[ultra thick] (-1.5,-0.5) -- (-3,-0.5);
    \node[right] at (2.5,0.5) {$L_2$};
    \node [right] at (2.5,-0.5) {$L_1$};
    \node[right] at (2.5,-2) {$L_1\#_UL_2$};
    \node at (-2.5,1.5) {$\mathbb C$};
    \draw[ultra thick, red, yshift=-.5] (-3,-0.5) -- (2.5,-0.5);
\draw[ultra thick, blue, yshift=.5] (2.5,0.5) .. controls (2.25,0.5) and (0.5,0.5) .. (0.25,0.5) .. controls (-0.25,0.5) and (-0.5,0.25) .. (-0.75,0) .. controls (-1,-0.25) and (-1,-0.5) .. (-1.5,-0.5) .. controls (-2.5,-0.5) and (-3,-0.5) .. (-3,-0.5);
\draw[side by side={blue}{red}]
(-3,-0.5) .. controls (-3.5,-0.5) and (-3.5,-2) .. (-3,-2) .. controls (1.5,-2) and (2,-2) .. (2.5,-2);
\end{tikzpicture}    \caption{The surgery trace is given by surgering $L_1\times \gamma_1$ and $L_2\times \gamma_2$, where $\gamma_1$ and $\gamma_2$ are the blue and red curve drawn.}
   \label{fig:surgeryconfiguration}
\end{figure}
\section[Generation of partially wrapped Fukaya category]{Generation of $\Fuk(X, \stp)$}
\label{app:generation}
\subsection{Lagrangian cobordisms with Legendrian ends}
Let $\partial X^{int}\times \RR$ be the symplectization of a contact manifold $\partial X^{int}$.
We denote by $\pi_\RR: \partial X^{int}\times \RR\to \RR$ the projection to the symplectization coordinate. 
\begin{df}
   Let $\partial X^{int}$ be a contact manifold, $\Lambda^-, \Lambda^+\subset X$ be Legendrian submanifolds.
   A \emph{Lagrangian cobordism with Legendrian ends} $\Lambda^-, \Lambda^+$ is a Lagrangian submanifold of $\partial X^{int}\times \RR$ whose ends are conical in the sense that there exists an interval $[t_-, t_+]\subset \RR$ outside which 
   \[L\setminus \pi_\RR^{-1}([t_-, t_+])=( \Lambda^-\times (-\infty, t_-))\sqcup ( \Lambda^+\times (t_+, \infty)).\] 
   In this case, we say that $L$ is a Lagrangian cobordism from $\Lambda_+$ to $\Lambda_-$ and write $L: \Lambda_+\rightsquigarrow \Lambda_-$. 
\end{df}
Given two Lagrangian cobordisms with Legendrian ends, 
\begin{align*}
   L_{01}: \Lambda_0\rightsquigarrow \Lambda_1 && L_{12}: \Lambda_1\rightsquigarrow \Lambda_2
\end{align*}
there exists a concatenation Lagrangian cobordism with Legendrian ends $L_{01}\circ L_{10}: \Lambda_0\rightsquigarrow \Lambda_1$.
This cobordism is canonically defined up to Hamiltonian isotopy. 
In some circumstances, we can also decompose a Lagrangian cobordism between Legendrians into simpler pieces along the symplectization coordinate $t$.  
We start by describing cobordisms of Legendrians which arise from Legendrian isotopies. 

\begin{lem}[Sliceablility Criteria] 
   Let $\partial X^{int}$ be a $2n-1$ dimensional contact manifold with contact form $\lambda$.
   Suppose $\Lambda$ is an $n-1$ dimensional submanifold of $\partial X^{int}$ with isotopy parameterized by \[\phi(x, s): \Lambda\times \RR_s\to \partial X^{int}.\]
   The following are equivalent:
   \begin{enumerate}
      \item  The pullback $\iota_{\partial_s}(\phi^*d\lambda)=0$ vanishes so that  $L_\phi$, the submanifold parameterized by 
      \begin{align*} 
      \Phi:\Lambda\times \RR_s\to& \partial X^{int}\times \RR_t\\
        (x, s) \mapsto& (\phi(x, s), s),
      \end{align*}
      is a Lagrangian cobordism with Legendrian ends. 
      \label{item:lagrangiancondition}
      \item For each $s_0$, the manifold $\Lambda_{s_0}\subset \partial X^{int}$ parameterized by  $\phi(x, s_0)$ is a Legendrian submanifold.
      Furthermore, the isotopy $\phi(x, s)$ is generated by 
       \[H_{s}:=\iota_{\partial_s}(\phi^*(\lambda)):\Lambda\times \RR_s\to \RR\]
       which is constant across slices, in the sense that for all $\partial_w\in T\Lambda$, 
       \[\partial_w H_s=0.\]
      Finally, we require that the generating function  $H_s$ is compactly supported in the $s$ coordinate.
      \label{item:legendriancondition}
   \end{enumerate}
   \label{lemma:sliceable}
\end{lem}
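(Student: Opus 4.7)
The plan is to carry out a direct coordinate computation by splitting the pullback form $\phi^*\lambda$ into its $\Lambda$- and $s$-components, and then match the two sides of the equivalence term by term.

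First I will write
\[ \phi^*\lambda \;=\; \alpha_\Lambda + H_s\,ds, \]
where $\alpha_\Lambda$ is a family of $1$-forms on $\Lambda$ parameterized by $s$ and $H_s=\iota_{\partial_s}(\phi^*\lambda)$. Noting that the symplectic form on the symplectization is $\omega = d(e^t\lambda)$ and that $\Phi^*(e^t\lambda)= e^s(\alpha_\Lambda + H_s\,ds)$, a direct computation will give
\[
\Phi^*\omega \;=\; e^s\!\left(d_\Lambda\alpha_\Lambda \,+\, ds\wedge\bigl(\alpha_\Lambda + \partial_s\alpha_\Lambda - d_\Lambda H_s\bigr)\right),
\]
so that $L_\phi$ is Lagrangian if and only if the two conditions
\[
(\mathrm{i})\;\; d_\Lambda\alpha_\Lambda = 0, \qquad (\mathrm{ii})\;\; \alpha_\Lambda + \partial_s\alpha_\Lambda - d_\Lambda H_s = 0
\]
both hold. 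A parallel computation of $\phi^*d\lambda = d_\Lambda\alpha_\Lambda + ds\wedge(\partial_s\alpha_\Lambda - d_\Lambda H_s)$ will show that the condition $\iota_{\partial_s}(\phi^*d\lambda)=0$ is precisely $\partial_s\alpha_\Lambda - d_\Lambda H_s = 0$.

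Next I will establish $(1)\Rightarrow(2)$: subtracting the identity $\partial_s\alpha_\Lambda = d_\Lambda H_s$ from (ii) yields $\alpha_\Lambda = 0$, which is exactly the statement that each slice $\Lambda_{s_0}$ is Legendrian. Substituting $\alpha_\Lambda=0$ back into (ii) gives $d_\Lambda H_s = 0$, i.e.\ $H_s$ is constant on each slice. Compact support of $H_s$ in the $s$-variable then follows from the Legendrian-ends hypothesis: for $|s|$ large, $\phi_s$ does not depend on $s$, so $\partial_s\phi=0$ and hence $H_s = \lambda(\partial_s\phi)=0$. For the converse $(2)\Rightarrow(1)$, the assumption $\alpha_\Lambda=0$ and $d_\Lambda H_s=0$ makes both (i) and (ii) hold trivially, so $L_\phi$ is Lagrangian, and $\phi^*d\lambda = d_\Lambda H_s\wedge ds = 0$ forces $\iota_{\partial_s}(\phi^*d\lambda)=0$.

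The hardest step will be the end-behavior in the direction $(2)\Rightarrow(1)$: compact support of $H_s$ in $s$ only shows that the generating contact Hamiltonian vanishes outside a compact interval, which at the infinitesimal level only implies the Legendrian isotopy is trivial \emph{up to reparameterization of $\Lambda$}. To conclude that $L_\phi$ has genuinely Legendrian ends in the sense of Definition~B.1 (i.e.\ equal to $\Lambda_\pm\times\RR_{>0}$ rather than merely asymptotic), I will choose a representative of the parameterization $\phi$ that is constant in $s$ outside the support of $H_s$; this is possible because the flow of a vector field on $\Lambda$ tangent to $\Lambda_s$ does not affect the image submanifold $L_\phi$, only its parameterization. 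With this choice the ends are literally $\Lambda_\pm\times\RR_{>0}$, completing the equivalence.
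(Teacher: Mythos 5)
Your proposal is correct, and its organization differs from the paper's in a way that makes both directions more transparent. The paper argues pointwise: it evaluates $\Phi^*\omega$ and $\iota_{\partial_s}(\phi^*d\lambda)$ on vector pairs $(\partial_s,\partial_w)$ and applies the Cartan formula for $d$ of a $1$-form; it writes out only $(1)\Rightarrow(2)$ and declares the converse "analogous." You instead split $\phi^*\lambda=\alpha_\Lambda+H_s\,ds$ globally and extract from $\Phi^*\omega=0$ the two scalar conditions $d_\Lambda\alpha_\Lambda=0$ and $\alpha_\Lambda+\partial_s\alpha_\Lambda-d_\Lambda H_s=0$, with $\iota_{\partial_s}(\phi^*d\lambda)=0$ becoming $\partial_s\alpha_\Lambda-d_\Lambda H_s=0$; subtracting then makes both implications an exercise in linear algebra on forms. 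You also correctly identify the only delicate point in $(2)\Rightarrow(1)$, which the paper does not mention: compact support of $H_s$ together with $\alpha_\Lambda=0$ gives $\phi^*\lambda=0$ for $|s|$ large, and since the isotropic image of $d\phi$ sits inside the $(2n-2)$-dimensional symplectic vector space $\ker\lambda$, a rank bound forces $\partial_s\phi$ to be tangent to $\Lambda_s$, so the image Legendrian is eventually constant. Because the Legendrian-ends condition in Definition~\ref{def:cobordism} (and its Legendrian analogue) is a condition on the image $L_\phi$ rather than the parameterization $\Phi$, this already suffices; the reparameterization of $\phi$ you propose is harmless but not strictly necessary. One small imprecision worth correcting in the $(1)\Rightarrow(2)$ step: "$\phi_s$ does not depend on $s$, so $\partial_s\phi=0$" is too strong, as the hypothesis fixes only the image Legendrian and $\partial_s\phi$ may be a nonzero tangential vector field. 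But then $\partial_s\phi\in T\Lambda_\pm\subset\ker\lambda$, so $H_s=\lambda(\partial_s\phi)=0$ anyway --- the same tangential-reparameterization point you articulate for the reverse direction.
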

\begin{proof}
   We will prove one direction: that \cref{item:lagrangiancondition} implies \cref{item:legendriancondition}.
   First, we show that $(\phi^*\lambda)|_{\Lambda\times \{s_0\}}=0$.  
    Let $\partial_w\in T_p(\Lambda\times \{s_0\})$ be any vector.
    Let $\partial_s\in T_p (\Lambda\times \RR_s)$ be the vector arising from the $\RR_s$ coordinate so that $\Phi^*dt(\partial_s)=1$. 
    Because $\Phi$ is a Lagrangian embedding,
    \begin{align*}
        0=\Phi^*\omega(\partial_s,\partial_w)=&e^{s_0} \cdot (\Phi^* dt\wedge \Phi^*\lambda)(\partial_s,\partial_w)+e^{s_0}\cdot (\Phi^* d\lambda)(\partial_s,\partial_w)\\
        =& e^{s_0}\cdot ( \Phi^*dt(\partial_s))\cdot (\Phi^*\lambda(\partial_w))+\iota_{\partial_s}(\Phi^*d\lambda)(\partial_w)\\
        =& e^{s_0}\cdot \Phi^*\lambda(\partial_w)
        = e^{s_0}\cdot \phi^*\lambda(\partial_w)|_{s=s_0}
    \end{align*}
    Therefore, $\lambda$ vanishes on $T_p\Lambda_{s_0}$ and we conclude that $\Lambda_{s_0}$ is Legendrian. 

    To check that $\iota_{\partial_t}(\phi^*(\lambda))$ is solely a function of $t$, we extend $\partial_w\in T\Lambda$ and $\partial_s\in T\Lambda\times \RR_s$ to vector fields on $\Lambda\times \RR_s$  so that $[\partial_w, \partial_s]=0$.
    The identity comes from the computation
    \begin{align*}
      0=&\iota_{\partial_s}(d\lambda)(\partial_w)\\
      =&\partial_s \phi^*\lambda(\partial_w)+ \partial_w \phi^*\lambda(\partial_s)+\lambda([\partial_s, \partial_w])\\
      =& \partial_w( \iota_{\partial_s}\phi^*\lambda).
    \end{align*}
   The reverse direction is analogous. 
\end{proof}

With \cref{lemma:sliceable} in mind:
\begin{df}
   We say that a cobordism of Legendrians $L\subset \partial X^{int}\times \RR_t$ is \emph{sliceable}  at $t_0\in \RR_s$ if:
   \begin{itemize}
      \item $t_0$ is a regular value of $\pi_\RR: L\to \RR$ and;
      \item There exists a vector field $\partial_t$ on $L$ transverse to $\pi^{-1}(t_0)$ with 
      \[\iota_{\partial_t}\phi^*(d\lambda)=0.\]
   \end{itemize}
   We call the Legendrian $\Lambda_{t_0}:=L\cap \pi^{-1}_\RR(t_0)$ the Legendrian slice of $L$ at $t_0$. 
   \label{def:sliceable}
\end{df}
\begin{lem}[Decomposition Lemma]
   Consider a Lagrangian cobordism with Legendrian ends $L_{+-}:\Lambda_+\rightsquigarrow\Lambda_-$ inside the symplectization $\partial X^{int}\times \RR$.
   Suppose that $L_{+-}$ is sliceable for all $s\in [-\eps, \eps]$, and so in particular we obtain a Legendrian slice $\Lambda_0$. 
   Then there exist cobordisms of Legendrians in $X\times \RR$ 
   \begin{align*}
    L_{+0}:& \Lambda_+\rightsquigarrow \Lambda_0\\
    L_{0-}:& \Lambda_0 \rightsquigarrow \Lambda_-
   \end{align*}
    whose concatenation is Hamiltonian isotopic to $L_{+-}$.
   \label{lemma:decomposition}
\end{lem}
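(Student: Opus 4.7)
The plan is to use the sliceability hypothesis to modify $L_{+-}$ by a Hamiltonian isotopy into a cobordism that is strictly cylindrical over $\Lambda_0$ on a subinterval of $[-\eps,\eps]$, and then cut this straightened cobordism at $t=0$ to obtain $L_{+0}$ and $L_{0-}$.

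First, I would apply \cref{lemma:sliceable} on the strip $\pi_\RR^{-1}([-\eps,\eps]) \cap L_{+-}$: sliceability at every $s\in[-\eps,\eps]$ provides a vector field transverse to the slices, and integrating it gives a parameterization $\Phi(x,s) = (\phi(x,s), s)$ where each $\phi(\cdot, s)$ is an embedding of the common Legendrian $\Lambda_0$ onto the slice $\Lambda_s$. By \cref{lemma:sliceable}\eqref{item:legendriancondition}, the isotopy $\phi$ is generated by a Hamiltonian $H_s = \iota_{\partial_s}(\phi^*\lambda)$ which is constant along each slice, i.e. depends only on $s$ (as a function on $\Lambda_0$ parameterizing the slice).

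Next, I would choose a cutoff function $\chi: [-\eps,\eps]\to[0,1]$ that equals $1$ near $\pm\eps$ and vanishes on $[-\eps/2,\eps/2]$, and consider the interpolated Hamiltonian $H^\tau_s := (1-\tau(1-\chi(s))) H_s$ for $\tau\in[0,1]$. Each $H^\tau_s$ is constant along slices, so by the reverse direction of \cref{lemma:sliceable} it integrates to a Legendrian isotopy whose suspension is a Lagrangian cobordism $L^\tau\subset \partial X^{int}\times [-\eps,\eps]$ matching $L_{+-}$ at $s=\pm\eps$. Extending $L^\tau$ by $L_{+-}\setminus \pi_\RR^{-1}([-\eps,\eps])$ outside the strip produces a one-parameter family of Lagrangian cobordisms with Legendrian ends $\Lambda_\pm$, starting at $L_{+-}$ when $\tau=0$ and ending at a cobordism $\tilde L := L^1$ which, by construction, coincides with $\Lambda_0\times[-\eps/2,\eps/2]$ over the middle interval. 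Because the deformation $H_s\rightsquigarrow \chi(s)H_s$ is compactly supported in $s$ and smooth in $\tau$, standard arguments (promoting a family of exact Lagrangians in a symplectization to an ambient Hamiltonian isotopy, cf. the suspension construction after \cref{def:cobordism}) produce a compactly supported Hamiltonian $G_\tau$ on $\partial X^{int}\times\RR$ whose time-$1$ flow carries $L_{+-}$ to $\tilde L$.

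With $\tilde L$ cylindrical over $\Lambda_0$ on $[-\eps/2,\eps/2]$, I now cut at $t=0$: set
$$L_{+0} := \bigl(\tilde L \cap \pi_\RR^{-1}([0,\infty))\bigr)\,\cup\,\bigl(\Lambda_0\times(-\infty,0]\bigr),\qquad L_{0-} := \bigl(\Lambda_0\times[0,\infty)\bigr)\,\cup\,\bigl(\tilde L \cap \pi_\RR^{-1}((-\infty,0])\bigr).$$
Both pieces are smooth Lagrangian cobordisms with Legendrian ends, and their concatenation (after the canonical re-parameterization of $\RR$) is precisely $\tilde L$, which is Hamiltonian isotopic to $L_{+-}$. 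The main technical obstacle is the second step: verifying that the pointwise Legendrian-isotopy interpolation $H^\tau_s$ genuinely lifts to a Hamiltonian isotopy of the ambient symplectization rather than merely a Lagrangian isotopy of cobordisms — this requires care because the family $L^\tau$ is non-compact, but exactness of $\lambda$ on each slice together with compact support of $1-\chi$ in $s$ lets one build the generating function $G_\tau$ explicitly from $(1-\chi(s)) H_s$ and a cutoff in the transverse directions.
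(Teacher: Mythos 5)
Your high-level strategy --- use sliceability to straighten the cobordism to be cylindrical over $\Lambda_0$ on a middle interval, then cut --- is the same idea as the paper's proof, which achieves the straightening by a time-reparameterization of the $s$-coordinate. However, your specific implementation has a genuine error.

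The interpolated Hamiltonian $H^\tau_s = (1-\tau(1-\chi(s)))H_s$ does not produce a cobordism matching $L_{+-}$ at \emph{both} ends of the strip. Because $H_s$ is time-dependent, scaling it by a cutoff $\chi(s)$ is not a reparameterization of the original isotopy: the time-$2\eps$ flow of $\chi(s)X_{H_s}$ from $\Lambda_{-\eps}$ is generically a different Legendrian from $\Lambda_{+\eps} = \phi_\eps(\Lambda_{-\eps})$. (Consider $H_s$ nonzero only on $[-\eps,0]$: the total displacement is $\int_{-\eps}^{0}\chi_\tau(s)\,ds < \int_{-\eps}^{0}1\,ds$ once $\tau>0$.) Consequently $L^\tau$ fails to agree with $L_{+-}$ at $s=+\eps$, and the proposed extension of $L^\tau$ by $L_{+-}\setminus\pi_\RR^{-1}([-\eps,\eps])$ is not even continuous, let alone a smooth Lagrangian. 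The ``technical obstacle'' you flag (lifting to an ambient Hamiltonian) is therefore moot, because the input family of Lagrangians is not well-defined.

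The fix is to replace the rescaling by a genuine time-reparameterization: take a family of nondecreasing surjections $\sigma_\tau:[-\eps,\eps]\to[-\eps,\eps]$ with $\sigma_\tau(\pm\eps)=\pm\eps$ and $\sigma_1'\equiv 0$ on $[-\eps/2,\eps/2]$, and use $\phi(\,\cdot\,,\sigma_\tau(s))$, whose generating Hamiltonian is $\sigma_\tau'(s)H_{\sigma_\tau(s)}$, not $\chi(s)H_s$. Since $\sigma_\tau$ fixes the endpoints, this does match $L_{+-}$ at $s=\pm\eps$ for all $\tau$, and the rest of your argument goes through. This is essentially what the paper does, though more directly: rather than deforming $L_{+-}$ first and then cutting, the paper builds $L_{+0}$ and $L_{0-}$ each from a one-sided reparameterization ($\rho_+$, stalling only on one side of $0$) and then shows the concatenation is Hamiltonian isotopic to $L_{+-}$ by shrinking $\eps$, which avoids having to verify the two-sided endpoint-matching altogether.
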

\begin{proof}
   We need only show that it is possible to smooth out the ends created by cutting $L_{+-}$ in two pieces along $0$. 
   Consider the slice parameterization 
   \begin{align*}
      \Phi:\Lambda_{0}\times [-\eps, \eps]_s \to &\partial X^{int}\times [\eps, \eps]\\
      (p, s)\mapsto &(\phi(p,s), s)
   \end{align*}
   where $\phi(p, s_0): L_{s_0}\to \partial X^{int}$ is the parameterization of $\Lambda_{s_0}\subset \partial X^{int}$.  
   
   We now create a new isotopy  by reparameterization of the $s$ coordinate. 
   Pick a smooth increasing function $\rho+:[-\infty, \eps)\to [0, \eps]$ which satisfies the following properties:
   \[
      \rho_+(s)=\left\{\begin{array}{cc}
         0 & s<0\\
         s & s>\frac{\eps}{2}
      \end{array}\right.
   \] 
   and consider the embedding
   \begin{align*}
      \Phi_+:\Lambda_{0\times [-\infty, \eps]_s}\to& \partial X^{int}\times [-\infty, \eps]\\
      (p, s)\mapsto& (\phi(p, \rho(s)),s)
   \end{align*}
   As \cref{item:legendriancondition} of \cref{lemma:sliceable} holds,  $\phi_+(p, s):=(\phi(p, \rho(s)))$ satisfies the relation
   \[  \partial_w\iota_{\partial_s}(\phi_+^*(\lambda))= \frac{d\rho}{ds} \partial_w\iota_{\partial_s}\left(\phi^*(\lambda)|_{\rho(s)}\right)=0.\]
   
   Also, by \cref{lemma:sliceable}, $\Phi^+$ parameterizes a Lagrangian submanifold.
   The cobordisms of Legendrians $L_{+0}$ is parameterized by gluing $\Phi_+|_{s<\eps}$ and $\Phi|_{s\geq \eps}$ together.
   One can similarly construct $L_{0-}$ in this fashion. 

   By reducing the parameter $\eps$ to zero, we obtain a Hamiltonian isotopy between the composition $L_{0-}\circ L_{+0}\to L_{+-}$.
\end{proof}

A great source of sliceable Lagrangian cobordisms with Legendrian ends comes from exact Lagrangians. On an exact Lagrangian, $e^td\lambda$ vanishes, and \cref{def:sliceable} is automatically satisfied at all regular points of the projection $\pi_\RR: L\to \RR$. 

\subsection{Using both Legendrian and Lagrangian ends}
We now will look at Lagrangian cobordisms of non-compact Lagrangians -- specifically, Lagrangian cobordisms of Lagrangian cobordisms with Legendrian ends.
Let $L_-$ be a Lagrangian in $X$ with boundary $\Lambda_-$.
Suppose that we have a Lagrangian $L_{+-}\subset \partial X^{int}\times \RR $ with Legendrian ends $\Lambda_+, \Lambda_-$. 
Let $L_+=L_-\circ L_{+-}$. 
We will show that there is a Lagrangian cobordism with Lagrangian ends $K:L_-\rightsquigarrow L_+$.
\begin{thm}
   Let $L_{+0}:\Lambda_+\rightsquigarrow \Lambda_0$ and $L_{0-}: \Lambda_0\rightsquigarrow \Lambda_-$ be exact Lagrangian cobordisms in $\partial X^{int}\times \RR$ with Legendrian ends. 
   Suppose that $L_{+0}$ is disjoint from $\mathfrak f\subset \partial X^{int}\times \{\infty\}$.
   Let $L_{+-}:=L_{0-}\circ L_{+0}: \Lambda_+\rightsquigarrow \Lambda_0$ be the composition of these two cobordisms.
   There exists an exact Lagrangian cobordism with Lagrangian ends $K: L_{+0}\rightsquigarrow L_{+-}$  in $(\partial X^{int}\times \RR) \times \CC$ which is disjoint from $\mathfrak f \times \RR$.
\end{thm}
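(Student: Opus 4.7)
The plan is to realize $K$ as the suspension of a one-parameter family of Hamiltonian perturbations that stretches the cylindrical neck of the concatenation $L_{+-}$ along the symplectization coordinate, sliding the $L_{0-}$ piece toward $+\infty$ in $\RR$ while leaving $L_{+0}$ pointwise fixed. In the limit, the $L_{0-}$ portion is pushed out of every compact set, so that what remains over any bounded window in $\RR$ is exactly $L_{+0}$. The disjointness hypothesis on $L_{+0}$ and the fact that the stretching Hamiltonian depends only on the symplectization coordinate $t$ ensure that the suspension avoids $\mathfrak f \times \RR$.

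Concretely, first I would invoke exactness of $L_{+0}$ and $L_{0-}$ to choose, via \cref{lemma:decomposition}, a sliceable presentation of $L_{+-}$: pick heights $t_0 < t_0'$ so that $L_{+-}$ restricted to $\pi_\RR^{-1}(-\infty, t_0]$ equals $L_{+0}$, its restriction to $\pi_\RR^{-1}[t_0, t_0']$ equals the cylinder $\Lambda_0\times [t_0, t_0']$, and its restriction to $\pi_\RR^{-1}[t_0',\infty)$ equals a vertically translated copy of $L_{0-}$. Now define a Hamiltonian $H_T \colon \partial X^{int}\times\RR\to\RR$ depending only on $t$ (hence Poisson-commuting with the contact slice) whose time-$1$ flow $\psi_T$ is the identity for $t\le t_0$, equal to translation $(x,t)\mapsto(x,t+T)$ for $t\ge t_0'$, and interpolates smoothly in between. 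The Lagrangian $\psi_T(L_{+-})$ is still a Lagrangian cobordism from $\Lambda_+$ to $\Lambda_-$, but with its neck cylinder lengthened by $T$. Form the suspension
\[
\tilde K \;=\;\bigl\{\,(\psi_s(x),\;s+iH_s(x))\;:\;x\in L_{+-},\ s\in[0,S]\,\bigr\}\;\subset\;(\partial X^{int}\times\RR)\times\CC
\]
for $S\gg 0$; bend its two $s$-ends to horizontal rays as in \cref{def:cobordism}. The $s=0$ end is $L_{+-}$. For $s=S$ large, the image of the translated $L_{0-}$ lies above any preassigned compact window in $t$, so truncating and attaching a conical end identifies the $s=S$ slice, up to admissible Hamiltonian isotopy, with $L_{+0}$ together with a cylindrical tail $\Lambda_0\times[t_0,\infty)$; this tail is absorbed into the standard conical end of $L_{+0}$.

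To verify that $K$ has the required properties, three checks are needed. Exactness: because $H_s$ depends only on $t$ and is compactly supported in $t$, the suspension is an exact Lagrangian whenever $L_{+-}$ is exact, which holds since $L_{+0}$ and $L_{0-}$ are. Conicality at infinity in the product Liouville structure: the Liouville vector field on $(\partial X^{int}\times\RR)\times\CC$ decomposes along the two factors, and the suspension is invariant under radial scaling of $\CC$ outside a compact set, while in the symplectization direction it is built from Lagrangians that are already cylindrical over their Legendrian ends. Finally, disjointness from $\mathfrak f\times\RR$: the Hamiltonian $\psi_s$ only translates in $t$ and therefore does not alter the projection of any Legendrian slice to $\partial X^{int}$; since the top Legendrian of $L_{+0}$ avoids $\mathfrak f$ by hypothesis, and the body of the suspension interpolates between $L_{+0}$ and (a translate of) itself glued to $L_{0-}$, the $\partial X^{int}$-projection of $K$ is contained in that of $L_{+-}$, which avoids $\mathfrak f$ precisely because $L_{+0}$ does and $L_{0-}$ lives in the symplectization.

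The main obstacle will be the last point: making sure that the entire sliding family, not just the two endpoint slices, stays clear of $\mathfrak f\times\RR$. This is where the hypothesis on $L_{+0}$ is essential. The construction is rigged so that every Legendrian slice of $\psi_s(L_{+-})$ at the top of the symplectization agrees, up to vertical translation, with the top slice of either $L_{+0}$ or $L_{0-}$; the former avoids $\mathfrak f$ by assumption and the latter is already contained in the symplectization and not in the contact boundary. Handling the transition region $[t_0,t_0']$ in a $\mathfrak f$-avoiding way is the only delicate point, and it is controlled by choosing the cutoff defining $H_T$ to vary only in $t$.
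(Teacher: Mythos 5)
Your proposal has two gaps. The first is that a Hamiltonian $H_T\colon\partial X^{int}\times\RR\to\RR$ depending only on $t$ cannot have $\partial_t$ as its Hamiltonian vector field: on the symplectization with $\omega = d(e^t\lambda)$, the Hamiltonian vector field of a function $f(t)$ is $-e^{-t}f'(t)R$ (a reparametrized Reeb flow), while $\partial_t$ is the Liouville vector field, whose flow is only a conformal symplectomorphism. The fix, which is what the paper's proof actually relies on, is that the isotopy this flow induces on the exact Lagrangian $L_{+-}$ has exact flux $\iota_{\partial_t}\omega|_{L_{+-}} = e^t\lambda|_{L_{+-}} = dH_{L_{+-}}$; so it is a Hamiltonian isotopy of Lagrangians even though the ambient flow is not Hamiltonian, and this is what makes the suspension a Lagrangian submanifold.

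The second, structural gap is that the $s=S$ end of your suspension is $\psi_S(L_{+-})$, which is still a cobordism from $\Lambda_+$ to $\Lambda_-$, not $L_{+0}$: the translated copy of $L_{0-}$ does not disappear, however far you push it. The step ``truncating and attaching a conical end'' is not a Hamiltonian isotopy and cannot be performed on a single $s$-slice of a cobordism in isolation; one would have to perform a compatible truncation for every $s\in[0,S]$, and that is exactly what the paper does. It treats the whole suspension $K_{pre}\subset(\partial X^{int}\times\RR)\times\CC$ as a Lagrangian cobordism with Legendrian ends $\Lambda_+\times\RR$ and $\Lambda_-\times\RR$, arranges by the choice of scaling that it is sliceable at $\pi_\RR=0$, and invokes \cref{lemma:decomposition}; the upper half of the resulting decomposition, reinterpreted in the $\CC$-direction, is precisely the desired cobordism $K\colon L_{+0}\rightsquigarrow L_{+-}$. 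Your construction shares the geometric idea (push the $L_{0-}$ piece off to infinity and suspend) but is missing this slicing step, which is what actually extracts $L_{+0}$ as an end.
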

\begin{proof} 
   Pick $R$ sufficiently large so that $L_{+0}$ and $L_{0-}$ are conical outside of the regions $\pi^{-1}_\RR([0, R])$ and $\pi^{-1}_\RR([-R, 0])$ respectively, and choose the concatenation which identifies these two Lagrangians by their Legendrian boundaries at $\pi^{-1}(0)$. 
   Consider the isotopy 
   \begin{align*}
      \Phi_r: \partial X^{int}\times \RR_t\times \RR_r \to&\partial_0  X\times \RR\\
(x, t)\mapsto (x, e^{\rho(r)}t )
   \end{align*}
   where $\rho(r)$ is a function interpolating between $0$ and $R$ whose derivative has bounded support in $r$.
   The induced isotopy on exact Lagrangians is Hamiltonian.
   The flux of the isotopy is  explicitly given by $\rho(r)\lambda= d\rho(r) H$, where $H: L\to \RR$ is the primitive providing exactness of $\lambda|_L$. 

   The choice of $\rho(r)$ means that   $\Phi_1(\pi_\RR^{-1}([R,R]))\cap \pi_{\RR}^{-1}(0)=\emptyset$.
   Therefore, this symplectic isotopy displaces the non-conical portion of the Lagrangians $L_{-0}$ and $L_{0+}$ from itself.
 
   Let $K_{pre}\subset ((\partial X^{int}\times \RR)\times \CC)$ be the suspension of this Hamiltonian isotopy.
   As a topological space,   $K_{pre}=L_{+-}\times \RR$.
   The suspension is an exact Lagrangian submanifold.
   In addition to being a cobordism with Lagrangian ends, $K_{pre}$ is a Lagrangian cobordism with Legendrian ends $\Lambda_+\times \RR$ and $\Lambda_-\times \RR$.  
   $K_{pre}:\Lambda_+\times\RR_u\rightsquigarrow \Lambda_-\times\RR$ is exact, as it is Hamiltonian isotopic to $L_{+-}\times \RR$ (which is itself exact).

   We now take $K_{pre}$ and slice it along $\pi_\RR: (X\times \CC)\times \RR\to \RR$ at $0$ (see \cref{fig:cobordismofcobordism}).
   Because $K_{pre}$ is exact, it suffices to show that this is a regular value of the map $\pi_\RR$.
   For points $(p, u)\in K_{pre}$ with $u\in (0, 1)$, we have $(\pi_\RR)_*\partial_u=\rho'(u)\neq 0$. 
   Outside of this region, the slice can be computed explicitly as 
   \begin{align*}
      \pi_\RR^{-1}(0)\cap (K_{pre}|_{u\leq 0})=&\Lambda_0\times (-\infty, 0]\\
      \pi_\RR^{-1}(0)\cap (K_{pre}|_{u\geq 1})=&\Lambda_0\times [1, \infty).
   \end{align*}
   By \cref{lemma:decomposition}, $K_{pre}$ is the composition of cobordisms of with Legendrian ends, $K_{L_{0-}\rightsquigarrow  L_{\id}}\circ K_{L_{+0}\rightsquigarrow  L_{+-}}.$
   Each half is exact, and the top half of this composition provides the desired cobordism.
\end{proof}

\begin{figure}
   \centering
   \begin{tikzpicture}[scale=.75, thick]

\node[left] at (-4,0) {$R$};
\draw[dotted] (-4,0) -- (7,0);
\begin{scope}[shift={(6.8,0)}]

\draw  (-3,6) rectangle (-1,1.4);

\draw  (-3,1.4) rectangle (-1,-1.5);
\draw[red, ultra thick] (-2.5,1.4) -- (-2.5,-1.5);

\draw[red, ultra thick] (-2.5,1.4) .. controls (-2.5,1.9) and (-2.5,1.9) .. (-2.5,2.4) .. controls (-2.5,2.9) and (-2,2.9) .. (-2,2.4) .. controls (-2,1.9) and (-2,1.9) .. (-2,1.4);
\draw[red, ultra thick] (-1.5,1.4) -- (-1.5,6);
\draw[red, ultra thick] (-2,1.4) .. controls (-2,1.0) and (-2,1.0) .. (-2,0.5) .. controls (-2,0.0) and (-1.5,0.0) .. (-1.5,0.5) .. controls (-1.5,1.0) and (-1.5,1.0) .. (-1.5,1.4);
\node at (-2,7) {$\partial X\times \mathbb R$};
\node at (-2,6.5) {$\Re(\pi_{\mathbb C})<0$};
\node[circle, fill, scale=.2] at (-2.25,6) {};
\node[circle, fill, scale=.2]at (-2.25,1.4) {};
\node[fill=white, scale=.75] at (-2.5,0) {$L_{0-}$};
\node[fill=white, scale=.75] at (-1.5,4) {$L_{+0}$};
\end{scope}
\begin{scope}[shift={(3.5,0)}]

\draw  (-3,6) rectangle (-1,1.4);

\draw  (-3,1.4) rectangle (-1,-1.5);
\draw[red, ultra thick] (-2.5,2.3) -- (-2.5,-1.5);

\draw[red, ultra thick] (-2.5,2.3) .. controls (-2.5,2.8) and (-2.5,2.8) .. (-2.5,3.3) .. controls (-2.5,3.8) and (-2,3.8) .. (-2,3.3) .. controls (-2,2.8) and (-2,2.8) .. (-2,2.3);
\draw[red, ultra thick] (-1.5,2.3) -- (-1.5,6);
\draw[red, ultra thick] (-2,2.3) .. controls (-2,2) and (-2,2) .. (-2,1.5) .. controls (-2,1) and (-1.5,1) .. (-1.5,1.5) .. controls (-1.5,2) and (-1.5,2) .. (-1.5,2.3);
\node at (-2,7) {$\partial X\times \mathbb R$};
\node at (-2,6.5) {$\Re(\pi_{\mathbb C})\in (0, 1)$};
\node[circle, fill, scale=.2] at (-2.25,6) {};
\node[circle, fill, scale=.2]at (-2.25,1.4) {};
\end{scope}

\begin{scope}[shift={(0.2,0)}]

\draw  (-3,6) rectangle (-1,1.4);

\draw  (-3,1.4) rectangle (-1,-1.5);
\draw[red, ultra thick] (-2.5,3.8) -- (-2.5,-1.5);

\draw[red, ultra thick] (-2.5,3.8) .. controls (-2.5,4.3) and (-2.5,4.3) .. (-2.5,4.8) .. controls (-2.5,5.3) and (-2,5.3) .. (-2,4.8) .. controls (-2,4.3) and (-2,4.3) .. (-2,3.8);
\draw[red, ultra thick] (-1.5,3.8) -- (-1.5,6);
\draw[red, ultra thick] (-2,3.8) .. controls (-2,2.8) and (-2,2.8) .. (-2,2.3) .. controls (-2,1.8) and (-1.5,1.8) .. (-1.5,2.3) .. controls (-1.5,2.8) and (-1.5,2.8) .. (-1.5,3.8);
\node at (-2,7) {$\partial X\times \mathbb R$};
\node at (-2,6.5) {$\Re(\pi_{\mathbb C})>1$};
\node[circle, fill, scale=.2] at (-2.25,6) {};
\node[circle, fill, scale=.2]at (-2.25,1.4) {};
\node[fill=white, scale=.75] at (-2.5,0) {id};
\node[fill=white, scale=.75] at (-2,4) {$L_{+-}$};
\end{scope}

\draw  (-4,-2.5) rectangle (7,-5.5);
\draw[fill=red!20,ultra thick] (-4,-4) .. controls (-3.5,-4) and (-2,-4) .. (-1.5,-4) .. controls (-0.5,-4) and (0.5,-3) .. (1.5,-3) .. controls (2.5,-3) and (4,-4) .. (4.5,-4) .. controls (5,-4) and (6.5,-4) .. (7,-4) .. controls (6.5,-4) and (5,-4) .. (4.5,-4) .. controls (3.5,-4) and (2.5,-5) .. (1.5,-5) .. controls (0.5,-5) and (-0.5,-4) .. (-1.5,-4);
\draw[dashed, ->] (-2,-2) -- (-2,-3.5);
\draw[dashed, ->] (1.5,-2) -- (1.5,-3.5);
\draw[dashed, ->] (5,-2) -- (5,-3.5);
\node at (1.5,-4) {$K_{pre}$};
\draw[dotted] (-4,1.4) -- (7,1.4);
\node at (-5,1.4) {$\pi_{\mathbb R}=0$};
\node at (8.5,1.4) {$\partial X\times \mathbb R\times \mathbb C$};
\draw[->] (8.5,0.9) -- (8.5,-3.5);
\node at (8.5,-4) {$\mathbb C$};
\node[circle, fill=white] at (8.5,0) {$\pi_{\mathbb C}$};
\end{tikzpicture}    \caption{Construction of the Lagrangian cobordism $K_{pre}$, drawn in the fibers of the map $\pi_\CC: (\partial X^{int}\times \RR\times \CC)\to \CC$}
   \label{fig:cobordismofcobordism}
\end{figure}
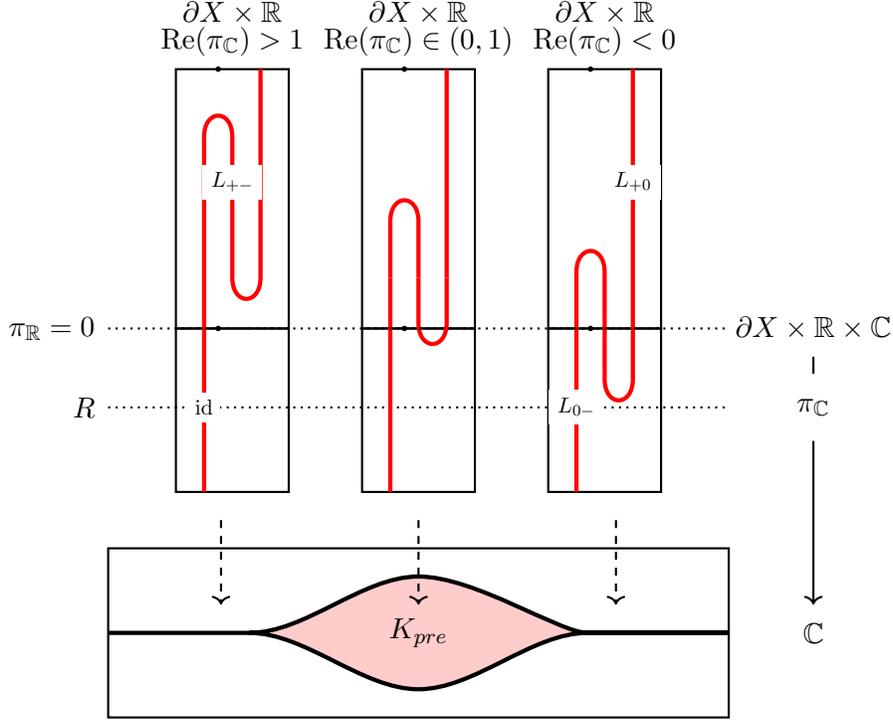
An application of this decomposition is a canonical cobordism associated to admissible Lagrangian submanifolds in Liouville domains with stops.
\begin{prop}[Decomposition into linking disks]
   Let $L\subset (X,\LL_\stp)$ be a Lagrangian submanifold with primitive $dH=\lambda|_L$. Suppose that $L$ intersects $\LL_\stp$  transversely at the points $x_1, \ldots, x_k$, ordered so that $H(x_1)<H(x_2)<\cdots < H(x_k)$.
   There is an exact Lagrangian cobordism  $K:(\cocore({x_1}), \ldots\cocore({x_k}))\rightsquigarrow L$, where  $\cocore({x_1}), \ldots, \cocore({x_k})$ are the linking disks and cocores of the $x_i$.
\end{prop}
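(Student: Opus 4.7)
The plan is to build $K$ inductively on $k$ by peeling off intersection points one at a time via local Polterovich-style surgery with linking disks, working in the order dictated by $H(x_1) < \cdots < H(x_k)$.

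For the inductive step, fix the intersection $x_k$ of largest $H$-value and choose a linking disk $\cocore(x_k)$ such that $\cocore(x_k)$ and $L$ meet transversely at $x_k$ and are disjoint elsewhere. This is a generic condition: the two tangent Lagrangian subspaces $T_{x_k} L, T_{x_k}\cocore(x_k) \subset T_{x_k} X$ are each $n$-dimensional and transverse to the skeleton stratum at $x_k$, and can be arranged to intersect only at $0$. Applying the local Lagrangian surgery of \cref{prop:generalizedsurgeryprofile} resolves this intersection, producing a new exact admissible Lagrangian $L^{(k-1)} := L \#_{U_k} \cocore(x_k)$ that (for the correctly chosen surgery direction) no longer meets $\LL_\stp$ near $x_k$ and thus intersects $\LL_\stp$ transversely in precisely $x_1, \ldots, x_{k-1}$. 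The surgery-trace cobordism has ends $L$, $\cocore(x_k)$, and $L^{(k-1)}$; by rotating its $\CC$-projection (the operation corresponding to cyclic rotation of the underlying Biran--Cornea exact triangle), it may be reinterpreted in the form
$$\wt K_k : (L^{(k-1)}, \cocore(x_k)) \rightsquigarrow L,$$
with $L$ at the distinguished end. The induction hypothesis applied to $L^{(k-1)}$ produces $K' : (\cocore(x_1), \ldots, \cocore(x_{k-1})) \rightsquigarrow L^{(k-1)}$, and concatenating $K'$ with $\wt K_k$ along the common end $L^{(k-1)}$ yields the desired cobordism $K$. The ordering by $H$ is used to ensure that the primitive of $\lambda|_L$ can be extended consistently across each surgery, since the surgery at $x_k$ is localized where the primitive attains its largest value among the $x_i$.

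The base case $k = 0$ asks for a nullcobordism of an exact admissible Lagrangian $L$ that is disjoint from $\LL_\stp$ (which includes $\core$). Since $L$ avoids the entire skeleton, a positive Hamiltonian isotopy can be chosen to push $L$ entirely into the cylindrical end without ever encountering the skeleton; the suspension of this isotopy, combined with the sliceability and decomposition technology of \cref{lemma:sliceable} and \cref{lemma:decomposition} to cap off the Legendrian end at infinity, yields an exact admissible Lagrangian cobordism whose only Lagrangian end is $L$.

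The main technical obstacle is the rotation step in the inductive case: the surgery trace of \cref{prop:generalizedsurgeryprofile} naturally places $L^{(k-1)}$ at the distinguished end, while the statement requires $L$ there. Implementing this rotation amounts to deforming the projection curves in $\CC$ (analogous to the configurations of \cref{fig:surgeryconfiguration}) while simultaneously maintaining admissibility with respect to $\stp$, exactness of the cobordism, and disjointness from $\stp_n \times \stp$. A secondary obstacle is verifying the base case nullcobordism in full generality: while geometrically compelling (the complement of the skeleton deformation retracts to the contact boundary), extracting an admissible exact cobordism in $X \times \CC$ requires care, particularly when the topology of $L$ is nontrivial.
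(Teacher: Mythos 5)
Your inductive surgery approach is genuinely different from the paper's, which constructs $K$ in one shot: one first isotopes $L$ so that $L\cap B_\eps(\LL_\stp)=\bigcup_i\cocore(x_i)$, takes the suspension $K^{pre}\subset X\times\CC$ of a reparameterized Liouville flow $\Phi_{\rho(t)}(L)$ (whose flux primitive is $\rho(t)H$, automatically giving the $H$-ordering of the positive ends), and then slices $K^{pre}$ along $\partial_0 X^{int}$ using the exactness/sliceability machinery of \cref{lemma:sliceable} and \cref{lemma:decomposition}. The negative end of the slice is $L$ and the positive ends are exactly the $\cocore(x_i)$, with no surgery, no induction, and no rotation required. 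Your proposal, by contrast, builds $K$ by iterating \cref{prop:generalizedsurgeryprofile}, which in principle could work but requires substantially more bookkeeping and has two genuine gaps.

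First, the rotation: the trace of \cref{prop:generalizedsurgeryprofile} places the surgered Lagrangian $L^{(k-1)}=L\#_{U_k}\cocore(x_k)$ at the distinguished (negative) end, and you need $L$ there instead. In the compact monotone setting this cyclic rotation is standard, but here it amounts to replacing the curves $\gamma_1,\gamma_2\subset\CC$ of \cref{fig:surgeryconfiguration} with a different configuration and then re-verifying exactness, conicality at infinity, and disjointness from $\stp_n\times\stp$; none of this is done, and it is the sort of step that tends to fail silently when the surgery necks interact with the stop. Second, the base case is not quite right as stated: pushing $L$ into the cylindrical end by a positive isotopy yields a suspension cobordism whose far end is still a Lagrangian (a displaced copy of $L$), not nothing. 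What one actually needs is the slicing step — truncate the suspension of the Liouville flow at $\partial_0 X^{int}$ and observe that for large time the intersection with $X^{int}$ is empty (when $k=0$) or equal to $\bigcup_i\cocore(x_i)|_{X^{int}}$ (in general). That observation is the real engine of the proof, and your proposal never invokes it; once you do, the entire induction becomes unnecessary, because the slice already has the correct positive ends for every $k$ simultaneously. Finally, note that after the Hamiltonian isotopy that makes $L$ agree with $\cocore(x_k)$ near $x_k$, the two Lagrangians are \emph{not} transverse there — they coincide on a neighborhood — so the genericity argument for transversality at $x_k$ would have to be set up against a differently chosen (perturbed) linking disk, and one would then have to check that the surgered Lagrangian still hits $\LL_\stp$ only at $x_1,\ldots,x_{k-1}$ and that the surgery neck can be kept away from the stop.
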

\begin{proof}
   There exists a neighborhood $B_\eps(\LL_\stp)$ so that after applying a Hamiltonian isotopy, 
   \[L\cap B_\eps(\LL_\stp)=\bigcup_{i=1}^k \cocore({x_i}).\]
   Let $\Phi_{t}: X\times \RR\to \RR$ be the time $t$ Liouville flow. 
   Pick $X^{int}\subset X$ so that $L$ is cylindrical outside of $X^{int}$.
   Since the Liouville flow eventually displaces all points in $X^{int}\setminus \LL$ from $X^{int}$, there  exists a time $t_0$ so that for all $t>t_0$, 
   \[\bigcup_{i=1}^k\cocore({x_i})|_{X^{int}}= \Phi_{t_0}(L)|_{X^{int}}.\]
   Consider a function $\rho:\RR\to \RR$ which is increasing, $\rho(t)=0$ for $t\ll 0$, and $\rho(t)=t_0$ for $t> t_0$.
   This means that the restriction of the isotopy to $X^{int}$ is constant after time $t_0$.
   The isotopy $\Phi_{\rho(t)}(L)$ is an exact isotopy whose primitive is $\rho(t)H$.

   Let $K^{pre}\subset X\times \CC$ be the suspension of this Hamiltonian isotopy. 
   Consider now the Lagrangian cobordism $K$ obtained by taking $K^{pre}$ and slicing it at $\partial_0X^{int}$. 
   This is an exact Lagrangian cobordism whose
   \begin{itemize}
      \item Negative end is the completion of $L|_{X^{int}}$, which is $L$ and;
      \item Positive ends are at completions of the $\cocore({x_i})|_{X^{int}}$.
      Since these complete to linking disks and cocores, the right end is a collection of cocores and linking disks.
      Notice that the imaginary coordinate of a Lagrangian suspension is determined by the primitive $H$ for the Hamiltonian isotopy. 
   \end{itemize}
In this case, the primitive is primitive for the Liouville form. 
Therefore, at points $x$ inside the right ends of the Lagrangian cobordism, we have that the imaginary coordinate is
\[H(x)= H(x_i)\text{ if $x\in D^n_{x_i}$},\]
which gives us the ordering of the ends of the Lagrangian cobordism.
\end{proof}
This proposition, along with \cref{thm:cobordismgeneration}, gives a repackaging of the generation statement from \cite{ganatra2018sectorial,chantraine2017geometric,tanaka2018generation}.
\begin{cor}
   Linking disks (and cocores) generate $\mathcal W(X, \stp)$.
\end{cor}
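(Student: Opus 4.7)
The plan is to combine the two ingredients developed immediately above: the ``Decomposition into linking disks'' proposition and the cobordism generation theorem \cref{thm:cobordismgeneration}. The final statement is essentially the synthesis of these two results, so the proof proposal is short and conceptually transparent.

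First, I would take an arbitrary object $L\in\mathcal W(X,\stp)$, represented by an admissible (exact, conical at infinity) Lagrangian. I would then perturb $L$ by an admissible Hamiltonian isotopy so that it meets $\LL_\stp$ transversely. Admissibility ensures that $L$ is cylindrical outside a compact set $X^{int}$, and $\LL_\stp$ is the conicalization of a mostly Legendrian closed subset; a generic small Hamiltonian perturbation supported in a neighborhood of $X^{int}$ achieves transversality, producing finitely many intersection points $x_1,\dots,x_k$. Crucially, outside $X^{int}$ the Lagrangian $L$ flows parallel to the Liouville vector field, so it does not accumulate additional intersections with the skeleton; this bounds the intersection set.

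Next, I would invoke the ``Decomposition into linking disks'' proposition to produce an exact Lagrangian cobordism $K\colon(\cocore(x_1),\dots,\cocore(x_k))\rightsquigarrow L$ in $X\times\CC$, with the ordering dictated by the values of a primitive for $\lambda|_L$. By construction the $\cocore(x_i)$ are the linking disks/cocores of the strata of $\LL_\stp$ containing each $x_i$, up to admissible Hamiltonian isotopy (the isotopy class depending only on the connected component of $\LL_\stp$ minus its singular locus that contains $x_i$, as recalled in \cref{subsec:partiallywrapped}).

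Finally, applying \cref{thm:cobordismgeneration} to this cobordism yields an iterated mapping cone expression for $L$ in terms of the linking disks $\cocore(x_1),\dots,\cocore(x_k)$. Since $L$ was arbitrary, this shows that the full subcategory on linking disks and cocores generates $\mathcal W(X,\stp)$. The main technical point to verify carefully is the existence of the admissible perturbation making $L$ transverse to $\LL_\stp$ with finitely many intersections; once that is granted, the rest is an application of the two results already established, together with the fact (used implicitly in the decomposition proposition) that after a further admissible isotopy one can arrange the intersection points to be separated by disjoint Weinstein neighborhoods matching standard linking disk models.
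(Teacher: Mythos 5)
Your proposal is correct and takes essentially the same route the paper takes: the corollary is stated as an immediate consequence of the ``Decomposition into linking disks'' proposition together with \cref{thm:cobordismgeneration}, and that is precisely the combination you use. Your additional remark that the transversality of $L$ with $\LL_\stp$ can be arranged by a compactly supported admissible Hamiltonian isotopy (and that admissibility keeps the intersection set compact and hence finite) is a reasonable point to spell out, since the decomposition proposition takes transversality as a hypothesis; otherwise the argument matches the paper's.
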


\section{Correspondences and cobordisms}
\label{app:correspondenceandcobordism}
A Lagrangian correspondence $\Lc: X_0\Rightarrow X_1$ between two compact symplectic manifolds $X_0, X_1$ is a Lagrangian submanifold $\Lc_{01}$ in $(X_0^-\times X_1, (-\omega_0)\oplus\omega_1)$. 
Given a Lagrangian correspondence $L_{01}: X_0\Rightarrow  X_1$ and a Lagrangian correspondence $\Lc_{12}: X_1\Rightarrow X_2$, one can obtain a set called the \emph{geometric composition  $\Lc_{12}\circ \Lc_{01}:X_0\Rightarrow X_2$} by taking the intersection inside $X_0\times X_1\times X_2$
\[\Lc_{12}\circ L_{01}:=\pi_{02}(\pi_{01}^{-1}(\Lc_{01})\cap \pi_{12}^{-1}(\Lc_{12})).\]
Here, $\pi_{ij}: X_0\times X_1\times X_2\to X_i\times X_j$ is the standard projection. 
When the intersection $\pi_{01}^{-1}(\Lc_{01})\cap \pi_{12}^{-1}(\Lc_{12})\cap \pi_{01}^{-1}(x)$ is transverse for all $x\in X_0\times X_1^{-1}$, this is an immersed Lagrangian submanifold.
One can always achieve transversality by taking a Hamiltonian isotopy. 
\begin{prop}
   Given submanifolds $M\subset X_0\times X_1$ and $N\subset X_1\times X_2$, there exist Hamiltonian isotopies $\theta_{12}:X_1\times X_2\to X_1\times X_2$ and $\theta_{01}:X_0\times X_1\to X_0\times X_1$  so that $\pi^{-1}_{01}(\theta_{01}(M)), \pi^{-1}_{12}(\theta_{12}(N))\subset X_0\times X_1\times X_2$ intersect transversely. 
   Furthermore, if the intersection $M\cap N$ is transverse outside of $U\subset X_0\times X_1\times X_2$, the support of $H_{ij}$ can be chosen to lie inside $\pi_{ij}(U)$.  
\end{prop}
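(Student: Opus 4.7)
The plan is to reduce transversality of $\pi_{01}^{-1}(M)$ and $\pi_{12}^{-1}(N)$ inside $X_0\times X_1\times X_2$ to transversality of a map to the diagonal in $X_1\times X_1$, and then apply a parametric Thom transversality argument over a space of Hamiltonians. The support statement will follow by running the construction with compactly supported Hamiltonians.

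First I would rephrase the transversality condition. A point in $\pi_{01}^{-1}(M)\cap\pi_{12}^{-1}(N)$ has the form $(x_0,x_1,x_2)$ with $(x_0,x_1)\in M$ and $(x_1,x_2)\in N$. Since $T_{(x_0,x_1,x_2)}\pi_{01}^{-1}(M)=T_{(x_0,x_1)}M\oplus T_{x_2}X_2$ and $T_{(x_0,x_1,x_2)}\pi_{12}^{-1}(N)=T_{x_0}X_0\oplus T_{(x_1,x_2)}N$, the two subspaces already contain all directions tangent to $X_0$ and $X_2$. Hence transversality at $(x_0,x_1,x_2)$ is equivalent to
\[
p_1(T_{(x_0,x_1)}M)+p_1(T_{(x_1,x_2)}N)=T_{x_1}X_1,
\]
where $p_1$ denotes projection to $TX_1$. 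Equivalently, if $\rho:M\times N\to X_1\times X_1$ is the map $\rho((x_0,x_1),(x_1',x_2))=(x_1,x_1')$, then transversality of $\pi_{01}^{-1}(M)$ and $\pi_{12}^{-1}(N)$ is equivalent to $\rho \pitchfork \Delta_{X_1}$.

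Next, I would parametrize the problem by compactly supported Hamiltonians. Let $\mathcal{H}_{01}=C^\infty_c(X_0\times X_1\times [0,1])$ and similarly $\mathcal{H}_{12}$, and for $H_{01}\in\mathcal{H}_{01}$ let $\theta_{01}^{H_{01}}$ denote the time-$1$ Hamiltonian flow. Define
\[
F:M\times N\times\mathcal{H}_{01}\times\mathcal{H}_{12}\longrightarrow X_1\times X_1,\qquad
F(m,n,H_{01},H_{12})=\bigl(p_1\circ\theta_{01}^{H_{01}}(m),\ p_1\circ\theta_{12}^{H_{12}}(n)\bigr).
\]
The core step is to show that $F$ is a submersion; once this holds the parametric transversality theorem produces a residual subset of $(H_{01},H_{12})$ for which the restricted map $(m,n)\mapsto F(m,n,H_{01},H_{12})$ is transverse to $\Delta_{X_1}$, which by the first paragraph is exactly the transversality we want for $\pi_{01}^{-1}(\theta_{01}(M))$ and $\pi_{12}^{-1}(\theta_{12}(N))$.

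To verify the submersion property I would argue pointwise. At a fixed $(m,n,H_{01},H_{12})$ with $m=(x_0,x_1)$ and $n=(x_1',x_2)$, a variation of $H_{01}$ by $h_{01}$ infinitesimally moves $\theta_{01}^{H_{01}}(m)$ by the corresponding (time-$1$) Hamiltonian vector field based on $h_{01}$ at the current image point. Using Darboux coordinates around that image point together with bump functions, one can realize any prescribed vector in $T(X_0\times X_1)$, and in particular any prescribed $X_1$-component, as a Hamiltonian variation supported in an arbitrarily small neighborhood; independently, variations of $H_{12}$ produce any prescribed $X_1$-component at $\theta_{12}^{H_{12}}(n)$. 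Thus the derivative of $F$ in the Hamiltonian directions already surjects onto $T_{x_1}X_1\oplus T_{x_1'}X_1$, giving the submersion property. The main technical nuisance here is making the parametric transversality theorem apply to the infinite-dimensional Banach (or Fréchet, via Floer's $C^\infty_\varepsilon$ trick) space of Hamiltonians; this is standard and I would handle it by the usual approximation argument.

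Finally, for the support refinement, I would run exactly the same argument with the space $\mathcal{H}_{01}$ replaced by Hamiltonians whose time-dependent support projects into $\pi_{01}(U)\subset X_0\times X_1$, and $\mathcal{H}_{12}$ by those with support in $\pi_{12}(U)$. Because the Hamiltonian variations used in the submersion argument can be taken supported in arbitrarily small neighborhoods of the relevant image points, and those image points lie in $U$ (the only locus where transversality can fail), the submersion property still holds. The parametric transversality theorem then yields Hamiltonians with support constrained to $\pi_{ij}(U)$ for which the perturbed intersection is transverse everywhere. The principal obstacle in writing this out carefully is setting up a well-behaved Banach manifold of Hamiltonians so that the parametric transversality statement applies verbatim; the geometric content is elementary.
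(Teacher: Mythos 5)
Your argument is correct and follows essentially the same strategy as the paper: a parametric Sard--Smale transversality argument over a space of Hamiltonians acting on the two factors, using that Hamiltonian perturbations surject onto tangent directions. The only difference is cosmetic — you reformulate the transversality condition as $\rho\pitchfork\Delta_{X_1}$ whereas the paper works directly with the preimages $\pi_{01}^{-1}(M)$ and $\pi_{12}^{-1}(N)$ in $X_0\times X_1\times X_2$ — but both hinge on the same submersion observation, and your treatment of the support refinement by restricting to compactly supported Hamiltonians over $\pi_{ij}(U)$ is the intended one.
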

\begin{proof}
   This is similar to the standard proof of transversality for Lagrangian submanifolds. Let $C^\infty(X_0\times X_1), C^\infty(X_1\times X_2)$ be the space of Hamiltonian functions for $X_0\times X_1$ and $X_1\times X_2$. Consider the map from 
   \begin{align*}
      M\times C^\infty(X_0\times X_1)\times C^\infty(X_1\times X_2)\to X_0\times X_1\times X_2\\
      (x, f, g)\mapsto \theta_f^1\circ \theta_g^1 \circ i(x)
   \end{align*}
   where $i: M\to X_0\times X_1$ is the given parameterization, and $\theta_f, \theta_g$ are the time 1-Hamiltonian flows of $f, g$ on their corresponding components. 
   This map is a submersion, and therefore transverse to $N$. 
   It follows that for generic $f, g$, the inclusion $ \theta_f^1\circ \theta_g^1 \circ i: M\to X_0\times X_1\times X_2$ is transverse to $N$.
\end{proof}

In this subsection, we'll show that the Lagrangian cobordism class of $(\theta_{12}(\Lc_{12}))\circ (\theta_{01}(\Lc_{01}))$ is independent of the choice of perturbation $\theta_{01},\theta_{12}$. 
In fact, we show that the Lagrangian cobordism class is preserved by Lagrangian correspondence. 
\begin{prop}
   Let $X_0, X_1, X_2$ be compact and
   \begin{align*}
      K_{01}:(\Lc_{01}^1, \ldots, \Lc_{01}^k)\rightsquigarrow\Lc_{01}^0
   \end{align*}
   be a cobordism of correspondences with $K_{01}\subset X_0^-\times X_1\times \CC$. Let $L_{12}\subset X_1^-\times X_2$ be another Lagrangian correspondence. Suppose that the geometric compositions $\Lc_{02}^i=\Lc_{12}\circ \Lc_{01}^i$ are immersed Lagrangian submanifolds.
   Then there is an immersed Lagrangian cobordism $\Lc_{12}\circ K_{01}\subset X_0^-\times X_2\times \CC$ with ends 
   \[\Lc_{12}\circ K_{01}: (\Lc_{02}^1, \ldots, \Lc_{02}^k)\rightsquigarrow \Lc_{02}^0.\]  
   \label{claim:correspondenceofcobordism}
\end{prop}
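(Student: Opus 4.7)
The plan is to construct $\Lc_{12}\circ K_{01}$ as a family of geometric compositions over $\CC$. Concretely, viewing $K_{01}$ as an immersed Lagrangian in $X_0^-\times X_1\times \CC$ and $\Lc_{12}$ as an honest correspondence in $X_1^-\times X_2$ independent of the $\CC$-factor, define
\[
\Lc_{12}\circ K_{01} := \pi_{02\CC}\!\left(\pi_{01\CC}^{-1}(K_{01})\cap \pi_{12}^{-1}(\Lc_{12})\right)\subset X_0^-\times X_2\times \CC,
\]
where the $\pi$'s are the obvious projections from $X_0\times X_1\times X_2\times \CC$. The key observation is that since $\Lc_{12}$ is trivial in the $\CC$-direction, the slice of $\Lc_{12}\circ K_{01}$ over a point $z\in \CC$ is exactly $\Lc_{12}\circ (K_{01})_z$, where $(K_{01})_z$ is the slice of $K_{01}$ over $z$.

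First I would verify that this is an immersed Lagrangian (where it is cut out transversely) by the standard symplectic reduction argument: $\pi_{01\CC}^{-1}(K_{01})\cap \pi_{12}^{-1}(\Lc_{12})$ is coisotropic in $X_0\times X_1\times X_2\times \CC$, its null foliation integrates to the diagonal $X_1$-direction, and projecting along this foliation via $\pi_{02\CC}$ yields a Lagrangian in $X_0^-\times X_2\times \CC$, exactly as for the classical composition of two correspondences. Second, I would check the end conditions: on the cylindrical ends of $K_{01}$, where $K_{01}=\Lc_{01}^i\times \gamma_i$ for rays $\gamma_i\subset \CC$, the composition splits as $\Lc_{02}^i\times \gamma_i$, since $\Lc_{12}$ is constant along $\CC$. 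Combined with the hypothesis that each $\Lc_{02}^i$ is an immersed Lagrangian, this shows that $\Lc_{12}\circ K_{01}$ has the correct cylindrical Lagrangian ends demanded by \cref{def:cobordism}. Third, I would address transversality on the compact part of $K_{01}$ (i.e.\ over a bounded subset of $\CC$): this is where the issue might arise, since transversality of $\pi_{01\CC}^{-1}(K_{01})\cap \pi_{12}^{-1}(\Lc_{12})$ in the interior is not guaranteed from the hypotheses.

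The main obstacle is precisely this last point — achieving transversality on the compact part without disturbing the cylindrical ends. I would resolve it by applying the transversality lemma proven just before the proposition, perturbing $\Lc_{12}$ by a Hamiltonian isotopy $\theta_{12}$ whose support is concentrated away from the regions $\pi^{-1}_{X_1\times X_2}(\text{ends})$, or equivalently perturbing $K_{01}$ in its $X_0\times X_1$-directions by a Hamiltonian isotopy whose $\CC$-support avoids the cylindrical rays $\gamma_i$. Since the ends already yield transverse compositions $\Lc_{02}^i$ by hypothesis, the perturbation can be chosen to be the identity near infinity in $\CC$, so the perturbed composition is a genuine Lagrangian cobordism with the prescribed ends. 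Once transversality is arranged, the three steps above assemble into the required immersed Lagrangian cobordism $\Lc_{12}\circ K_{01}:(\Lc_{02}^1,\ldots,\Lc_{02}^k)\rightsquigarrow \Lc_{02}^0$.
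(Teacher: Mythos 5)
Your proposal is correct and takes essentially the same approach as the paper: both define $\Lc_{12}\circ K_{01}$ by the same formula, verify that the slices over the ends of $\CC$ are the $\Lc_{02}^i$ (so transversality holds there by hypothesis), use a compactly supported Hamiltonian perturbation to achieve transversality in the interior without disturbing the ends, and confirm the Lagrangian condition. The only cosmetic difference is that you phrase the Lagrangian condition via the coisotropic/null-foliation reduction argument, whereas the paper directly sums the vanishing-of-$\omega$ conditions for $K_{01}$ and $\Lc_{12}$; these are the same computation.
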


\begin{proof}
   The cobordism is defined by taking the intersection of the  $\pi_{01\CC}^{-1}(K_{01})$ and $\pi_{12}^{-1}(L_{12})$ in $X_0\times X_1\times X_2\times \CC$ and projecting down 
   \[
      \Lc_{12}\circ K_{01}:=\pi_{02\CC}(  \pi_{01\CC}^{-1}(K_{01})\cap \pi_{12}^{-1}(L_{12}))
   \]
   We first compute the ``ends'' of this prospective Lagrangian cobordism.
   Let $r+\jmath t_i$ be a point contained in the $i$th  ray of $K_{01}$.
   Then for any $r\ll 0$, we have
   \begin{align*}
      \pi_{02}\circ (\pi_\CC^{-1}(r+\jmath t_i)\cap (L_{12}\circ K_{01}))=& \pi_{02} (\pi_\CC^{-1}(r+\jmath t_i)\cap \pi_{01\CC}^{-1}(K_{01})\cap \pi_{12}^{-1}(L_{12}))\\
      =& \pi_{02}(\pi_{01\CC}^{-1}(\pi_\CC(r+\jmath t_i)\cap K_{01})\cap \pi_{12}^{-1}(L_{12}))\\
      =& \pi_{02}\left(\pi_{01}^{-1}\left(L_{01}^i\right)\cap \pi_{12}^{-1}(L_{12})\right)\\
      =&  L_{01}^i \circ L_{12}.
   \end{align*}
   This shows that the intersection defining $\Lc_{12}\circ K_{01}$ is transverse for $r \ll 0$.
   Therefore, we can apply a compactly supported Hamiltonian isotopy to make the intersection $\pi_{01\CC}^{-1}(K_{01})\cap \pi_{12}^{-1}(L_{12})$ transverse so that $\Lc_{12}\circ K_{01}$ is an immersed submanifold.

   To see that this is a Lagrangian submanifold, we use the Lagrangian conditions of $K_{01}$ and $L_{12}$:
   \begin{align*}
      (-\omega_0+\omega_1+\omega_z)|_{\pi_{12\CC}^{-1}( K_{01})}=0 && (-\omega_1+\omega_2)|_{\pi_{12}^{-1}(L_{12})}=0
   \end{align*}
   from which it follows that $(-\omega_0+\omega_2+\omega_z)|_{\pi_{12\CC}^{-1}(K_{01})\cap\pi_{12}^{-1}(L_{12}) }=0$
\end{proof}
Under the additional hypothesis that  $\Lc_{02}^i=\Lc_{12}\circ \Lc_{01}^i$ are embedded, the Lagrangian submanifold $\Lc_{12}\circ K_{01}$ is embedded away from a compact set. 
We can subsequently take a compact Lagrangian homotopy to obtain  $(\Lc_{12}\circ K_{01})'$ which has transverse self-intersections. Applying Polterovich surgery at these self intersections yields $(\Lc_{12}\circ K_{01})'': (\Lc_{02}^1, \ldots, \Lc_{02}^k)\rightsquigarrow \Lc_{02}^0$ which is an embedded Lagrangian cobordism.
\begin{cor}
   Let $\theta_{01}, \theta_{01}':X_0^-\times X_1\to X_0^-\times X_1$ be two Hamiltonian isotopies, chosen so that the geometric compositions $L_{12}\circ \theta_{01}(L_{01}) $ and $L_{12}\circ \theta_{01}'(L_{01}) $ are defined from a transverse intersection. 
   The two geometric compositions are cobordant. 
\end{cor}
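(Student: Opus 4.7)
The plan is to reduce this to Proposition \ref{claim:correspondenceofcobordism} by producing a Lagrangian cobordism of correspondences between $\theta_{01}(L_{01})$ and $\theta_{01}'(L_{01})$ itself, and then geometrically composing that cobordism with $L_{12}$.

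First I would construct the connecting cobordism $K_{01}$. Since $\theta_{01}$ and $\theta_{01}'$ are Hamiltonian diffeomorphisms of $X_0^- \times X_1$, the composition $\theta_{01}' \circ \theta_{01}^{-1}$ is also Hamiltonian and hence connected to the identity by a (compactly supported) Hamiltonian isotopy $\phi_s$. Applying the suspension construction to $\phi_s$ acting on $\theta_{01}(L_{01})$, I obtain a Lagrangian submanifold $K_{01} \subset X_0^- \times X_1 \times \CC$ whose projection to $\CC$ is compactly supported away from two horizontal rays, and whose two ends are $\theta_{01}(L_{01})$ and $\theta_{01}'(L_{01})$ respectively. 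After rotating the ends in the standard way, $K_{01}$ becomes a Lagrangian cobordism of correspondences in the sense of Definition \ref{def:cobordism}:
\[
K_{01} : \theta_{01}(L_{01}) \rightsquigarrow \theta_{01}'(L_{01}).
\]

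Next I would apply Proposition \ref{claim:correspondenceofcobordism} to $K_{01}$ and $L_{12}$. The hypotheses of that proposition require only that the geometric compositions at the ends, namely $L_{12} \circ \theta_{01}(L_{01})$ and $L_{12} \circ \theta_{01}'(L_{01})$, be immersed Lagrangian submanifolds; this is precisely the transversality assumption stated in the corollary. The proposition then produces an immersed Lagrangian cobordism
\[
L_{12} \circ K_{01} : L_{12} \circ \theta_{01}(L_{01}) \rightsquigarrow L_{12} \circ \theta_{01}'(L_{01})
\]
inside $X_0^- \times X_2 \times \CC$, which is exactly the cobordism being asserted.

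The step I expect to require the most care is verifying that $L_{12} \circ K_{01}$ is genuinely a Lagrangian cobordism and not merely a set: transversality of $\pi_{01\CC}^{-1}(K_{01}) \cap \pi_{12}^{-1}(L_{12})$ is only guaranteed over the ends of $K_{01}$, not over the intermediate compact region where $\phi_s$ is nontrivial. However, this is handled internally to Proposition \ref{claim:correspondenceofcobordism}, which explicitly permits a compactly supported Hamiltonian perturbation to achieve global transversality while preserving the ends (since the intersection is already transverse there). Thus the only work beyond invoking Proposition \ref{claim:correspondenceofcobordism} is the suspension construction of $K_{01}$, which is standard.
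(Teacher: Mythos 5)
Your proposal is correct and takes essentially the same approach as the paper's proof: build a suspension cobordism $K_{01}:\theta_{01}(L_{01})\rightsquigarrow\theta_{01}'(L_{01})$ and apply \cref{claim:correspondenceofcobordism}. The only cosmetic difference is that the paper replaces $K_{01}$ by a Hamiltonian-isotopic $K'_{01}$ to achieve transversality of $L_{12}\circ K'_{01}$ before invoking the proposition, whereas you observe (correctly) that the compactly supported perturbation is already built into the proof of \cref{claim:correspondenceofcobordism}, since transversality at the conical ends is exactly the hypothesis of the corollary.
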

\begin{proof}
   Take the suspension cobordism $K:\theta_{01}(L_{12})\rightsquigarrow \theta_{01}'(L_{12})$.
   The composition $L_{12}\circ K$ may not be cut out transversely.
   Since $L_{12}\circ \theta_{01}(L_{01}) $ and $L_{12}\circ \theta_{01}'(L_{01}) $ are cut out transversely, we only have a failure of transversality away from the ends.
   Therefore, there exists another Lagrangian cobordism $K':\theta_{01}(L_{12})\rightsquigarrow \theta_{01}'(L_{12})$, Hamiltonian isotopic to $K$, for which $L_{12}\circ K'$ is cut out transversely.
   It follows from \cref{claim:correspondenceofcobordism} that  $L_{12}\circ \theta_{01}(L_{01}) $ and $L_{12}\circ \theta_{01}'(L_{01}) $ are cobordant. 
\end{proof}

It is conjectured that a Lagrangian correspondence in $X_0^-\times X_1$ provides a functor between the Fukaya categories of $X_0$ and $X_1$ which acts on objects by geometric composition (and  Lagrangians $L_0\subset X_0$ are interpreted as Lagrangian correspondences in $\{\text{pt}\}\times X_0$). 
We obtain two interpretations of the correspondence/cobordism relation by either putting the cobordism on the objects $L_0$ or the morphisms $L_{01}$. 
From an algebraic standpoint, these cobordisms tell us something about the Weinstein Fukaya 2-category $\textbf{Floer}^\#$, which is a conjectured $A_\infty$-2-category whose objects are symplectic manifolds, and morphisms are Lagrangian correspondences. 
The 2-morphisms $\hom(L_{01}, L_{01}')$ are given by Floer theory, and one recovers the Fukaya category of a symplectic manifold $X$ by looking at the category $\Hom(pt, X)= \Fuk(pt\times X)=\Fuk(X)$.
A version of this category is constructed in\cite{wehrheim2007functoriality} on the level of cohomology. 

We now describe a decategorified version of this 2-category. 
We define $\Cob(\textbf{Floer}^\#)$ to be the category whose objects are symplectic manifolds, and whose morphisms $\Cob(X_1, X_2)$ are formal linear combinations of Lagrangian submanifolds in $(X_1^-\times X_2)$ mod cobordance equivalence, that is $[L^1_{12}]+[L^2_{12}]=[L^0_{12}]$ whenever there is a Lagrangian cobordism $K: (L^1_{12}, L^2_{12})\rightsquigarrow L^0_{12}$.
The vertical composition comes from the abelian group structure on the cobordism group. 
In this category, horizontal composition is the geometric composition of Lagrangian correspondence. \cref{claim:correspondenceofcobordism} shows us that geometric composition with a Lagrangian correspondence $L_{23}\in X_2^-\times X_3$ gives us a well defined map over the equivalence class relation
\[L_{23}:\Cob(X_1, X_2)\to \Cob(X_1, X_3).\]
By applying \cref{claim:correspondenceofcobordism}  to the morphism side of the composition, we obtain the 2-category relation,
\[[(L_{23}^1+L_{23}^2)\circ L_{12}]=[L_{23}^1\circ L_{12}]+ [L_{23}^2\circ L_{12}].\]
We remark that this contains less data than the Donaldson-Fukaya 2-category. 
One interpretation of \cref{thm:cobordismgeneration} given in \cite{biran2014lagrangian} is that $\Cob(\textbf{Floer}^\#)$ behaves like the group $K_0(\textbf{Floer}^\#)$.
On the mirror side, the appropriate comparison is the derived category of coherent sheaves with functors given by Fourier-Mukai kernel versus the Chow groups with morphisms given by push-pull and intersection.
We learned of this interpretation of Lagrangian cobordisms as the mirror to rational equivalence from \cite[Section 1.3]{sheridan2021lagrangian}.

\section{Lagrangian correspondence and admissibility}
\label{app:correspondenceadmissibility}
We look at extending Lagrangian correspondences to the setting of stopped Liouville domains. 
The main difficulty is solved by \cite[Lemma 2.21]{ganatra2017covariantly} which constructs from the data of Liouville manifolds $(X_1^{int}, \stp_1)$ and $(X_2^{int}, \stp_2)$  a product Liouville manifold with a product stop, 
\[(X_1^{int}\times X_2^{int}, \overline {\stp_1\times \stp_2}).\] 
Here, the contact boundary of $X_1^{int}\times X_2^{int}$ is obtained by smoothing out the corner strata of $\partial (X_1^{int}\times X_2^{int})$.
This smoothing covers $\partial(X^{int}_1\times X^{int}_2)$ with charts which resemble $X^{int}_1\times (\partial X^{int}_2), (\partial X^{int}_1)\times X^{int}_2$, and $(\partial X^{int}_1)\times (\partial X^{int}_2)\times (\RR)$.
The $\RR$ component of the last chart from taking a neighborhood of the boundaries of $X^{int}_1$ and $X^{int}_2$, and identifying the symplectization coordinate $t_1$ for $X_1$ with $-t_2$ for $X_2$.

The product stop is given by \cite[Equation 6.1]{ganatra2018sectorial}:
\begin{equation}
    \overline {\stp_1\times \stp_2} = (\stp_1\times \core_1 ) \cup (\stp_1\times \stp_2 \times \RR) \cup (\core_2\times \stp_1).
    \label{eq:prodstop}
\end{equation}
where $\stp_1\times \core_1, \core_2\times \stp_1$ lay in the boundary strata of $X_1\times X_2$, and $\stp_1\times \stp_2\times \RR$ lives in the corner smoothing chart. 
\begin{ex}
   The simplest example of this product smoothing can be seen in toric varieties. Let $\check X_1, \check X_2=\CC$, and let $(X_1^{int}, \stp_1), (X_2^{int}, \stp_2)$ be the mirror stopped domains, so that $X_i= \{z\in \CC^* \; : \; -1<\log|z|<1\}$.
   The FLTZ stop  is the point $e^{-1}\in \CC^*$.
   The contact boundary for $X_1\times X_2$ with rounded corners can be obtained by first considering the product of the symplectic completions, $X_1\times X_2\subset (\CC^*)^2= T^*T^2$, and taking the unit conormal bundle $S^*_1(T^2)=T^2\times S^1$. 
   The boundary stratification given by \cite{ganatra2017covariantly} covers this with three charts: $(S^1\times S^0)\times X_2, X_1\times (S^1\times S^0)$, and $(S^0\times S^1)\times (S^1\times S^0)\times \RR$.
   The product stop is made of three components, and is topologically:
   \[
      \overline{\stp_1\times \stp_2}=(\{e^{-1}\}\times S^1)\cup (S^1\times \{e^{-1}\})\cup (\{e^{-1}\}\times \{e^{-1}\}\times \RR).
   \]
   This last portion gives us the real quarter circle $(e^{-1}\cdot \cos\theta, e^{-1}\cdot \sin \theta)\subset S^1\times T^2$ with $\theta \in [0,\pi/2]$.
   We can complete the product stop to a skeleton, which has four components, $S^1\times S^1, \RR_{>0}\times S^1, S^1\times \RR_{>0},$ and $\RR_{>0}\times \RR_{>0}$. 
   The obtained skeleton matches the FLTZ skeleton for the mirror to $\CC^2$.
   More generally, the product of FLTZ skeletons is the FLTZ skeleton of the product fan. 
\end{ex}
A  stopped Liouville domain $(X, \stp)$ is called \emph{totally stopped} if there is a positive flow from any point of $(\partial X^{int})\setminus \stp$ to the stop.
In this case, there exists a sequence of admissible positive wrappings so that for every point $x\in \partial X^{int}$, $\lim_{t\to\infty} \theta^t(x)\in \stp$. 
Similarly, we say that an admissible Lagrangian $L\subset (X, \stp)$ is \emph{totally stopped} if there is a Reeb chord from every point $x\in \partial_0 L$ to $\stp.$
Conjecturally, totally stopped Liouville domains should have compact mirrors. If a Lagrangian $L$ is totally stopped, it is expected to be mirror to a perfect complex with compact support. 

\begin{df}
   An admissible Lagrangian correspondence  $\Lc_{12}:(X_1, \stp_1)\Rightarrow (X_2, \stp_2)$ is an admissible Lagrangian submanifold $\Lc_{12}\subset (X_1^-\times X_2, \overline{\stp_1\times \stp_2})$.
\end{df}

\begin{prop}
   Suppose that $\Lc_{12}\subset (X_1^-\times  X_2, \overline{\stp_1\times \stp_2})$ is a Lagrangian correspondence and that $L_1\subset (X_1, \stp_1)$ is a totally stopped Lagrangian.
   There exists $L_1'$, admissibly Hamiltonian isotopic to $L_1$, and $X_2^{int}\subset X_2$ compact so that $\Lc_{12}\circ L_1'$ avoids $\LL_2\setminus X_2^{int}$.
   \label{claim:compositionstop}
\end{prop}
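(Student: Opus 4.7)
The strategy is to combine an admissible wrapping of $L_1$ (supplied by total stoppedness) with a small admissible Hamiltonian perturbation in the interior, and then use admissibility of $\Lc_{12}$ in the two relevant boundary charts of $\partial(X_1^{int}\times X_2^{int})$ to rule out bad contributions to the composition at infinity. Choose Liouville subdomains $X_1^{int}\subset X_1$, $X_2^{int}\subset X_2$ large enough that $\Lc_{12}$, $L_1$, and $\LL_2$ are conical outside $X_1^{int}\times X_2^{int}$, and let $\Lambda_{12}\subset \partial(X_1^{int}\times X_2^{int})$ be the Legendrian at infinity of $\Lc_{12}$; it is stratified by the three charts $\partial X_1^{int}\times X_2^{int}$, $X_1^{int}\times \partial X_2^{int}$, and the smoothed corner $\partial X_1^{int}\times \partial X_2^{int}\times \RR$. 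First, using that $L_1$ is totally stopped, apply a positive admissible isotopy to produce $L_1^{(1)}$ whose Legendrian at infinity lies in an arbitrarily small neighborhood $U_1\subset \partial X_1^{int}$ of $\stp_1$; this concentrates the cylindrical end of $L_1^{(1)}$ inside $U_1\times[1,\infty)$.

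Any point of $\Lc_{12}\circ L_1^{(1)}$ landing in $\LL_2$ outside $X_2^{int}$ must lie in the Liouville cone over $\stp_2$, so it comes from a pair $(x,y)\in \Lc_{12}$ with $x\in L_1^{(1)}$ and $y$ (after rescaling by conicality) on $\partial X_2^{int}$ with boundary direction in $\stp_2$. Split into two cases. In the corner case $(x,y)$ lies near $U_1\times \stp_2\times \RR$; but admissibility of $\Lc_{12}$ forces it to avoid the closed Legendrian $\stp_1\times \stp_2\times \RR$, and since $\Lc_{12}$ is closed and invariant under the product Liouville flow, the avoidance opens up to avoidance of $U_1\times \stp_2\times \RR$ once $U_1$ is chosen sufficiently small, which is how we chose it. In the pure $X_2$-infinity case, $x$ lies in the compact set $L_1^{(1)}\cap X_1^{int}$ and $(x,y)$ belongs to $\tilde\Lambda_{12}:=\Lambda_{12}\cap (X_1^{int}\times \partial X_2^{int})$, which by admissibility avoids $\core_1\times \stp_2$. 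Set $C:=\pi_{X_1}\!\left(\tilde\Lambda_{12}\cap (X_1^{int}\times \stp_2)\right)$; this is a compact subset of $X_1^{int}\setminus \core_1$ of dimension at most $n_1-1$ (a direct count using $\dim \tilde\Lambda_{12}=n_1+n_2-1$ and $\dim \stp_2=n_2-1$).

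Since $\dim L_1+\dim C\leq 2n_1-1<\dim X_1$, a generic small Hamiltonian isotopy compactly supported in $X_1^{int}$ (automatically admissible, and supported away from the cylindrical end so it does not disturb the wrapping from the first step) displaces $L_1^{(1)}$ off $C$. Let $L_1'$ be the result. With this $L_1'$ both cases are eliminated, so $\Lc_{12}\circ L_1'$ meets $\LL_2$ only in a compact set, which we absorb into $X_2^{int}$. The main obstacle is the corner case: one has to argue carefully that closedness of the stop and of $\Lc_{12}$, together with conicality, actually produces a uniform neighborhood $U_1$ of $\stp_1$ with $\Lc_{12}\cap (U_1\times \stp_2\times \RR)=\emptyset$ after the corner smoothing. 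The dimensional displacement in the pure $X_2$-infinity chart, by contrast, is robust and purely transversality-theoretic.
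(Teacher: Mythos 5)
Your overall strategy — wrap $L_1$ near the stop using total stoppedness and then analyze the composition at infinity using the product boundary charts — is sound, and your corner-chart analysis is correct and is indeed where total stoppedness enters essentially. However, the pure-$X_2$-infinity case has a genuine gap. A point $(x_1,x_2)\in \Lc_{12}$ with $x_1\in X_1^{int}$ and $x_2\in \LL_2\setminus X_2^{int}$ flows backward under the product Liouville vector field $(Z_1,Z_2)$ to a boundary point $(\phi_{Z_1}^{-t_0}(x_1), s_2)\in \tilde\Lambda_{12}\cap(X_1^{int}\times\stp_2)$; so the problematic $X_1$-coordinate $x_1$ lies on the \emph{forward} $Z_1$-trajectory of a point of $C$, not in $C$ itself. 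The set you actually must avoid is therefore the truncated cone $\{\phi_{Z_1}^t(c): c\in C,\ t\geq 0\}\cap X_1^{int}$, whose dimension is $\dim C + 1$. Even granting $\dim C\leq n_1-1$, that gives a set of dimension $n_1$ in the $2n_1$-dimensional $X_1$, so a generic perturbation of the $n_1$-dimensional $L_1^{(1)}$ only makes the intersection $0$-dimensional, not empty. (Separately, the bound $\dim C \leq n_1-1$ presumes a transversality of $\tilde\Lambda_{12}$ with $X_1^{int}\times\stp_2$ that is not automatic; these are fixed objects, not in general position.)

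Both issues disappear if you use the full strength of admissibility: $\Lc_{12}$ is disjoint outside $X_1^{int}\times X_2^{int}$ from the \emph{entire} product skeleton $\LL_{\overline{\stp_1\times\stp_2}} = \LL_1\times\LL_2$, not merely from the product stop stratum $\core_1\times\stp_2$. Consequently, the compact set $\pi_1\bigl(\Lc_{12}\cap\pi_2^{-1}(\LL_2\setminus X_2^{int})\bigr)$ is disjoint from all of $\LL_1$, not just from $\core_1$, and hence (by conicality and compactness) from an $\eps$-neighborhood $X_1^\eps$ of $\LL_1$. Total stoppedness lets you wrap $L_1$ to an $L_1'$ lying entirely inside $X_1^\eps$ (as a whole Lagrangian, not merely its Legendrian boundary), which immediately handles both of your cases without any case split, dimension count, or perturbation. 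This is precisely the paper's one-paragraph argument. If you wish to keep your chart-by-chart structure, simply replace the displacement step in the pure-$X_2$-infinity case with the observation that full admissibility makes $C$ and its forward Liouville cone disjoint from $\LL_1\cap X_1^{int}$, so they are avoided by any $L_1'$ sufficiently close to $\LL_1$.
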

\begin{proof}
   As $L_{12}$ is admissible, $L_{12}$ is disjoint from $(\LL_1\times \LL_2 ) \setminus (X^{int}_1\times X^{int}_2)$. 
   Therefore $\pi_1(L_{12}\cap (X_1\times \LL_2))$ is disjoint from $X^{\eps}_1$, a small neighborhood of $\LL_1$.\
   Since $L_1$ is totally stopped, there exists $L_1'$, admissibly Hamiltonian isotopic to $L_1$, so that $L_1'\subset X^{\eps}_1$. 
   Therefore, $\pi_1^{-1}(L_1')$ is disjoint from $L_{12} \cap \pi_2^{-1}(\LL_2\setminus X^{int}_2)$, and we conclude that $\Lc_{12}\circ L_1'$ is disjoint from $\LL_2\setminus X^{int}_2$. 
\end{proof}
We note that ``totally stopped'' is not a necessary requirement for making sense of Lagrangian correspondences of Louiville domains. 
For instance, any symplectomorphism of a Liouville domain which preserves the stop should give rise to a Lagrangian correspondence which preserves admissible Lagrangian submanifolds.
A more general set of examples come from stop removal in FLTZ skeletons; in particular the functors explored in \cref{sec:applications} mirror to localization away from a divisor and toric blowdown.
\begin{prop}
   Suppose that $\Lc_{12}\subset X_1^-\times  X_2$ is an admissible Lagrangian correspondence between Liouville domains. 
   Let $L_1\subset X_1$ be an admissible Lagrangian submanifold.
   Then there exists a Lagrangian submanifold $L_2$, Hamiltonian isotopic to $\Lc_{12}\circ L_1$, which is an admissible Lagrangian submanifold of $X_2$.
   \label{claim:correspondencecomposition}
\end{prop}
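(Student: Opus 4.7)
The plan is to perturb $L_{12}$ by a compactly supported Hamiltonian isotopy to achieve a transverse intersection with $\pi_1^{-1}(L_1)$, and then to verify admissibility of the projection to $X_2$ by analyzing the behavior at infinity using the product Liouville structure from \cite[Lemma 2.21]{ganatra2017covariantly}.

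First, I would apply a compactly supported Hamiltonian isotopy $\phi$ on $X_1^-\times X_2$ so that $\phi(\Lc_{12})$ is transverse to the coisotropic submanifold $\pi_1^{-1}(L_1)=L_1\times X_2$; this is possible by a standard Sard-type argument, and I would arrange the support of $\phi$ to lie inside a compact subset $K\subset X_1\times X_2$. Away from $K$, both $\Lc_{12}$ and $L_1\times X_2$ are already in ``clean position'' by virtue of the conical structures, so the compactly supported perturbation suffices. Setting $L_2:=\pi_2(\phi(\Lc_{12})\cap\pi_1^{-1}(L_1))$ produces an immersed Lagrangian in $X_2$ that is Hamiltonian isotopic to $\Lc_{12}\circ L_1$, and the remaining task is to show $L_2$ is conical at infinity.

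Next, I would fix compact Liouville subdomains $X_i^{int}\subset X_i$ large enough to contain both the projections $\pi_i(K)$ and the non-conical parts of $L_i$. Outside a large compact set, $\phi(\Lc_{12})$ agrees with $\Lc_{12}$ and is invariant under the product Liouville flow. I would then walk through each of the three boundary charts covering $\partial(X_1^{int}\times X_2^{int})$ in the smoothed corner construction. In the chart $\partial X_1^{int}\times X_2^{int}$, the $X_2$-projection of any intersection point lies in $X_2^{int}$, so no contribution to $L_2$ at infinity arises. In the chart $X_1^{int}\times\partial X_2^{int}$, admissibility of $\Lc_{12}$ forces it locally to split as $\Lambda\times\RR$ along the $X_2$-Liouville direction, and intersecting with $L_1\times X_2$ (now compact in $X_1$) yields a set of the form $(\text{compact})\times\RR$ whose projection to $X_2$ is a cylindrical end.

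The main obstacle I expect is the corner chart $\partial X_1^{int}\times\partial X_2^{int}\times\RR$, where the Liouville vector field mixes contributions from both factors. Here I would exploit that, in this chart, $L_1$ takes the form $\Lambda_1\times\RR$ for a Legendrian $\Lambda_1\subset\partial X_1^{int}$, while admissibility of $\Lc_{12}$ gives invariance under the combined diagonal Liouville flow. Restricting $\Lc_{12}$ to $L_1\times X_2$ should thus ``absorb'' the $X_1$-component along the cylindrical end of $L_1$, leaving the $X_2$-component of the flow to act on the projection $L_2$; this yields that $L_2$ is conical at infinity in $X_2$. Exactness of $L_2$ then follows from exactness of $\Lc_{12}$ and $L_1$ together with the fact that the intersection with a coisotropic preserves primitives on the relevant connected components, so $L_2$ is admissible as claimed.
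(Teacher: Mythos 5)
Your analysis of the two corner-free charts is where the gap lies. In the chart $X_1^{int}\times\partial X_2^{int}$, the product Liouville vector field is $Z_{X_1}\oplus Z_{X_2}$, and $Z_{X_1}$ does not vanish on $X_1^{int}$. So conicality of $\Lc_{12}$ there means invariance under the \emph{combined} flow of $Z_{X_1}\oplus Z_{X_2}$, not a splitting ``$\Lambda\times\RR$ along the $X_2$-Liouville direction.'' More to the point, $\pi_1^{-1}(L_1)=L_1\times X_2$ is \emph{not} conical at infinity in that chart: conicality would require $Z_{X_1}$ tangent to $L_1$, which fails on the compact non-conical part of $L_1$, yet the product of this compact piece with the non-compact $X_2$ lies at infinity of $X_1\times X_2$. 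Consequently, neither the intersection $\Lc_{12}\cap\pi_1^{-1}(L_1)$ nor its projection $L_2$ is automatically conical, and a compactly supported transversality perturbation of $\Lc_{12}$ cannot repair this, since the failure occurs at infinity.

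The paper addresses exactly this point by modifying the Liouville structure rather than only perturbing the Lagrangians: with $f_1$ an extension of the primitive of $\lambda_1|_{L_1}$ and $\rho_2$ a cutoff supported near infinity of $X_2$, it uses $\tilde\lambda=\lambda_1-\lambda_2+d(\rho_2 f_1)$. Near $\partial X_2^{int}$ the modified Liouville vector field becomes $Z_{X_1}+X_{f_1}-Z_{X_2}$, and the key observation is that $Z_{X_1}+X_{f_1}$ is tangent to $L_1$ \emph{everywhere} because $f_1$ primitivizes $\lambda_1|_{L_1}$; thus $\pi_1^{-1}(L_1)$ becomes $\tilde\lambda$-conical. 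The Liouville homotopy from $\lambda_1-\lambda_2$ to $\tilde\lambda$ is then converted into an exact symplectic isotopy via \cite[Proposition 11.8]{cieliebak2012stein}, and the corrected geometric composition $\pi_2(\tilde\Phi_1(\pi_1^{-1}(L_1))\cap\Lc_{12})$ is what one checks is conical. This Liouville-form modification is the missing ingredient in your argument; without it, the assertion that $L_2$ has a cylindrical end over the $X_1^{int}\times\partial X_2^{int}$ chart does not follow.
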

The proof we give follows \cite[Section 6.2]{ganatra2018sectorial}, which discusses admissibility of a product Lagrangian in a product Liouville manifold. 
\begin{proof}
   Let $f_1: L_1\to \RR$ be the primitive for $\lambda_{X_1}|_L$.
   We look at compact sets $X_1^{int}\subset X_1, X_2^{int}\subset X_2$ so that $L_1$ is conical outside of $X_1^{int}$, and $\Lc_{12}$ is conical outside of $X_1^{int}\times X_2^{int}$. 
   Following \cite[Lemma 6.1]{ganatra2018sectorial}, we will assume that $f_1$ has compact support. Extend $f_1$ to a compactly supported function $f_1: X_1\to \RR$.
   Pick a bump function $\rho_{2}:X_2\to \RR$ with the property that  $\rho_{2}\circ \pi_{X_2}((X_1\times X_2)\setminus B_\epsilon(X_1^{int}\times X_2^{int}))=1$, and vanishes inside $X_1^{int}\times X_2^{int}$.

   From the primitives $d\lambda_i=\omega_i$ for $X_i$, we construct a modified primitive for $\omega_1\oplus (-\omega_2)$ on $X_1\times X_2$ given by 
   \[\tilde \lambda = \lambda_1-\lambda_2+ d(\rho_2 f_1).\]
   For this modified $\tilde \lambda$ we show that the Liouville vector field $\tilde Z_{X_1\times X_2}$ is parallel to the submanifold $\pi^{-1}_1(L_1)\subset X_1\times X_2$ outside of a compact set:
   \begin{itemize}
      \item Over $X_1\times X_2 \setminus \pi^{-1}_1(X_1^{int})$, the function $f_1$ vanishes, so the Liouville vector field reduces to
      \[\tilde Z_{X_1\times X_2}|_{\pi^{-1}_X(X_1^{int})} = Z_{X_1}-Z_{X_2}\]
      As $L_1$ is conical outside of $X_1^{int}$, the tangent space  $T(\pi^{-1}_1(L\setminus X_1^{int}))$ contains $\langle Z_X\rangle \times TX_2$; in particular, $\pi^{-1}_1(L_1)$ is parallel to $\tilde Z_{X_1\times X_2}$ outside of  $\pi^{-1}_1(X_1^{int})$.
      \item Over $X_1\times X_2 \setminus \pi^{-1}_2(X_2^{int})$ the bump function is  $\rho_2=1$, so 
      \[\tilde Z_{X_1\times X_2}=Z_{X_1}-Z_{X_2}+Z_{f_L}\]
      where $Z_{f_{1}}$ is the Hamilton vector field of $f_1: X_1\to \RR$. 
      By design $Z_{X_1}+Z_{f_1}$ is parallel to $L_1$ everywhere. Therefore,  the tangent space to  $\pi^{-1}_1(L_1)$ contains $\langle Z_{X_1}+Z_{f_1}\rangle \times TX_2$ outside of $ \pi_2^{-1}(X_2\setminus X_2^{int})$; in particular, this is parallel to $\tilde Z_{X_1\times X_2}$
   \end{itemize}
   Therefore $\pi^{-1}_1(L_1)$ is $\tilde \lambda$-conical at infinity. 
   Continuing to follow \cite[Section 6.3]{ganatra2018sectorial}, the linear interpolation $\tilde\lambda_t$ between the Liouville structures $\lambda_{X_1\times X_2}$ and $\tilde \lambda_{X_1\times X_2}$ is a Liouville homotopy; therefore by \cite[Proposition 11.8]{cieliebak2012stein}, there exists an exact symplectic isotopy $\Phi_t$ of $X_1\times X_2$ inducing this Liouville homotopy. 

   We now give a short outline of \cite[Proposition 11.8]{cieliebak2012stein} to additionally show that this exact symplectic isotopy can be chosen with support in a neighborhood of $\pi^{-1}(L_1)$. 
   This will allow us to choose this exact symplectic isotopy in such a way that we continue to avoid the stop.
   For notational convenience, we will write $X=X_1\times X_2$ and $ \lambda_t=\tilde \lambda_t$. 
   A Liouville homotopy consists of a family of domains $X_t^k\subset X$ which exhaust $X$ along which $Z_{\lambda_t}$ points outward. 
   For each $t$, there exists a contactomorphism $f_t^k: Y_0^k\to Y_t^k$, where $Y_t^k$ is the contact boundary to $(X_t^k, \lambda_t)$. 
   This contactomorphism arises from Gray's theorem, and is the identity outside of the support of $\frac{d}{dt} \lambda_t$. 
   This contactomorphism does not preserve the contact form, which differs from the original contact form by a scaling factor given by a function $\rho^k_t: Y^k_t\to \RR$. 

   Let $Y^{k, c}_t=\phi_{Z_{\lambda_t}}^c(Y^k_t)$, the time $c$ flow of the contact boundary $Y^k_t$ along the Liouville vector field $Z_{\lambda_t}$.
   Since the $Z_{\lambda_t}$ agree where $\frac{d}{dt} \lambda_t = 0$, the compositions 
   \[\phi^{k, c}_t :=\phi_{Z_{\lambda_t}}^c\circ f_t^k\circ \phi_{Z_{\lambda_t}}^{-c}: Y^{k, c}_0\to Y^{k, c}_t\]
   are equal to the identity on the region where $\frac{d}{dt}\lambda_t=0$. 
   Then, \cite{cieliebak2012stein} uses these diffeomorphisms on the contact slices of $X$ to build a family of symplectomorphisms on disjoint open neighborhoods  of the $Y^{k,c}_t$ in $X$.
   This is accomplished by scaling the neck coordinate of an open neighborhood of $Y^{k,c}_t$ by the $\rho^k_t$. 
   
   The portion of a symplectomorphism defined over a neighborhood of contact slices is extended to a family of diffeomorphisms of $\Phi_s: X\to X$, where $s\in [0,1]$.
   This extension can be done in such a way that it is the identity on $\frac{d}{dt}(\lambda_t)=0$. 
   From here, Moser's stability is applied to obtain a symplectic isotopy $\tilde \Phi_s:X\to X$ which agree with $\phi^{k, c}_t$ on the slices, and are identity where $\frac{d}{dt}(\lambda_t)=0$.
   
   Returning to our setup, there exists Hamiltonian symplectomorphism $\tilde \Phi_1:X_1\times X_2\to X_1\times X_2$, constant over $X_1^{int}\times X_2^{int}$, so that $(\tilde \Phi_1)_*(\tilde Z_{X_1\times X_2})=Z_{X_1\times X_2}$, and $\tilde \Phi_1=\id$ outside the support of $f_1$. 
   With this Hamiltonian isotopy, $\tilde \Phi_1(\pi^{-1}_1(L_1))$ is parallel to $Z_{X_1\times X_2}$ outside of a compact set. 
   
   The corrected geometric composition of $L_{12}$ and $L_1$ is defined to be:
   \[\Lc_{12}\tilde \circ (L_1):= \pi_2(\tilde \Phi_1(\pi_1^{-1}(L_1))\cap \Lc_{12}).\]
   Since $\tilde \Phi_1(\pi_1^{-1}(L_1)), K_{12}$ are both parallel to $Z_{X_1\times X_2}=(Z_{X_1}, Z_{X_2})$ outside of $\pi_2^{-1}(X_2^{int})$, the corrected geometric composition is parallel to $Z_{X_2}$ outside of the set $X_2^{int}$. 
\end{proof}
\begin{cor}
   Suppose $(X_1, \stp_1)$ is totally stopped. 
   Let $L_1\in (X_1, \stp_1)$ be an admissible Lagrangian, and  $L_{12}: (X_1, \stp_1)\Rightarrow (X_2, \stp_2)$ be an admissible Lagrangian correspondence.
   Then there exists $L_1'$, admissibly Hamiltonian isotopic to $L_1$, so that the geometric composition $L_{12}\tilde \circ L_1'$ is an admissible Lagrangian submanifold of $(X_2, \stp_2)$.
\end{cor}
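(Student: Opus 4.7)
The plan is to combine the two immediately preceding results: \Cref{claim:compositionstop} gives admissibility away from a compact set provided $L_1$ is itself totally stopped, and \Cref{claim:correspondencecomposition} upgrades the resulting Lagrangian to something conical at infinity. Our only task is to upgrade the hypothesis ``$(X_1,\stp_1)$ is totally stopped'' into the stronger statement ``$L_1$ is admissibly Hamiltonian isotopic to a totally stopped Lagrangian,'' and then check that the correction from \Cref{claim:correspondencecomposition} is compatible with avoiding $\stp_2$.

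First, I would use the totally stopped hypothesis on $(X_1,\stp_1)$ to produce an admissible positive wrapping $\theta^T\colon X_1\to X_1$ with $T\gg 0$ large enough that every point of $\partial_\infty L_1$ flows into an arbitrarily small neighborhood of $\stp_1$. Setting $L_1'':=\theta^T(L_1)$, the Lagrangian $L_1''$ is admissibly Hamiltonian isotopic to $L_1$ and is totally stopped (in the sense used in \Cref{claim:compositionstop}), because $L_1''\subset X_1^\eps$ for a neighborhood $X_1^\eps$ of $\LL_1$ of our choosing. Applying \Cref{claim:compositionstop} to $L_1''$ produces a further admissible Hamiltonian perturbation $L_1'$ of $L_1''$, together with a compact set $X_2^{int}\subset X_2$, such that $L_{12}\circ L_1'$ is disjoint from $\LL_2\setminus X_2^{int}$. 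By transitivity, $L_1'$ is admissibly Hamiltonian isotopic to $L_1$.

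Next, I would feed $L_1'$ into the proof of \Cref{claim:correspondencecomposition}. That proof modifies the Liouville form on $X_1\times X_2$ by the exact $1$-form $d(\rho_2 f_1)$, where $f_1$ is a compactly supported extension of the primitive for $\lambda|_{L_1'}$ and $\rho_2$ is a bump function on $X_2$, and then uses a Moser-type argument to obtain a Hamiltonian symplectomorphism $\tilde\Phi_1$ of $X_1\times X_2$ that straightens $\pi_1^{-1}(L_1')$ to a conical submanifold. The key point to verify is that $\tilde\Phi_1$ can be chosen with support contained in a small neighborhood of $\pi_1^{-1}(L_1')$, as recalled in \Cref{claim:correspondencecomposition}: since $\rho_2 f_1$ is supported where $f_1\neq 0$, and $f_1$ itself is compactly supported, the Liouville homotopy is constant outside this set, and Cieliebak--Eliashberg's argument then produces $\tilde\Phi_1$ identity outside a neighborhood of $\mathrm{supp}(f_1)\times \mathrm{supp}(\rho_2)$.

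Finally I would argue that the corrected composition $L_{12}\,\tilde\circ\, L_1'$ is disjoint from $\stp_2$. Outside a compact subset of $X_2$ this follows because $\tilde\Phi_1$ is the identity at infinity and the uncorrected composition already avoided $\LL_2\setminus X_2^{int}$ by \Cref{claim:compositionstop}; inside $X_2^{int}$ the stop lies only at infinity, so there is nothing to avoid. The main obstacle, and the reason this requires some care, is precisely this interplay between the stop-avoiding perturbation supplied by \Cref{claim:compositionstop} and the conicalizing Hamiltonian supplied by \Cref{claim:correspondencecomposition}: one must pick the neighborhood $X_1^\eps$ of $\LL_1$ in the first step small enough, and the support of $\rho_2$ in the second step disjoint from a neighborhood of $\stp_2$, so that the two modifications do not interfere. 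Since both constructions are local near compact subsets of $X_1\times X_2$, these choices can be made compatible, yielding the desired admissible Lagrangian in $(X_2,\stp_2)$.
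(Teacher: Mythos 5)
Your overall plan — run the wrapping from \Cref{claim:compositionstop} first to get $L_1'\subset X_1^\eps$, then run the conicalization of \Cref{claim:correspondencecomposition} on $L_1'$ and verify the two perturbations don't collide — is the same as the paper's. But the final avoidance argument has a genuine gap.

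You justify disjointness from $\stp_2$ at infinity by claiming ``$\tilde\Phi_1$ is the identity at infinity.'' This is not true in the direction that matters. By your own accounting, $\tilde\Phi_1$ is the identity only outside a neighborhood of $\mathrm{supp}(f_1)\times\mathrm{supp}(\rho_2)$, and $\mathrm{supp}(\rho_2)$ is all of $X_2$ outside a compact set (recall $\rho_2\equiv 1$ near $\partial_\infty X_2$, which is where $\stp_2$ lives). So the support of the correcting isotopy is unbounded in the $X_2$ direction and $\tilde\Phi_1$ is emphatically not the identity near $\stp_2$. Your suggestion to choose ``the support of $\rho_2$ disjoint from a neighborhood of $\stp_2$'' is not available: $\rho_2=1$ there by construction, and it must be, since the purpose of $\rho_2$ is to turn on the conicalizing correction precisely at infinity.

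What actually makes the argument work — and the choice that the paper makes explicit — is a constraint on $f_1$, not on $\rho_2$: one must choose the extension of the primitive $f_1$ from $L_1'$ to $X_1$ so that $\mathrm{supp}(f_1)\subset X_1^\eps$ (possible because $L_1'\subset X_1^\eps$). Then $\tilde\Phi_1$ is supported in $\pi_1^{-1}(X_1^\eps)$, which is unbounded in $X_2$ but crucially narrow in $X_1$. It follows that $\tilde\Phi_1(\pi_1^{-1}(L_1'))\subset \pi_1^{-1}(X_1^\eps)$, and one then re-runs the intersection estimate from \Cref{claim:compositionstop}: since $\Lc_{12}$ avoids the product skeleton outside a compact set, $\Lc_{12}\cap \pi_1^{-1}(X_1^\eps)$ avoids $\pi_2^{-1}(\LL_2\setminus X_2^{int})$, so the corrected composition avoids $\LL_2\setminus X_2^{int}$. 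Your proof mentions $\mathrm{supp}(f_1)$ in passing but never imposes $\mathrm{supp}(f_1)\subset X_1^\eps$; without that, the localization of $\tilde\Phi_1$ is in the wrong factor and the argument does not close. (The intermediate $L_1''$ step is harmless but redundant, since the hypothesis that $(X_1,\stp_1)$ is totally stopped already makes every admissible $L_1$ a totally stopped Lagrangian.)
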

\begin{proof}
   Pick $L_1'$ as in the proof of \cref{claim:compositionstop}.
   From the proof of \cref{claim:compositionstop}, $L_1' \subset X_1^{\eps}$.
   The construction of the isotopy to correct the geometric composition in \cref{claim:correspondencecomposition} relied on a choice of extension of $f_1$, the primitive of $\lambda_1$ on $L_1'$ to $X_1$. 
   Choose such an extension so that the support of $f_1$ is contained in $X_1^{\eps}$.
   Then by the argument above, the image of $\Phi(\pi^{-1}_1(L_1'))$ lies within the preimage of the support of $f_1$, which is contained  inside $\pi_1^{-1}(X_1^{\eps})$.
   This is sufficient to carry out the remainder of the proof \cref{claim:compositionstop} showing that $\pi_2(\Phi(\pi_1^{-1}(L_1))\cap \Lc_{12})$ is disjoint from $\LL_2\setminus X_2^{int}$. 
\end{proof}
 \emergencystretch=1.5em
\printbibliography

@article{abouzaid2006homogeneous,
  author  = {Abouzaid, Mohammed},
  title   = {Homogeneous coordinate rings and mirror symmetry for toric varieties},
  journal = {Geometry and Topology},
  volume  = {10},
  year    = {2006},
  pages   = {1097--1156}
}

@article{abouzaid2009morse,
  title     = {Morse homology, tropical geometry, and homological mirror symmetry for toric varieties},
  author    = {Abouzaid, Mohammed},
  journal   = {Selecta Mathematica},
  volume    = {15},
  pages     = {189--270},
  year      = {2009},
  publisher = {Springer}
}

@article{arinkin2010perverse,
  title     = {Perverse coherent sheaves},
  author    = {Arinkin, Dmitry and Bezrukavnikov, Roman},
  journal   = {Moscow Mathematical Journal},
  volume    = {10},
  number    = {1},
  pages     = {3--29},
  year      = {2010},
  publisher = {Независимый Московский университет--МЦНМО}
}

@article{arnol1980lagrange,
  title     = {Lagrange and {L}egendre cobordisms. I},
  author    = {Arnol'd, Vladimir},
  journal   = {Functional Analysis and Its Applications},
  volume    = {14},
  number    = {3},
  pages     = {167--177},
  year      = {1980},
  publisher = {Springer}
}

@article{auroux2006mirror,
  title     = {Mirror symmetry for del Pezzo surfaces: vanishing cycles and coherent sheaves},
  author    = {Auroux, Denis and Katzarkov, Ludmil and Orlov, Dmitri},
  journal   = {Inventiones Mathematicae},
  volume    = {166},
  number    = {3},
  pages     = {537--582},
  year      = {2006},
  publisher = {Springer}
}

@article{auroux2007mirror,
  author  = {Auroux, Denis},
  title   = {Mirror symmetry and {$T$}-duality in the complement of an
              anticanonical divisor},
  journal = {Journal of G\"{o}kova Geometry Topology. GGT},
  volume  = {1},
  year    = {2007},
  pages   = {51--91}
}

@article{auroux2008mirror,
  title   = {Mirror symmetry for weighted projective planes and their noncommutative deformations},
  author  = {Auroux, Denis and Katzarkov, Ludmil and Orlov, Dmitri},
  journal = {Annals of Mathematics},
  volume  = {167},
  issue   = {3},
  pages   = {867--943},
  year    = {2008}
}

@article{ballard2015mori,
  title   = {The Mori program and non-Fano toric homological mirror symmetry},
  author  = {Ballard, Matthew and Diemer, Colin and Favero, David and Katzarkov, Ludmil and Kerr, Gabriel},
  journal = {Transactions of the American Mathematical Society},
  volume  = {367},
  issue   = {12},
  pages   = {8933--8974},
  year    = {2015}
}

@article{biran2014lagrangian,
  title     = {Lagrangian cobordism and {F}ukaya categories},
  author    = {Biran, Paul and Cornea, Octav},
  journal   = {Geometric and functional analysis},
  volume    = {24},
  number    = {6},
  pages     = {1731--1830},
  year      = {2014},
  publisher = {Springer}
}

@article{borisov2005orbifold,
  author  = {Borisov, Lev A. and Chen, Linda and Smith, Gregory G.},
  title   = {The orbifold {C}how ring of toric {D}eligne-{M}umford stacks},
  journal = {Journal of the American Mathematical Society},
  volume  = {18},
  year    = {2005},
  number  = {1},
  pages   = {193--215}
}

@article{sheridan2021lagrangian,
  title={Lagrangian cobordism and tropical curves},
  author={Sheridan, Nick and Smith, Ivan},
  journal={Journal f{\"u}r die reine und angewandte Mathematik (Crelles Journal)},
  volume={2021},
  number={774},
  pages={219--265},
  year={2021},
  publisher={De Gruyter}
}

@article{bosshard,
  title   = {On Lagrangian Cobordisms in Liouville manifolds},
  author  = {Bosshard, Valentin},
  journal={arXiv:2105.14821},
  year={2021}
}

@article{candelas1991pair,
  title     = {A pair of {C}alabi-{Y}au manifolds as an exactly soluble superconformal theory},
  author    = {Candelas, Philip and Xenia, C and Green, Paul S and Parkes, Linda},
  journal   = {Nuclear Physics B},
  volume    = {359},
  number    = {1},
  pages     = {21--74},
  year      = {1991},
  publisher = {Elsevier}
}

@article{chan2016lagrangian,
  title={Lagrangian sections on mirrors of toric Calabi-Yau 3-folds},
  author={Chan, Kwokwai and Pomerleano, Daniel and Ueda, Kazushi},
  journal={arXiv:1602.07075},
  year={2016}
}

@incollection{chan2020syz,
  author    = {Chan, Kwokwai},
  title     = {SYZ mirror symmetry for toric varieties},
  booktitle = {Handbook for Mirror Symmetry of Calabi-Yau and Fano Manifolds, Adv. Lect. Math.},
  volume    = {47},
  pages     = {1--32},
  publisher = {Int. Press, Somerville, MA},
  year      = {2020}
}

@article{chantraine2017geometric,
  title   = {Geometric generation of the wrapped {F}ukaya category of {W}einstein manifolds and sectors},
  author  = {Chantraine, Baptiste and Rizell, Georgios Dimitroglou and Ghiggini, Paolo and Golovko, Roman},
  journal = {arXiv:1712.09126},
  year    = {2017}
}

@book{cieliebak2012stein,
  title     = {From Stein to Weinstein and back: symplectic geometry of affine complex manifolds},
  author    = {Cieliebak, Kai and Eliashberg, Yakov},
  volume    = {59},
  year      = {2012},
  publisher = {American Mathematical Society}
}

@article{clarke2016dual,
  title={Dual fans and mirror symmetry},
  author={Clarke, Patrick},
  journal={Advances in Mathematics},
  volume={301},
  pages={902--933},
  year={2016},
  publisher={Elsevier}
}

@book{cox2011toric,
  title     = {Toric Varieties},
  author    = {Cox, David A and Little, John B and Schenck, Henry K},
  volume    = {124},
  year      = {2011},
  publisher = {American Mathematical Soc.}
}

@article{dyckerhoff2019symplectic,
  title   = {The symplectic geometry of higher {A}uslander algebras: Symmetric products of disks},
  author  = {Dyckerhoff, Tobias and Jasso, Gustavo and Lekili, Yank{\i}},
  journal = {arXiv:1911.11719},
  year    = {2019}
}

@book{ewald2012combinatorial,
  title={Combinatorial convexity and algebraic geometry},
  author={Ewald, G{\"u}nter},
  volume={168},
  year={2012},
  publisher={Springer Science \& Business Media}
}

@article{fang2011categorification,
  title     = {A categorification of Morelli’s theorem},
  author    = {Fang, Bohan and Liu, Chiu-Chu Melissa and Treumann, David and Zaslow, Eric},
  journal   = {Inventiones mathematicae},
  volume    = {186},
  number    = {1},
  pages     = {79--114},
  year      = {2011},
  publisher = {Springer}
}

@article{fang2014coherent,
  author  = {Fang, Bohan and Liu, Chiu-Chu Melissa and Treumann, David and
              Zaslow, Eric},
  title   = {The coherent-constructible correspondence for toric
              {D}eligne-{M}umford stacks},
  journal = {International Mathematics Research Notices. IMRN},
  year    = {2014},
  number  = {4},
  pages   = {914--954}
}

@article{sylvan2019orlov,
  title={Orlov and Viterbo functors in partially wrapped Fukaya categories},
  author={Sylvan, Zachary},
  journal={arXiv:1908.02317},
  year={2019}
}

@phdthesis{subotic2010monoidal,
  title={A monoidal structure for the Fukaya category},
  author={Subotic, Aleksandar},
  year={2010},
  school={Harvard University}
}

@article{fukaya2007lagrangian,
  title={Lagrangian surgery and holomorphic discs},
  author={Fukaya, Kenji and Oh, Yong-Geun and Ohta, Hiroshi and Ono, Kaoru},
  journal={Lagrangian Intersection Floer Theory: Anomaly and Obstruction},
  year={2007}
}

@article{gammage2017mirror,
  title   = {Mirror symmetry for very affine hypersurfaces},
  author  = {Gammage, Benjamin and Shende, Vivek},
  journal = {arXiv:1707.02959},
  year    = {2017}
}

@article{ganatra2017covariantly,
  title     = {Covariantly functorial wrapped {F}loer theory on {L}iouville sectors},
  author    = {Ganatra, Sheel and Pardon, John and Shende, Vivek},
  journal   = {Publications math{\'e}matiques de l'IH{\'E}S},
  pages     = {1--128},
  year      = {2019},
  publisher = {Springer}
}

@article{ganatra2018microlocal,
  title   = {Microlocal Morse theory of wrapped Fukaya categories},
  author  = {Ganatra, Sheel and Pardon, John and Shende, Vivek},
  journal = {arXiv:1809.08807},
  year    = {2018}
}

@article{ganatra2018sectorial,
  title   = {Sectorial descent for wrapped {F}ukaya categories},
  author  = {Ganatra, Sheel and Pardon, John and Shende, Vivek},
  journal = {arXiv:1809.03427},
  year    = {2018}
}

@incollection{gross2013mirror,
  author    = {Gross, Mark},
  title     = {Mirror symmetry and the {S}trominger-{Y}au-{Z}aslow
              conjecture},
  booktitle = {Current developments in mathematics 2012},
  pages     = {133--191},
  publisher = {Int. Press, Somerville, MA},
  year      = {2013}
}

@article{hanlon2019monodromy,
  title     = {Monodromy of monomially admissible {F}ukaya-{S}eidel categories mirror to toric varieties},
  author    = {Hanlon, Andrew},
  journal   = {Advances in Mathematics},
  volume    = {350},
  pages     = {662--746},
  year      = {2019},
  publisher = {Elsevier}
}

@article{hicks2020tropical,
  title     = {Tropical Lagrangian hypersurfaces are unobstructed},
  author    = {Hicks, Jeffrey},
  journal   = {Journal of Topology},
  volume    = {13},
  number    = {4},
  pages     = {1409--1454},
  year      = {2020},
  publisher = {Wiley Online Library}
}

@article{hitchin1997moduli,
  author  = {Hitchin, Nigel J.},
  title   = {The moduli space of special {L}agrangian submanifolds},
  note    = {Dedicated to Ennio De Giorgi},
  journal = {Annali della Scuola Normale Superiore di Pisa. Classe di
              Scienze. Serie IV},
  volume  = {25},
  year    = {1997},
  number  = {3-4},
  pages   = {503--515 (1998)},
  issn    = {0391-173X}
}

@article{hori2000mirror,
  title   = {Mirror symmetry},
  author  = {Hori, Kentaro and Vafa, Cumrun},
  journal = {arXiv:hep-th/0002222},
  year    = {2000}
}

@article{katzarkov2017partially,
  title   = {Partially wrapped Fukaya categories of simplicial skeleta},
  author  = {Katzarkov, Ludmil and Kerr, Gabriel},
  journal = {arXiv:1708.06038},
  year    = {2017}
}

@inproceedings{kontsevich1995homological,
  title        = {Homological algebra of mirror symmetry},
  author       = {Kontsevich, Maxim},
  booktitle    = {Proceedings of the international congress of mathematicians},
  pages        = {120--139},
  year         = {1995},
  organization = {Springer}
}

@article{kuwagaki2020nonequivariant,
  title     = {The nonequivariant coherent-constructible correspondence for toric stacks},
  author    = {Kuwagaki, Tatsuki},
  journal   = {Duke Mathematical Journal},
  year      = {2020},
  publisher = {Duke University Press}
}

@article{nadler2009constructible,
  title   = {Constructible sheaves and the Fukaya category},
  author  = {Nadler, David and Zaslow, Eric},
  journal = {Journal of the American Mathematical Society},
  volume  = {22},
  number  = {1},
  pages   = {233--286},
  year    = {2009}
}

@article{orlov1993projective,
  title     = {Projective bundles, monoidal transformations, and derived categories of coherent sheaves},
  author    = {Orlov, Dmitri},
  journal   = {Russian Academy of Sciences. Izvestiya Mathematics},
  volume    = {41},
  number    = {1},
  pages     = {133},
  year      = {1993},
  publisher = {IOP Publishing}
}

@inbook{rouquier2010derived,
  place      = {Cambridge},
  series     = {London Mathematical Society Lecture Note Series},
  title      = {Derived categories and algebraic geometry},
  doi        = {10.1017/CBO9781139107075.008},
  booktitle  = {Triangulated Categories},
  publisher  = {Cambridge University Press},
  author     = {Rouquier, Rapha{\"e}l},
  editor     = {Holm, Thorsten and J{\o{}}rgensen, Peter and Rouquier, Rapha{\"e}l },
  year       = {2010},
  pages      = {351–370},
  collection = {London Mathematical Society Lecture Note Series}
}

@article{seidel2000graded,
  author  = {Seidel, Paul},
  title   = {Graded {L}agrangian submanifolds},
  journal = {Bulletin de la Soci\'{e}t\'{e} Math\'{e}matique de France},
  volume  = {128},
  year    = {2000},
  number  = {1},
  pages   = {103--149}
}

@incollection{seidel2001more,
  title     = {More about vanishing cycles and mutations},
  author    = {Seidel, Paul},
  booktitle = {Symplectic Geometry And Mirror Symmetry},
  pages     = {429--465},
  year      = {2001},
  publisher = {World Scientific}
}

@book{seidel2008fukaya,
  title     = {Fukaya categories and {P}icard-{L}efschetz theory},
  author    = {Seidel, Paul},
  volume    = {},
  year      = {2008},
  publisher = {European Mathematical Society}
}

@article{strominger1996mirror,
  title     = {Mirror symmetry is {T}-duality},
  author    = {Strominger, Andrew and Yau, Shing-Tung and Zaslow, Eric},
  journal   = {Nuclear Physics B},
  volume    = {479},
  pages     = {243--259},
  year      = {1996},
  publisher = {Elsevier}
}

@article{sylvan2019partially,
  title     = {On partially wrapped Fukaya categories},
  author    = {Sylvan, Zachary},
  journal   = {Journal of Topology},
  volume    = {12},
  number    = {2},
  pages     = {372--441},
  year      = {2019},
  publisher = {Wiley Online Library}
}

@article{tanaka2018generation,
  title   = {Generation for Lagrangian cobordisms in Weinstein manifolds},
  author  = {Tanaka, Hiro Lee},
  journal = {arXiv:1810.10605},
  year    = {2018}
}

@article{oh2020infinity,
  title={$A_\infty$-categories, their $\infty$-category, and their localizations},
  author={Oh, Yong-Geun and Tanaka, Hiro Lee},
  journal={arXiv:2003.05806},
  year={2020}
}

@article{wehrheim2007functoriality,
  title   = {Functoriality for {L}agrangian correspondences in {F}loer theory},
  author  = {Wehrheim, Katrin and Woodward, Chris T},
  journal = {arXiv:0708.2851},
  year    = {2007}
}

@article{zhou2018lagrangian,
  title   = {Lagrangian Skeleta of Hypersurfaces in $(\mathbb{C}^*)^n$},
  author  = {Zhou, Peng},
  journal = {Selecta Mathematica},
  volume  = {26},
  number = {26},
  year    = {2020}
}

@article{zhou2019twisted,
  author  = {Zhou, Peng.},
  title   = {Twisted polytope sheaves and coherent-constructible correspondence for toric varieties},
  journal = {Selecta Mathematica},
  volume  = {25},
  year    = {2019},
  number  = {1}
}

\Addresses

\end{document}